\theoremstyle{plain}
\newtheorem{thm}{Theorem}[section]
\newtheorem{prop}[thm]{Proposition}
\newtheorem{lemma}[thm]{Lemma}
\newtheorem{cor}[thm]{Corollary}
\theoremstyle{definition}
\newtheorem{defn}[thm]{Definition}
\newtheorem{remark}[thm]{Remark}
\newtheorem{conj}[thm]{Conjecture}
\newcommand{\D}{{\mathcal D}}
\newcommand{\E}{{\mathcal E}}
\newcommand{\F}{{\mathbb F}}
\renewcommand{\H}{{\mathcal H}}
\newcommand{\I}{{\mathcal I}}
\renewcommand{\O}{{\mathcal O}}
\newcommand{\m}{{{\mathfrak m}}}
\newcommand{\om}{\overline{\mathfrak m}}
\newcommand{\omk}{\overline{\mathfrak m}_k}
\newcommand{\MM}{\mathcal M}
\newcommand{\MMv}[1]{\MM_{#1}}
\newcommand{\T}{{{\mathbb T}}}
\newcommand{\W}{{\mathcal W}}
\newcommand{\ve}{\varepsilon}
\newcommand{\p}{{\mathfrak p}}
\newcommand{\Z}{{\mathbb Z}}
\newcommand{\Q}{{\mathbb Q}}
\newcommand{\Qinf}{{\mathbb Q}_\infty}
\newcommand{\Qcyc}{{\mathbb Q(\mu_{p^\infty})}}
\newcommand{\R}{{\mathbb R}}
\newcommand{\C}{{\mathbb C}}
\newcommand{\Fp}{{\mathbb F}_p}
\newcommand{\Qpbar}{\overline{{\Q}}_p}
\newcommand{\Qbar}{\overline{{\Q}}}
\renewcommand{\P}{{\mathbb P}}
\newcommand{\cP}{{\mathcal P}}
\newcommand{\Zp}{\Z_p}
\newcommand{\Zpx}{\Z_p^\times}
\newcommand{\Cpx}{\C_p^\times}
\newcommand{\Qp}{\Q_p}
\newcommand{\Cp}{\C_p}
\newcommand{\inj}{\hookrightarrow}
\newcommand{\surj}{\twoheadrightarrow}
\newcommand{\lra}{\longrightarrow}
\newcommand{\smallmat}{\bigl( \begin{smallmatrix} a & b \\ c & d
\end{smallmatrix} \bigr)}
\newcommand{\sigop}{\Sigma_0(p)}
\newcommand{\psmallmat}[4]{\left( \begin{smallmatrix} #1 & #2 \\ #3 & #4 \end{smallmatrix} \right)}
\newcommand{\X}{\Zpx \times \Zp}
\newcommand{\M}{\Meas(\Zp)}
\newcommand{\Mx}{\Meas(\Zpx)}
\newcommand{\Mk}{\Meas_k(\Zp)}
\newcommand{\Mv}[1]{\Meas_{#1}(\Zp)}
\newcommand{\Mvx}[1]{\Meas_{#1}(\Zpx)}
\newcommand{\Ms}{\Meas(\Zpx \times \Zp)}
\newcommand{\Mg}{\Zp[[\Zp]]}
\newcommand{\Mxg}{\Zp[[\Zpx]]}
\newcommand{\Msg}{\Zp[[\Zpx \times \Zp]]}
\newcommand{\Lt}{\tilde{\Lambda}}
\newcommand{\rhobar}{\overline{\rho}}
\newcommand{\Kf}{K}
\newcommand{\Of}{\O}
\newcommand{\KOf}{\Kf/\Of}
\newcommand{\Ff}{\F}
\newcommand{\Afj}{A_{f,j}}
\newcommand{\Gammac}{\Zpx}
\newcommand{\Gammawj}{\Gamma_{w,j}}
\newcommand{\Leis}{L_{\eis}}
\newcommand{\Leispsi}{L_{\eis,\m}}
\newcommand{\Tk}{\T_k}
\newcommand{\Tc}{\T^c}
\newcommand{\Tck}{\T^c_k}
\newcommand{\mc}{\m^c}
\newcommand{\mk}{\m_k}
\newcommand{\mck}{\m^c_k}
\newcommand{\Tm}{\T_{\m}}
\newcommand{\Tcm}{\Tc_{\mc}}
\newcommand{\Tkm}{\T_{k,\m_k}}
\newcommand{\Tckm}{\Tc_{k,\mck}}
\newcommand{\Ieis}{\I}
\newcommand{\Ikeis}{\I_k}
\newcommand{\Iceis}{\I^c}
\newcommand{\Ickeis}{\I^c_k}
\newcommand{\vandv}[1]{\text{\rm (Vand~}\omega^{#1}{\rm )}}
\newcommand{\vandpsiv}[2]{\text{\rm (Vand~}#1\omega^{#2}{\rm )}}
\newcommand{\lu}{\text{\rm ($U_p-1$~gens)}}
\newcommand{\fk}{\text{\rm(Good Eisen)}}
\newcommand{\all}{\text{\rm($\star$)}}
\newcommand{\ep}{\text{Eisen}$^+$}
\DeclareMathOperator{\Div}{Div}
\DeclareMathOperator{\Hom}{Hom}
\DeclareMathOperator{\Ext}{Ext}
\DeclareMathOperator{\ord}{ord}
\DeclareMathOperator{\im}{im}
\DeclareMathOperator{\Sym}{Sym}
\DeclareMathOperator{\GL}{GL}
\DeclareMathOperator{\SL}{SL}
\DeclareMathOperator{\Cl}{Cl}
\DeclareMathOperator{\Sel}{Sel}
\DeclareMathOperator{\Aut}{Aut}
\DeclareMathOperator{\Frob}{Frob}
\DeclareMathOperator{\eis}{eis}
\DeclareMathOperator{\Gal}{Gal}
\DeclareMathOperator{\chr}{char}
\DeclareMathOperator{\Meas}{Meas}
\DeclareMathOperator{\Cont}{Cont}
\DeclareMathOperator{\Ind}{Ind}
\DeclareMathOperator{\res}{res}
\DeclareMathOperator{\un}{un}
\DeclareMathOperator{\loc}{loc}
\DeclareMathOperator{\cont}{cont}
\renewcommand{\and}{\quad \text{and} \quad}
\title{Congruences with Eisenstein series and $\mu$-invariants}
\author{Jo\"el Bella\"iche and Robert Pollack}
\begin{document}

\maketitle

\begin{abstract}
We study the variation of $\mu$-invariants in Hida families with residually reducible Galois representations.  We prove a lower bound for these invariants which is often expressible in terms of the $p$-adic zeta function.  This lower bound forces these $\mu$-invariants to be unbounded along the family, and moreover, we conjecture that this lower bound is an equality.  When $U_p-1$ generates the cuspidal Eisenstein ideal, we establish this conjecture and further prove that the $p$-adic $L$-function is simply a power of $p$ up to a unit (i.e.\ $\lambda=0$).  On the algebraic side, we prove analogous statements for the associated Selmer groups which, in particular, establishes the main conjecture for such forms.  

\end{abstract}

\section{Introduction}

\subsection{Setting the stage}
Let $(p,k)$ be an irregular pair -- that is, $p$ is an irregular prime such that $p$ divides the numerator of $B_k$, the $k$-th Bernoulli number.  In \cite{Ribet-Herbrand}, Ribet proved the converse of Herbrand's theorem which predicts the non-triviality of a particular eigenspace of the $p$-part of the class group of $\Q(\mu_p)$ under the action of $\Gal(\Q(\mu_p)/\Q)$.  His method exploited a congruence between an Eisenstein series and a cuspform.   Ribet worked in weight 2 at level $p$, but we describe the idea here in weight $k$ and level $1$; namely, let
$$
E_k = -\frac{B_k}{2k} + \sum_{n \geq 1} \sigma_{k-1}(n) q^n
$$
denote the Eisenstein series of weight $k >2$ on $\SL_2(\Z)$.  Under the assumption that $(p,k)$ is an irregular pair, the constant term of $E_k$ vanishes mod $p$, and thus $E_k$ ``looks like" a cuspform modulo $p$.  One can make this idea precise and prove the existence of a cuspidal eigenform $g_k$ of weight $k$ and level 1 which is congruent to the Eisenstein series $E_k$.  The mod $p$ Galois representation of $g_k$ is then reducible, and from this Galois representation one can extract the sought after unramified extension of $\Q(\mu_p)$. 

Wiles  in \cite{Wiles-MC} pushed this argument further by looking at the whole Hida family of the $g_k$ as $k$ varies $p$-adically.  By analyzing the intersection of the Eisenstein and cuspidal branches of this Hida family (in the Hilbert modular case), Wiles proved the main conjecture over totally real fields.

In this paper, rather than looking at the Iwasawa theory of class groups, we will instead focus on the Iwasawa theory of these famous cuspforms $g_k$ in their own right.  Namely, we will examine the $p$-adic $L$-functions and Selmer groups of the $g_k$ as $k$ varies $p$-adically.  In fact, within the paper, we will study a larger collection of cuspforms with reducible residual representations by allowing for congruences with Eisenstein series with characters.  However, to keep things simple, for the remainder of the introduction we will continue to work with tame level $N=1$.

To further set the stage, let's recall the Hida theoretic picture of congruences between Eisenstein series and cuspforms.  Namely, to put the $E_k$ in a $p$-adic family, we must consider $E_k^{\ord} := E_k(z) - p^{k-1} E_k(pz)$, the ordinary $p$-stabilization to $\Gamma_0(p)$.  Explicitly, we have
$$
E_k^{\ord} = -(1-p^{k-1}) \frac{B_k}{2k} + \sum_{n \geq 1} \sigma^{(p)}_{k-1}(n) q^n
$$
where $\sigma^{(p)}_{k-1}(n)$ is the sum of the $(k-1)$-st powers of the prime-to-$p$ divisors of $n$.  Since $d^k$ is a $p$-adically continuous function in $k$ as long as $(d,p)=1$, we see that the functions $\sigma^{(p)}_{k-1}(n)$ satisfy nice $p$-adic properties.  

Specifically, let $\W = \Hom_{\cont}(\Zpx,\Cpx)$ denote weight space which is isomorphic to $p-1$ copies of the open unit disc of $\Cp$. 
Let $\W_j$ denote the disc of weights with tame character $\omega^j$ where $\omega$ is the mod $p$ cyclotomic character.  
We view $\Z \inj \W$ by identifying $k$ with the character $z \mapsto z^k$.  Then there exists rigid analytic functions $A_n$ on $\W$ such that $A_n(k) = \sigma_{k-1}^{(p)}(n)$ for each $n \geq 1$.

The constant term of $E_k^{\ord}$ also enjoys nice $p$-adic properties.  
For $k>2$ even, we have $-(1-p^{k-1}) \frac{B_k}{2k} = \frac{1}{2} \zeta_p(k)$ where $\zeta_p(\kappa)$ is the $p$-adic $\zeta$-function as in \cite{Colmez-pL,BD-evil}.  In particular, the formal $q$-expansion:
$$
\E_\kappa = \frac{1}{2} \zeta_p(\kappa) + \sum_{n \geq 1} A_n(\kappa) q^n
$$
defines the $p$-adic Eisenstein family in the weight variable $\kappa$ over all even components of weight space.

Note further that if $\kappa_0$ is a zero of $\zeta_p(\kappa)$ (which necessarily cannot be a classical weight), then the weight $\kappa_0$ Eisenstein series $\E_{\kappa_0}$ ``looks cuspidal".  In the spirit of Ribet's proof of the converse to Herbrand's theorem, one can show that there is a cuspidal Hida family which specializes at weight $\kappa_0$ to an Eisenstein series.  That is, the weights of the crossing points of the cuspidal and Eisenstein branches of the Hida family occur precisely at the zeroes of $\zeta_p(\kappa)$.

We set some notation:\ let $\T$ denote the universal ordinary Hecke algebra of tame level $N=1$ acting on ordinary modular forms of all weights (as in \cite{Hida-Iwmod}) which is a $\Lambda$-module where $\Lambda = \Zp[[1+p\Zp]]$.  Let $\Tc$ denote the quotient of $\T$ corresponding to the cuspidal Hecke algebra, and let $\mc \subseteq \Tc$ denote some maximal ideal containing the cuspidal Eisenstein ideal.  Note that the choice of such an ideal implicitly chooses some disc $\W_j$ of weight space over which our Hida family sits and for which $(p,j)$ is an irregular pair.

For simplicity and concreteness, we begin by imposing the following hypothesis:
\begin{equation}
\label{r1}
\tag{Cuspidal rank one}
\dim_{\Lambda} \Tcm = 1
\end{equation}
so that the geometry of the Hida family is as simple as possible.

We note that this condition is equivalent to there being a unique eigenform $f_k$ in weight $k$ congruent to $E^{\ord}_{k}$ for one (equivalently any) $k \equiv j \pmod{p-1}$.
The condition \eqref{r1} certainly does not hold  for all irregular pairs $(p,k)$.  But we checked that it does hold for all irregular pairs $(p,k)$ with $p<10^5$ with the exception of $p=547$ and $k = 486$.  In this case, $\dim_{\Zp} \Tcm = 2$.

\subsection{$p$-adic $L$-functions and mod $p$ multiplicity one}
Each of the cuspforms $f_k$ have an associated $p$-adic $L$-function $L^+_p(f_k)$, which can be equivalently viewed as a $\Zp$-valued measure on $\Gammac$ or as an element in the completed group algebra $\Zp[[\Gammac]]$. The $+$ superscript here indicates that this is the $p$-adic $L$-function supported only on even characters.   Moreover, these $p$-adic $L$-functions vary $p$-adic analytically in $k$ yielding a two-variable $p$-adic $L$-function as in \cite{Kitagawa,GS}.   

Under \eqref{r1}, we can view the two-variable $p$-adic $L$-function $L_{p}^+(\kappa)$ as an element  in $\Zp[[\Gammawj \times \Gammac]]$ where $\Gammawj = 1+p\Zp$. Here, we think of $\kappa$ as ranging over weights in $\W_j$.
That is, under this assumption, we can conflate a parameter of the Hida family with a parameter of $\W_j$.  At each classical weight in $\W_j$, the two-variable $p$-adic $L$-function specializes to a one-variable $p$-adic $L$-function.  That is, for $k \in  \Z^{\geq 2} \cap \W_j$ we have $$L^+_{p}(\kappa)\big|_{{\kappa =k}} = L^+_p(f_k).$$
One can also naturally attach $p$-adic $L$-functions to Eisenstein series (e.g.\ via overconvergent modular symbols as in \cite{GS}), and somewhat surprisingly the result is simply the zero distribution (see Theorem \ref{thm:pLvanish}).\footnote{The corresponding minus $p$-adic $L$-function on the space of odd characters does not vanish (see Remark \ref{rmk:odd}).}

The key idea of this paper is to use the vanishing of the $p$-adic $L$-functions on the Eisenstein branch and the intersection points of the Eisenstein and cuspidal branches to make deductions about the cuspidal $p$-adic $L$-functions.  Indeed, if there were a  two-variable $p$-adic $L$-function defined simultaneously over both the Eisenstein and  cuspidal branches, the vanishing of the $p$-adic $L$-function on the Eisenstein branch would imply the vanishing of the $p$-adic $L$-functions at the crossing points of the branches.
This in turn would mean that $L^+_p(f_k)$ becomes more and more divisible by $p$ as $k$ moves close to one of these crossing points --- i.e.\ to one of the zeroes of $\zeta_p(\kappa)$.  In the language of Iwasawa theory, this means that the $\mu$-invariants in the Hida family blow up as you approach these zeroes!

We now set some further notation and introduce a mod $p$ multiplicity one condition which helps explain this phenomenon.  Let $\Tk$ (resp.\ $\Tck$) denote the full Hecke algebra over $\Zp$ which acts faithfully on $M_k(\Gamma_0(p))^{\ord}$ (resp.\ $S_k(\Gamma_0(p))^{\ord}$).  Let $\m_k$  denote the maximal ideal of $\Tk$ corresponding to $E^{\ord}_k$, and let $\mck$ denote the image of $\m_k$ in $\Tck$.  Let $(p,k)$ be an irregular pair, in which case $\mck$ is a maximal ideal (i.e.\ it is a proper ideal).  We consider the following condition:
\begin{align*}
\label{m1}
\tag{Mult One}
\dim_{\Fp} \left( H^1_c(\SL_2(\Z),\cP_{k-2}^\vee \otimes \Fp)^+[\m_k] \right) = 1 
\end{align*}
where $\cP_g$ is the collection of $\Qp$-polynomials of degree less than or equal to $g$ which preserve $\Zp$.
We will see that \eqref{m1} holds for one irregular pair $(p,k)$ if and only if it holds for all irregular pairs $(p,k)$ as $k$ runs over a fixed residue class mod $p-1$ (Theorem \ref{thm:m1cong}).  
 
To see the relevance of condition \eqref{m1}, a weaker version of the argument on $\mu$-invariants given above would be to simply say that since $f_k$ and $E_k^{\ord}$ satisfy a congruence modulo $p$, one would hope for their $p$-adic $L$-functions to also satisfy a congruence.  Since the $p$-adic $L$-function of $E_k^{\ord}$ is 0, we would then deduce that the $\mu(f_k)>0$.  However, the implication that a congruence of forms leads to a congruence of $p$-adic $L$-functions is a subtle one which is typically proven via a congruence between their associated modular symbols --- that is via \eqref{m1}.  Mod $p$ multiplicity one is now well-established in the residually irreducible case (e.g.\ \cite{Wiles-FLT}), but is more subtle in the residually reducible case, and not always true for tame level greater than 1.  
But we do note that \eqref{r1} implies  \eqref{m1} --- see Lemma \ref{lemma:r1togoren} and Theorem \ref{thm:gorentom1}.  

We  now state a precise theorem relating the $\mu$-invariants of the $f_k$ with the $p$-adic $\zeta$-function.
For $a$ even, we write $\mu(f,\omega^a)$ for the $\mu$-invariant of the projection of $L^+_p(f)$ to the $\omega^a$-component of the semi-local ring $\Zp[[\Gammac]]$.

\begin{thm}
\label{thm:anlowerbndintro}
Fix an irregular pair $(p,j)$ and assume that  \eqref{r1} holds for this pair.  Then
$$
\zeta_p(\kappa) \text{~~divides~~} L^+_{p}(\kappa)
$$
in $\Zp[[\Gammawj \times \Zpx]]$.  In particular,
$$
\mu(f_k,\omega^a) \geq \ord_p(\zeta_p(k)) = \ord_p \left(  B_k/k \right)
$$
for each even $a$ with $0 \leq a \leq p-1$ and for classical $k \equiv j \pmod{p-1}$.
\end{thm}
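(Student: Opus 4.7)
The plan is to lift the problem to the full Hecke algebra $\Tm$ acting on the entire ordinary Hida family (cuspidal plus Eisenstein) and to exploit the vanishing of the Eisenstein $p$-adic $L$-function proved in Theorem \ref{thm:pLvanish}. The geometric picture under \eqref{r1} is that the cuspidal branch of the Hida family is a copy of $\Lambda = \Zp[[\Gammawj]]$, the Eisenstein branch is another copy of $\Lambda$, and the two branches meet exactly at the zeros of $\zeta_p(\kappa)$. This should translate into a fiber product description
\[
\Tm \;\cong\; \bigl\{\,(a,b) \in \Lambda \times \Lambda : a \equiv b \pmod{\zeta_p(\kappa)}\,\bigr\},
\]
the two projections being the cuspidal and Eisenstein quotients of $\Tm$.

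To produce a $p$-adic $L$-function defined over this algebra, I would use the mod $p$ multiplicity one property \eqref{m1}, which follows from \eqref{r1} by Lemma \ref{lemma:r1togoren} and Theorem \ref{thm:gorentom1}. A Nakayama-style argument should then show that the big ordinary plus modular symbol module is free of rank one over $\Tm$, so that a generator $\Phi$ exists. Applying the Mellin transform yields a single element $\mathcal{L}(\Phi) \in \Tm[[\Gammac]]$. Its image under the cuspidal quotient $\Tm \surj \Lambda$ recovers the two-variable cuspidal $p$-adic $L$-function $L_p^+(\kappa)$ by construction; its image under the Eisenstein quotient $\Tm \surj \Lambda$ recovers the two-variable Eisenstein $p$-adic $L$-function, which by Theorem \ref{thm:pLvanish} (and interpolation across classical points) vanishes identically.

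The theorem then follows formally from the fiber product structure: the two projections of $\mathcal{L}(\Phi)$ must agree modulo $\zeta_p(\kappa)$, and since the Eisenstein projection is zero, we conclude $L_p^+(\kappa) \equiv 0 \pmod{\zeta_p(\kappa)}$ in $\Lambda[[\Gammac]] = \Zp[[\Gammawj \times \Zpx]]$. Specializing at a classical weight $k \equiv j \pmod{p-1}$ and projecting to the $\omega^a$-component gives $\mu(f_k,\omega^a) \geq \ord_p(\zeta_p(k))$, and $\ord_p(\zeta_p(k)) = \ord_p(B_k/k)$ since $\zeta_p(k) = -(1-p^{k-1}) B_k/k$ with $\ord_p(1-p^{k-1})=0$ for $k \geq 2$.

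The main obstacle is setting up the structural pieces precisely: verifying (i) the fiber product description of $\Tm$, which requires identifying the crossing divisor of the two branches with the principal divisor cut out by $\zeta_p$---a sharpened form of Ribet's converse to Herbrand's theorem in this Hida-theoretic setting---and (ii) that multiplicity one promotes to freeness of the big modular symbol module over $\Tm$ itself (not merely over its cuspidal quotient), so that the Eisenstein projection of $\mathcal{L}(\Phi)$ really does compute the two-variable Eisenstein $L$-function rather than something that only agrees with it after restriction. Once both are in place, the divisibility is a formal consequence of the fiber product.
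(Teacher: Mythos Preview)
Your proposal is correct and follows essentially the same route as the paper: build a two-variable $p$-adic $L$-function over the full Hecke algebra $\Tm$ using \eqref{m1} (which holds under \eqref{r1} via Lemma~\ref{lemma:r1togoren} and Theorem~\ref{thm:gorentom1}), show its Eisenstein projection vanishes via Theorem~\ref{thm:pLvanish}, and deduce the divisibility on the cuspidal side. The paper phrases the last step as ``$L_p^+(\m)$ lands in the Eisenstein ideal, hence its cuspidal image is divisible by a generator of $\Iceis_{\mc}$, which is $\zeta_p(\kappa)$ by Ohta's result (Theorem~\ref{thm:eisgen})'' rather than invoking an explicit fiber-product description of $\Tm$, but these are equivalent formulations and your two identified obstacles are exactly what the paper establishes (Theorem~\ref{thm:m1impliesfreeplus} in the appendix for freeness, and Theorem~\ref{thm:eisgen} for the crossing divisor).
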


Note that this theorem makes precise the previous claim that the $\mu$-invariants blow up as one approaches the zeroes of the $p$-adic $\zeta$-function as the above lower bound blows up as one approaches these zeroes.

Our method of proof of this theorem is to build a two-variable $p$-adic $L$-function on the full Hida family (i.e.\ on both the cuspidal and Eisenstein branches).  
We follow the construction of Greenberg and Stevens, but keep careful track over the integrality of all of the objects.  Then \eqref{m1} implies that
some large space of overconvergent modular symbols is free as a Hecke-module.  This freeness allows us to build a two-variable $p$-adic $L$-function over the full Hida algebra.

With this two-variable $p$-adic $L$-function in hand, the vanishing of the $p$-adic $L$-functions of the Eisenstein series implies the divisibility of Theorem \ref{thm:anlowerbndintro}.
We note that the last claim of the theorem follows immediately from this divisibility.  Indeed, when one specializes $\kappa$ to some classical weight $k$, one gets that the {\it number} $\zeta_p(k)$ divides $L^+_p(f_k)$ in $\Zp[[\Gammac]]$ -- which exactly gives the above lower bound on the $\mu$-invariant.

We now give an upper bound on $\mu$-invariants which holds even without our \eqref{r1} assumption.  In what follows, we write $\mu(f)$ for $\mu(f,\omega^0)$.

\begin{thm}
\label{thm:anupperbndintro}
Fix an irregular pair $(p,j)$ and assume that \eqref{m1} holds for this pair.  Then 
$$
\mu(f_k) \leq \ord_p(a_p(f_k)-1)
$$
for all classical $k \equiv j \pmod{p-1}$.
\end{thm}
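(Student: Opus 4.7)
The strategy is to compare the integral modular symbol attached to $f_k$ with the Eisenstein boundary symbol, iteratively extracting Eisenstein contributions until the process terminates after exactly $v := \ord_p(a_p(f_k)-1)$ steps.

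First, invoke \eqref{m1} to choose an integrally normalized ordinary $U_p$-eigensymbol $\Phi_{f_k}$ attached to $f_k$ and the integral Eisenstein boundary symbol $\Phi_{\eis}$ attached to $E_k^{\ord}$ (so that $U_p\Phi_{\eis} = \Phi_{\eis}$). Since the mod-$p$ $\m_k$-torsion of $H^1_c$ is one-dimensional by \eqref{m1}, normalize so that $\Phi_{f_k} \equiv \Phi_{\eis} \pmod{p}$. Crucially, by Theorem~\ref{thm:pLvanish}, $L_p^+(\Phi_{\eis})=0$, so $L_p^+(f_k)$ depends only on the difference.

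Next, write $\Phi_{f_k} - \Phi_{\eis} = p\Psi_1$ with $\Psi_1$ integral. A direct Hecke calculation gives $(T - a_T(E_k^{\ord}))\Psi_1 = \tfrac{a_T(f_k) - a_T(E_k^{\ord})}{p}\,\Phi_{f_k}$ for $T \in \m_k$. Iterating, $\Psi_i$ lies in the $\m_k$-torsion mod $p$ (and hence can be written as $\Psi_i = c_i\Phi_{\eis} + p\Psi_{i+1}$ with $\Psi_{i+1}$ integral) if and only if the Hecke congruence $a_T(f_k) \equiv a_T(E_k^{\ord})\pmod{p^{i+1}}$ holds for all $T \in \m_k$. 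Applied to $T = U_p$, this condition holds for $i \leq v-1$ but fails at $i=v$. Therefore the process yields the identity
$$
\Phi_{f_k} = C \cdot \Phi_{\eis} + p^v \Psi_v, \qquad C \in \Zp,
$$
with $\Psi_v$ integral, satisfying $(U_p-1)\Psi_v = u\cdot\Phi_{f_k}$ for a unit $u \in \Zpx$, but no longer $\m_k$-torsion mod $p$. Applying $L_p^+$ and using $L_p^+(\Phi_{\eis})=0$ gives $L_p^+(f_k) = p^v L_p^+(\Psi_v)$, so the desired bound $\mu(f_k) \leq v$ reduces to the non-vanishing statement $L_p^+(\Psi_v) \not\equiv 0 \pmod{p}$.

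\textbf{Main obstacle.} The delicate point is precisely this last non-vanishing claim: while $\Psi_v$ lies outside the $\m_k$-torsion mod $p$, a priori nothing prevents $L_p^+(\Psi_v)$ from being divisible by $p$. One must leverage the relation $(U_p-1)\Psi_v = u\Phi_{f_k}$ together with a non-degeneracy of the Mellin pairing on the non-Eisenstein part of the mod-$p$ modular symbol space --- e.g., that the image of the cuspidal part under restriction-to-$\Zpx$ of the distribution $\Phi(\{0\}-\{\infty\})$ is generically non-zero. It is this genericity step, rather than the formal iteration, that carries the real technical weight of the argument.
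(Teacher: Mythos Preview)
Your approach leaves a genuine gap precisely where you flag it: you never establish that $L_p^+(\Psi_v) \not\equiv 0 \pmod p$. The appeal to ``non-degeneracy of the Mellin pairing on the non-Eisenstein part'' is not a proof, and there is no evident mechanism by which the relation $(U_p-1)\Psi_v = u\,\Phi_{f_k}$ alone forces the $p$-adic $L$-function of $\Psi_v$ to be nonzero mod $p$. There is also a minor imprecision in the iteration: the process continues while \emph{all} generators of $\m_k$ satisfy the depth-$p^{i+1}$ congruence, not just $U_p$, so termination could occur at some step $m \leq v$ rather than exactly at $v$. This is harmless for an upper bound \emph{provided} you have the non-vanishing --- but you don't.

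The paper's argument is completely different and far more direct. Rather than decomposing the symbol and analyzing the whole measure, one simply evaluates $L_p^+(f_k)$ at the trivial character via the interpolation formula
\[
L_p^+(f_k)({\bf 1}) \;=\; \Bigl(1 - \tfrac{1}{a_p(f_k)}\Bigr)\,\varphi_{f_k}^+(\{\infty\}-\{0\})(1).
\]
The Euler-type factor has $p$-adic valuation exactly $\ord_p(a_p(f_k)-1)$. For the remaining factor, \eqref{m1} forces $\varphi_{f_k}^+$ to be congruent mod $p$ (up to a unit) to the explicit Eisenstein boundary symbol, whose value on $\{\infty\}-\{0\}$ at the constant function $1$ one reads off directly to be $-1$. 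Hence $\varphi_{f_k}^+(\{\infty\}-\{0\})(1)$ is a $p$-adic unit, the special value $L_p^+(f_k)({\bf 1})$ has valuation exactly $\ord_p(a_p(f_k)-1)$, and the bound $\mu(f_k) \leq \ord_p(a_p(f_k)-1)$ follows at once. The iterative scaffolding is simply not needed: a single special value does all the work, and in fact this evaluation retroactively supplies exactly the non-vanishing input your argument was missing.
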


 We sketch the simple proof of this theorem now.  Assuming \eqref{m1}, one shows that $L(f_k,1)/\Omega^+_f$ is a $p$-adic unit (as one knows that the plus modular symbol attached to $f$ is congruent to a boundary symbol mod $p$).  The interpolation formula for the $p$-adic $L$-function thus gives that  $L_p(f_k,{\bf 1})$ has valuation $\ord_p(a_p(k)-1)$ which in turns gives our upper bound on $\mu$. 

Thus under \eqref{r1} (which implies \eqref{m1}), we get the following string of inequalities
\begin{equation}
\label{eqn:ineq}
\ord_p(\zeta_p(k)) \leq \mu(f_k) \leq \ord_p(a_p(f_k)-1).
\end{equation}
We note that since $a_p(E_\kappa^{\ord})=1$ for all $\kappa \in \W$, we have that $\zeta_p(\kappa)$ divides $a_p(f_\kappa)-1$ in $\Zp[[\Gamma_w]]$ which is consistent with the above inequalities.

Given the philosophy that $\mu$-invariants are ``as small as they can be", it's tempting to make the following conjecture. We will also further justify this conjecture after analyzing the algebraic side of the situation.

\begin{conj}
\label{conj:anmuintro}
For an irregular pair $(p,j)$ satisfying \eqref{r1} we have
$$
\mu(f_k,\omega^a) = \ord_p(\zeta_p(k))
$$ 
for $a$ even and for any classical $k \equiv j \pmod{p-1}$.
\end{conj}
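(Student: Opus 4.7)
The lower bound is already provided by Theorem~\ref{thm:anlowerbndintro}, so the plan is to establish the matching upper bound $\mu(f_k,\omega^a) \leq \ord_p(\zeta_p(k))$. The starting point is the factorization
$L^+_p(\kappa) = \zeta_p(\kappa) \cdot H(\kappa)$ in $\Zp[[\Gammawj \times \Gammac]]$ guaranteed by Theorem~\ref{thm:anlowerbndintro}. The conjecture is equivalent to showing that, for every classical $k \equiv j \pmod{p-1}$ and every even $a$, the image of $H(\kappa)|_{\kappa=k}$ in the $\omega^a$-component of $\Fp[[\Gammac]]$ is nonzero.

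For the $\omega^0$-component, I would combine Theorem~\ref{thm:anupperbndintro} with a structural analysis of the cuspidal Eisenstein ideal. Since $a_p(\E_\kappa) = 1$ identically on the Eisenstein family, the element $a_p(f_\kappa) - 1$ lies in the cuspidal Eisenstein ideal $\Iceis$, and in particular $\zeta_p(\kappa)$ divides $a_p(f_\kappa) - 1$ in $\Lambda$. Under \eqref{r1}, the identification $\Tcm \cong \Lambda$ together with Hida's Eisenstein congruence theory makes $\Iceis$ the principal ideal generated by $\zeta_p(\kappa)$ up to a unit. Consequently, the quotient $G(\kappa) := (a_p(f_\kappa) - 1)/\zeta_p(\kappa)$ is a unit in $\Lambda$ if and only if $U_p - 1$ generates $\Iceis$, i.e.\ if and only if the hypothesis \lu holds. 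Under this additional hypothesis, Theorem~\ref{thm:anupperbndintro} immediately yields $\mu(f_k) \leq \ord_p(a_p(f_k) - 1) = \ord_p(\zeta_p(k))$, which matches the lower bound.

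To extend the equality to the $\omega^a$-component for general even $a$, I would establish a twisted analogue of Theorem~\ref{thm:anupperbndintro}. Concretely, one shows that the $\omega^a$ projection of $L^+_p(f_k)$ evaluated at the trivial character recovers the special $L$-value $L(f_k,\omega^{-a},1)$ up to an interpolation factor involving $a_p(f_k)$, and uses \eqref{m1} to prove that the corresponding twisted plus modular symbol is congruent mod $p$ to a nonzero boundary symbol, so that $L(f_k,\omega^{-a},1)/\Omega^+_{f_k}$ (with the appropriate Gauss sum normalization) is a $p$-adic unit. This yields $\mu(f_k,\omega^a) \leq \ord_p(a_p(f_k) - 1)$ for every even $a$, and combined with the preceding paragraph produces the desired equality.

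The main obstacle is the case when \lu fails: then $G(\kappa)$ may have strictly positive $\mu$-invariant at $\kappa = k$, and the bound from Theorem~\ref{thm:anupperbndintro} becomes strictly weaker than the target. Recovering the equality in this generality would require a finer analysis of $H(\kappa)$ itself --- for example by directly constructing a lift of $H(\kappa)$ modulo an appropriate power of $p$ via overconvergent modular symbols and proving its nonvanishing in the residue field --- together, perhaps, with the algebraic side (i.e.\ the corresponding main conjecture statement for Selmer groups) to independently bound $\mu$ from above. This is the principal reason the statement remains conjectural in general and is only established in the paper under the additional hypothesis \lu.
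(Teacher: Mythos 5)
This statement is a \emph{conjecture}, and the paper does not prove it; it proves the lower bound (Theorem~\ref{thm:anlowerbndintro}), the $a=0$ upper bound (Theorem~\ref{thm:anupperbndintro}), the $a=0$ equality when the bounds meet (Theorem~\ref{thm:anfullintro}, i.e.\ under \lu), and otherwise verifies the conjecture numerically. You correctly recover the $a=0$ story, but your proposed extension to $a\neq 0$ contains a genuine error.

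The ``twisted analogue of Theorem~\ref{thm:anupperbndintro}'' you propose in the third paragraph does not exist, and the paper explicitly says so: ``when $a>0$, we have no a priori upper bound on the $\mu$-invariant (i.e.\ Theorem~\ref{thm:anupperbndintro} does not apply).'' Here is why your argument fails. Evaluating the $\omega^a$-component of $L^+_p(f_k)$ at the trivial character of $1+p\Zp$ gives
$$
\int_{\Zpx} \omega^a\,dL^+_p(f_k) \;=\; \frac{1}{a_p(f_k)}\sum_{b=1}^{p-1}\omega^a(b)\,\varphi^+_{f_k}\bigl(\{\infty\}-\{b/p\}\bigr)(1).
$$
For $N=1$, every cusp $b/p$ with $\gcd(b,p)=1$ lies in the $\Gamma_0(p)$-orbit of $\infty$, and the Eisenstein boundary symbol $\varphi_{k,\psi}$ vanishes identically on that orbit (this is exactly the ``not supported on the $\infty$ cusp'' fact invoked in the proof of Theorem~\ref{thm:anupperbndgen}). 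The \eqref{m1} congruence $\varphi^+_{f_k}\equiv c\,\varphi_{k,\psi}\pmod{p}$ therefore forces $\varphi^+_{f_k}(\{\infty\}-\{b/p\})(1)\equiv 0\pmod p$ for every $b$; you learn only that the twisted special value is $\equiv 0\pmod p$, which is a lower bound on its valuation and gives no control on $\mu$. The mechanism behind the $a=0$ bound is the explicit Euler factor $(1-1/a_p(f_k))$ in the $n=0$ line of the interpolation formula~(\ref{eqn:interp}); that factor disappears for ramified $\chi$, where the only $a_p$-dependence is the unit $1/a_p(f_k)^n$. Moreover, if the normalized twisted $L$-value were a $p$-adic unit as you assert, the interpolation formula would give $\mu(f_k,\omega^a)=0$, contradicting the lower bound $\mu(f_k,\omega^a)\geq \ord_p(\zeta_p(k))\geq 1$ that you correctly invoke in your first paragraph. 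So the third paragraph of your proposal is internally inconsistent.

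As a consequence, even under \lu~the paper establishes the conjecture only for $a=0$; for $a\neq 0$ it resorts to direct computation of $\mu$-invariants via overconvergent modular symbols (section~\ref{sec:numerical}). Your closing paragraph correctly flags the obstruction when \lu~fails, but you should also record that the $a\neq 0$ case is open even when \lu~holds, for the structural reason just described.
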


Besides giving an elegant formula for $\mu$-invariants in a Hida family, the above conjecture has an appealing philosophical feel.  Namely, the non-trivial $\mu$-invariant is explained away through $p$-adic variation.  For an isolated form, a non-trivial $\mu$-invariant almost feels like an error in the choice of the complex period.  However, in the family, one sees these $\mu$-invariants explicitly arising from the special values of some interesting analytic function.  Further, by Ferrero-Washington \cite{FW-mu}, the $\mu$-invariant of $\zeta_p(\kappa)$ is 0, and so all divisibility by a power of $p$ vanishes in the family.  We further note that Conjecture \ref{conj:anmuintro} implies the weaker statement that the two-variable $\mu$-invariant of $L_p^+(\kappa)$ vanishes.

We note that this conjecture holds for $a=0$ for all irregular pairs $(p,k)$ with $p<2000$.  Unfortunately, this provides scant evidence for the conjecture as in this range, $\ord_p(a_p(f_k)-1)=1$, and thus the lower bound equals the upper bound in the inequalities in \eqref{eqn:ineq} forcing Conjecture \ref{conj:anmuintro} to hold.

However, when $a>0$, 
 we have no a priori upper bound on the $\mu$-invariant (i.e.\ Theorem \ref{thm:anupperbndintro} does not apply) and thus we must instead compute $\mu$-invariants of $p$-adic $L$-functions to verify that our conjecture holds.  In this case, we verified our conjecture holds for $p<750$ (for all even $a$).  For details, see section \ref{sec:numerical} where we describe the extensive computation we did using the algorithms in \cite{PS-explicit} on overconvergent modular symbols.

Returning to the case of trivial tame character (i.e.\ $a=0$), when the lower and upper bounds of \eqref{eqn:ineq} meet, the $p$-adic $L$-functions of the $f_k$ turn out to be very simple.  In what follows, $L^+_p(\kappa,\omega^a)$ denotes the projection of $L^+_p(\kappa)$ to the $\omega^a$-component of $\Zp[[\Gammawj \times \Gammac]]$.

\begin{thm}
\label{thm:anfullintro}
Fix an irregular pair $(p,j)$ satisfying \eqref{r1} and for which
$$
\ord_p(\zeta_p(j)) = \ord_p(a_p(f_j)-1).
$$
Then 
$$
L^+_{p}(\kappa,\omega^0) = \zeta_p(\kappa) \cdot U
$$ 
where $U$ is a unit in $\Zp[[\Gammawj \times (1+p\Zp)]]$.
In particular, for every classical $k \equiv j \pmod{p-1}$, we have that 
$$\lambda(f_k)=0 \text{~~and~~}\mu(f_k) = \ord_p(\zeta_p(k))= \ord_p\left(  B_k/k \right).$$  
That is, up to a unit, $L^+_p(f_k,\omega^0)$ is simply a power of $p$.
\end{thm}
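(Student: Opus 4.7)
The strategy is to combine Theorem \ref{thm:anlowerbndintro} with Theorem \ref{thm:anupperbndintro} at a single classical weight to pin down the unit quotient of two-variable objects. Projecting the divisibility of Theorem \ref{thm:anlowerbndintro} to the $\omega^0$-component yields
$$
L^+_p(\kappa,\omega^0) = \zeta_p(\kappa) \cdot U, \qquad U \in R := \Zp[[\Gammawj \times (1+p\Zp)]],
$$
where the division makes sense because $\zeta_p(\kappa)$ is a nonzero element of the domain $R$ (by Ferrero--Washington, even $\mu(\zeta_p)=0$). It then suffices to show $U \in R^\times$, after which the claims about $\lambda(f_k)$ and $\mu(f_k)$ follow by specialization.

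Choose topological generators so that $R \cong \Zp[[T_1,T_2]]$. Then $U \in R^\times$ is equivalent to $U(0,0) \in \Zp^\times$; and since any value $U(t_1,t_2)$ with $(t_1,t_2) \in p\Zp \times p\Zp$ is congruent to $U(0,0)$ modulo $p$, it suffices to exhibit a single such specialization giving a $p$-adic unit.

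The natural candidate is $\kappa = j$ paired with the trivial character ${\bf 1}$ on $\Gammac$: this corresponds to $(T_1,T_2) = (\gamma_1^j - 1, 0) \in p\Zp \times p\Zp$ for a chosen topological generator $\gamma_1$ of $\Gammawj$. On the left, $L^+_p(\kappa,\omega^0)\big|_{\kappa=j,{\bf 1}} = L_p(f_j,{\bf 1})$. Since \eqref{r1} implies \eqref{m1} (by Lemma \ref{lemma:r1togoren} and Theorem \ref{thm:gorentom1}), the argument given for Theorem \ref{thm:anupperbndintro} applies: $L(f_j,1)/\Omega^+_{f_j}$ is a $p$-adic unit, and the interpolation formula gives $\ord_p L_p(f_j,{\bf 1}) = \ord_p(a_p(f_j)-1)$. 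On the right, $\ord_p \zeta_p(j) = \ord_p(a_p(f_j)-1)$ by hypothesis, so $\ord_p U(j,{\bf 1}) = 0$ and thus $U \in R^\times$.

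Finally, for any classical $k \equiv j \pmod{p-1}$, the specialization $\kappa \mapsto k$ sends $T_1 \mapsto \gamma_1^k - 1 \in p\Zp$, so the resulting power series $U(k) \in \Zp[[1+p\Zp]]$ has constant term $U(\gamma_1^k-1,0) \equiv U(0,0) \pmod p$, which is a unit. Hence $U(k)$ is itself a unit, so $L^+_p(f_k,\omega^0) = \zeta_p(k) \cdot U(k)$ is an integer times a unit, giving $\lambda(f_k) = 0$ and $\mu(f_k) = \ord_p \zeta_p(k) = \ord_p(B_k/k)$ (using that $1 - p^{k-1}$ is a unit for $k \geq 2$). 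The only real obstacle lies in the third step: recognizing that a single coincidence of valuations at $\kappa=j$, handed to us by the hypothesis, suffices to trivialize the entire two-variable quotient $U$ via the mod-$p$ rigidity of power series rings --- this is what converts a one-point numerical equality into a global statement over the Hida family.
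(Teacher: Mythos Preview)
Your proof is correct and follows essentially the same approach as the paper's: divide by $\zeta_p(\kappa)$ using Theorem~\ref{thm:anlowerbndintro}, then use the single-weight coincidence $\ord_p L_p(f_j,{\bf 1}) = \ord_p(a_p(f_j)-1) = \ord_p(\zeta_p(j))$ to force the quotient $U$ to be a unit. The only organizational difference is that the paper first specializes to $\kappa=j$ and argues that $U|_{\kappa=j}$ has $\mu=\lambda=0$ (hence is a unit in one variable) before lifting to two variables, whereas you evaluate at the two-variable point $(\kappa=j,{\bf 1})$ directly; both routes rest on the same mod-$p$ rigidity of $\Zp[[T_1,T_2]]$.
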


We sketch the proof here.  For our fixed $j$, our assumption combined with \eqref{eqn:ineq} tells us the value of the $\mu$-invariant exactly:
$$
\ord_p(\zeta_p(j)) = \mu(f_{j}) = \ord_p(a_p(j)-1).
$$
But, as argued above, we know that the value of $L^+_p(f_{j})$ at the trivial character has $p$-adic valuation equal to $\ord_p(a_p(j)-1)$.  Since this valuation equals the $\mu$-invariant, we get that $\lambda(f_{j})=0$.  Then, the divisibility of Theorem \ref{thm:anlowerbnd}, tells us that the projection of 
$$
\frac{L^+_p(\kappa)}{\zeta_p(\kappa)}\Big|_{\kappa = j}
= \frac{L^+_p(f_{j})}{\zeta_p(j)}
$$
to $\Zp[[1+p\Zp]]$ has vanishing $\mu$ and $\lambda$-invariants since $\mu(f_j) = \ord_p(\zeta_p(j))$, and is thus a unit.  Since $\frac{L^+_p(\kappa)}{\zeta_p(\kappa)}$ specializes to a unit at one weight, it must itself be a unit, as desired.

We note that it appears reasonable to believe that the case where the upper and lower bounds of \eqref{eqn:ineq} meet is the ``generic" case. Indeed, if $e = \ord_p(\zeta_p(k))$, then $p^e$ necessarily divides $a_p(f_k)-1$, and one might imagine that $(a_p(f_k)-1)/p^e$ is a random number modulo $p$.  If this is the case, then for any given $p$, the probability is $1/p$ for these bounds not to meet.  One then expects there to be infinitely many $p$ such that the bounds in \eqref{eqn:ineq} fail to meet, but given how slowly $\sum_p \frac{1}{p}$ diverges such examples will be extremely rare.

\subsection{A more general picture}

We now discuss the more general case where \eqref{r1} is no longer assumed to hold, but still for the introduction we keep the tame level $N=1$.
In trying to repeat the arguments from earlier in the introduction without assuming \eqref{r1}, several new difficulties emerge.  First, we no longer have that \eqref{m1} is automatically satisfied which is key to our construction of a two-variable $p$-adic $L$-function over both the cuspidal and Eisenstein families.  Second, the intersections of the cuspidal branches and the Eisenstein branch are no longer controlled solely by the $p$-adic $\zeta$-function.  Indeed, if there are multiple branches of the cuspidal family, some of them may not even intersect the Eisenstein family.

To remedy the first of these problems, we introduce a Gorenstein hypotheses:
\begin{equation}
\label{goren}
\tag{Goren}
\Tkm  \text{~is~Gorenstein}
\end{equation}
where $(p,k)$ is some irregular pair.  We note that \eqref{goren} holds for one irregular pair $(p,k)$ iff it holds for all $(p,k)$ as $k$ varies over a fixed residue class mod $p-1$ iff  $\T_{\m}$ is Gorenstein where $\m \subseteq \T$ is the maximal ideal attached to $E_k$.

The connection between Gorenstein hypotheses and mod $p$ multiplicity one in the residually irreducible case has been understood for a while now as in \cite{Mazur-eisenstein, Tilouine-gorenstein, Wiles-FLT}.  In our residually reducible case, we have that  \eqref{goren} implies \eqref{m1} which follows from work of Ohta \cite{Ohta-towers1,Ohta-towers2} and Sharifi \cite{Sharifi-reciprocity} (see Theorem \ref{thm:gorentom1}).  

We also note that in this more general setting, we cannot consider our cuspidal two-variable $p$-adic $L$-function as an elements in $\Zp[[\Gammawj \times \Zpx]]$ since we are not able to conflate the Hida family with weight space.  Instead, 
we will consider the two-variable $p$-adic $L$-function as an element $L^+_p(\mc)$ in $\Tcm[[\Gammac]]$.  The weight variable of $L^+_p(\m)$ then varies over the spectrum of $\Tcm$; that is, for $\p$ a height 1 prime of $\Tcm$ of residual charactertic 0, we can ``evaluate" $L^+_p(\mc)$ at $\p$ by looking at its image in $(\Tcm / \p)[[\Gammac]]$.  Here $\Tcm/\p$ is a finite extension of $\Zp$.  Moreover, if $\p_f \subseteq \Tcm$ is the prime ideal associated with a classical ordinary eigenform $f$, then the image of $L^+_p(\mc)$ in $\Tcm  / \p_f[[\Gammac]]$ recovers the single variable $p$-adic $L$-function $L^+_p(f)$.

For the second problem, to find a replacement for the $p$-adic $\zeta$-function to control the intersection of the Eisenstein and cuspidal branches, we seek a single element in the cuspidal Hecke algebra $\Tcm$ which measures congruences between Eisenstein series and cuspforms.  Essentially, by definition, we can simply use, if one exists, a generator of the cuspidal Eisenstein ideal in $\Tcm$.  That is, assume  the Eisenstein ideal of $\Tcm$ is principal generated by say $\Leis$.  Fix a weight $k$ cuspidal eigenform $f$ congruent to $E^{\ord}_k$ and let $\p_f \subseteq \Tc$ denote the corresponding prime ideal.
Then $\Tcm / \p_f$ is some finite extension of $\Zp$.  Write $\O_f$ for the ring of integers in the field of fractions of $\Tcm / \p_f$, and write $\Leis(\p_f)$ to 
denote the image of $\Leis$ in $\O_f$.  We then have that $E^{\ord}_k$ is congruent to $f$ modulo $\Leis(\p_f) \O_f$.

Unfortunately, there is no a priori reason why the Eisenstein ideal is principal.  To ensure the principality of this ideal we introduce another Gorenstein condition:
\begin{equation}
\label{cuspgoren}
\tag{Cusp Goren}
\Tckm  \text{~is~Gorenstein}
\end{equation}
where $(p,k)$ is some irregular pair.  Again, by Hida theory, \eqref{cuspgoren} holds for one irregular pair $(p,k)$ iff it holds for all $(p,k)$ as $k$ varies over a fixed residue class mod $p-1$ iff  $\Tc_{\mc}$ is Gorenstein.

Now by work of Ohta \cite{Ohta-companion}, we have that  \eqref{goren} and \eqref{cuspgoren} hold if and only if the Eisenstein ideal is principal (see Theorem \ref{thm:eisenprincipal}).  We also note that Vandiver's conjecture implies that both \eqref{goren} and \eqref{cuspgoren} hold,  (see Propostion \ref{prop:vandtogoren}) 
and so we can offer no examples where these hypotheses fail when $N=1$.  However, when $N>1$ one does not expect these hypotheses to hold generally (see \cite[Corollary 1.4]{Wake-gorenstein}).

The following is our  generalization of Theorem \ref{thm:anlowerbndintro}.
Below $\varpi$ denotes a uniformizer of $\O_f$.

\begin{thm}
\label{thm:anlowerbndgen_intro}
Fix an irregular pair $(p,j)$ and assume that  \eqref{goren} and \eqref{cuspgoren} hold for this pair, and let $\mc$ denote the corresponding maximal ideal of $\Tc$.  Let $\Leis$ denote a generator of the cuspidal Eisenstein ideal of $\Tcm$.  Then
$$
\Leis  \text{~~divides~~} L^+_p(\mc)
$$
in $\Tc_{\mc}[[\Gammac]]$.  In particular,
$$
\mu(f,\omega^a) \geq \ord_\varpi(\Leis(\p_f))
$$
for all even $a$ with $0 \leq a \leq p-2$,  and all $f$ in the Hida family corresponding to $\mc$.  
\end{thm}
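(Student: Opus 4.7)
The plan is to promote the cuspidal two-variable $p$-adic $L$-function to a two-variable $p$-adic $L$-function $L_p^+(\m)$ living over the \emph{full} Hida Hecke algebra $\Tm$ (including the Eisenstein branch), exploit the fact that the Eisenstein $p$-adic $L$-function on even characters vanishes identically (Theorem \ref{thm:pLvanish}), and then push everything down to the cuspidal quotient $\Tcm$.

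First, I would invoke Theorem \ref{thm:gorentom1} to deduce from \eqref{goren} the mod $p$ multiplicity one condition \eqref{m1}. By standard Hida-theoretic lifting from finite level, \eqref{m1} propagates to a freeness statement for the $\m$-localized $+$-part of the $\Lambda$-adic space of overconvergent modular symbols as a $\Tm$-module (concretely, free of rank one). Following the Greenberg-Stevens construction, but tracking integrality at every step, a choice of generator produces an element $L_p^+(\m) \in \Tm[[\Gammac]]$ which interpolates $L_p^+(f)$ for every classical eigenform $f$ in the family --- \emph{whether cuspidal or Eisenstein}. This last point is crucial: the modular symbol construction makes sense over the full Hida algebra, so the Eisenstein members of the family are covered as well.

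Next, consider the Eisenstein ideal $I \subseteq \Tm$. The quotient $\Tm/I$ is the Eisenstein branch, and the image of $L_p^+(\m)$ in $(\Tm/I)[[\Gammac]]$ interpolates the $+$-parts of the $p$-adic $L$-functions of the Eisenstein specializations. By Theorem \ref{thm:pLvanish} each of these vanishes, so the image is zero and $L_p^+(\m) \in I \cdot \Tm[[\Gammac]]$. Now invoke Theorem \ref{thm:eisenprincipal}: given both \eqref{goren} and \eqref{cuspgoren}, the Eisenstein ideal is principal, and its image in $\Tcm$ is the cuspidal Eisenstein ideal $\Leis \cdot \Tcm$. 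Applying the cuspidal quotient map $\Tm \surj \Tcm$ componentwise to the containment $L_p^+(\m) \in I \cdot \Tm[[\Gammac]]$ yields $L_p^+(\mc) \in \Leis \cdot \Tcm[[\Gammac]]$, which is the desired divisibility. The $\mu$-invariant bound then follows by specialization: the image of $L_p^+(\mc)$ in $(\Tcm/\p_f)[[\Gammac]] = \O_f[[\Gammac]]$ equals $L_p^+(f)$, and is therefore divisible by $\Leis(\p_f)$; each even character projection thus has $\mu$-invariant at least $\ord_\varpi(\Leis(\p_f))$.

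The main obstacle is the construction in the first paragraph. One must check that \eqref{m1}, an a priori mod-$p$ statement at a single classical weight, genuinely upgrades to freeness of an integral $\Lambda$-adic space of overconvergent modular symbols over the \emph{full} (not just cuspidal) Hecke algebra $\Tm$, and that the resulting Greenberg-Stevens element actually lives in $\Tm[[\Gammac]]$ rather than in some overring obtained by inverting $p$ or denominators coming from periods. Once this integral two-variable $L$-function is in hand, the remaining inputs --- vanishing of the Eisenstein $p$-adic $L$-function on even characters and Ohta's principality theorem for the Eisenstein ideal --- feed in cleanly to deliver both the divisibility and the $\mu$-invariant bound.
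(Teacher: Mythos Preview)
Your proposal is correct and follows essentially the same approach as the paper: use \eqref{goren} to obtain \eqref{m1} via Theorem~\ref{thm:gorentom1}, construct the integral two-variable $p$-adic $L$-function $L_p^+(\m)$ over the full Hecke algebra $\Tm$ (this is done in Appendix~\ref{appendix:2var}, and the obstacle you flag is precisely what that appendix addresses), show $L_p^+(\m) \in \Ieis_{\m}[[\Gammac]]$ by checking vanishing at the classical Eisenstein specializations via Theorem~\ref{thm:pLvanish} (this is Theorem~\ref{thm:eisenvanish}), and then project to $\Tcm$ and specialize. The paper's argument for vanishing on the Eisenstein branch is phrased as checking vanishing modulo $\p_{\eis,k}$ for a Zariski-dense set of classical $k$, which is exactly what your ``interpolates the Eisenstein specializations, each of which vanishes'' amounts to.
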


We again conjecture that the above inequality is in fact an equality (see Conjecture  \ref{conj:anmu}).  Further, this lower bound meets the upper bound of Theorem \ref{thm:anupperbndintro} if and only if the cuspidal Eisenstein ideal is generated by $U_p-1$.  In this case, our Gorenstein hypotheses are automatically satisfied 
and we have the following generalization of Theorem \ref{thm:anfullintro} which asserts that the Iwasawa theory in this case is as simple as it can be.

\begin{thm}
\label{thm:anfullintrogen}
For an irregular pair $(p,j)$ for which $U_p-1$ generates the cuspidal Eisenstein ideal of $\Tcm$, we have 
$$\lambda(f)=0 \text{~~and~~}\mu(f) = \ord_p(a_p(f)-1)$$  
for all $f$ in the corresponding Hida family.
\end{thm}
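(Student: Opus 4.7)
The plan is to deduce the theorem by combining the lower bound from Theorem \ref{thm:anlowerbndgen_intro} with the upper bound from Theorem \ref{thm:anupperbndintro}, using that the hypothesis on $U_p-1$ forces these two bounds to coincide. The hypothesis has two immediate structural consequences that I would record first. By Theorem \ref{thm:eisenprincipal}, principality of the cuspidal Eisenstein ideal of $\Tcm$ is equivalent to both Gorenstein hypotheses \eqref{goren} and \eqref{cuspgoren} holding; hence both hold automatically. Theorem \ref{thm:gorentom1} then upgrades \eqref{goren} to mod $p$ multiplicity one \eqref{m1}. So all of the running hypotheses of the two bounding theorems are in force.

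Next, I would invoke Theorem \ref{thm:anlowerbndgen_intro} with the choice $\Leis = U_p - 1$. This yields the divisibility
$$
(U_p - 1) \,\,\text{divides}\,\, L^+_p(\mc)
$$
in $\Tcm[[\Gammac]]$. Specializing at the prime $\p_f \subseteq \Tcm$ attached to any classical $f$ in the Hida family corresponding to $\mc$, we obtain that $a_p(f) - 1$ divides $L^+_p(f)$ in $\Of[[\Gammac]]$, and projecting to the $\omega^0$-component gives the lower bound
$$
\mu(f) \geq \ord_p(a_p(f) - 1).
$$
Since \eqref{m1} holds, Theorem \ref{thm:anupperbndintro} (whose proof uses only \eqref{m1}, not \eqref{r1}, and so applies to every classical $f$ in the family) supplies the matching upper bound $\mu(f) \leq \ord_p(a_p(f) - 1)$. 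Equality follows.

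For the vanishing of $\lambda(f)$, I would use a little more information from the proof of Theorem \ref{thm:anupperbndintro}: under \eqref{m1}, the ratio $L(f,1)/\Omega_f^+$ is a $p$-adic unit, and the interpolation formula gives that the value $L^+_p(f,\mathbf{1})$ has $p$-adic valuation exactly $\ord_p(a_p(f) - 1)$. Identifying $L^+_p(f,\omega^0)$ with an element of $\Of[[T]]$, this value is simply the constant term at $T=0$. Writing $L^+_p(f,\omega^0) = \varpi^{\mu(f)} g(T)$ with $g(T)$ of vanishing $\mu$-invariant, the valuation of $g(0)$ is then zero, so $g(0)$ is a unit in $\Of$. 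Hence $g(T)$ is a unit in $\Of[[T]]$ and $\lambda(f) = 0$.

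The main conceptual point is the ``if'' direction of the claim made in the text preceding the theorem --- that the lower and upper bounds coincide precisely when $U_p-1$ generates the Eisenstein ideal --- which is exactly what makes the sandwich argument work. No step involves a genuinely hard obstacle; the only subtlety worth checking carefully is that Theorem \ref{thm:anupperbndintro} applies uniformly to each classical eigenform in the family, not just to the forms $f_k$ singled out by \eqref{r1}, and this is immediate because its hypothesis is \eqref{m1} and its proof works form by form using the boundary-symbol congruence.
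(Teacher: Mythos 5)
Your proposal is correct and tracks the paper's own argument for this theorem (proved in the body as Theorem \ref{thm:anfullgen}): the sandwich $\ord_\varpi(\Leispsi(\p_f)) \le \mu(f) \le \ord_\varpi(a_p(f)-1)$ collapses when $\Leis = U_p-1$, and the exact valuation of $L^+_p(f)(\mathbf{1})$ coming from the interpolation formula and the boundary-symbol congruence forces $\lambda(f)=0$. The only mild difference is that you invoke Theorem \ref{thm:gorentom1} explicitly to secure \eqref{m1}, whereas the paper routes through Lemma \ref{lemma:lutogoren} and then silently uses \eqref{m1} inside Theorem \ref{thm:anupperbndgen}; this is cosmetic, not a substantive deviation.
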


\subsection{Selmer groups}

We return the situation where $(p,j)$ is an irregular pair which satisfies \eqref{goren} and \eqref{cuspgoren} and let $\Leis$ denote a generator of the associated cuspidal Eisenstein ideal.  Let $f$ denote some classical form which belongs to the corresponding Hida family.

On the algebraic side, we note that there is not a well-defined Selmer group attached to  $f$.  Indeed, within the Galois representation $\rho_{f} : G_{\Q} \to \Aut(V_{f})$,  one must choose a Galois stable lattice $T \subseteq V_{f}$.  One then attaches a Selmer group to $V_{f}/T$;  we denote this Selmer group by $\Sel(\Qinf,V_f/T) \subseteq H^1({\Qinf},V_{f}/T)$ where $\Qinf$ is the cyclotomic $\Zp$-extension.  Changing the lattice $T$ can change the $\mu$-invariant of the corresponding Selmer group.

In this setting, we have the following collection of lower bounds on the possible algebraic $\mu$-invariants that can occur.

\begin{thm}
\label{thm:alglowerbnd}
Let $(p,j)$ denote an irregular pair satisfying \eqref{goren} and \eqref{cuspgoren} and let $f$ denote a classical eigenform in the corresponding Hida family.  Then there exists a chain of lattice $T_0 \subseteq T_1 \subseteq \cdots \subseteq T_m$ in $V_f$ with $m=\ord_\varpi(\Leis(\p_f))$ such that
$$
\mu(\Sel(\Qinf,V_{f}/T_r)^\vee) \geq r
$$
for $0 \leq r \leq m$.
\end{thm}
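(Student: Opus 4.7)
The plan is to build the chain of lattices from the Eisenstein-style reducibility of $\bar\rho_f$ modulo $\varpi^m$, and then to use a change-of-lattice analysis of Selmer groups to extract the claimed $\mu$-growth at each step.

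First, I would construct the lattices. The hypotheses \eqref{goren} and \eqref{cuspgoren} imply (via Ohta's theorem, Theorem \ref{thm:eisenprincipal}) that the cuspidal Eisenstein ideal of $\Tc_{\mc}$ is principal with generator $\Leis$, so $f \equiv E_k^{\ord} \pmod{\Leis(\p_f)}$ with $\Leis(\p_f)\O_f = \varpi^m \O_f$.  By a Ribet-style lemma applied to this congruence, one obtains a Galois-stable $\O_f$-lattice $T_0 \subseteq V_f$ for which the reduction fits into a non-split extension
\begin{equation*}
0 \to (\O_f/\varpi^m)(1) \to T_0/\varpi^m T_0 \to (\O_f/\varpi^m)(\chi) \to 0
\end{equation*}
of $G_\Q$-modules, where $\chi = \omega^{k-1}\langle\cdot\rangle^{k-1}$ is the non-trivial character of $E_k^{\ord}$.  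Inductively, I define $T_r$ for $1 \leq r \leq m$ as the preimage in $V_f$ of the unique $G_\Q$-invariant line in $\varpi^{-1}T_{r-1}/T_{r-1}$, which has trivial character; this is well-defined throughout $r \leq m$ by the reducibility of $T_0$ mod $\varpi^m$, and in coordinates corresponds to $T_r = \varpi^{-r}\O_f e_1 \oplus \O_f e_2$.  Each successive quotient $T_r/T_{r-1}$ is then the residue field of $\O_f$ with trivial $G_\Q$-action.

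Next, I would compare the Selmer groups along the chain.  For each $r$, the short exact sequence
\begin{equation*}
0 \to T_r/T_{r-1} \to V_f/T_{r-1} \to V_f/T_r \to 0
\end{equation*}
of discrete $G_\Q$-modules, combined with the Greenberg ordinary local conditions and global Poitou--Tate duality, yields after a diagram chase the inequality
\begin{equation*}
\mu(\Sel(\Qinf, V_f/T_r)^\vee) \geq \mu(\Sel(\Qinf, V_f/T_{r-1})^\vee) + 1.
\end{equation*}
The $+1$ arises from the shift in the Greenberg ordinary filtration at $p$ forced by the lattice enlargement: passing from $T_{r-1}$ to $T_r$ creates an additional class in the local Galois module at $p$ that cannot be absorbed by the existing global Selmer classes, and so contributes a new $\Lambda/p$ piece (up to pseudo-isomorphism) to the dual Selmer group.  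Iterating from $r=0$ to $r=m$ yields the theorem.

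The main obstacle is the rigorous extraction of the $+1$ growth at each step.  The Selmer group is controlled by local conditions, most importantly the Greenberg ordinary condition at $p$, which is sensitive to the ordinary filtration and therefore to the choice of lattice.  One must carefully track how this filtration transforms under the enlargement $T_{r-1} \subseteq T_r$ and verify, via local-global duality, that the resulting change in $\mu$ is indeed at least $1$.  This is essentially the algebraic counterpart of the Ribet--Mazur--Wiles construction of class group elements from Eisenstein congruences, but carried out for each step of the lattice chain and refined so as to extract the Iwasawa-theoretic $\mu$-invariant rather than a single class.
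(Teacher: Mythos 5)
Your construction of the lattice chain is essentially the paper's (Lemma \ref{lemma:lattices}, a Ribet-style argument), but the step from the chain to the $\mu$-bound is not, and the route you take has a genuine gap.

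The paper does not argue inductively on $r$ and does not use Poitou--Tate duality. Instead it records the key structural fact (Lemma \ref{lemma:lattices}(5)) that $T_r/\varpi^r T_r$ contains a cyclic $G_\Q$-submodule of size $q^r$ on which $G_\Q$ acts by the \emph{odd, ramified-at-$p$} character $\psi\ve^{k-1}$, and then invokes Greenberg's argument \cite[Proposition 5.7]{Greenberg-CIME} verbatim: an odd, ramified cyclic submodule of $A_f = V_f/T_r$ of that size directly forces $\mu(\Sel(\Qinf,A_f)^\vee) \geq r$. In other words, the lower bound is read off in one shot from the submodule inside the fixed lattice $T_r$, not accumulated one step at a time across the chain.

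Your inductive step is the problem. You pass from $T_{r-1}$ to $T_r$ via the short exact sequence $0 \to T_r/T_{r-1} \to V_f/T_{r-1} \to V_f/T_r \to 0$ and assert that this forces $\mu$ to jump by at least $1$. But the quotient $T_r/T_{r-1}$ carries the \emph{trivial} $G_\Q$-action (Lemma \ref{lemma:lattices}(1)); this is an even, unramified piece and is precisely the wrong type of module to contribute to $\mu$ in Greenberg's mechanism. The object whose growth actually drives the $\mu$-bound is the odd submodule $\O_f/\varpi^r(\psi\ve^{k-1}) \subseteq A_f[\varpi^r]$, which is not a successive quotient $T_r/T_{r-1}$ but rather builds up inside $T_r/\varpi^r T_r$ as $r$ grows. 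Moreover, even setting aside the character issue, the claimed inequality $\mu(\Sel(\Qinf,V_f/T_r)^\vee) \geq \mu(\Sel(\Qinf,V_f/T_{r-1})^\vee)+1$ is asserted rather than proven: the map on Selmer groups induced by $V_f/T_{r-1} \twoheadrightarrow V_f/T_r$ has a kernel and cokernel controlled by $H^i(\Qinf, T_r/T_{r-1})$ together with the change in the Greenberg local condition at $p$, and nothing in your sketch rules out cancellation. This is exactly the step you flag as the ``main obstacle,'' and it is not resolved. To repair the argument you would essentially have to reprove Greenberg's Proposition 5.7; it is cleaner, and is what the paper does, to apply it directly to $A_f = V_f/T_r$ for each $r$ separately.
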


This theorem is proven as follows:\ for each $r$ in the above range, there is a lattice $T_r$ such that $T_r/p^r T_r$ is a reducible representation with a cyclic submodule of size $p^r$ which is odd and ramified at $p$.  Using Greenberg's methods as in \cite[Proposition 5.7]{Greenberg-CIME}, the lower bound on $\mu$ follows.

Greenberg, in the case of elliptic curves, would then conjecture that this lower bound on $\mu$ is actually an equality (see \cite[Conjecture 1.11 and page 70]{Greenberg-CIME}).  Combining this conjecture with Theorem \ref{thm:anlowerbndintro}, we see that if there is any main conjecture defined over $\Zp$ for $f$, we must be using the lattice $T_r$ with maximal $\mu$-invariant.  Set $T$ equal to such a lattice and let $A_{f} = V_{f}/T$.

We now state an upper bound for the algebraic $\mu$-invariant.  To do so, we need to invoke Vandiver's conjecture.  Namely, for $r$ even, we denote the $\omega^r$-part of Vandiver's conjecture by:
\begin{equation}
\label{vand}
\tag{$\text{Vand~}\omega^r $}
\text{the~}\omega^{r}\text{-eigenspace~of~}\Cl(\Q(\mu_p))[p]{~vanishes}.
\end{equation}

\begin{thm}
\label{thm:algupperbnd}
Let $(p,j)$ be an irregular pair for which $\vandv{2-j}$ holds.
For $f$ a classical form in the corresponding Hida family, we have
$$
\mu(\Sel(\Qinf,A_{f})^\vee) \leq \ord_p(a_p(f)-1).
$$
\end{thm}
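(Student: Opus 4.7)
The strategy is the algebraic mirror of the proof of Theorem~\ref{thm:anupperbndintro}, with Vandiver's conjecture $\vandv{2-j}$ taking the place of \eqref{m1} and the Mazur--Greenberg control theorem taking the place of the interpolation formula at the trivial character.

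Let $X=\Sel(\Qinf,A_f)^\vee$, let $\Gamma=\Gal(\Qinf/\Q)$, and let $\gamma$ be a topological generator of $\Gamma$.  The first step is to apply Greenberg's control theorem for Selmer groups of ordinary $p$-adic representations: the restriction map fits into an exact sequence
\begin{equation*}
0\longrightarrow \Sel(\Q,A_f)\longrightarrow \Sel(\Qinf,A_f)^{\Gamma}\longrightarrow \H_p,
\end{equation*}
where $\H_p$ is a local Euler-characteristic contribution at $p$ measured by the unramified quotient $F^-A_f$, on which $\Frob_p$ acts by $a_p(f)$.  A direct computation gives $|\H_p|=p^{\ord_p(a_p(f)-1)}$.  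Combined with Greenberg's theorem that $X$ has no nontrivial finite $\Lambda$-submodule in this ordinary setting, taking cardinalities yields
\begin{equation*}
\mu(X)\;\leq\;\ord_p|X/(\gamma-1)X|\;=\;\ord_p|\Sel(\Q,A_f)|\;+\;\ord_p(a_p(f)-1).
\end{equation*}

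The heart of the argument is then to show $\Sel(\Q,A_f)=0$ under $\vandv{2-j}$.  Reducing modulo $p$, the residually reducible structure gives a short exact sequence of $G_\Q$-modules
\begin{equation*}
0\longrightarrow \Fp(\omega^{j-1})\longrightarrow A_f[p]\longrightarrow \Fp\longrightarrow 0,
\end{equation*}
the sub being the odd character that is also ramified at $p$ in the ordinary filtration.  In the induced long exact sequence of Selmer groups, the image of any class in $\Sel(\Q,A_f[p])$ in $H^1(\Q,\Fp)$ is forced to be unramified everywhere --- the ordinary local condition at $p$ kills its restriction to $H^1(I_p,\Fp)$, and it is already unramified away from $p$ --- and hence vanishes since $\Cl(\Q)=0$.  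Therefore every Selmer class lifts to a class in $H^1(\Q,\Fp(\omega^{j-1}))$ unramified outside $p$, on which the ordinary local condition is automatic.  A standard inflation--restriction to $\Q(\mu_p)$, Kummer theory, and Iwasawa's reflection principle identify the obstruction to the vanishing of this relaxed Selmer group with the $\omega^{2-j}$-eigenspace of $\Cl(\Q(\mu_p))[p]$, which vanishes by $\vandv{2-j}$.  One then concludes $\Sel(\Q,A_f[p])=0$, and induction on $p$-power torsion gives $\Sel(\Q,A_f)=0$ and hence $\mu(X)\leq\ord_p(a_p(f)-1)$.

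The main obstacle is the reflection step: because $(p,j)$ is an irregular pair, the Herbrand--Ribet eigenspace $\Cl(\Q(\mu_p))[\omega^{j-1}]$ is \emph{nontrivial}, so the naive class-group argument in the $\omega^{j-1}$ component fails; one must carefully match the subspace of $H^1(\Q,\Fp(\omega^{j-1}))$ unramified outside $p$ that can actually arise from global Selmer classes for $A_f$ (rather than from cyclotomic units or the nontrivial Herbrand--Ribet classes) with the mirror eigenspace $\omega^{2-j}$ controlled by Vandiver, and this compatibility needs to be verified via local duality at $p$ and the explicit structure of the $p$-units in $\Q(\mu_p)$.
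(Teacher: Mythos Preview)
Your overall strategy matches the paper's exactly: control theorem at~$p$ to reduce to bounding $\Sel(\Q,A_f)$, no-finite-submodule result to pass from $|X_\Gamma|$ to $\mu(X)$, and then the vanishing of $\Sel(\Q,A_f)$ via the residual short exact sequence $0\to\Fp(\omega^{j-1})\to A_f[p]\to\Fp\to 0$ together with $\vandv{2-j}$.  The first several steps are essentially verbatim what the paper does (see Proposition~\ref{prop:controlinf}, Lemma~\ref{lemma:mu}, and \cite[Proposition~2.5]{GV}).

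Where you and the paper part ways is precisely the step you flag as ``the main obstacle.''  You try to pass from $H^1(\Q_{\{p\}}/\Q,\Fp(\omega^{j-1}))$ to the $\omega^{2-j}$-eigenspace of the class group by Kummer theory and Leopoldt--Iwasawa reflection, and you are right to worry: the $\omega^{j-1}$-eigenspace of $\Cl(\Q(\mu_p))$ is nonzero by Herbrand--Ribet, so a naive class-group bound in that component is hopeless, and sorting out which classes ``actually arise from Selmer classes'' via local duality is delicate.  The paper avoids this entirely by not imposing any extra local condition on the $\omega^{j-1}$-side and instead bounding the \emph{full} group $H^1(\Q_\Sigma/\Q,\Fp(\omega^{j-1}))$ via Wiles's global Euler-characteristic formula \cite[Proposition~1.6]{Wiles-FLT} (this is Lemma~\ref{lemma:Wiles}).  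That formula reads, with $X=\Fp(\omega^{j-1})$ and $X^*=\Fp(\omega^{2-j})$,
\[
\#H^1(\Q_\Sigma/\Q,X)=\#H^1_f(\Q_\Sigma/\Q,X^*)\cdot h_\infty\cdot\prod_{\ell\in\Sigma}h_\ell,
\]
and in the case at hand all the local factors $h_\ell$ equal $1$ while $h_\infty=p$.  Since $H^1_f(\Q_\Sigma/\Q,X^*)$ (classes locally trivial everywhere) injects into $\Hom(\Cl(\Q(\mu_p))^{(\omega^{2-j})},\Fp)$, Vandiver gives $\dim H^1(\Q_\Sigma/\Q,\Fp(\omega^{j-1}))\le 1$.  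The extra dimension~$1$ is then absorbed by the kernel $H^0(\Q,\Fp)\cong\Fp$ of the connecting map into $H^1(\Q_\Sigma/\Q,A_f[p])$, yielding $\Sel(\Q,A_f[p])=0$ on the nose.  So the ``reflection'' you were reaching for is already packaged inside the Poitou--Tate duality underlying Wiles's formula, and no separate analysis of which $\omega^{j-1}$-classes come from genuine Selmer classes is needed.
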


 Our method of proof is analogous to the proof of Theorem \ref{thm:anupperbndintro}.  There we bounded the $\mu$-invariant of the $p$-adic $L$-function by computing the $p$-adic valuation of its special value at the trivial character.  Here we instead look at $\Sel(\Qinf,A_f)^\Gamma$ and by bounding its size we get a bound on the $\mu$-invariant (see  Lemma \ref{lemma:mu}).  To bound the size of $\Sel(\Qinf,A_f)^\Gamma$, we use a control theorem 
 to compare this group with $\Sel(\Q,A_f)$ which we in turn control via Vandiver's conjecture.

We now state the algebraic analogue of Conjecture \ref{conj:anmuintro} for our chosen lattice, and note that this is essentially Greenberg's conjecture on $\mu$-invariants in this special case.

\begin{conj}
\label{conj:algmu}
For an irregular pair $(p,j)$ satisfying \eqref{goren} and \eqref{cuspgoren} and $f$ a classical form in the corresponding Hida family, we have
$$
\mu(\Sel(\Qinf,A_{f})^\vee) = \ord_p(\Leis(\p_f)).
$$ 
\end{conj}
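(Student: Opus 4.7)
The plan is to bracket $\mu(\Sel(\Qinf,A_f)^\vee)$ between matching lower and upper bounds. For the lower bound I would apply Theorem~\ref{thm:alglowerbnd} with $r = m := \ord_\varpi(\Leis(\p_f))$ and take $A_f := V_f/T_m$, which yields directly $\mu(\Sel(\Qinf,A_f)^\vee) \geq m$. The conjecture therefore reduces entirely to the matching upper bound, and this is where the real work lies.

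For the upper bound I would work two-variably over $\Tcm$. First, build a Selmer module $\mathcal{X}$ over $\Tcm[[\Gammac]]$ from a Galois-stable lattice inside the universal ordinary deformation attached to $\mc$, directly analogous to the construction of $L^+_p(\mc) \in \Tcm[[\Gammac]]$. The hypotheses \eqref{goren} and \eqref{cuspgoren} should make this construction tractable, as they yield perfect duality between the Hecke algebra and modular forms and ensure the existence of a well-behaved big Galois representation. The key Iwasawa-theoretic input is then one half of a two-variable main conjecture, namely
$$
\text{char}_{\Tcm[[\Gammac]]}(\mathcal{X}) \,\,\text{divides}\,\, L^+_p(\mc),
$$
which would typically be supplied by a Kato-style Euler system in the Hida family (e.g.\ Ochiai's two-variable zeta element). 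Combined with the conjectural identity $L^+_p(\mc) = \Leis \cdot U$ with $U$ a unit (Conjecture~\ref{conj:anmu}), this gives that $\text{char}(\mathcal{X})$ divides $(\Leis)$. A control theorem specialising at the height-one prime $\p_f$ then transfers this to $\Sel(\Qinf,A_f)^\vee$, producing $\mu(\Sel(\Qinf,A_f)^\vee) \leq \ord_\varpi(\Leis(\p_f))$, as desired.

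The main obstacle, in my view, is twofold. First, one needs Conjecture~\ref{conj:anmu} itself, which is established here only under the strong hypothesis that $U_p-1$ generates the cuspidal Eisenstein ideal (Theorem~\ref{thm:anfullintrogen}); proving that $L^+_p(\mc)/\Leis$ is a unit in general appears to require genuinely new information about higher-order congruences between the cuspidal and Eisenstein branches, perhaps through a numerical-criterion-style comparison of $\Tcm$ with a universal ordinary deformation ring in the spirit of Wiles's proof of the main conjecture over totally real fields. Second, the residually reducible setting complicates the Euler-system side: the lattice $T_m$ is chosen precisely to maximise the algebraic $\mu$-invariant, and one must carefully verify that the Kato classes descend to this lattice and that the control theorem remains valid despite the failure of semisimplicity of the residual representation. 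An alternative route that bypasses the full main conjecture would be to sharpen Theorem~\ref{thm:algupperbnd} directly by an Eisenstein descent tracking the action of Galois on $T_m/p^m T_m$ in finer detail, but this too seems to require ingredients of the same depth as a main-conjecture argument.
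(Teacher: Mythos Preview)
This statement is a \emph{conjecture} in the paper, not a theorem; the paper offers no proof. It is presented as the algebraic analogue of Conjecture~\ref{conj:anmuintro}/\ref{conj:anmu} and is described as ``essentially Greenberg's conjecture on $\mu$-invariants in this special case.'' The only case in which the paper establishes it is Theorem~\ref{thm:algfull} (equivalently Theorem~\ref{thm:selmer}), under the additional hypothesis \lu, where the argument is the direct one: the lower bound from Theorem~\ref{thm:alglowerbndgen} and the upper bound from Theorem~\ref{thm:algupperbndgen} happen to coincide because $\eis(f)=\ord_\varpi(a_p(f)-1)$ in that case.

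Your proposal is therefore not a proof but a sketch of a possible attack, and you are transparent about this. A few comments on the strategy itself. The lower-bound half is fine and matches the paper. For the upper bound, your plan hinges on (i) a two-variable divisibility $\chr(\mathcal X)\mid L_p^+(\mc)$ and (ii) Conjecture~\ref{conj:anmu} to convert this into $\chr(\mathcal X)\mid (\Leis)$. Even granting (i), step (ii) is itself an open conjecture in the paper outside of the \lu\ case, so your argument is conditional on another conjecture of comparable depth; you note this, but it means the proposal does not reduce Conjecture~\ref{conj:algmu} to anything the paper actually proves. Moreover, the residually reducible Euler-system input you invoke (integral Kato/Ochiai divisibility with control of $\mu$) is precisely what is \emph{not} available in this setting---this is why the paper's main-conjecture corollary is stated only under \lu\ and why Kato's Theorem~\ref{thm:katomc} is quoted only over $\Lambda_\O[1/p]$. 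So the ``key Iwasawa-theoretic input'' you need is not known. In short: your outline is a reasonable wish list, and you correctly identify the two genuine obstructions, but neither is overcome here or in the paper.
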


Lastly, we state a theorem in the case where the lower and upper bounds meet.

\begin{thm}
\label{thm:algfull}
For an irregular pair $(p,j)$ such that $U_p-1$ generates the corresponding cuspidal Eisenstein ideal, we have
$$
\Sel(\Qinf,A_{f})^\vee \cong  \Lambda/(a_p(f)-1) \Lambda
$$
for each classical $f$ in the corresponding Hida family.
In particular,
$$
\mu(\Sel(\Qinf,A_{f})^\vee) = \ord_p(a_p(f)-1) \text{~~~and~~~}
\lambda(\Sel(\Qinf,A_{f})^\vee) = 0.
$$
\end{thm}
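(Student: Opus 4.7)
The plan is to sandwich the $\mu$-invariant between matching lower and upper bounds, and then upgrade the numerical equality to the claimed structural isomorphism via a Nakayama argument applied to the $\Gamma$-coinvariants of $\Sel(\Qinf, A_f)^\vee$.

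First, the hypothesis that $U_p - 1$ generates the cuspidal Eisenstein ideal of $\Tcm$ lets us take $\Leis = U_p - 1$, so $\Leis(\p_f) = a_p(f) - 1$ and $\ord_\varpi(\Leis(\p_f)) = \ord_p(a_p(f) - 1) =: m$. As remarked after Theorem \ref{thm:anlowerbndgen_intro}, both Gorenstein hypotheses \eqref{goren} and \eqref{cuspgoren} are automatic in this setting, so Theorem \ref{thm:alglowerbnd} applies; applied to the top lattice $T = T_m$ in the chain (which is the lattice used to define $A_f$), it yields
$$
\mu(\Sel(\Qinf, A_f)^\vee) \geq m.
$$
The matching upper bound $\mu(\Sel(\Qinf, A_f)^\vee) \leq m$ is furnished by Theorem \ref{thm:algupperbnd}; the only extra input there is $\vandv{2-j}$, which I expect to follow from the hypothesis on $U_p - 1$ via Mazur--Wiles type reasoning (a non-trivial $\omega^{2-j}$-eigenspace of $\Cl(\Q(\mu_p))[p]$ would produce Eisenstein congruences beyond those captured by the single element $U_p - 1$, contradicting principality of the Eisenstein ideal by this element). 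Combining the two bounds gives $\mu(\Sel(\Qinf, A_f)^\vee) = m$.

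To upgrade this numerical equality to the isomorphism, I would use Greenberg's control theorem to identify $(\Sel(\Qinf, A_f)^\vee)_\Gamma \cong (\Sel(\Qinf, A_f)^\Gamma)^\vee$ with $\Sel(\Q, A_f)^\vee$ up to finite error, and then compute $\Sel(\Q, A_f)$ directly. The key input is the explicit structure of $T_m/p^m T_m$ from the proof of Theorem \ref{thm:alglowerbnd}: it contains a cyclic, odd, $p$-ramified Galois submodule of exact order $p^m$ whose extension class is precisely the mod $p^m$ Eisenstein congruence. Under $\vandv{2-j}$, this forces $\Sel(\Q, A_f)$ to be cyclic of order exactly $p^m$. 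Nakayama's lemma then shows that $\Sel(\Qinf, A_f)^\vee$ is cyclic over $\Lambda$, so $\Sel(\Qinf, A_f)^\vee \cong \Lambda/(g)$ for some $g$; writing $g = p^m \cdot P(T) \cdot u$ with $P$ a distinguished polynomial and $u$ a unit via Weierstrass preparation (where $T = \gamma - 1$), the constraint $|(\Lambda/(g))_\Gamma| = p^m$ forces $\deg P = 0$, so $(g) = (p^m) = (a_p(f) - 1)\Lambda$. This yields the claimed isomorphism and in particular $\lambda = 0$.

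The main obstacle will be the direct computation of $\Sel(\Q, A_f)$: one must carefully track the interplay between the Eisenstein extension class defining $T_m$, the ramification behavior of its cyclic quotient, and Greenberg's local condition at $p$, in order to ensure the Selmer group has size \emph{exactly} $p^m$ rather than merely at most $p^m$. Verifying that $\vandv{2-j}$ is in fact a consequence of the hypothesis on $U_p - 1$ (so that Theorem \ref{thm:algupperbnd} is available and the Selmer computation goes through cleanly) is a secondary but genuine subtlety.
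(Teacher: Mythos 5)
Your overall architecture (matching $\mu$-bounds, then control theorem plus Nakayama to get cyclicity, then Weierstrass preparation to pin down the characteristic series) is the right one and matches the paper's proof of the general version (Theorem~\ref{thm:selmer}). But there is a concrete error in the middle step: you assert that ``under $\vandv{2-j}$, this forces $\Sel(\Q,A_f)$ to be cyclic of order exactly $p^m$.'' That is wrong. Proposition~\ref{prop:vandtoSel0} shows that $\Sel(\Q,A_f)=0$ precisely because the top lattice $T=T_{\eis(f)}$ was chosen to make $T/\varpi T$ a non-split extension of $\Ff$ by $\Ff(\psi\omega^{k-1})$, so $H^0(\Q,A_f[\varpi])=0$, and then Vandiver kills the rest via Lemma~\ref{lemma:Wiles}. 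The size $q^m$ of $\Sel(\Qinf,A_f)^\Gamma$ does \emph{not} come from $\Sel(\Q,A_f)$; it comes entirely from the local cokernel of the control map at $p$, namely $\ker(r)\cong H^0(\Q_{\infty,p},\KOf(\eta))\cong \O/(\alpha_p-1)\O$ (see the proof of Proposition~\ref{prop:controlinf}). With your claimed $\Sel(\Q,A_f)\cong \O/p^m$ injecting into $S^\Gamma$ \emph{and} the cokernel $\ker(r)$ also of size up to $q^m$, the size of $S^\Gamma$ would be bounded only by $q^{2m}$, not $q^m$, and the pinching argument would fail. The correct calculation gives $S^\Gamma\cong\ker(r)\cong\O/(a_p(f)-1)\O$ from the exact sequence $0\to\Sel(\Q,A_f)\to S^\Gamma\to\ker(r)$ together with Greenberg's identity $\ord_q|S^\Gamma|=\ord_\varpi h(0)+\ord_q|S_\Gamma|$ and the bound $\ord_\varpi h(0)\geq\mu(S^\vee)=m$; this forces all inequalities to be equalities, kills $S_\Gamma$, and gives cyclicity of $S^\Gamma$ over $\O$.

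On the secondary point: your heuristic that Vandiver should follow from the $U_p-1$ hypothesis is correct but not a proof. The paper derives it via Theorem~\ref{thm:eisenprincipal} (principality of $\Ieis$ implies $\Tm$ is Gorenstein) combined with \cite[Theorem 1.2]{Wake-gorenstein} (Gorenstein implies the relevant Vandiver statement), and this should be cited rather than re-derived informally.
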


We note that we do not need to assume Vandiver's conjecture for the above theorem as our assumption that $U_p-1$ generates the cuspidal Eisenstein ideal implies that the relevant class group vanishes by the results in \cite{Wake-gorenstein}.

Combining Theorems \ref{thm:anfullintro} and \ref{thm:algfull} yields the following result.

\begin{cor}[The main conjecture]
For an irregular pair $(p,j)$ satisfying $\vandv{2-j}$ and such that $U_p-1$ generates the corresponding cuspidal Eisenstein ideal, we have
$$
\chr_{\Lambda}(\Sel(\Qinf,A_{f})^\vee) = L^+_p(f,\omega^0) \Lambda
$$
for each classical $f$ in the corresponding Hida family.
\end{cor}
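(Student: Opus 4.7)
The plan is to derive this Corollary directly by combining the analytic statement of Theorem \ref{thm:anfullintrogen} with the algebraic statement of Theorem \ref{thm:algfull}, both of which apply under the sole hypothesis that $U_p-1$ generates the cuspidal Eisenstein ideal (note that when $U_p-1$ is a generator, the Gorenstein hypotheses are automatically satisfied, so both theorems are immediately available).

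First, I would invoke Theorem \ref{thm:algfull}, which provides the explicit cyclic presentation
$$
\Sel(\Qinf,A_f)^\vee \cong \Lambda/(a_p(f)-1)\Lambda
$$
as a $\Lambda$-module. Since the characteristic ideal of $\Lambda/(x)\Lambda$ is $(x)\Lambda$ for any nonzero $x \in \Lambda$, this reads off as
$$
\chr_\Lambda(\Sel(\Qinf,A_f)^\vee) = (a_p(f)-1)\Lambda.
$$

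Next, I would apply Theorem \ref{thm:anfullintrogen} on the analytic side, which asserts $\lambda(f) = 0$ and $\mu(f) = \ord_p(a_p(f)-1)$. By the Iwasawa structure theorem applied to the principal $\Lambda$-submodule of $\Lambda$ generated by $L^+_p(f,\omega^0)$, vanishing of the $\lambda$-invariant forces $L^+_p(f,\omega^0)$ to equal $p^{\mu(f)}$ up to a unit in $\Lambda$, so
$$
L^+_p(f,\omega^0)\Lambda = p^{\ord_p(a_p(f)-1)}\Lambda.
$$
Since $a_p(f)-1 \in \Zp$, it differs from $p^{\ord_p(a_p(f)-1)}$ by an element of $\Zp^\times \subseteq \Lambda^\times$, hence the two elements generate the same ideal. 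Comparing with the previous display yields the desired equality
$$
\chr_\Lambda(\Sel(\Qinf,A_f)^\vee) = (a_p(f)-1)\Lambda = L^+_p(f,\omega^0)\Lambda.
$$

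I do not anticipate any substantive obstacle here, as the Corollary is essentially a bookkeeping consequence of the two main theorems; the only mildly delicate point is verifying that $a_p(f)-1$ and $p^{\ord_p(a_p(f)-1)}$ are associates in $\Lambda$, which is immediate. I also note that the Vandiver hypothesis $\vandv{2-j}$ in the statement is not strictly invoked in this argument: Theorem \ref{thm:algfull} uses the results of \cite{Wake-gorenstein} (rather than Vandiver) to control the relevant class group, and Theorem \ref{thm:anfullintrogen} requires no Vandiver-type input. It is presumably included for uniformity with the surrounding results.
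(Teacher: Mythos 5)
Your proof is correct and matches the paper's own argument: the text preceding the corollary states the result follows by ``Combining Theorems \ref{thm:anfullintro} and \ref{thm:algfull},'' which is precisely the plan you execute (your substitution of Theorem \ref{thm:anfullintrogen} for \ref{thm:anfullintro} is harmless and arguably cleaner, since \ref{thm:anfullintrogen} is stated directly under the $U_p-1$ hypothesis without the auxiliary \eqref{r1} and valuation condition). Your observation that $\vandv{2-j}$ is not actually needed is also consistent with the paper's remark after Theorem \ref{thm:algfull}, which explains that the $U_p-1$ hypothesis already forces the relevant class group to vanish via \cite{Wake-gorenstein}.
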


We note that the above main conjecture over $\Lambda[1/p]$ follows from Kato's work \cite[Theorem 17.4.2]{Kato} as the $p$-adic $L$-function is a unit up to a power of $p$.  However, neither \cite{Kato} nor \cite{Skinner-Urban} control what happens with the algebraic and analytic $\mu$-invariants in the residually reducible case.

The organization of the paper is as follows:\ in the following section, we recall Stevens' theory of overconvergent modular symbols and its connection to $p$-adic $L$-functions.  The heavier lifting of constructing two-variable $p$-adic $L$-functions is relegated to the appendix.  In the third section, we carry out our analysis of analytic $\mu$-invariants sketched in the introduction.  In the fourth section, we recall the basic definitions and facts about Selmer groups.  In the fifth section, we carry out our analysis of algebraic $\mu$-invariants.

~\\

\noindent
{\bf Acknowledgements:} We heartily thank Preston Wake for his help and advice throughout this project especially regarding his extensive help with the arguments involving the Gorenstein properties of Hecke algebras.

\section{Overconvergent modular symbols and $p$-adic $L$-functions}

In this section, we recall Glenn Stevens' theory of overconvergent modular symbols and its connection to $p$-adic $L$-functions (see \cite{Stevens-rigid} as well as \cite{PS-explicit}).  

\subsection{Modular symbols}

Let $\Delta_0$ denote the space of degree zero divisors on $\P^1(\Q)$, and let $V$ be some right $\Z[\Gamma]$-module where $\Gamma = \Gamma_1(N)$.  We define the space of $V$-valued modular symbols of level $\Gamma$ to be
the collection of additive maps
$$
\label{eqn:ms}
\Hom_{\Gamma}(\Delta_0,V) := \{ \varphi : \Delta_0 \to V ~|~ \varphi(\gamma D) = \varphi(D) | \gamma \text{~for~all~}\gamma \in \Gamma\text{~and~}D \in \Delta_0\}.
$$
Here $\Gamma$ acts on $\Delta_0$ on the left via linear fractional transformations $\smallmat \cdot z = \frac{az+b}{cz+d}$.  

We will also be interested in a subspace of {\it boundary} modular symbols:\ $\Hom_{\Gamma}(\Delta,V)$ where $\Delta = \Div(\P^1(\Q))$.  Note that these boundary symbols naturally map to $V$-valued modular symbols of level $\Gamma$ by restriction to $\Delta_0$.  

If $V$ is endowed with an action of a larger collection of matrices, one can define a  natural Hecke action on these spaces of modular symbols.  For instance, for a prime $p$, consider the semi-group
$$
S_0(p) := \{  \gamma = \smallmat \in M_2(\Z) \text{~such~that~} p \nmid a, p |c, \text{~and~}\det(\gamma) \neq 0
\}.
$$
If $V$ is a $\Z[S_0(p)]$-module, then $\Hom_{\Gamma}(\Delta_0,V)$ is naturally a Hecke-module.

If the order of every torsion element of $\Gamma$ acts invertibly on $V$, then \cite[Proposition 4.2]{AS} yields a canonical isomorphism
$$
\Hom_{\Gamma}(\Delta_0,V) \cong H^1_c(\Gamma,V).
$$
Further, this map is Hecke-equivariant when Hecke operators are defined on these spaces.
In what follows, we will often tacitly identify spaces of modular symbols with compactly supported cohomology groups.

For $F$ a field and $g \geq 0$, let $\cP_g(F) \subseteq F[z]$ denote the subset of polynomials with degree less than or equal to $g$.  Set $\cP^\vee_g(F) = \Hom(\cP_g(F),F)$.  We endow $\cP_g(F)$ with a left action of $S_0(p)$ by 
$$
(\gamma \cdot P)(z) = (a+cz)^g P \left(\frac{b+dz}{a+cz} \right)
$$
for $P \in \cP_g(F)$ and $\gamma \in \GL_2(\Q)$, and equip $\cP^\vee_g(F)$ with a right action by
$$
(\alpha|\gamma)(P) = \alpha(\gamma \cdot P)
$$
for $\alpha \in \cP^\vee_g(F)$.

One can associate to each eigenform $f$ in $S_k(\Gamma,\C)$ a $\cP_{k-2}^\vee(\C)$-valued modular symbol $\xi_f$ of level $\Gamma$ defined by
$$
\xi_f(\{r\} - \{s\})(P(z)) = 2 \pi i \int_s^r f(z) P(z) dz
$$
where $r,s \in \P^1(\Q)$; here we write $\{r\}$ for the divisor associated to $r \in \Q$.
The symbol $\xi_f$ is a Hecke-eigensymbol with the same Hecke-eigenvalues as $f$.

Since the matrix $\iota := \psmallmat{-1}{0}{0}{1}$ normalizes $\Gamma$, it acts as an involution on these spaces of modular symbols. Thus $\xi_f$ can be uniquely written as $\xi^+_f + \xi^-_f$ with $\xi^\pm_f$ in the $\pm1$-eigenspace of $\iota$.  By a theorem of Shimura \cite{Shimura}, there exists complex numbers $\Omega^\pm_f$ and a number field $K$ such that for each $D \in \Delta_0$, $\xi^\pm_f(D)$ takes values in $K\Omega^\pm_f$.    We can thus view
$\varphi^\pm_f := \xi^\pm_f / \Omega^\pm_f$ as taking values in 
$\cP^\vee_{k-2}(K)$, and for a fixed embedding $\Qbar \inj \Qpbar$, we can view $\varphi^\pm_f$ as taking values in $\cP^\vee_{k-2}(\Qpbar)$.

As we are interested in $\mu$-invariants in this paper, we must carefully normalize our choice of periods.  To this end, we will choose $\Omega_f^\pm$ so that for all $D \in \Delta_0$, all values of $\varphi_f^\pm(D) \in \cP_{k-2}^\vee(\Qpbar)$ are $p$-adic integers.  Further, we insist that there is at least one divisor $D$ so that $\varphi_f^\pm(D)$ takes on at least one value which is a $p$-adic unit.  
Periods $\Omega_f^\pm$ which achieve this normalization we will call canonical periods.

\begin{remark}
We note that this definition differs slightly from the one given in \cite{Vatsal-canonical}.  where parabolic cohomology is used rather than compactly supported cohomology.
\end{remark}

\subsection{Measures and $p$-adic $L$-functions}

Let $G$ denote a $p$-adic Lie group which for this paper we will be taking to be either $\Zp$, $\Zpx$ or $\X$.
Let $\Cont(G)$ denote the space of continuous maps from $G$ to $\Zp$, and let $\Meas(G)$ denote the continuous $\Zp$-dual of $\Cont(G)$ which we regard as the space of $\Zp$-valued measures on $G$.  

For $\mu \in \Meas(G)$ and $U$ a compact open of $G$, we write $\mu(U)$ for $\mu({\bf 1}_U)$ where ${\bf 1}_U$ is the characteristic function of $U$.  Since the $\Zp$-span of these characteristic functions are dense in $\Cont(G)$, a measure is uniquely determined by its values on the compact opens of $G$.
We further note that there is a natural isomorphism $\Meas(G) \cong \Zp[[G]]$
which sends the Dirac-measure $\delta_g$ supported at $g \in G$ to the group-like element $[g]$ in $\Zp[[G]]$.  

Set $\Gamma_0 = \Gamma_0(p) \cap \Gamma_1(N)$, and let $f$ be a $p$-ordinary eigenform in $S_k(\Gamma_0,\Qpbar)$;  that is, if $a_p(f)$ is the $p$-th Fourier coefficient of $f$, then $a_p(f)$ is a $p$-adic unit.  
Then $L_p(f)$, the $p$-adic $L$-function of $f$,  is an element of $\Mx \otimes \O$ where $\O := \O_f$ is the subring of $\Qpbar$ generated over $\Zp$ by the Hecke-eigenvalues of $f$.  Explicitly, we define $L_p(f)$ via the following formula:
$$
L^\pm_p(f)(a+p^n\Zp)
 := \frac{1}{a_p(f)^n} \varphi^\pm_f(\{\infty\} - \{a/p^n\})(1),
$$
and $L_p(f) = L_p^+(f) + L_p^-(f)$.
The fact that this formula for $L_p(f)$ defines a measure follows from the fact that $\varphi^\pm_f$ is a $U_p$-eigensymbol with eigenvalue $a_p(f)$.\footnote{We note that $L_p(f)$ implicitly depends upon the choice of the periods $\Omega_f^\pm$ and so it is an abuse of language to call it {\it the} $p$-adic $L$-function of $f$.}

If $\chi$ denotes a Dirichlet character of conductor $p^n$, then the $p$-adic $L$-function satisfies the interpolation property:
\begin{equation}
\label{eqn:interp}
\int_{\Zpx} \chi ~dL_p(f) = \begin{cases} \displaystyle
\frac{1}{a_p(f)^n} \cdot  \frac{p^n}{\tau(\chi^{-1})} \cdot \frac{L(f,\chi^{-1},1)}{\Omega^\ve_f} & n>0 \\
~\\
\displaystyle\left( 1- \frac{1}{a_p(f)}\right) \frac{L(f,1)}{\Omega^+_f} & n=0
\end{cases}
\end{equation}
where $\ve$ equals the sign of $\chi(-1)$.  

\subsection{Overconvergent modular symbols and $p$-adic $L$-functions}
\label{sec:OMS}

We endow the space $\M$ with a weight $g$ action of $S_0(p)$ via the formula
$$
(\mu |_g \gamma)(f) = \mu\left(  (a+cz)^g f\left( \frac{b+dz}{a+cz} \right)\right)
$$
where $\gamma = \smallmat$ and $f$ is a continuous function on $\Zp$.  When equipped with this action, we denote this space of measures by $\Mv{g}$. 

Let $\cP_g := \cP_g(\Zp) \subset \cP_g(\Qp)$ denote the continuous $\Zp$-valued functions on $\Zp$ which are given by $\Qp$-polynomials of degree less than or equal to $k$.  
This space is generated over $\Zp$ by the binomial coefficients $\binom{z}{j}$ for $0 \leq j \leq k$ (see Theorem \ref{thm:Mahler}).
Set $\cP_g^\vee = \Hom(\cP_g,\Zp)$.

As $\cP_g$ naturally sits inside of $\Cont(\Zp)$, restriction yields a $S_0(p)$-equivariant map 
$$
\Mv{g} \to \cP_g^\vee,
$$
and thus a map
$$
H^1_c(\Gamma_0,\Mv{g}) \lra H^1_c(\Gamma_0,\cP_g^\vee).
$$
We refer to both of these maps as specialization.  

If $X$ is a Hecke-module with an action of $U_p$, we define $X^{\ord}$ to be the intersection of the image of all powers of $U_p$.  The following is Stevens' control theorem in the ordinary case (see \cite{Stevens-rigid,PS-explicit,PS-critical}).

\begin{thm}[Stevens]
\label{thm:control}
For $g \geq 0$, specialization induces the isomorphism
$$
H^1_c(\Gamma_0,\Mv{g})^{\ord} \otimes \Qp \stackrel{\sim}{\lra} H^1_c(\Gamma_0, \cP_g^\vee)^{\ord} \otimes \Qp.
$$
Moreover, if $\Phi^\pm \in H^1_c(\Gamma_0,\Mv{k-2}) \otimes \Qpbar$ is the unique lift of $\varphi^\pm_f$, then $$\Phi^\pm(\{ \infty \} - \{ 0 \}){\big|_{\Zpx}}$$
is the $p$-adic $L$-function of $f$ -- i.e.\ it satisfies the interpolation property in (\ref{eqn:interp}) for some choice of canonical periods $\Omega_f^\pm$.
\end{thm}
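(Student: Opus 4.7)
The plan is to prove the two assertions in turn: first the specialization isomorphism, then the identification of $\Phi^\pm(\{\infty\}-\{0\})|_{\Zpx}$ with $L_p^\pm(f)$.

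For the control theorem, I would analyze the kernel of specialization by moments. Let $K \subset \Mv{g}$ denote the kernel of $\Mv{g} \to \cP_g^\vee$, i.e.\ measures $\mu$ with $\mu(z^i) = 0$ for $0 \leq i \leq g$, and set up the filtration $\Fil^r\Mv{g}$ of measures with $\mu(z^i) = 0$ for $i < r$, so that $K = \Fil^{g+1}$. From the weight-$g$ action, for $\gamma = \smatrix{1}{a}{0}{p}$ one computes
\begin{equation*}
(\mu|_g\gamma)(z^j) = \mu((a+pz)^j) = \sum_{i=0}^{j}\binom{j}{i}a^{j-i}p^i\mu(z^i),
\end{equation*}
so each $\mata$ preserves $\Fil^r$ and acts as multiplication by $p^r$ on $\Fil^r/\Fil^{r+1}$. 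Summing over coset representatives shows $U_p$ acts by $p^{r+1}$ on each graded piece, so on the complete filtered module $K = \Fil^{g+1}$ the operator $U_p$ is topologically nilpotent; in particular $K^{\ord}\otimes\Qp = 0$. Surjectivity of specialization (lift a functional assigned on $z^0,\dots,z^g$ to a measure with prescribed Mahler coefficients, using Theorem~\ref{thm:Mahler}) yields a short exact sequence $0 \to K \to \Mv{g} \to \cP_g^\vee \to 0$; the associated long exact sequence in $H^*_c(\Gamma_0,-)$, together with the vanishing of the ordinary part of $K$-cohomology after inverting $p$, gives the desired isomorphism on ordinary parts.

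For the $p$-adic $L$-function identification, I would exploit the $U_p$-eigensymbol property. Writing $U_p^n$ as the sum of the classes of $\smatrix{1}{a}{0}{p^n}$ for $a$ running over representatives modulo $p^n$, and noting that $\smatrix{1}{a}{0}{p^n}$ sends $\{0\}$ to $\{a/p^n\}$ while fixing $\{\infty\}$, the relation $\Phi^\pm|U_p^n = a_p^n\Phi^\pm$ applied to $\{\infty\}-\{0\}$ becomes
\begin{equation*}
a_p^n\,\Phi^\pm(\{\infty\}-\{0\}) = \sum_{a\bmod p^n}\Phi^\pm(\{\infty\}-\{a/p^n\})\Big|\smatrix{1}{a}{0}{p^n}.
\end{equation*}
Evaluating both sides at $\mathbf{1}_{a+p^n\Zp}$ and using $(\mu|\smatrix{1}{a'}{0}{p^n})(f) = \mu(f(a'+p^nz))$, the only surviving summand is $a' \equiv a\pmod{p^n}$, contributing $\Phi^\pm(\{\infty\}-\{a/p^n\})(\mathbf{1}_\Zp) = \Phi^\pm(\{\infty\}-\{a/p^n\})(1)$. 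Since specialization sends $\Phi^\pm$ to $\varphi_f^\pm$ and $1 \in \cP_g$, this equals $\varphi_f^\pm(\{\infty\}-\{a/p^n\})(1)$. Rearranging reproduces the defining formula for $L_p^\pm(f)(a+p^n\Zp)$, and restriction to $\Zpx$ extracts the $p$-adic $L$-function; the interpolation property~\eqref{eqn:interp} transfers automatically, and with canonical periods the values are manifestly integral.

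The main obstacle is the slope argument underlying the control theorem: on the infinite-dimensional profinite $\Zp$-module $K$, one must upgrade the graded-piece estimate to genuine topological nilpotency of $U_p$ in a way compatible with the compact topology on measures, and then verify that Hida's ordinary projector kills both the $H^0_c$ and $H^2_c$ boundary terms after tensoring with $\Qp$. Reconciling the filtration by vanishing moments with the compact topology and the exactness of the ordinary-projector functor on the long exact sequence is where the subtlety lies; once that is in place, the eigenrelation argument of the second part is essentially formal.
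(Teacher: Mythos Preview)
Your argument for the second assertion is correct and more explicit than anything the paper offers: there Theorem~\ref{thm:control} is simply attributed to Stevens with citations, and the analogous family statement (Proposition~\ref{prop:interpolate}) is dismissed as a ``diagram chase.'' Your $U_p$-eigensymbol computation is exactly the right one.

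For the control isomorphism, the paper does not prove Theorem~\ref{thm:control} in place, but the appendix proves the strictly stronger integral statement (Theorem~\ref{thm:comparek}: the map is an isomorphism \emph{before} inverting $p$), and it is worth comparing. Both arguments begin with $0\to K\to\Mv{g}\to\cP_g^\vee\to 0$ (surjectivity via Mahler, as you say) and the long exact sequence; the issue is the vanishing of $H^1_c(K)^{\ord}$ and $H^2_c(K)^{\ord}$---those are the relevant terms, not $H^0_c$ as you write. For $H^2_c$ both arguments reduce to $K^{\ord}=0$ on the module level. For $H^1_c$, your moment computation shows that for $\Psi$ valued in $K$ every moment of $(\Psi|U_p^n)(D)$ lies in $p^{n(g+1)}\Zp$; writing an ordinary $\Phi$ as $\Psi_n|U_p^n$ for each $n$ forces all moments of $\Phi(D)$ to vanish, and since polynomials are dense in $\Cont(\Zp,\Qp)$ this gives $\Phi=0$. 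That closes the gap you flag, and in fact yields the integral result. The paper's argument (Lemma~\ref{lemma:measUp}) is a slicker special case of the same idea: rather than tracking all moments, it uses only that the \emph{zeroth} moment of any $\mu\in K$ vanishes, and then your own formula $(\mu|\smatrix{1}{a}{0}{p^n})(\mathbf 1_{b+p^n\Zp})=\mu(\mathbf 1_{\Zp})\cdot[a\equiv b]$ immediately gives $\Phi(D)(a+p^n\Zp)=0$ for all $a,n$. Your filtration estimate is the one that generalizes to finite slope (and is what the cited references do); the paper's total-mass trick is shorter but specific to the ordinary case.
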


\subsection{$p$-adic $L$-functions of ordinary Eisenstein series}
\label{sec:boundary}

Fix a primitive character $\psi : (\Z/N\Z)^\times \to \C^\times$ and consider the Eisenstein series
$$
E^{\ord}_{k,\psi} = 
 -(1-\psi(p)p^{k-1}) \frac{B_{k,\psi}}{2k} + \sum_{n=1}^\infty \left( \sum_{d|n, \\ p \nmid d} \psi(d) d^{k-1} \right) q^n.
$$

The following theorem describes  overconvergent modular symbols with the same system of Hecke-eigenvalues as $E^{\ord}_{k,\psi}$.

\begin{thm}
\label{thm:pLvanish}
Let $\Phi$ be any element of $H^1_c(\Gamma_0,\Mv{k-2})$ satisfying
$$
\Phi | U_q = \Phi \text{~~~for~} q \mid Np
$$
and
$$
\Phi | T_\ell = (1 + \psi(\ell) \ell^{k-1}) \Phi \text{~~~for~} \ell \nmid Np
$$
for $\psi$ be a Dirichlet character of conductor $N$.  
Then
\begin{enumerate}
\item $\Phi$ is a boundary symbol;
\item $\Phi$ is in the plus-subspace (i.e.\ $\Phi | \iota = \Phi$);
\item $\Phi(\{\infty\} - \{0\})$ is a constant multiple of $\delta_0$, the Dirac distribution at 0.
\end{enumerate}
\end{thm}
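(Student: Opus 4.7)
My plan is to prove the three claims in sequence, using Stevens' control theorem (Theorem~\ref{thm:control}) as a bridge from overconvergent modular symbols to the classical setting, where the Eisenstein Hecke eigensystem is rigidly understood. The key input throughout is that the eigensystem $T_\ell \mapsto 1 + \psi(\ell)\ell^{k-1}$ is characteristically Eisenstein: cuspforms cannot carry it, since their associated Galois representations are irreducible whereas this system comes from the reducible representation $1 \oplus \psi\chi_{\mathrm{cyc}}^{k-1}$ (equivalently, it violates Ramanujan's bound).

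For (1), I would use the short exact sequence $0 \to \Delta_0 \to \Delta \to \Z \to 0$ of $\Gamma_0$-modules to obtain a natural boundary restriction map $\Hom_{\Gamma_0}(\Delta, \Mv{k-2}) \to H^1_c(\Gamma_0, \Mv{k-2})$ whose cokernel embeds into the interior (parabolic) overconvergent cohomology. By the control theorem, the ordinary part of this cokernel is classical after $\otimes \Qpbar$, and its Hecke eigensystems match those of ordinary cuspforms of weight $k$. Since the hypothesized system is not cuspidal, the image of $\Phi$ in this cokernel vanishes and $\Phi$ is a boundary symbol (integrality follows because the Eisenstein generalized eigenspace is a direct summand under the Hecke action). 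With $\Phi$ now a boundary symbol, it is determined by values $\nu_c := \tilde\Phi(\{c\})$ at cusp representatives, each invariant under the stabilizer of $c$ in $\Gamma_0$. A further appeal to the control theorem and a classical dimension count show that the Eisenstein eigenspace in the ordinary boundary symbols is one-dimensional and sits in the $\iota$-plus part, the latter because $\iota$ fixes both $\infty$ and $0$ and the standard classical boundary symbol attached to $E^{\ord}_{k,\psi}$ is directly seen to be $\iota$-invariant. This yields (2).

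For (3), I would explicitly analyze $\nu_\infty - \nu_0$. The stabilizer of $\infty$ in $\Gamma_0$ is generated by $\psmallmat{1}{1}{0}{1}$, forcing $\nu_\infty$ to be $\Z$-translation-invariant under the weight-$(k-2)$ action and hence, by continuity, (essentially) a scalar multiple of Haar measure on $\Zp$; the stabilizer of $0$ imposes a different invariance on $\nu_0$. Applying $U_p$ to $\Phi(\{\infty\} - \{0\})$, using $\Phi \mid U_p = \Phi$ together with the fact that the cusps $\{a/p\}$ for $p \nmid a$ all lie in the $\Gamma_0$-orbit of $\{\infty\}$, produces linear constraints on $\nu_\infty - \nu_0$ whose only solutions (up to scalar) are multiples of $\delta_0$. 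I expect this final step to be the main technical obstacle: translating the cusp-stabilizer invariances and the $U_p$-relation into measure-theoretic equations demands careful bookkeeping of the weight-$(k-2)$ action of $S_0(p)$, and one must verify that the ``diffuse'' mass of $\nu_\infty$ and $\nu_0$ cancels precisely, leaving only a point mass at $0$.
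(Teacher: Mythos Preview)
Your strategy is the same as the paper's, which defers each step to \cite{BD-evil}; you are in effect sketching the arguments behind the cited propositions there. One correction and one caveat.

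For (3), translation-invariance does not make $\nu_\infty$ a multiple of Haar measure: Haar measure assigns mass $p^{-n}$ to $a+p^n\Zp$ and so is not $\Zp$-valued, hence not in $\M$. The only translation-invariant element of $\M$ is zero (in Mahler coordinates, $\mu\bigl(\binom{z+1}{j}\bigr)=\mu\bigl(\binom{z}{j}\bigr)$ forces every moment to vanish), so $\nu_\infty = 0$ outright --- this is exactly \cite[Lemma~5.1]{BD-evil}, and it dissolves your anticipated obstacle of cancelling diffuse mass. What remains is to show $\nu_0 \in \Zp \cdot \delta_0$, which is \cite[Proposition~5.2]{BD-evil}; your $U_p$-relation idea does this once the contributions from the cusps $a/p$ with $a\ne 0$ are shown to vanish, since the relation then reads $\nu_0(f(z)) = \nu_0(f(pz))$, forcing $\nu_0$ to be supported on $\bigcap_n p^n\Zp = \{0\}$.

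The caveat: your claim that $\{a/p\}$ for $p \nmid a$ lies in the $\Gamma_0$-orbit of $\{\infty\}$ holds for $\Gamma_0(p)$ but fails for $\Gamma_0 = \Gamma_1(N) \cap \Gamma_0(p)$ when $N>1$, so for general $N$ you must separately argue that $\tilde\Phi$ vanishes at the additional cusps. Alternatively, having established in (2) that the eigenspace is one-dimensional, you can bypass this by identifying $\Phi$ with the explicit boundary eigensymbol of \cite[\S 5.2]{BD-evil} and reading off (3) directly.
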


\begin{proof}
Our proof relies heavily on \cite{BD-evil}.   Let $\phi$ denote the specialization of $\Phi$ to $H^1_c(\Gamma_0, \cP_{k-2}^\vee)^{\ord}$.  Since $\phi$ is an Eisenstein symbol, standard descriptions of classical spaces of modular symbols (as in \cite[Proposition 2.5]{BD-evil}) show that $\phi$ is a boundary symbol while \cite[Proposition 2.9]{BD-evil} shows that $\phi$ is in the plus-subspace. Then \cite[Proposition 5.7]{BD-evil} shows that $\Phi$ itself must have been a boundary symbol in the plus subspace.  Lastly \cite[Lemma 5.1]{BD-evil} shows that $\Phi(\{\infty\})=0$ while \cite[Proposition 5.2]{BD-evil} shows that $\Phi(\{0\})$ is a constant multiple of $\delta_0$.
\end{proof}

\begin{remark}
\label{rmk:odd}
\begin{enumerate}
\item
One can explicitly write down such Eisenstein symbols.  This is done in great detail in \cite[section 5.2]{BD-evil}.
\item
In light of Theorem \ref{thm:control} and Theorem \ref{thm:pLvanish}, the natural $p$-adic $L$-function to attach to $E_{k,\psi}^{(p)}$ on even components of weight space is simply 0 as the restriction of $\delta_0$ to $\Zpx$ vanishes.
\item 
On odd components of weight space, the above approach does not suggest what $p$-adic $L$-function to attach to this Eisenstein series as there is no overconvergent modular symbol with these Eisenstein eigenvalues in the minus subspace.  However, in \cite{BD-evil}, a {\it partial} overconvergent modular symbol with the correct eigenvalues was constructed in the minus subspace and its associated $p$-adic $L$-function is (naturally enough) a product of $p$-adic $L$-functions of characters.
\end{enumerate}
\end{remark}

\section{Analytic results}
\label{sec:analytic}

\subsection{Notations and the like}
\label{sec:notation}
Fix a prime $p \geq 5$ and a tame level $N$.  We assume for the remainder of the paper that $p \nmid \varphi(N)$.
Let $\T$ denote the universal ordinary Hecke algebra of tame level $\Gamma_1(N)$, and let $\Tc$ denote its cuspidal quotient.    Both $\T$ and $\Tc$ are modules over $\Zp[[\Zpx \times (\Z/N\Z)^\times]]$ via the diamond operators.  For $a \in \Zpx$, we write $\langle a \rangle$ to denote the corresponding group-like element of $(a,a)$ in $\Zp[[\Zpx \times (\Z/N\Z)^\times]]$.  Further, let $\Tk$ (resp.\ $\Tck$) denote the full Hecke algebra over $\Zp$ which acts faithfully on $M_k(\Gamma_1(N) \cap \Gamma_0(p))^{\ord}$ (resp.\ $S_k(\Gamma_1(N) \cap \Gamma_0(p))^{\ord}$).  

Set $\Ieis$ equal to the Eisenstein ideal of $\T$; that is, the ideal generated by $T_\ell -(1+ \langle \ell \rangle \ell^{-1})$ for $\ell \nmid Np$ and by $U_q-1$ for $q \mid Np$.  Let $\Iceis$ denote the image of $\Ieis$ in $\Tc$ which we will call the cuspidal Eisenstein ideal.  We also denote by $\Ikeis$ (resp.\ $\Ickeis$) for the image of $\Ieis$ in $\Tk$ (resp.\ $\Tck$).

Let $\mc \subseteq \Tc$ be a maximal ideal and write $\m$ for its pre-image in $\T$.  
We note that a choice of a maximal ideal $\m$ of $\T$ distinguishes an even character of $(\Z/Np\Z)^\times$ in the following way.  The components of the semi-local ring $\Zp[[\Zpx \times (\Z/N\Z)^\times]]$ are indexed by the characters of $(\Z/pN\Z)^\times$ since $p \nmid \varphi(N)$, and the restriction of $\m$ to this ring is a maximal ideal which cuts out one of these components.  Write $\theta := \theta_{\m}$ for the corresponding character of $(\Z/pN\Z)^\times$ and write 
$\theta_{\m} = \omega^{j(\m)} \psi_{\m}$ with $0 \leq j(\m) < p-1$ and $\psi := \psi_{\m}$ a character on $(\Z/N\Z)^\times$.  Note that $(-1)^{j(\m)} = \psi(-1)$ and that $\theta$ is an even character.

Write $\m_k$ and $\mck$ for the image of $\m$ in $\T_k$ and $\Tck$ respectively.  These images are maximal as long as $k \equiv j(\m) \pmod{p-1}$.  We say that these ideals, $\m$, $\mc$, $\m_k$, $\mck$, are {\it Eisenstein} if $\m \supseteq \Ieis$ (equivalently $\mc \supseteq \Iceis$).
We note that the existence of a maximal ideal $\m \supseteq \Ieis$ is equivalent to $\ord_p( B_{j(\m),\psi_{\m}}) > 0$.

Consider the following conditions:
\begin{itemize}
\item $\psi_{\m}$ is a {\it primitive} character of conductor $N$;
\item $j(\m) = 1 \implies \psi_{\m}(p) \neq 1$;
\end{itemize}
We say $\m$ satisfies \fk~ if it is Eisenstein and the above two conditions hold.
The main reason for these conditions (along with $p \nmid \varphi(N)$) is that they ensure that there is a unique Eisenstein component in the families we are considering.  This is verified in the following lemma.

\begin{lemma}
\label{lemma:unique_eisen}
If $\m \subseteq \T$ is an maximal ideal satisfying \fk, then the subspace of Eisenstein series in $M_k(\Gamma_0,\psi_{\m})_{\m}^{\ord}$ is one-dimensional for $k \geq 2$ and $k \equiv j(\m) \pmod{p-1}$.
\end{lemma}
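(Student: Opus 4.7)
The plan is to enumerate all Eisenstein eigenforms in $M_k(\Gamma_0,\psi_\m)^{\ord}$ and determine which lie in the $\m$-component. By the standard classification, every such eigenform is a $p$-stabilization of some $E_{k,\chi,\eta}$ with $\chi\eta=\psi_\m$, where $\chi,\eta$ are Dirichlet characters mod $Np$.

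I would first reduce to the case that $\chi,\eta$ are tame. Writing $\chi=\chi_N\omega^a$ and $\eta=\eta_N\omega^{-a}$ (forced since $\chi\eta=\psi_\m$ is trivial on $(\Z/p\Z)^\times$), the conductor $\text{cond}(\chi)\text{cond}(\eta)$ must divide $Np$; if $a\ne 0$ then this conductor is divisible by $p^2$, contradicting $p\nmid N$. Using $k\equiv j(\m)\pmod{p-1}$, the residual Eisenstein eigensystem sends $T_\ell$ to $1+\psi_\m(\ell)\ell^{k-1}\bmod p$. Equating this with the $T_\ell$-eigenvalue $\chi(\ell)+\eta(\ell)\ell^{k-1}$ of $E_{k,\chi,\eta}$, specializing to $\ell\equiv 1\pmod p$, and using linear independence of Dirichlet characters mod $N$ (combined with the injectivity of $\mu_{\varphi(N)}\hookrightarrow\Fpbar$ provided by $p\nmid\varphi(N)$), I would deduce $\{\chi,\eta\}=\{1,\psi_\m\}$.

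The proof then splits into two cases. The form $E_{k,1,\psi_\m}$ is new of level $N$ by the primitivity assumption \fk(i), so its image at level $Np$ is spanned by $E_{k,1,\psi_\m}(z)$ and $E_{k,1,\psi_\m}(pz)$; the $U_p$-eigenvalues of the two $p$-stabilizations are $1$ and $\psi_\m(p)p^{k-1}$, and since $k\ge 2$ only the first is a $p$-adic unit, so exactly one eigenform from this case lies in the ordinary $\m$-component. The swapped pair $(\chi,\eta)=(\psi_\m,1)$ satisfies the residual $T_\ell$-matching only when $j(\m)=1$ or $\psi_\m=1$. In the first subcase, its ordinary $p$-stabilization has $U_p$-eigenvalue $\psi_\m(p)$, which would have to reduce to $1$ mod $p$ for membership in $\m$ (as $U_p-1\in\Ieis$); by the same injectivity of $\mu_{\varphi(N)}$ into $\Fpbar$, this forces $\psi_\m(p)=1$, contradicting \fk(ii). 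In the second subcase $\psi_\m=1$, the two candidates coincide.

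The main obstacle is bookkeeping: tracking how the universal character $\theta_\m=\omega^{j(\m)}\psi_\m$ determines the classical nebentypus $\psi_\m$ at weights $k\equiv j(\m)\pmod{p-1}$, pinning down the precise residual Eisenstein eigensystem, and controlling the interplay between ordinary $p$-stabilization and $\m$-localization. A secondary technicality is the weight $2$, trivial character edge case where $E_{2,1,1}$ is not holomorphic and must be replaced by $E_2(z)-pE_2(pz)$; but this arises only when $\psi_\m=1$ (hence $N=1$), and then \fk(ii) forces $j(\m)\ne 1$, so the analysis goes through unchanged.
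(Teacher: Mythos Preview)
Your proposal is correct and follows essentially the same approach as the paper: classify Eisenstein eigenforms by pairs of characters, use linear independence of characters together with $p\nmid\varphi(N)$ to reduce to the two cases $\{\chi,\eta\}=\{1,\psi_\m\}$, and eliminate the swapped case via the $j(\m)=1\implies\psi_\m(p)\neq 1$ hypothesis. The only organizational difference is that you first reduce to tame characters by a conductor-of-$p^2$ argument, whereas the paper allows $p$-power characters and instead lets the conductor condition $f_1f_2\mid Np$ force $k\equiv 1\pmod{p-1}$ in case (b); these are equivalent rearrangements of the same idea.
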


\begin{proof}
Clearly $E^{\ord}_{k,\psi_\m}$ is in this space.  Assume there is another Eisenstein series $E$ in this space which is an eigenform.  Then there exists characters $\chi_1$ and $\chi_2$ with conductors $f_1$ and $f_2$ such that $f_1 f_2 | Np$ and $E$ has eigenvalues $\chi_1(\ell)+\chi_2(\ell) \ell^{k-1}$ for $\ell \nmid \frac{Np}{f_1 f_2}$.
For each such $\ell$, we then have congruences
$$
\chi_1(\ell)+\chi_2(\ell) \omega^{k-1}(\ell) \equiv 1 + \psi_\m(\ell) \omega^{k-1}(\ell) \pmod{\p}
$$
where $\p$ is the maximal ideal of $\Zp[\mu_{\varphi(N)}]$.   Thus by the linear independence of characters and the fact that $p \nmid \varphi(N)$ yields that  either:\ (a) $\chi_1 = 1$ and $\chi_2 = \psi_\m$, or (b) $\chi_1 = \psi_\m \omega^{k-1}$ and $\chi_2 = \omega^{1-k}$.

In case (a), we see that $f_1=1$ while $f_2=N$ since $\psi_\m$ is primitive.  Thus $E$ and $E^{\ord}_{k,\psi_\m}$ have the same eigenvalues at all primes $\ell$ except possibly for $\ell=p$.  But since $E$ is ordinary, this forces its $U_p$-eigenvalue to be $\chi_1(p) = 1$ and $E$ and $E^{\ord}_{k,\psi_\m}$ have the same eigenvalues at all primes.

In case (b), since $f_1 f_2 | Np$, we must have that $k \equiv 1 \pmod{p-1}$, in which case $j(\m)=1$, $f_1=N$ and $f_2=1$.  Again since $E$ is ordinary we must have that its $U_p$-eigenvalue is $\psi_\m(p)$ and that $\psi_\m(p) \equiv 1 \pmod{\p}$.  
But since $p \nmid \varphi(N)$, this implies that $\psi_\m(p)=1$ which contradicts $\m$ satisfying \fk.
\end{proof}

To match with the notation of the introduction, we note that if $N=1$ and $(p,j)$ is an irregular pair, then there is a unique maximal ideal $\m$ containing $\Ieis$ with $j(\m) \equiv j \pmod{p-1}$.   Moreover \fk~ is vacuous in this case.

\subsection{Gorenstein conditions and the principality of the Eisenstein ideal}
\label{sec:goren}

We continue to use the notation of section \ref{sec:notation}. We thank Preston Wake for his help with many of the proofs in this section.

\begin{prop}
\label{prop:gorencong}
Fix a maximal ideal $\m \subseteq \T$.  The following are equivalent:
\begin{enumerate}
\item $\Tkm$ (resp.\ $\Tckm$) is Gorenstein for some $k \equiv j(\m) \pmod{p-1}$, $k \geq 2$;
\item $\Tkm$ (resp.\ $\Tckm$) is Gorenstein for all $k \equiv j(\m) \pmod{p-1}$, $k \geq 2$;
\item $\Tm$ (resp.\ $\Tcm$) is Gorenstein.
\end{enumerate}
\end{prop}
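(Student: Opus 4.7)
The plan is to reduce all three statements to a single commutative algebra fact via Hida's control theorem. Let $\Lambda = \Zp[[1+p\Zp]]$ and let $\gamma$ be a topological generator of $1+p\Zp$. For each integer $k \geq 2$ with $k \equiv j(\m) \pmod{p-1}$, set $P_k = [\gamma] - \gamma^k \in \Lambda$; this is a regular element. The local component of $\Lambda$ through which $\Tm$ factors is a two-dimensional regular local ring (isomorphic to $\Zp[[T]]$).

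First I would invoke Hida's control theorem in the form appropriate to our setup: after restricting to the relevant local component of $\Lambda$, the algebra $\Tm$ (and likewise $\Tcm$) is finitely generated and free as a $\Lambda$-module, and specialization at weight $k$ induces canonical isomorphisms
\begin{equation*}
\Tm / P_k \Tm \;\cong\; \Tkm \quad\text{and}\quad \Tcm / P_k \Tcm \;\cong\; \Tckm.
\end{equation*}
Because $\Tm$ is finite free over a two-dimensional regular local ring, it is Cohen--Macaulay of dimension two, and $P_k$ acts on it as a non-zero-divisor. The same holds for $\Tcm$.

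Next I would apply the following standard commutative algebra fact: if $R$ is a Cohen--Macaulay local Noetherian ring and $x \in R$ is a non-zero-divisor, then $R$ is Gorenstein if and only if $R/xR$ is Gorenstein. Taking $R = \Tm$ and $x = P_k$ (for any single eligible $k$) gives (1)$\Longleftrightarrow$(3); since the choice of $k$ was arbitrary within the residue class, this also gives (2)$\Longleftrightarrow$(3). The implication (2)$\Longrightarrow$(1) is trivial. The same chain of implications with $R = \Tcm$ and $x = P_k$ handles the cuspidal case.

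The main obstacle is ensuring Hida's control theorem holds at the level of generality we need -- in particular, that the full (non-cuspidal) local algebra $\Tm$ is $\Lambda$-free on the branch cut out by $\m$, rather than merely finite. On the cuspidal side this is Hida's classical freeness theorem; on the full side one uses Lemma~\ref{lemma:unique_eisen}, which guarantees a unique Eisenstein contribution on the $\m$-branch, to extend freeness from the cuspidal part to $\Tm$ itself, after which specialization in weight $k$ behaves as described. Once this is granted, the proof is formal.
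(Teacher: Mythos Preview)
Your approach is essentially the paper's: apply Hida's control theorem to obtain $\Tm/\p_k\Tm \cong \Tkm$ (and similarly in the cuspidal case), then use that quotienting a local ring by a regular element preserves and reflects the Gorenstein property. The paper states this in two sentences, citing \cite[Corollary~3.2]{Hida-Iwmod} for the control isomorphism and then asserting the Gorenstein equivalence directly.

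One comment on your ``main obstacle'': you do not need Lemma~\ref{lemma:unique_eisen} here, and invoking it would be inappropriate since that lemma assumes \fk, which is \emph{not} a hypothesis of this proposition. Hida's control theorem as cited already gives $\Tm/\p_k\Tm \cong \Tkm$ for the full Hecke algebra, not just the cuspidal quotient; freeness of $\Tm$ over $\Lambda$ is part of Hida's structure theory and does not require isolating a unique Eisenstein component. So the obstacle you flag is already handled by the cited reference, and the proof is as formal as you suggest.
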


\begin{proof}
Let $\p_k$ denote the principal ideal of $\Zp[[\Zpx]]$ generated by $[\gamma]-\gamma^k$ where $\gamma$ is a topological generator of $\Zpx$.  By Hida's control theorem \cite[Corollary 3.2]{Hida-Iwmod}, we have $\Tm /\p_k\Tm \cong \Tkm$.  Thus $\Tm$ is Gorenstein if and only if $\Tkm$ is Gorenstein.  The same argument also applies to the cuspidal Hecke algebras.
\end{proof}

\begin{defn}
Let $\m \subseteq \T$ denote a maximal ideal.  We say that $\m$ (resp.\ $\mc$) satisfies \eqref{goren} (resp.\ \eqref{cuspgoren}) if any of the equivalent conditions of Proposition \ref{prop:gorencong} hold for $\Tm$ (resp.\ $\Tcm$).
\end{defn}

\begin{lemma}
\label{lemma:equiv}
If $\m \subseteq \T$ satisfies \fk, we have $\Ieis_{\m} \cong \Iceis_{\mc}$ as $\Tm$-modules and $\Ieis_{k,\m} \cong \Iceis_{k,\mc}$ as $\Tkm$-modules.
\end{lemma}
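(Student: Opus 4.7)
The map $\Ieis_\m \twoheadrightarrow \Iceis_\mc$ is surjective by construction, as $\Iceis$ is the image of $\Ieis$ under $\T\twoheadrightarrow\Tc$. My plan is to prove injectivity by comparing $\Lambda$-ranks. Both $\Ieis_\m$ and $\Iceis_\mc$ are ideals inside the finite free $\Lambda$-algebras $\Tm$ and $\Tcm$ (by Hida's structure theorem), hence are themselves $\Lambda$-torsion-free. So once I show the two ideals have the same $\Lambda$-rank, the kernel of the surjection must be $\Lambda$-torsion and therefore zero.

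The target rank is controlled by the fact that the cuspidal congruence module $\Tcm/\Iceis_\mc$ measures Eisenstein/cuspidal congruences and those only occur at a discrete set of weights: hence it is $\Lambda$-torsion and $\rank_\Lambda \Iceis_\mc = \rank_\Lambda \Tcm$. For the source rank, I would exploit \fk\ together with Lemma \ref{lemma:unique_eisen}, which identifies a unique Eisenstein eigensystem in the $\m$-component in each weight. Via Hida theory, this globalizes to a single character $\Tm \to R_{\eis}$ valued in a $\Lambda$-algebra of generic rank $1$; the generators of $\Ieis$ lie in its kernel by construction, and after inverting $\Lambda$ the uniqueness of the Eisenstein family forces the equality $\Ieis_\m \otimes \Frac(\Lambda) = \ker(\Tm\to R_{\eis}) \otimes \Frac(\Lambda)$. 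Hence $\rank_\Lambda(\Tm/\Ieis_\m) = 1$, and so $\rank_\Lambda \Ieis_\m = \rank_\Lambda \Tm - 1 = \rank_\Lambda \Tcm$, matching the target.

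The finite-level statement $\Ieis_{k,\m} \cong \Iceis_{k,\mc}$ follows by the same rank argument with $\Zp$ in place of $\Lambda$: $\Tkm$ is finite free over $\Zp$, Lemma \ref{lemma:unique_eisen} provides that the Eisenstein part of $M_k(\Gamma_0,\psi_\m)^{\ord}_{\m_k}$ is one-dimensional over $\Qp$, and $\Tckm/\Iceis_{k,\mc}$ is finite. Alternatively, one can deduce the finite-level isomorphism from the big one by reducing modulo the principal ideal $([\gamma]-\gamma^k)$ and invoking Hida's control theorem as in Proposition \ref{prop:gorencong}.

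The main delicate point I anticipate is verifying that $\Ieis_\m$ coincides with the full annihilator of the Eisenstein eigensystem after inverting $\Lambda$. The containment $\Ieis_\m \subseteq \ker(\Tm\to R_{\eis})$ is immediate from the definition of $\Ieis$, but equality uses both the uniqueness of the Eisenstein family under \fk\ and the fact that each cuspidal family has Hecke eigenvalues generically differing from the Eisenstein ones (so that the explicit generators $T_\ell - (1+\langle\ell\rangle\ell^{-1})$ become units in each cuspidal factor after inverting $\Lambda$, pushing $\Ieis_\m$ onto the entire non-Eisenstein component). Without \fk, multiple Eisenstein components could contribute and the naive rank count would break down.
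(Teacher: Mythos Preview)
Your approach is correct but considerably more roundabout than the paper's. The paper argues directly at finite level: if $K = \ker(\Ieis_{k,\m_k} \twoheadrightarrow \Iceis_{k,\mc_k})$, then $K$ annihilates $S_k(\Gamma_0,\psi_\m)_{\m_k}$ (since $K$ maps to zero in the cuspidal Hecke algebra) and $K \subseteq \Ieis_{k,\m_k}$ annihilates the unique Eisenstein series furnished by Lemma~\ref{lemma:unique_eisen}. Hence $K$ annihilates all of $M_k(\Gamma_0,\psi_\m)_{\m_k}$, and faithfulness of $\Tkm$ forces $K=0$. The $\Lambda$-adic statement then follows because the kernel of $\Ieis_\m \twoheadrightarrow \Iceis_\mc$ reduces to zero modulo $\p_k$ for every classical $k$.

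Your rank-comparison argument reaches the same conclusion but requires you to establish several auxiliary facts along the way: the semisimple structure of $\Tm \otimes_\Lambda \Frac(\Lambda)$, that $\Iceis_\mc$ surjects onto each cuspidal factor there (so $\Tcm/\Iceis_\mc$ is $\Lambda$-torsion), that $\Ieis_\m$ cuts out exactly the Eisenstein factor after inverting $\Lambda$, and that $\rank_\Lambda \Tm = \rank_\Lambda \Tcm + 1$. These are all true, and you correctly flag the most delicate step, but each of them is ultimately a repackaging of the same faithfulness-plus-uniqueness input that the paper uses in one stroke. The paper's route also reverses your order of deduction (finite level first, then $\Lambda$-adic), which avoids having to invoke the generic structure theory of Hida families at all.
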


\begin{proof}
Let $K$ be the kernel of the natural surjection $\Ieis_{k,\m} \surj \Iceis_{k,\mc}$.  By  definition $K$ annihilates $S_k(\Gamma_0,\psi_{\m})_{\mk}$.  Further, by Lemma \ref{lemma:unique_eisen}, $\Ieis_{k,\m}$ annihilates all Eisenstein series in $M_k(\Gamma_0,\psi_{\m})_{\mk}$.  Thus $K$ annihilates all cuspforms and Eisenstein series, and hence $K=0$.  Then $\Ieis_{k,\m} \cong \Iceis_{k,\mc}$ for all classical $k$ immediately implies that the natural map $\Ieis_{\m} \surj \Iceis_{\m}$ is an isomorphism as well.
\end{proof}

\begin{thm}
\label{thm:eisenprincipal}
Let $\m$ denote an Eisenstein maximal ideal of $\T$.  The following are equivalent:
\begin{enumerate}
\item \label{eis:1} $\m \subseteq \T$ satisfies \eqref{goren} and \eqref{cuspgoren},
\item \label{eis:2} $\T_\m$ and $\Tc_{\mc}$ are complete intersections,
\item \label{eis:3} $\Ieis_{\m}$ is principal,
\item \label{eis:4} $\Iceis_{\mc}$ is principal,
\item \label{eis:5} $\Ieis_{k,\m_k}$ is principal for some $k \equiv j(\m) \pmod{p-1}$,
\item \label{eis:6} $\Iceis_{k,\mck}$ is principal for some $k \equiv j(\m) \pmod{p-1}$.
\end{enumerate}
\end{thm}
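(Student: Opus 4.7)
My plan is to establish the six equivalences through a combination of descent to weight $k$ via Hida's control theorem, the identification of full and cuspidal Eisenstein ideals coming from Lemma \ref{lemma:equiv}, and the core Ohta-type theorem linking Gorenstein-ness of the Hecke algebra to principality of its Eisenstein ideal.

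First I will handle the passage between Hida-theoretic statements \eqref{eis:3}, \eqref{eis:4} and the weight-$k$ statements \eqref{eis:5}, \eqref{eis:6}. Hida's control theorem (as invoked in the proof of Proposition \ref{prop:gorencong}) gives $\Tm / \p_k \Tm \cong \Tkm$, where $\p_k$ is the principal ideal $([\gamma]-\gamma^k)\Lambda$, and the image of $\Ieis_\m$ under this quotient map is $\Ieis_{k,\mk}$; the analogous statement holds on the cuspidal side. Since $\p_k$ lies in the maximal ideal of the local ring $\Tm$, Nakayama's lemma transfers principality upward from weight $k$ to the Hida-theoretic statement, while the reverse implication is automatic. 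This yields \eqref{eis:3} $\iff$ \eqref{eis:5} and \eqref{eis:4} $\iff$ \eqref{eis:6}. Next, \eqref{eis:3} $\iff$ \eqref{eis:4} (and hence also \eqref{eis:5} $\iff$ \eqref{eis:6}) is immediate from Lemma \ref{lemma:equiv}: the $\Tm$-module isomorphism $\Ieis_\m \cong \Iceis_{\mc}$ makes the existence of a single generator equivalent on both sides, since the quotient map $\Tm \surj \Tcm$ sends generators to generators.

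To connect with the Gorenstein and complete intersection conditions, I observe that \eqref{eis:2} $\implies$ \eqref{eis:1} is standard commutative algebra (complete intersection implies Gorenstein). The cycle is completed by proving \eqref{eis:1} $\implies$ \eqref{eis:3} and \eqref{eis:3} $\implies$ \eqref{eis:2}, both appealing to Ohta's structural analysis in \cite{Ohta-companion}. The key input is that the Eisenstein quotient $\Tcm / \Iceis_{\mc}$ is a cyclic $\Lambda$-module, isomorphic to $\Lambda$ modulo the Eisenstein element governing the congruences between the cuspidal and Eisenstein branches (in the $N=1$ case this element is essentially $\zeta_p$). If $\Tcm$ is Gorenstein as a $\Lambda$-algebra, then the self-duality of $\Tcm$ as a $\Lambda$-module converts the cyclic quotient $\Tcm/\Iceis_{\mc}$ into a cyclic ideal, forcing $\Iceis_{\mc}$ to be principal; this gives \eqref{eis:1} $\implies$ \eqref{eis:3}. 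Conversely, if $\Iceis_{\mc}$ is generated by a single element $L_{\eis}$ and the quotient $\Tcm/\Iceis_{\mc}$ is cyclic over $\Lambda$ with generator $f$, then $\Tcm$ can be presented as $\Lambda[[f]]$ modulo the single relation encoding $L_{\eis}$ in terms of $f$, exhibiting it as a complete intersection; this gives \eqref{eis:3} $\implies$ \eqref{eis:2} on the cuspidal side. The statements for the full Hecke algebra $\Tm$ follow by combining these arguments with Lemma \ref{lemma:equiv} together with Lemma \ref{lemma:unique_eisen}, which together control the Eisenstein component of $\Tm$ and guarantee it is a single copy of $\Lambda$ glued to $\Tcm$ along the Eisenstein quotient.

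The main obstacle is the implication \eqref{eis:1} $\implies$ \eqref{eis:3}, i.e.\ extracting principality of the Eisenstein ideal from Gorenstein-ness of the Hecke algebra. This is where Ohta's duality machinery is essential, and it relies crucially on the cyclicity of the Eisenstein quotient $\Tcm/\Iceis_{\mc}$, which in turn depends on the uniqueness of the Eisenstein series in the Hida family ensured by the hypothesis \fk~ and Lemma \ref{lemma:unique_eisen}. Additional care is required to track the relationship between $\Tm$ and $\Tcm$, since the presence of the additional Eisenstein component in $\Tm$ means that one must work with the short exact sequence $0 \to \Iceis_{\mc} \to \Tcm \to \Tcm/\Iceis_{\mc} \to 0$ and its analogue for $\Tm$, and deduce Gorenstein or complete intersection properties for each ring from information about the other.
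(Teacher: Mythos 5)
Your overall strategy is correct and shares the paper's two main external inputs: Lemma \ref{lemma:equiv} to identify $\Ieis$ with $\Iceis$, and Ohta's \cite[Theorem 2]{Ohta-companion} for (\ref{eis:1}) $\Rightarrow$ (\ref{eis:3}). But you close the remaining cycle along a genuinely different path. The paper runs (\ref{eis:1}) $\Rightarrow$ (\ref{eis:3}) $\Rightarrow$ (\ref{eis:5}) $\Rightarrow$ (\ref{eis:6}) $\Rightarrow$ (\ref{eis:2}) $\Rightarrow$ (\ref{eis:1}); the only nontrivial new link, (\ref{eis:6}) $\Rightarrow$ (\ref{eis:2}), is carried out at the residue field: since $\m_k = \Ieis_{k,\m_k} + p\Tkm$, principality of $\Ieis_{k,\m_k}$ makes the maximal ideal of the Artinian local $\Fp$-algebra $\Tkm/p\Tkm$ principal, whence $\Tkm/p\Tkm \cong \Fp[x]/(x^r)$ is a complete intersection, and one climbs back up by Hida control. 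Your route instead supplies (\ref{eis:5}) $\Rightarrow$ (\ref{eis:3}) by Nakayama and then goes directly (\ref{eis:3}) $\Rightarrow$ (\ref{eis:2}) via $\Lambda$-monogenicity: $\Tcm = \Lambda + \Iceis_{\mc} = \Lambda + \Leis\Tcm$ forces $\Tcm = \Lambda[\Leis]$, which by $\Lambda$-freeness is $\Lambda[X]/(g)$ for a monic $g$, hence a complete intersection. Both are valid; the paper's is shorter and entirely self-contained at the residue-field level, while yours makes visible the structural fact --- also central to Ohta's proof of (\ref{eis:1}) $\Rightarrow$ (\ref{eis:3}) --- that principality of the Eisenstein ideal is equivalent to $\Lambda$-monogenicity of the Hecke algebra. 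Two points to tighten. First, in your (\ref{eis:3}) $\Rightarrow$ (\ref{eis:2}) sketch the relevant algebra generator is $\Leis$ itself, not a $\Lambda$-module generator $f$ of the cyclic quotient $\Tcm/\Iceis_{\mc}$: that quotient is a quotient of $\Lambda$, so its generator is $1$ and carries no information, and the phrase ``$\Lambda[[f]]$ modulo the single relation encoding $\Leis$ in terms of $f$'' mixes up the two roles. Second, your Nakayama step for (\ref{eis:5}) $\Rightarrow$ (\ref{eis:3}) tacitly requires that the natural surjection $\Ieis_{\m}/\p_k\Ieis_{\m} \twoheadrightarrow \Ieis_{k,\m_k}$ be an isomorphism, i.e.\ $\Tor_1^{\Lambda}(\Tm/\Ieis_{\m},\Lambda/\p_k)=0$; this holds because $\Tm/\Ieis_{\m}$ is $\Lambda$-torsion-free (it is a component of the diamond-operator algebra), but that should be stated, since without it the mod-$\p_k$ fiber of $\Ieis_{\m}$ could be strictly larger than $\Ieis_{k,\m_k}$. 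Note finally that the paper never needs (\ref{eis:5}) $\Rightarrow$ (\ref{eis:3}) at all, since its cycle returns to (\ref{eis:1}) via (\ref{eis:2}); in your arrangement the Nakayama step is genuinely load-bearing.
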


\begin{proof}
By Lemma \ref{lemma:equiv}, we have (\ref{eis:3}) $\iff$ (\ref{eis:4}) and (\ref{eis:5}) $\iff$ (\ref{eis:6}).
Also, (\ref{eis:3}) $\implies$ (\ref{eis:5}) is clear and (\ref{eis:2}) $\implies$ (\ref{eis:1}) is clear.  Further, it is a theorem of Ohta \cite[Theorem 2]{Ohta-companion} that (\ref{eis:1}) $\implies$ (\ref{eis:3}). 

Thus, it suffices to check that  (\ref{eis:6}) $\implies$ (\ref{eis:2}).  To this end, first note that $\Tm$ is a complete intersection if and only if $\Tkm$ is a complete intersection if and only if $\Tkm/p\Tkm$ is a complete intersection.  Further, since $\m_k = \Ieis_{k,\m_k} + p \Tkm$ and $\Ieis_{k,\m_k}$ is principal by Lemma \ref{lemma:equiv}, we have that the maximal ideal of $\Tkm/p\Tkm$ is principal.  But then $\Tkm/p\Tkm$ is a finite-dimensional local $\Fp$-algebra with principal maximal ideal.  We must have then that $\Tkm/p\Tkm \cong \Fp[x] / (x^r)$ for some $r \geq 0$, and is thus a complete intersection.   An identical argument works for $\Tckm$ as well.
\end{proof}

The conditions \eqref{goren} and \eqref{cuspgoren} do not hold generally.  See \cite[Corollary 1.4]{Wake-gorenstein} for an example where \eqref{goren} fails. However, there are no known counter-examples to \eqref{goren} or \eqref{cuspgoren} when the tame level $N=1$.  Indeed, we will verify that these two conditions follow from Vandiver's conjecture in this case.   
More precisely:

\begin{defn}
For $k$ even, we say that $(p,k)$ satisfies $\vandv{k}$ if
$$
\Cl(\Q(\mu_p))[p]^{(\omega^k)} = 0.
$$
\end{defn}

\begin{prop}
\label{prop:vandtogoren}
Let $N=1$ and let $(p,k)$ denote an irregular pair with corresponding maximal ideal $\m \subseteq \T$.  Then $\vandv{k}$ and $\vandv{2-k}$ imply that $\m$ satisfies \eqref{goren} and \eqref{cuspgoren}.
\end{prop}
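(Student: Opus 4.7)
The plan is to use Theorem~\ref{thm:eisenprincipal} to convert both Gorenstein conditions into the single statement that the Eisenstein ideal $\Ieis_\m$ is principal, and then to establish that principality by bounding the cotangent space $\Ieis_\m/\m\Ieis_\m$ by means of Galois cohomology controlled by class group eigenspaces of $\Q(\mu_p)$.

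First I would apply Theorem~\ref{thm:eisenprincipal}: the conjunction of \eqref{goren} and \eqref{cuspgoren} is equivalent to $\Ieis_\m$ being principal, and by Nakayama's lemma applied to the complete local noetherian ring $\Tm$ this in turn reduces to the single inequality $\dim_{\Fp} \Ieis_\m/\m\Ieis_\m \leq 1$. Note that at least one generator is always needed (since $\m$ is Eisenstein), so it is really an equality that we are after.

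Next I would realize $\Tm$ as a quotient of the universal ordinary pseudo-deformation ring of the residual Galois representation $\mathbf{1} \oplus \omega^{k-1}$ attached to $E_k$, following the Galois-theoretic description of the ordinary Hecke algebra of Mazur--Wiles, Ohta, and, in the residually reducible setting, Wake--Wang-Erickson. Under this identification, the Eisenstein ideal matches the reducibility ideal of the universal pseudo-deformation, and its cotangent space $\Ieis_\m/\m\Ieis_\m$ is bounded by the product of dimensions of the two ``off-diagonal'' Galois cohomology groups
$$
H^1_{\mathrm{ord}}(\Z[1/p],\Fp(\omega^{k-1})) \quad\text{and}\quad H^1_{\mathrm{ord}}(\Z[1/p],\Fp(\omega^{1-k})),
$$
where the local condition at $p$ is ordinarity. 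Via inflation--restriction up to $\Q(\mu_p)$ together with Kummer theory, the first group is controlled (up to a one-dimensional cyclotomic-unit contribution, which is the class of the Ribet extension producing the Eisenstein congruence) by $\Cl(\Q(\mu_p))[p]^{(\omega^{2-k})}$, and the second is controlled by $\Cl(\Q(\mu_p))[p]^{(\omega^{k})}$.

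Finally, the hypotheses $\vandv{2-k}$ and $\vandv{k}$ are precisely what is needed to force both of these class group eigenspaces to vanish, so the only contribution that survives is the single cyclotomic-unit class. The dimension bound then reads $\dim_{\Fp} \Ieis_\m/\m\Ieis_\m \leq 1$, and Nakayama gives the principality of $\Ieis_\m$. The main obstacle is the cotangent-space/Galois-cohomology identification in the residually reducible setting: this requires the framework of pseudo-deformations and a careful accounting of the ordinarity condition at $p$ (which is what distinguishes the two directions of the extension and isolates the Ribet class from the class-group contributions). Once this identification is in place, the Kummer-theoretic computation of the relevant $H^1$'s and the application of Vandiver are standard.
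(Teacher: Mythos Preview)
Your strategy is sound but takes a longer road than the paper. Both arguments begin identically: invoke Theorem~\ref{thm:eisenprincipal} to reduce \eqref{goren} and \eqref{cuspgoren} to principality of the Eisenstein ideal. At that point the paper simply cites Kurihara \cite[Theorem~0.4]{Kurihara-JNT}, which states directly that $\vandv{k}$ together with $\vandv{2-k}$ force $\Iceis_{k,\mck}$ to be principal; Theorem~\ref{thm:eisenprincipal} then finishes.

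What you do instead is sketch, via pseudo-deformation theory in the style of Wake--Wang-Erickson, a proof of (a version of) Kurihara's theorem: bound the cotangent space of the Eisenstein ideal by a product of dimensions of off-diagonal $H^1$'s, then kill the excess with the two Vandiver hypotheses. This is a legitimate and conceptually illuminating route---it explains \emph{why} the two particular eigenspaces $\omega^k$ and $\omega^{2-k}$ are the relevant ones---but it re-derives from scratch a result already available in the literature, and the deformation-theoretic identification you flag as ``the main obstacle'' is genuinely nontrivial to set up carefully. The paper's proof trades that insight for a one-line citation.

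One caution on your sketch: the bookkeeping of which class-group eigenspace controls which ordinary $H^1$, and the precise form of the product bound (in particular ensuring that the bound comes out to $1$ rather than $0$), needs care. The irregularity of $(p,k)$ forces one of the two Ext groups to be nonzero via Ribet, so the product cannot collapse to zero; but you should check that your indexing is consistent with this.
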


\begin{proof}
By \cite[Theorem 0.4]{Kurihara-JNT}, $\vandv{k}$ together with $\vandv{2-k}$ imply that $\Iceis_{k,\mck} \subseteq \Tck$ is principal.  Then, by Theorem \ref{thm:eisenprincipal}, we have that $\Tm$ and $\Tcm$ are Gorenstein.
\end{proof}

\begin{remark}
We remind the reader that Vandiver's conjecture is known to hold for $p<2^{31}$ \cite{HHO}, and so \eqref{goren} and \eqref{cuspgoren} hold in these cases as well when $N=1$.
\end{remark}

\subsection{Freeness of spaces of modular symbols}

Let $\Lt = \Zp[[\Zpx]]$, $\MM = \Ms$ and consider $H^1_c(\Gamma_0,\MM)$.  
The following theorem is proven in the appendix.

\begin{thm}
\label{thm:m1cong}
Let $\m \subseteq \T$ be a maximal ideal satisfying \fk. The following conditions are equivalent:
\begin{enumerate}
\item
for some (equivalently for all) $j>2$ with $j \equiv j(\m) \pmod{p-1}$, we have
$$\dim_{\Fp} \left( H^1_c(\Gamma,\cP_{j-2}^\vee\otimes \Fp)^+[\m_j] \right) = 1,$$
\item
$\Hom_{\Lt}(H^1_c(\Gamma_0,\MM)^+_{\m},\Lt)$ is free over $\Tm$.
\end{enumerate}
\end{thm}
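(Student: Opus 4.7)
The plan is to set $M := H^1_c(\Gamma_0, \MM)^+_{\m}$ and $M^\vee := \Hom_\Lt(M, \Lt)$, and to translate both conditions into a single statement about $\dim_{\Fp} M^\vee/\m M^\vee$. By Hida's theorem, $M$ is free of finite rank over $\Lt$ in the relevant component, and $\Tm$ acts faithfully on $M$.

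The first key ingredient is an integral refinement of Stevens' control theorem (Theorem \ref{thm:control}) after localization at $\m$: for every $j > 2$ with $j \equiv j(\m) \pmod{p-1}$,
$$
M/\p_j M \;\cong\; H^1_c(\Gamma, \cP_{j-2}^\vee)^+_{\m_j}
$$
as $\Tkm$-modules, using also $\Tm/\p_j \Tm \cong \Tkm$ from Hida's control theorem. Since $M$ is $\Lt$-free, taking $\Lt$-duals commutes with reduction, giving
$$
M^\vee/(p, \p_j) M^\vee \;\cong\; \Hom_{\Fp}\bigl(H^1_c(\Gamma, \cP_{j-2}^\vee \otimes \Fp)^+_{\m_j}, \Fp\bigr).
$$
For any finite-dimensional $\Fp$-vector space $V$ acted on by a commutative local $\Fp$-algebra, the standard duality identity $\dim_{\Fp} V^*/\m V^* = \dim_{\Fp} V[\m]$ then yields
$$
\dim_{\Fp} M^\vee/\m M^\vee \;=\; \dim_{\Fp} H^1_c(\Gamma, \cP_{j-2}^\vee \otimes \Fp)^+[\m_j].
$$
The left side is independent of $j$, which already explains the ``some iff all'' part of (1).

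The direction (1) $\Rightarrow$ (2) then follows from Nakayama: dimension $1$ forces $M^\vee$ to be cyclic over $\Tm$, and faithfulness of the $\Tm$-action (inherited from $M$ by $\Lt$-torsion-freeness) upgrades this to $M^\vee \cong \Tm$, hence free. For (2) $\Rightarrow$ (1), if $M^\vee$ is $\Tm$-free of rank $r$, then $\rank_\Lt M^\vee = r \cdot \rank_\Lt \Tm$; combined with $\rank_\Lt M^\vee = \rank_\Lt M = \rank_\Lt \Tm$ (the latter by classical Eichler--Shimura: each ordinary eigenform in the $\m$-isotypic component contributes a one-dimensional plus-eigenspace at every classical weight), this forces $r = 1$, whence the $\Fp$-dimension on the left of the displayed identity equals $1$.

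I expect the main obstacle to be establishing the integral control isomorphism $M/\p_j M \cong H^1_c(\Gamma, \cP_{j-2}^\vee)^+_{\m_j}$: the rational version is Theorem \ref{thm:control}, but the integral identification at the $\m$-localization requires careful handling of $p$-torsion in the specialization map and a precise interaction with the ordinary projector. This is presumably what the appendix is for; once in hand, the remainder is the clean Nakayama--duality package above.
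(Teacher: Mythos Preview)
Your approach is essentially the same as the paper's: reduce to the appendix result (Theorem \ref{thm:m1impliesfreeplus}), which is exactly the Nakayama-plus-rank-comparison argument you outline, with Lemma \ref{lemma:lemma} supplying the dual control identity $\dim_{\Fp} M^\vee/\m M^\vee = \dim_{\Fp} H^1_c(\cP_{j-2}^\vee\otimes\Fp)^+[\m_j]$ and the integral control theorems of the appendix (Theorem \ref{thm:comparek}, Corollary \ref{cor:CT}) playing the role of your anticipated ``main obstacle.''

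One point you gloss over is precisely where the hypothesis \fk\ enters. Your claim that $\rank_{\Lt} M = \rank_{\Lt} \Tm$ via Eichler--Shimura (``each ordinary eigenform contributes a one-dimensional plus-eigenspace'') is only automatic for \emph{cusp} forms; for Eisenstein series the associated boundary symbol could in principle lie in the minus subspace, in which case the plus part would only see $\Tcm$, not $\Tm$. The paper handles this by first observing that \fk\ implies the auxiliary condition (\ep) --- the unique Eisenstein series $E_{k,\psi_\m}$ has its boundary symbol in the plus subspace --- and then invoking Lemma \ref{lemma:ES} for the rank match. Your ``faithfulness'' argument for $(1)\Rightarrow(2)$ has the same hidden dependence: faithfulness of $\Tm$ (rather than $\Tcm$) on $M = H^1_c(\Gamma_0,\MM)^+_\m$ again requires the Eisenstein contribution to sit in the plus part. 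Once you make this explicit, your argument is complete and matches the paper's.
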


\begin{proof}
First note that \fk~implies the condition (\ep) from the appendix.  Indeed, \fk~implies by Lemma \ref{lemma:unique_eisen} that $E_{k,\psi_{\m}}$ spans the Eisenstein subspace of $M_k(\Gamma_0,\psi_{\m})_{\mk}$.  Since the boundary symbol associated to $E_{k,\psi_{\m}}$ is in the plus-subspace (see \cite[Proposition 2.9]{BD-evil}),  (\ep) follows.  Thus, this theorem follows immediately from Theorem \ref{thm:m1impliesfreeplus} after we note that it is fine to replace $\Gamma_0$ with $\Gamma$ since there are no ordinary $p$-new forms in weights greater than 2.
\end{proof}

If $\m \subseteq \T$ satisfies either of the above conditions, we say $\m$ satisfies \eqref{m1}.  Now let
$$
\tilde{H}^1(N) := \varprojlim_r H^1(Y_1(Np^r),\Zp)^{\ord}
\and
\tilde{H}^1_c(N) := \varprojlim_r H^1_c(Y_1(Np^r),\Zp)^{\ord}
$$
as in \cite[1.5-1.6]{FK}.

\begin{prop}
\label{prop:compare}
We have
$$
H^1_c(\Gamma_0,\MM)^\pm \cong \tilde{H}^1_c(N)^\pm \cong  \Hom_{\T}(\tilde{H}^1(N)^\mp,M_\Lambda)
$$
as Hecke-modules where $M_\Lambda$ is the space of $\Lambda$-adic modular forms of tame level $\Gamma_1(N)$.
\end{prop}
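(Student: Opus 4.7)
The plan is to establish the two isomorphisms separately, using Shapiro's lemma in an inverse limit for the first and Hida's duality combined with Poincar\'e duality for the second.

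For the first isomorphism $H^1_c(\Gamma_0,\MM)^\pm \cong \tilde{H}^1_c(N)^\pm$, I would approximate $\MM = \Meas(\Zpx \times \Zp)$ by its finite truncations $\Zp[(\Zpx/(1+p^r\Zp)) \times (\Zp/p^r\Zp)]$. The action of $S_0(p)$ on $\Zpx \times \Zp$ is precisely calibrated so that the $\Gamma_0$-stabilizer of the canonical mod-$p^r$ point is $\Gamma_1(Np^r)$, so each truncation is identified with an induced module $\Ind_{\Gamma_1(Np^r)}^{\Gamma_0} \Zp$. Shapiro's lemma then yields $H^1_c(\Gamma_0, \Ind_{\Gamma_1(Np^r)}^{\Gamma_0}\Zp) \cong H^1_c(\Gamma_1(Np^r), \Zp)$, and applying the ordinary projector and taking the inverse limit in $r$ produces the desired isomorphism. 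The $\pm$-decomposition is preserved because $\iota$ normalizes every subgroup in sight.

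For the second isomorphism $\tilde{H}^1_c(N)^\pm \cong \Hom_\T(\tilde{H}^1(N)^\mp, M_\Lambda)$, I would combine Poincar\'e duality with Hida's fundamental duality. Poincar\'e duality on each $Y_1(Np^r)$ gives a Hecke-adjoint perfect pairing $H^1_c(Y_1(Np^r),\Zp)^{\ord} \times H^1(Y_1(Np^r),\Zp)^{\ord} \to \Zp$; these pair compatibly under level-lowering, delivering in the limit a perfect pairing $\tilde{H}^1_c(N) \times \tilde{H}^1(N) \to \Lambda$ which swaps the $\iota$-eigenspaces (the intersection pairing being $\iota$-alternating up to sign). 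This gives $\tilde{H}^1_c(N)^\pm \cong \Hom_\Lambda(\tilde{H}^1(N)^\mp, \Lambda)$. Hida's duality, in the form developed in \cite{FK}, identifies $M_\Lambda \cong \Hom_\Lambda(\T, \Lambda)$ as $\T$-modules, and tensor-hom adjunction then converts $\Hom_\Lambda$ into $\Hom_\T$:
$$\Hom_\Lambda(\tilde{H}^1(N)^\mp, \Lambda) \cong \Hom_\T(\tilde{H}^1(N)^\mp, \Hom_\Lambda(\T, \Lambda)) \cong \Hom_\T(\tilde{H}^1(N)^\mp, M_\Lambda).$$

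The main obstacle will be tracking Hecke-equivariance throughout, especially through Poincar\'e duality, where a Hecke operator $T_\ell$ is carried to its adjoint $T_\ell^*$; converting the adjoint back to the standard Hecke operator requires absorbing a twist by the diamond operator $\langle \ell \rangle^{-1}$, and this twist is precisely what produces the flip $\pm \leftrightarrow \mp$ in the final statement. A secondary technicality is upgrading the Shapiro identification to be equivariant for the full semigroup $S_0(p)$ (not merely for $\Gamma_0$), so that all Hecke correspondences transport faithfully across the isomorphism.
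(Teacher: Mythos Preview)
Your proposal is correct and follows essentially the same route as the paper, just unpacking what the paper leaves to citations. The paper is terser: for the first isomorphism it quotes Ohta's identification of $\tilde{H}^1_c(N)$ with $H^1_c(\Gamma,\Meas(\D))$ (where $\D$ is the set of primitive vectors in $\Zp^2$) and then applies Shapiro's lemma once via $\Meas(\D) \cong \Ind_{\Gamma_0}^{\Gamma}(\MM)$ --- this is the same inverse-limit-of-Shapiro argument you sketch, viewed from level $\Gamma$ rather than from the tower $\Gamma_1(Np^r)$. For the second isomorphism the paper simply cites the pairing (1.6.7) in Fukaya--Kato, whose content is exactly the Poincar\'e-plus-Hida-duality combination you spell out.

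One small correction to your commentary: the $\pm \leftrightarrow \mp$ flip does not arise from the Hecke-adjoint twist by $\langle\ell\rangle^{-1}$. It comes from the fact that complex conjugation reverses orientation on the modular curve and hence acts by $-1$ on $H^2_c$; the cup-product pairing therefore vanishes on $(H^1_c)^+ \times (H^1)^+$ and on $(H^1_c)^- \times (H^1)^-$, forcing the cross-pairing. The diamond-operator twist is a separate bookkeeping point --- it is what makes the pairing equivariant for the \emph{same} Hecke action on both factors --- but it is not the source of the sign swap.
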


\begin{proof}
By \cite[Remark 3.5.10]{Ohta-congruence}, we have an identification between $\tilde{H}^1_c(N)^\pm$ and $H^1_c(\Gamma,\Meas(\D))^\pm$ where $\D = \{ (x,y) \in \Zp^2 ~|~ (x,y)=1 \}$.  Since $\Meas(\D) \cong \Ind_{\Gamma_0}^{\Gamma}(\MM)$, by Shapiro's lemma, we then have $H^1_c(\Gamma_0,\MM)  \cong \tilde{H}^1_c(N)$.
Then, the pairing in \cite[(1.6.7)]{FK} yields $
\tilde{H}^1_c(N)^\pm \cong \Hom_{\T}(\tilde{H}^1(N)^\mp,M_\Lambda)$
as desired.
\end{proof}

\begin{thm}
\label{thm:gorentom1}
Let $\m \subseteq \T$ be a maximal ideal satisfying \fk~and \eqref{goren}. Then  \eqref{m1} holds for $\m$.
\end{thm}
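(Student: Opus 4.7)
The plan is to verify condition (2) of Theorem \ref{thm:m1cong}, namely that $\Hom_{\Lt}(H^1_c(\Gamma_0,\MM)^+_\m, \Lt)$ is free over $\Tm$. Using Proposition \ref{prop:compare} together with the fact that $\T$ is finite over $\Lambda$,
$$H^1_c(\Gamma_0,\MM)^+_\m \;\cong\; \Hom_{\Tm}(\tilde{H}^1(N)^-_\m, (M_\Lambda)_\m),$$
so the task reduces to understanding both $\tilde{H}^1(N)^-_\m$ and $(M_\Lambda)_\m$ as $\Tm$-modules.

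For $(M_\Lambda)_\m$, I would invoke Ohta's $\Lambda$-adic Eichler--Shimura duality (\cite{Ohta-towers1,Ohta-towers2}), which on an Eisenstein component that contains a unique Eisenstein family gives $(M_\Lambda)_\m \cong \Hom_\Lambda(\Tm, \Lambda)$ as $\Tm$-modules. Here \fk\ plays the key role of isolating a unique Eisenstein series (Lemma \ref{lemma:unique_eisen}), so that this duality localizes cleanly at $\m$. Substituting, one obtains
$$H^1_c(\Gamma_0,\MM)^+_\m \;\cong\; \Hom_\Lambda(\tilde{H}^1(N)^-_\m, \Lambda).$$

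The crucial step --- and where \eqref{goren} is actually used --- is Ohta's freeness theorem, further clarified in Sharifi \cite{Sharifi-reciprocity}: under \eqref{goren}, $\tilde{H}^1(N)^-_\m$ is free of rank one over $\Tm$. Granting this, $H^1_c(\Gamma_0,\MM)^+_\m \cong \Hom_\Lambda(\Tm, \Lambda) \cong \Tm$, the second isomorphism being precisely the statement that $\Tm$ is Gorenstein over $\Lambda$. Dualizing over $\Lt$ (and noting that $\Lt_\m$ reduces to $\Lambda$ on the relevant component of the semi-local ring $\Zp[(\Z/p\Z)^\times]\otimes_{\Zp}\Lambda$), the same Gorenstein property gives $\Hom_\Lt(\Tm, \Lt) \cong \Tm$, which establishes condition (2) and hence \eqref{m1}.

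The main obstacle is this second step --- deploying Ohta's freeness theorem in our residually reducible, character-twisted setting at tame level $N$. The hypotheses \fk\ and \eqref{goren} are tuned precisely to this purpose: \fk\ ensures a unique Eisenstein component so the duality with $M_\Lambda$ behaves well, while \eqref{goren} is exactly the Gorenstein input from which Ohta extracts the freeness of one $\pm$-part of $\tilde{H}^1(N)_\m$. Care is needed with sign conventions, since under the isomorphism of Proposition \ref{prop:compare} the ``plus'' of $H^1_c$ corresponds to the ``minus'' of $\tilde{H}^1(N)$; it is for this minus part that Ohta's freeness is available, and one must trace this compatibility through to conclude.
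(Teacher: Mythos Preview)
Your proposal is correct and follows essentially the same route as the paper's proof: pass through Proposition~\ref{prop:compare}, use \eqref{goren} to identify $(M_\Lambda)_\m$ with $\Tm$, and invoke the fact that $\tilde{H}^1(N)^-_\m$ is a dualizing module (hence free of rank one) under \eqref{goren} to conclude freeness of $H^1_c(\Gamma_0,\MM)^+_\m$ and then of its $\Lt$-dual. The only minor differences are in attribution---the paper cites Fukaya--Kato \cite[Proposition~6.3.5 and (1.7.13)]{FK} for the dualizing-module statement rather than Ohta--Sharifi directly---and that the duality $(M_\Lambda)_\m \cong \Hom_\Lambda(\Tm,\Lambda)$ you invoke is really Hida's $q$-expansion pairing (which holds unconditionally, without needing a unique Eisenstein component) rather than Eichler--Shimura.
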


\begin{proof}
By \cite[Proposition 6.3.5 and (1.7.13)]{FK}, $\tilde{H}^1(N)^-_{\m}$ 
is a dualizing module for $\Tm$ and is thus free over $\Tm$ assuming \eqref{goren}.
As $(M_\Lambda)_{\m} \cong \Tm$ when $\Tm$ is Gorenstein, by Proposition \ref{prop:compare}, we have $H^1_c(\Gamma_0,\MM)^+_{\m}$ is free over $\Tm$.  Again since $\Tm$ is Gorenstein, we  have $\Hom_{\Lt}(H^1_c(\Gamma_0,\MM)^+_{\m},\Lt)$ is free over $\Tm$ as desired.
\end{proof}

\subsection{Lower bounds}

Fix a maximal ideal $\m \subseteq \T$ satisfying \fk~ and \eqref{goren}.  Then, by Theorem \ref{thm:gorentom1}, \eqref{m1} holds for $\m$, and thus the construction in Appendix \ref{sec:twovar} yields a two-variable $p$-adic $L$-function 
$L_p^+(\m)$ in $\Tm[[\Zpx]]$ defined over the full Hecke-algebra.  We now check that $L^+_p(\m) \in \Tm[[\Gammac]]$ vanishes along the Eisenstein component.

\begin{thm}
\label{thm:eisenvanish}
If  $\m \subseteq \T$ is an Eisenstein maximal ideal satisfying \fk~and \eqref{goren}, then
 $$
 L^+_p(\m) \in \Ieis_{\m}[[\Gammac]]
 $$
 where $\Ieis$ is the Eisenstein ideal of $\T$.
\end{thm}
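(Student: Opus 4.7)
The plan is to show that $L_p^+(\m)$ is killed by the quotient map $\T_\m[[\Gammac]] \twoheadrightarrow (\T_\m/\Ieis_\m)[[\Gammac]]$. Because classical weights are $\Lambda$-adically dense in $\Lambda = \Zp[[1+p\Zp]]$, it suffices to verify that for every classical weight $k \geq 2$ with $k \equiv j(\m) \pmod{p-1}$, the image of $L_p^+(\m)$ in $(\Tkm/\Ikeis_{k,\m})[[\Gammac]]$ is zero. By Lemma \ref{lemma:unique_eisen}, the ring $\Tkm/\Ikeis_{k,\m}$ is precisely the ring of Hecke eigenvalues of the unique ordinary Eisenstein eigenform $E_{k,\psi_\m}^{\ord}$ in the relevant $\m_k$-localized weight-$k$ space, so this is the natural place where a vanishing theorem for the Eisenstein $p$-adic $L$-function ought to live.

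Next I would recall the construction of $L_p^+(\m)$ from the appendix. The assumption \eqref{goren}, combined with Theorem \ref{thm:gorentom1}, gives \eqref{m1} for $\m$, i.e.\ freeness of $\Hom_{\Lt}(H^1_c(\Gamma_0,\MM)^+_{\m},\Lt)$ as a $\T_{\m}$-module. This freeness allows the Greenberg--Stevens style construction to produce a distinguished lift $\Phi_\m$ of a $\T_\m$-basis element, from which $L_p^+(\m)$ is obtained by evaluating $\Phi_\m$ at $\{\infty\}-\{0\}$ and restricting the resulting measure on $\Zp$ to $\Gammac = \Zpx$. Specializing $\Phi_\m$ first to weight $k$ (via the height-one prime $\p_k \subset \Lambda$) and then modulo $\Ikeis_{k,\m}$ yields an element $\phi_k \in H^1_c(\Gamma_0,\Mv{k-2})^+$ whose Hecke eigenvalues match those of $E_{k,\psi_\m}^{\ord}$, which is exactly the setting of Theorem \ref{thm:pLvanish}.

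Applying Theorem \ref{thm:pLvanish} to $\phi_k$, I conclude that $\phi_k$ lies in the plus-subspace, is a boundary symbol, and $\phi_k(\{\infty\}-\{0\})$ is a scalar multiple of $\delta_0 \in \Mv{k-2}$. Restricting $\delta_0$ to $\Zpx$ yields $0$, so the image of $L_p^+(\m)$ in $(\Tkm/\Ikeis_{k,\m})[[\Gammac]]$ vanishes. Since this holds for every classical $k$ in the residue class of $j(\m)$ modulo $p-1$, and the corresponding specializations of $\T_\m/\Ieis_\m$ collectively detect any nonzero element of $(\T_\m/\Ieis_\m)[[\Gammac]]$, we conclude $L_p^+(\m)\in \Ieis_\m[[\Gammac]]$.

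The main obstacle will be verifying that the appendix's construction of $L_p^+(\m)$ is genuinely compatible with the two successive quotients $\T_\m \twoheadrightarrow \Tkm \twoheadrightarrow \Tkm/\Ikeis_{k,\m}$: one must know that $\Phi_\m$ specializes to an honest weight-$k$ overconvergent modular symbol with the Eisenstein system of Hecke eigenvalues, so that Theorem \ref{thm:pLvanish} actually applies to $\phi_k$. This compatibility should flow from the functorial nature of the Greenberg--Stevens construction together with Hida's control theorem identifying $\Tm/\p_k\Tm$ with $\Tkm$ (as used in Proposition \ref{prop:gorencong}), but it is the step that requires the most care.
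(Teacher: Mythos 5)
Your argument is correct and follows essentially the same route as the paper's: reduce by density of classical weights to showing the image of $L_p^+(\m)$ vanishes in each weight-$k$ Eisenstein quotient of the Hecke algebra, identify that image with the restriction to $\Zpx$ of an Eisenstein overconvergent eigensymbol evaluated at $\{\infty\}-\{0\}$, and invoke Theorem \ref{thm:pLvanish}. The compatibility concern you flag at the end is exactly the point the paper hides behind ``by construction''; it is discharged by the interpolation property of the two-variable construction in the appendix (Proposition \ref{prop:interpolate}).
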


\begin{proof}
To prove this corollary, it suffices to see that the image of $L^+_p(\m)$ vanishes in 
$\Tm/\Ieis_{\m}[[\Gammac]]$.  To see this, let $\p_{\eis,k} \subseteq \m$ denote the ideal generated by $T_\ell - (1 + \psi(\ell) \ell^{k-1})$ for $\ell \nmid Np$ and by $U_q-1$ for $q \mid Np$.  It suffices to see that the image of $L^+_p(\m)$ vanishes in $\Tm/\p_{\eis,k}[[\Gammac]]$ for all classical $k \equiv j(\m) \pmod{p-1}$.  But, by construction,
the image of $L^+_p(\m)$ in  $\Tm/\p_{\eis,k}[[\Zpx]]$ equals $\Phi(\{\infty\} - \{0\})\big|_{\Zpx}$ for some eigensymbol $\Phi$ in $H^1_c(\Gamma_0,\Mk)^+$ annihilated by $\p_{\eis,k}$.    Thus, by Theorem \ref{thm:pLvanish}, $\Phi(\{\infty\} - \{0\})\big|_{\Zpx} = 0$ as desired.
\end{proof}

Let $\m \subseteq \T$ be a maximal ideal satisfying \fk.  Set $L_p^+(\mc)$ equal to the image of $L_p^+(\m)$ in $\Tcm[[\Zpx]]$ which is the cuspidal two-variable $p$-adic $L$-function.    
If we further assume that both \eqref{goren} and \eqref{cuspgoren} hold,   by Theorem \ref{thm:eisenprincipal}, the cuspidal Eisenstein ideal $\Iceis_{\mc} \subseteq \Tc_{\mc}$ is principal generated by say $\Leispsi$.
The following theorem gives a lower bound on the $\mu$-invariants of the forms in the Hida family parametrized by $\mc$ in terms of $\Leispsi$.  As we will be imposing the following three hypotheses in much of what follows, let us say that $\m \subseteq \T$ satisfies \all~ if $\m$ satisfies \fk, \eqref{goren} and \eqref{cuspgoren}.

Before stating our theorem on $\mu$-invariants, we first define how these invariants are normalized.  

\begin{defn}
\label{defn:mu}
Let $\O$ be a finite extension of $\Zp$ and let $\Lambda_{\O} = \O[[1+p\Zp]]$.  Let $\varpi$ be a uniformizer of the integral closure of $\O$.  For a non-zero element $h \in \Lambda_{\O}$, we define $\mu(f)$ to be $n$ where $n$ is the largest integer such that $h \in \varpi^n \Lambda_{\O} - \varpi^{n+1} \Lambda_{\O}$.
\end{defn}

Let $\p_f$ be a classical height one prime of $\Tcm$ associated to some ordinary cuspidal eigenform $f$ in the Hida family corresponding to $\mc$.  Write $\Of := \O_f$ for the finite extension of $\Zp$ equal to the integral closure of $\Tcm/\p_f$, and write $\varpi$ for a uniformizer of $\Of$.    We write $\Leispsi(\p_f)$ for the image of $\Leispsi$ in $\Tcm/\p_f \subseteq \Of$.  

For each $a$ with $0 \leq a < p-1$ and $R$ any ring, we have a map $R[[\Zpx]] \to R[[1+p\Zp]]$ given by $[x] \mapsto \omega^a(x) [x/\omega(x)]$ where $x \in \Zpx$.
We write $L^+_p(f,\omega^a)$ and $L^+_p(\m,\omega^a)$ for the respective images of $L_p^+(f)$ and $L^+_p(\m)$ under the corresponding maps.  Further, we write $\mu(f,\omega^a)$ for $\mu(L_p^+(f,\omega^a))$.

\begin{thm}
\label{thm:anlowerbndgen}
Let $\m \subseteq \T$ be a maximal ideal satisfying \all.  Let $\Leispsi$ denote a generator of the Eisenstein ideal $\Iceis \subseteq \Tcm$.  Then
$$
\Leispsi  \text{~~divides~~} L^+_p(\mc)
$$
in $\Tcm[[\Gammac]]$.  In particular,
$$
\mu(f,\omega^a) \geq \ord_\varpi(\Leispsi(\p_f))
$$
for all even $a$ with $0 \leq a \leq p-2$, and all $f$ in the Hida family corresponding to $\mc$.
\end{thm}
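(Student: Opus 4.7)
The plan is to deduce the theorem directly from Theorem \ref{thm:eisenvanish} together with the principality of the cuspidal Eisenstein ideal established in Theorem \ref{thm:eisenprincipal}. First, under \all, Theorem \ref{thm:eisenvanish} gives $L^+_p(\m) \in \Ieis_{\m}[[\Gammac]]$. Passing to the cuspidal quotient $\Tcm$, and using that the image of $\Ieis_{\m}$ is by definition $\Iceis_{\mc}$, I would obtain $L^+_p(\mc) \in \Iceis_{\mc}[[\Gammac]]$.

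Next, by Theorem \ref{thm:eisenprincipal}, the assumptions \eqref{goren} and \eqref{cuspgoren} together force $\Iceis_{\mc}$ to be principal, generated by $\Leispsi$. Hence the above membership translates immediately into the divisibility
$$
\Leispsi \mid L^+_p(\mc) \quad \text{in } \Tcm[[\Gammac]],
$$
which is the first assertion of the theorem.

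For the pointwise lower bound on $\mu$-invariants, I would write $L^+_p(\mc) = \Leispsi \cdot G$ for some $G \in \Tcm[[\Gammac]]$, specialize at the classical height one prime $\p_f$, and project to the $\omega^a$-component. Since specialization of $L^+_p(\mc)$ at $\p_f$ recovers $L^+_p(f) \in \Of[[\Gammac]]$ --- a compatibility built into the construction of $L^+_p(\m)$ in the appendix --- this yields
$$
L^+_p(f,\omega^a) = \Leispsi(\p_f) \cdot G(\p_f,\omega^a) \in \Of[[1+p\Zp]].
$$
Since $\mu$ is additive on products in $\Of[[1+p\Zp]]$ and $\mu$ of the scalar $\Leispsi(\p_f) \in \Of$ equals $\ord_\varpi(\Leispsi(\p_f))$, the bound $\mu(f,\omega^a) \geq \ord_\varpi(\Leispsi(\p_f))$ follows, with nontriviality of $L^+_p(f,\omega^a)$ for even $a$ ensuring the inequality is meaningful.

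There is no serious obstacle here: once the two-variable $p$-adic $L$-function $L^+_p(\m)$ on the full Hecke algebra is available, the argument is a formal synthesis of Theorems \ref{thm:eisenvanish} and \ref{thm:eisenprincipal}. All the genuine work has already been done in the preceding sections and the appendix --- namely constructing $L^+_p(\m)$ via the freeness consequence of \eqref{m1}, establishing the Eisenstein vanishing of plus $p$-adic $L$-functions (Theorem \ref{thm:pLvanish} specialized via the two-variable setup), and converting the Gorenstein hypotheses \eqref{goren} and \eqref{cuspgoren} into principality of $\Iceis_{\mc}$ (Ohta's theorem).
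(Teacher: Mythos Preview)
Your proposal is correct and follows essentially the same approach as the paper's own proof: invoke Theorem \ref{thm:eisenvanish} to place $L^+_p(\m)$ in $\Ieis_{\m}[[\Gammac]]$, project to the cuspidal quotient, and use the principality of $\Iceis_{\mc}$ (already secured by \all\ via Theorem \ref{thm:eisenprincipal}) to obtain the divisibility; the $\mu$-invariant bound then follows by specializing at $\p_f$. Your write-up of the second part is slightly more explicit than the paper's, but the argument is identical.
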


\begin{proof}
By Theorem \ref{thm:eisenvanish},  $L^+_p(\m) \in \Ieis_{\m} [[\Zpx]]$.  Projecting to $\Tcm[[\Zpx]]$ implies that $L^+_p(\mc) \in \Iceis_{\mc}[[\Zpx]]$, and is thus divisible by $\Leispsi$ as desired.

The second assertion is immediate from the first as $L^+_p(\mc)$ specializes to $L^+_p(f)$ at the prime $\p_f$ while $\Leispsi$ specializes to the {\it number} $\Leispsi(\p_f) \in \Of$ and thus contributes to the $\mu$-invariant.
\end{proof}

\subsection{Upper bounds}

We now establish an upper bound on $\mu$-invariants of these residually reducible forms under the hypothesis \eqref{m1}.  

\begin{thm}
\label{thm:anupperbndgen}
Fix an Eisenstein maximal ideal $\m \subseteq \T$ satisfying \eqref{m1}.    
Let $f$ be a classical cuspidal eigenform in the Hida family for $\mc$.  Then
$$
\mu(f) \leq \ord_\varpi(a_p(f)-1).
$$
\end{thm}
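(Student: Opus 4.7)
The plan is to exploit the multiplicity one hypothesis to force the classical modular symbol attached to $f$ to be congruent, modulo the uniformizer $\varpi$ of $\O_f$, to the boundary symbol attached to the Eisenstein series in the same Hida family, and then to read off the desired inequality from the interpolation formula at the trivial character.

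First, fix a classical weight $k \equiv j(\m) \pmod{p-1}$ with $k \geq 2$ and pass to the reduction of the plus-modular symbol $\varphi_f^+ \in \Hom_\Gamma(\Delta_0, \cP_{k-2}^\vee(\O_f))$. Because canonical periods were chosen, this reduction $\bar\varphi_f^+$ is a nonzero element of $H^1_c(\Gamma, \cP_{k-2}^\vee \otimes \F_p)^+[\m_k] \otimes_{\F_p} \O_f/\varpi$. The Eisenstein series $E^{\ord}_{k,\psi_\m}$ has an associated boundary symbol $\phi_{\eis,k}$ which, by Lemma~\ref{lemma:unique_eisen} and the discussion in Section~\ref{sec:boundary} (cf.\ \cite[Proposition 2.9]{BD-evil}), also lies in the plus-eigenspace and is annihilated by $\m_k$. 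Under \eqref{m1}, the space $H^1_c(\Gamma, \cP_{k-2}^\vee \otimes \F_p)^+[\m_k]$ is one-dimensional over $\F_p$, so $\bar\varphi_f^+$ must be a nonzero scalar multiple of the image of $\phi_{\eis,k}$ in this one-dimensional space.

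Next, evaluate both symbols at the divisor $\{\infty\}-\{0\}$ on the constant polynomial $1$. For the Eisenstein boundary symbol, this value can be computed directly: because $\phi_{\eis,k}$ is supported on the cusp $\{0\}$ (up to Eisenstein descriptions as in \cite[Section 5]{BD-evil}) and its value at $\{0\}$ paired with $1$ is a $p$-adic unit, the value $\phi_{\eis,k}(\{\infty\}-\{0\})(1)$ is a $p$-adic unit. Transferring the congruence, the classical special value $\varphi_f^+(\{\infty\}-\{0\})(1)$ is a $\varpi$-adic unit in $\O_f$. Since this number is exactly $L(f,1)/\Omega_f^+$ (up to the normalizing $2\pi i$, which is swallowed by the definition of $\Omega_f^+$), we conclude that $L(f,1)/\Omega_f^+$ is a unit in $\O_f$.

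Finally, apply the interpolation formula \eqref{eqn:interp} at the trivial character:
\[
\int_{\Zpx} \mathbf{1}\, dL_p^+(f) \;=\; \left(1 - \frac{1}{a_p(f)}\right) \frac{L(f,1)}{\Omega_f^+}.
\]
Since $f$ is ordinary, $a_p(f)$ is a unit, so the right-hand side has valuation $\ord_\varpi(a_p(f)-1)$. Viewing $L_p^+(f,\omega^0) \in \O_f[[1+p\Zp]]$, the value at the trivial character is obtained by augmentation; if $L_p^+(f,\omega^0) = \varpi^{\mu(f)} \cdot g$ with $g \notin \varpi \O_f[[1+p\Zp]]$, then the augmentation has valuation at least $\mu(f)$. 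This gives $\mu(f) \leq \ord_\varpi(a_p(f)-1)$, as desired.

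The main subtlety lies in showing that the Eisenstein boundary symbol is itself a $\varpi$-adic unit after evaluation at $\{\infty\}-\{0\}$ paired with $1$, i.e.\ in tracking that the explicit description of boundary symbols in \cite[Section 5.2]{BD-evil} produces a unit value. All other steps are formal consequences of \eqref{m1}, the canonical period normalization, and the interpolation formula.
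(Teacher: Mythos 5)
Your proposal follows the same approach as the paper: use \eqref{m1} to identify $\bar\varphi_f^+$ with the mod $\varpi$ reduction of the Eisenstein boundary symbol, read off from the explicit form of that boundary symbol (the paper cites \cite[(22)]{BD-evil}) that $\varphi_f^+(\{\infty\}-\{0\})(1)$ is a unit, and then apply the interpolation formula at the trivial character. The ``main subtlety'' you flag at the end is exactly what the paper resolves by using the explicit description $\varphi_{k,\psi}(\{0\})\colon P\mapsto P(0)$ and $\varphi_{k,\psi}(\{\infty\})=0$, giving $\varphi_{k,\psi}(\{\infty\}-\{0\})(1)=-1$.
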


\begin{proof}
To prove this theorem, we simply check that the valuation of $L^+_p(f)$ evaluated at the trivial character is bounded by $\ord_\varpi(a_p(f)-1)$ as this immediately gives the desired bound on the $\mu$-invariant.
By the interpolation property of $L^+_p(f)$, we have
$$
L^+_p(f)({\bf 1}) = \left( 1 - \frac{1}{a_p(f)} \right) \cdot \varphi^+_{f}(\{ \infty \} - \{0\})(1).
$$
We claim that $\varphi^+_{f}(\{ \infty \} - \{0\})(1)$ is a $p$-adic unit which clearly implies the theorem.

To see this, let $k$ denote the weight of $f$.  Then, by \cite[Proposition 2.5(iii)]{BD-evil}, there is a unique (up to scaling) boundary eigensymbol $\varphi_{k,\psi}$ in $H^1_c(\Gamma_0,\cP_{k-2}^\vee)^+$ with the same system of Hecke-eigenvalues as $E_{k,\psi}^{(p)}$.  We fix $\varphi_{k,\psi}$ to be as defined in \cite[(22)]{BD-evil} which is normalized so that its image in $H^1_c(\Gamma_0,\cP_{k-2}^\vee \otimes \Fp)^+$ is non-zero.

Since $\varphi^+_f$ and $\varphi_{k,\psi}$ have congruent systems of Hecke-eigenvalues, by \eqref{m1}, we have that the images of these symbols in $H^1_c(\Gamma_0,\cP_{k-2}^\vee \otimes \Fp)^+$
are non-zero multiples of one another.  Moreover, the explicit description of $\varphi_{k,\psi}$ in \cite[(22)]{BD-evil}, tells use that $\varphi_{k,\psi}$ is not supported on the $\infty$ cusp while $\varphi_{k,\psi}(\{0\})$ is the functional $P(z) \mapsto P(0)$.  
Thus, 
$$
\varphi_{k,\psi}(\{ \infty \} - \{0\})(1) = 0 - \varphi_{k,\psi}(\{0\})(1) = -1,
$$
and thus  $\varphi^+_{f}(\{ \infty \} - \{0\})(1)$ is a $p$-adic unit as desired.
\end{proof}

\subsection{Conjecture on $\mu$-invariants}

Under the assumption \all~(which implies \eqref{m1}), Theorems \ref{thm:anlowerbndgen} and \ref{thm:anupperbndgen} imply the following string of inequalities:
\begin{equation}
\label{eqn:ineqgen}
\ord_\varpi(\Leispsi(\p_f)) \leq \mu(f) \leq \ord_\varpi(a_p(f)-1).\footnote{Since $U_p - 1 \in \Iceis$, we have $\Leispsi$ divides $U_p-1$ which is consistent with these inequalities.}
\end{equation}

Given the philosophy that $\mu$-invariants are ``as small as they can be", it's tempting to make the following conjecture. 

\begin{conj}
\label{conj:anmu}
Fix a maximal ideal $\m \subseteq \T$ satisfying \all~and let $\Leispsi$ denote a generator of $\Iceis_{\mc}$. Then
\begin{equation}
\label{eqn:conj}
\mu(f,\omega^a) = \ord_\varpi(\Leispsi(\p_f))
\end{equation}
for all even $a$ and all classical $\p_f$ contained in $\mc$. 
\end{conj}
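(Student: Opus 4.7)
The lower bound $\mu(f,\omega^a) \geq \ord_\varpi(\Leispsi(\p_f))$ is already supplied by Theorem \ref{thm:anlowerbndgen}, so the content of the conjecture is the matching upper bound. The cleanest formulation of the goal is two-variable:\ show that the quotient $L^+_p(\mc)/\Leispsi$, which by Theorem \ref{thm:anlowerbndgen} lies in $\Tcm[[\Gammac]]$, is a unit in $\Tcm[[1+p\Zp]]$ after projection to each even $\omega^a$-component.  Evaluating at a classical prime $\p_f$ would then immediately yield $\mu(L^+_p(f,\omega^a)) = \ord_\varpi(\Leispsi(\p_f))$.

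The first step of my proposed attack is to reduce unitness of this two-variable quotient to a statement at a single well-chosen classical specialization.  Since $\Tcm$ is a finite flat $\Lambda$-algebra and the quotient is already integral, a Nakayama argument combined with Weierstrass preparation over $\Of[[1+p\Zp]]$ should let one conclude once one prime is controlled.  For the component $a=0$ the sharpest available input is Theorem \ref{thm:anupperbndgen}:\ at any classical $\p_f$ where $\ord_\varpi(\Leispsi(\p_f)) = \ord_\varpi(a_p(f)-1)$, the two inequalities in \eqref{eqn:ineqgen} coincide, the specialized quotient is a unit in $\Of[[1+p\Zp]]$, and one recovers Conjecture \ref{conj:anmu} for $a=0$ along the entire Hida family, generalizing Theorem \ref{thm:anfullintrogen} beyond the case where $U_p-1$ generates $\Iceis_{\mc}$.

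The serious obstacles are twofold.  First, when $U_p-1$ does not generate $\Iceis_{\mc}$, there is no a priori reason that the bounds of \eqref{eqn:ineqgen} are ever sharp:\ $\Leispsi(\p_f)$ could be a strict divisor of $a_p(f)-1$ at every classical prime in the family, in which case the direct analytic strategy above produces no anchor point.  Second, Theorem \ref{thm:anupperbndgen} only controls the $\omega^0$-component, since its proof rests on evaluating $L_p(f)$ at the trivial character; for even $a>0$ there is no analogous special-value bound, because the pairing of the plus-symbol $\varphi_f^+$ against boundary symbols at a non-trivial character of conductor $p^n$ is not obviously rigid.

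Given these obstructions the most promising route is indirect, through the algebraic side and the main conjecture.  Concretely, one would first try to prove Conjecture \ref{conj:algmu} by refining the Greenberg-style lattice argument sketched after Theorem \ref{thm:alglowerbnd} so as to pin down $\mu(\Sel(\Qinf,A_f)^\vee)$ exactly at $\ord_\varpi(\Leispsi(\p_f))$, and then combine this with Kato's divisibility from \cite{Kato} (valid over $\Lambda[1/p]$) to force the analytic $\mu$ to agree with the algebraic one.  The components $\omega^a$ with $a \neq 0$ would then require running a parallel argument character by character.  I expect the hardest single step to be pinning down the algebraic $\mu$-invariant exactly, i.e.\ upgrading Theorem \ref{thm:alglowerbnd} to a matching upper bound in general, since the multiplicity-one and freeness inputs that drive the construction of $L^+_p(\mc)$ in the appendix are crucial for the even plus part but do not directly apply to bounding Selmer $\mu$-invariants from above on the other characters.
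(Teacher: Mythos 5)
The statement you are addressing is a conjecture; the paper offers no proof of it, only the partial results you correctly inventory. Your reduction of the conjecture to unitness of the integral quotient $L^+_p(\mc,\omega^a)/\Leispsi$ in $\Tcm[[1+p\Zp]]$, and from there to a single classical specialization, is exactly Proposition \ref{prop:verify} (whose proof invokes \cite[Proposition 3.7.3]{EPW}). The case where the bounds of \eqref{eqn:ineqgen} meet is Theorem \ref{thm:anfullgen}, and your diagnosis of the two obstacles --- no guaranteed anchor prime when $\Leispsi$ is a proper divisor of $U_p-1$, and no special-value control for the non-trivial $\omega^a$-components --- matches the paper's own discussion around Theorem \ref{thm:anupperbndgen} and in section \ref{sec:numerical}. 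Up to this point your assessment is accurate.

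Where the proposal goes wrong is the indirect route through the algebraic side. Suppose you could establish Conjecture \ref{conj:algmu} exactly, i.e.\ $\mu(\Sel(\Qinf,A_f)^\vee) = \ord_\varpi(\Leispsi(\p_f))$. Kato's divisibility (Theorem \ref{thm:katomc}) lives over $\Lambda_{\O}[1/p]$, and inverting $p$ discards precisely the $\mu$-invariant information: two nonzero elements of $\Lambda_{\O}$ satisfying a divisibility over $\Lambda_{\O}[1/p]$ can have wildly different $\mu$-invariants, since $\varpi$ is a unit there. Kato's theorem therefore controls $\lambda$-invariants but gives no upper bound on $\mu(L^+_p(f,\omega^a))$, and the inequality $\mu(L^+_p(f,\omega^a)) \geq \ord_\varpi(\Leispsi(\p_f))$ from Theorem \ref{thm:anlowerbndgen} remains unimproved. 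To close the gap by this route you would need the \emph{opposite} divisibility, $L^+_p(f,\omega^a) \mid \chr_{\Lambda_{\O}}(\Sel(\Qinf,A_{f,j})^\vee)$, over $\Lambda_{\O}$ without inverting $p$ --- and as the paper notes at the end of the introduction, neither \cite{Kato} nor \cite{Skinner-Urban} control $\mu$-invariants in the residually reducible case. Your proposed detour thus trades one open conjecture for two open problems of comparable difficulty, which is why the paper presents Conjecture \ref{conj:algmu} as a parallel statement rather than as a lemma toward Conjecture \ref{conj:anmu}.
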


The following proposition shows that to check this conjecture it suffices to check (\ref{eqn:conj}) holds for a single form $f$ in the family.

\begin{prop}
\label{prop:verify}
Assume $\m \subseteq \T$ is a maximal ideal satisfying \all.  For each fixed even number $a$, we have $\mu(f,\omega^a) = \ord_\varpi(\Leispsi(\p_f))$ holds for one classical $\p_f \subseteq \mc$ if and only if this equation holds for all such $\p_f$.
\end{prop}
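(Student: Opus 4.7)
The plan is to exploit the two-variable factorization $L_p^+(\mc) = \Leispsi \cdot G$ for some $G \in \Tcm[[\Gammac]]$ furnished by Theorem \ref{thm:anlowerbndgen}, and to reduce the statement about varying $\p_f$ to a single statement about $G$ living over $\Tcm$. First, I would fix an even $a$ and project this factorization to the $\omega^a$-component, giving $L_p^+(\mc,\omega^a) = \Leispsi \cdot G_{\omega^a}$ in $\Tcm[[1+p\Zp]]$. Now fix a classical height-one prime $\p_f \subseteq \mc$. Specializing at $\p_f$ yields
$$L_p^+(f,\omega^a) = \Leispsi(\p_f) \cdot G_{\omega^a}(\p_f)$$
in $\Of[[1+p\Zp]]$. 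Because $f$ is cuspidal, the Hecke eigenvalues of $f$ do not literally match those of the Eisenstein series, so $\Leispsi \notin \p_f$ and hence $\Leispsi(\p_f) \neq 0$ in $\Of$.

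Since $\Of[[1+p\Zp]]$ is a domain, $\mu$-invariants are additive under multiplication, so
$$\mu(f,\omega^a) = \ord_\varpi(\Leispsi(\p_f)) + \mu(G_{\omega^a}(\p_f)).$$
Thus the conjectural equality $\mu(f,\omega^a) = \ord_\varpi(\Leispsi(\p_f))$ is equivalent to $\mu(G_{\omega^a}(\p_f)) = 0$, i.e.\ to the non-vanishing of the image of $G_{\omega^a}(\p_f)$ modulo $\varpi$ in $(\Of/\varpi)[[1+p\Zp]]$.

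The next step is to see that this non-vanishing condition is intrinsic to $\m$ and $a$ and does not depend on $\p_f$. Since $\Of$ is the integral closure of $\Tcm/\p_f$ and both rings have the same residue field, namely the residue field $\Tcm/\mc$ of the maximal ideal, there is a canonical identification $\Of/\varpi = \Tcm/\mc$ under which the map $\Tcm \to \Of/\varpi$ factors as the quotient $\Tcm \twoheadrightarrow \Tcm/\mc$. Consequently, the reduction of $G_{\omega^a}(\p_f)$ in $(\Of/\varpi)[[1+p\Zp]]$ coincides with the image $\overline{G}_{\omega^a}$ of $G_{\omega^a}$ under the natural projection $\Tcm[[1+p\Zp]] \twoheadrightarrow (\Tcm/\mc)[[1+p\Zp]]$, which is manifestly independent of $\p_f$. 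Therefore $\mu(G_{\omega^a}(\p_f)) = 0$ for one classical $\p_f \subseteq \mc$ if and only if $\overline{G}_{\omega^a} \neq 0$, if and only if $\mu(G_{\omega^a}(\p_f)) = 0$ for every such $\p_f$.

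The main technical subtlety is the well-definedness of specialization applied to the factored form $L_p^+(\mc) = \Leispsi G$: if $\Tcm$ has several irreducible components then $G$ may not be unique, but any two choices differ by an element annihilated by $\Leispsi$, which vanishes after specialization at $\p_f$ since $\Leispsi(\p_f)$ is a nonzero element of the domain $\Of$. Thus $G_{\omega^a}(\p_f)$ is unambiguously defined, and the argument above goes through.
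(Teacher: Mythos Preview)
Your argument is correct and follows essentially the same route as the paper's: factor $L_p^+(\mc,\omega^a) = \Leispsi \cdot G_{\omega^a}$ via Theorem~\ref{thm:anlowerbndgen}, then observe that $\mu(f,\omega^a) = \ord_\varpi(\Leispsi(\p_f))$ is equivalent to $\mu(G_{\omega^a}(\p_f))=0$, a condition independent of $\p_f$. The paper simply invokes \cite[Proposition~3.7.3]{EPW} for this independence, whereas you unpack it directly by reducing modulo $\mc$; one small imprecision is that $\Of/\varpi$ may properly contain $\Tcm/\mc$ rather than equal it, but since the inclusion $(\Tcm/\mc)[[1+p\Zp]] \hookrightarrow (\Of/\varpi)[[1+p\Zp]]$ is injective your conclusion is unaffected.
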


\begin{proof}
By Theorem \ref{thm:anlowerbndgen}, $L_p^+(\mc,\omega^a)/\Leispsi$ is integral, and by assumption, 
this power series has at least one specialization with vanishing $\mu$-invariant.  Thus, by \cite[Proposition 3.7.3]{EPW}, 
every specialization has vanishing $\mu$-invaraint, proving the proposition.  
\end{proof}

\subsection{When the bounds meet}

In the case when the lower and upper bounds of (\ref{eqn:ineqgen}) meet (e.g.\ when $U_p-1$ generates $\Iceis_{\mc}$), the Iwasawa theory of our situation becomes much simpler.  We begin with a lemma.

\begin{prop}
\label{prop:goren}
Let $\m \subseteq \T$ be an Eisenstein maximal ideal.
The following are equivalent:
\begin{enumerate}
\item \label{part:goren1} $\Iceis_{\mc}$ is generated by $U_p-1$;
\item $\Iceis_{k,\mc}$ is generated by $U_p-1$ for every $k \equiv j(\m) \pmod{p-1}$, $k\geq 2$;
\item \label{part:goren3} $\I^c_{k,\mc}$ is generated by $U_p-1$ for some $k \equiv j(\m) \pmod{p-1}$, $k\geq 2$.
\end{enumerate}
\end{prop}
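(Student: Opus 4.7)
The plan is to prove the equivalence via the implication chain $(1) \Rightarrow (2) \Rightarrow (3) \Rightarrow (1)$. The first two implications are formal, and the substantive content lies in $(3) \Rightarrow (1)$.

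For $(1) \Rightarrow (2)$, I will invoke Hida's control theorem, which identifies $\Tcm/\p_k \Tcm$ with $\Tckm$ for each admissible weight $k$. Under this specialization the ideal $\Iceis_{\mc}$ maps surjectively onto $\Iceis_{k,\mc}$ and the element $U_p - 1$ maps to $U_p - 1$, so if $\Iceis_{\mc} = (U_p - 1)\Tcm$ then taking images yields $\Iceis_{k,\mc} = (U_p - 1)\Tckm$ for every admissible $k$. The implication $(2) \Rightarrow (3)$ is immediate.

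For $(3) \Rightarrow (1)$, fix $k \equiv j(\m) \pmod{p-1}$, $k \geq 2$, for which $\Iceis_{k,\mc} = (U_p - 1)\Tckm$. The plan proceeds in two steps. First, I upgrade principality in weight $k$ to principality in $\Tcm$: since $\Iceis_{k,\mc}$ is principal, Theorem \ref{thm:eisenprincipal} (implication $(\ref{eis:6}) \Rightarrow (\ref{eis:4})$) shows that $\Iceis_{\mc}$ is a principal ideal of $\Tcm$. Because $U_p - 1 \in \Iceis_{\mc}$ is nonzero, $\Iceis_{\mc}$ is a nonzero principal ideal, so Nakayama's lemma applied to the local Noetherian ring $(\Tcm, \mc)$ forces
\[
\dim_{\Fp}(\Iceis_{\mc}/\mc \Iceis_{\mc}) = 1.
\]
Second, I identify $U_p - 1$ as the generator by comparing with weight $k$: specialization induces a surjective $\Fp$-linear map $\Iceis_{\mc}/\mc \Iceis_{\mc} \twoheadrightarrow \Iceis_{k,\mc}/\mck \Iceis_{k,\mc}$. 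Since the target is nonzero by Nakayama (as $\Iceis_{k,\mc}$ is nonzero) and, by hypothesis, is generated by the image of $U_p - 1$, that image is nonzero. Consequently the image of $U_p - 1$ in $\Iceis_{\mc}/\mc \Iceis_{\mc}$ is also nonzero; being a nonzero element of a one-dimensional $\Fp$-vector space, it generates $\Iceis_{\mc}/\mc \Iceis_{\mc}$. A final application of Nakayama's lemma then delivers $\Iceis_{\mc} = (U_p - 1)\Tcm$.

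The main obstacle is the lifting step that passes from principality at one weight to principality of the big ideal $\Iceis_{\mc}$; this is precisely what Theorem \ref{thm:eisenprincipal} (ultimately resting on Ohta's companion-forms results) accomplishes for us. A more direct attempt to lift a generator by tensoring the short exact sequence $0 \to \Iceis_{\mc} \to \Tcm \to \Tcm/\Iceis_{\mc} \to 0$ with $\Lambda/\p_k$ would be obstructed by a nonvanishing $\mathrm{Tor}_1^\Lambda(\Tcm/\Iceis_{\mc}, \Lambda/\p_k)$ arising from the fact that the Eisenstein quotient $\Tcm/\Iceis_{\mc}$ is $\Lambda$-torsion.
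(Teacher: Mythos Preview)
Your proof is correct and follows essentially the same approach as the paper. Both arguments reduce $(3)\Rightarrow(1)$ to Theorem~\ref{thm:eisenprincipal} to obtain a generator $\Leispsi$ of $\Iceis_{\mc}$, and then use a Nakayama-type argument: the paper phrases this as showing the ratio $(U_p-1)/\Leispsi$ specializes to a unit in weight $k$ (hence is a unit in the local ring $\Tcm$), while you equivalently argue via the one-dimensionality of $\Iceis_{\mc}/\mc\Iceis_{\mc}$ and the nonvanishing of the image of $U_p-1$ there.
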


\begin{proof}
We check that (\ref{part:goren3}) implies (\ref{part:goren1}).  Since $\I^c_{k,\mc}$ is principal, by Theorem \ref{thm:eisenprincipal}, we have $\Iceis_{\mc}$ is principal, generated by say $\Leispsi$.  
Consider now $(U_p-1) / \Leispsi \in \Tc_{\mc}$.  This element specializes to unit in weight $k$, and is thus itself a unit.  Hence, $\Leispsi$ differs from $U_p-1$ by a unit, and thus $U_p-1$ generates $\Iceis_{\mc}$.
\end{proof}

\begin{defn}
An Eisenstein maximal ideal $\m$ of $\T$ satisfies \lu~if any of the equivalent conditions of Proposition \ref{prop:goren} hold.  
\end{defn}

\begin{lemma}
\label{lemma:lutogoren}
\lu~implies \eqref{goren} and \eqref{cuspgoren}.
\end{lemma}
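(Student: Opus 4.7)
The plan is to deduce this from Theorem \ref{thm:eisenprincipal} essentially immediately. The hypothesis \lu~asserts that $\Iceis_{\mc}$ is generated by $U_p - 1$; in particular, $\Iceis_{\mc}$ is principal. This is condition (\ref{eis:4}) of Theorem \ref{thm:eisenprincipal}, which is equivalent to condition (\ref{eis:1})—that $\m$ satisfies both \eqref{goren} and \eqref{cuspgoren}. So the proof is simply a citation.

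If one prefers a more direct argument (mirroring the proof of (\ref{eis:6}) $\Rightarrow$ (\ref{eis:2}) in Theorem \ref{thm:eisenprincipal}), I would first apply Proposition \ref{prop:goren} to reduce to finite weight: $\Iceis_{k,\mck}$ is generated by $U_p - 1$ for any fixed $k \equiv j(\m) \pmod{p-1}$ with $k \geq 2$. Since $\mck = \Iceis_{k,\mck} + p \Tckm$, the maximal ideal of $\Tckm / p\Tckm$ is generated by the single element $U_p - 1$. But a finite-dimensional local $\Fp$-algebra with principal maximal ideal must be of the form $\Fp[x]/(x^r)$, which is a complete intersection and hence Gorenstein; the Gorenstein property then lifts from $\Tckm / p \Tckm$ to $\Tckm$, giving \eqref{cuspgoren}. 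To get \eqref{goren}, I would invoke Lemma \ref{lemma:equiv} to identify $\Ieis_{k,\mk} \cong \Iceis_{k,\mck}$ and run the identical argument on $\Tkm / p\Tkm$.

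There is no real obstacle here; all the heavy lifting was done in Theorem \ref{thm:eisenprincipal}. The only mild subtlety is that \lu~is phrased in terms of the cuspidal Eisenstein ideal, so passing to a statement about the full Hecke algebra $\Tm$ relies on the comparison of Eisenstein ideals from Lemma \ref{lemma:equiv}, whose proof uses the uniqueness of the Eisenstein component guaranteed by \fk.
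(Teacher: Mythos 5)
Your proposal is correct and is essentially identical to the paper's proof, which simply cites Theorem~\ref{thm:eisenprincipal} and observes that \lu~gives condition~(\ref{eis:4}) (principality of $\Iceis_{\mc}$), hence condition~(\ref{eis:1}). The extra unwinding you offer is a faithful transcription of the proof of Theorem~\ref{thm:eisenprincipal} and is not needed beyond the citation.
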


\begin{proof}
This claim follows immediately from Theorem \ref{thm:eisenprincipal}.
\end{proof}

In what follows, we write $\lambda(f)$ for $\lambda(f,\omega^0)$ and $\mu(f)$ for $\mu(f,\omega^0)$.

\begin{thm}
\label{thm:anfullgen}
For an Eisenstein maximal ideal $\m \subseteq \T$ satisfying \fk~and \lu, we have
$$
L^+_{p}(\mc,\omega^0) = \Leispsi \cdot U
$$ 
where $U$ is a unit in $\Tcm[[1+p\Zp]]$.
In particular, for every $f$ in the Hida family of $\mc$, we have
$$
\lambda(f)=0 \text{~~and~~}\mu(f) = \ord_\varpi(a_p(f)-1).
$$  
That is, $L_p(f,\omega^0)$ is simply a power of $p$, up to a unit.
\end{thm}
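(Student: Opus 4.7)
The strategy closely parallels the proof of Theorem~\ref{thm:anfullintro} sketched in the introduction. First, by Proposition~\ref{prop:goren} and \lu, we may take $\Leispsi = U_p - 1$. Next, Lemma~\ref{lemma:lutogoren} shows that \lu~implies \eqref{goren} and \eqref{cuspgoren}, so together with \fk~the ideal $\m$ satisfies \all, and hence by Theorem~\ref{thm:gorentom1} also \eqref{m1}. Applying Theorem~\ref{thm:anlowerbndgen} yields $(U_p-1) \mid L_p^+(\mc)$ in $\Tcm[[\Gammac]]$; projecting to the $\omega^0$-component gives $L_p^+(\mc, \omega^0) = (U_p-1)\, U$ for some $U \in \Tcm[[1+p\Zp]]$. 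The content of the theorem is thus reduced to showing that $U$ is a unit in $\Tcm[[1+p\Zp]]$.

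Since $\Tcm[[1+p\Zp]]$ is a complete local ring with residue field $k_\m := \Tcm/\mc$, a unit is detected by nonvanishing modulo the maximal ideal. I would verify this by specializing at any single classical height-one prime $\p_f \subset \mc$ corresponding to an eigenform $f$ in the Hida family: writing $\O_f$ for the integral closure of $\Tcm/\p_f$ with uniformizer $\varpi$, the induced map $k_\m \to \O_f/\varpi$ is injective (being a map of fields), so it suffices to show that $U(\p_f) = L_p^+(f,\omega^0)/(a_p(f)-1)$ is a unit in $\O_f[[1+p\Zp]]$.

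To establish this, I would pin down the Iwasawa invariants of $L_p^+(f, \omega^0)$ exactly. The divisibility $(a_p(f)-1) \mid L_p^+(f,\omega^0)$ gives $\mu(f) \geq \ord_\varpi(a_p(f)-1)$, while Theorem~\ref{thm:anupperbndgen} gives the matching upper bound, so $\mu(f) = \ord_\varpi(a_p(f)-1)$. Moreover, the proof of that upper bound computes via interpolation that $L_p^+(f)(\mathbf{1}) = (1 - 1/a_p(f))\,\varphi_f^+(\{\infty\}-\{0\})(1)$ has valuation exactly $\ord_\varpi(a_p(f)-1)$, since $a_p(f)$ is a $p$-adic unit and the modular-symbol factor is itself a $p$-adic unit by \eqref{m1}. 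The value of $L_p^+(f,\omega^0)$ at the trivial character of $1+p\Zp$ agrees with $L_p^+(f)(\mathbf{1})$, so it has valuation exactly $\mu(f)$. By Weierstrass preparation this forces $\lambda(f)=0$: if the distinguished polynomial factor were nonconstant, its constant term would contribute extra $\varpi$-divisibility at the trivial character. Hence $L_p^+(f,\omega^0)$ is $\varpi^{\mu(f)}$ times a unit in $\O_f[[1+p\Zp]]$, so $U(\p_f)$ is a unit as required, and the ``in particular'' clause reads off directly. The one step that demands care is the passage from ``$U$ is a unit after one specialization'' to ``$U$ is a unit in the two-variable ring,'' which is handled by the local-ring detection above.
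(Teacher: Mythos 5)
Your proposal is correct and follows essentially the same line of argument as the paper's proof: one first pins down $\mu(f)=\ord_\varpi(a_p(f)-1)$ by squeezing between the lower bound from Theorem~\ref{thm:anlowerbndgen} and the upper bound from Theorem~\ref{thm:anupperbndgen}, deduces $\lambda(f)=0$ from the exact valuation of $L_p^+(f)({\bf 1})$, and then lifts the resulting ``unit at one specialization'' to a unit in $\Tcm[[1+p\Zp]]$ via the local-ring structure. The only cosmetic differences are that you normalize $\Leispsi=U_p-1$ at the outset (the paper carries a generic generator), and you spell out the Weierstrass-preparation step and the residue-field argument that the paper leaves implicit.
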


\begin{proof}
By Lemma \ref{lemma:lutogoren}, \eqref{goren} and \eqref{cuspgoren} hold automatically.  Further,
for $\p_f \subseteq \mc$, \lu~and the inequalities in (\ref{eqn:ineqgen}) tell us the value of the $\mu$-invariant exactly:
$$
\ord_\varpi(\Leispsi(\p_f)) = \mu(f) = \ord_\varpi(a_p(f)-1).
$$
But, as in the proof of Theorem \ref{thm:anupperbndgen}, we know that $L^+_p(f)$ evaluated at ${\bf 1}$ has $p$-adic valuation equal to $\ord_\varpi(a_p(f)-1)$.  Since this valuation also equals the $\mu$-invariant, we get that $\lambda(f)=0$.

By Theorem \ref{thm:anlowerbndgen}, the quotient $L_p(\mc,\omega^0)/\Leispsi$ is integral.  Further,
$$
\frac{L_p(\mc,\omega^0)}{\Leispsi}(\p_f)
= \frac{L_p(f,\omega^0)}{\Leispsi(\p_f)}
$$
has vanishing $\mu$ and $\lambda$-invariants (since $\mu(f) = \ord_\varpi(\Leispsi(\p_f))$), and is thus a unit.  But then $L_p(\mc,\omega^0)/\Leispsi$ has a unit specialization, and it must itself be a unit.
\end{proof}

\subsection{The special case of \eqref{r1}}

In the special case where \eqref{r1} of the introduction holds, the results of the previous sections both become more concrete (with $\Leispsi$ being replaced by a $p$-adic $L$-function) and no longer require any Gorenstein hypotheses (as they are automatically satisfied).  We now state this condition and results more precisely.

\begin{prop} 
\label{prop:r1}
Let $\m \subseteq \T$ be a maximal ideal.
The following are equivalent:
\begin{enumerate}
\item $\dim_{\Lambda} \Tcm = 1$;
\item $\dim_{\Zp} \Tckm = 1$ for some $k \equiv j(\m) \pmod{p-1}$, $k \geq 2$;
\item $\dim_{\Zp} \Tckm = 1$ for all $k \equiv j(\m) \pmod{p-1}$, $k \geq 2$.
\end{enumerate}
\end{prop}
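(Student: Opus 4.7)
The plan is to derive all three equivalences from Hida's control theorem together with the fact that, in the ordinary setting, $\Tcm$ is a finitely generated free $\Lambda$-module. The key idea is that the "dimension" in question is really just the $\Lambda$-rank (resp.\ $\Zp$-rank), and specialization at a classical weight is flat base change.

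First I would recall Hida's theorem: $\Tc$ is finite and free over $\Lambda$, hence so is its localization $\Tcm$ at the maximal ideal $\mc$. In particular $\dim_\Lambda \Tcm$ is well-defined and equals $\rank_\Lambda \Tcm$. Next, because $\Tckm$ acts faithfully on a finite free $\Zp$-module of ordinary cuspforms, it is itself $\Zp$-torsion free, so $\dim_{\Zp} \Tckm = \rank_{\Zp} \Tckm$ as well.

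Now I would invoke Hida's control theorem in the exact form used in Proposition~\ref{prop:gorencong}: for any $k \equiv j(\m) \pmod{p-1}$ with $k \geq 2$,
$$
\Tcm / \p_k \Tcm \;\cong\; \Tckm,
$$
where $\p_k = ([\gamma] - \gamma^k) \subseteq \Lambda$. Since $\Lambda / \p_k \cong \Zp$ and $\Tcm$ is free over $\Lambda$, we have a natural isomorphism of $\Zp$-modules
$$
\Tcm / \p_k \Tcm \;\cong\; \Tcm \otimes_\Lambda (\Lambda/\p_k) \;\cong\; \Zp^{\,\rank_\Lambda \Tcm}.
$$
Combining with the control theorem yields
$$
\rank_{\Zp} \Tckm \;=\; \rank_\Lambda \Tcm
$$
for every allowable $k$. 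This single equality gives (1)$\iff$(2)$\iff$(3) at once, because all three numbers coincide with $\rank_\Lambda \Tcm$.

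There is no real obstacle here: the entire argument rests on Hida's freeness theorem and the control isomorphism, both of which are already invoked in Proposition~\ref{prop:gorencong} under exactly the same hypotheses. The only point to watch is to observe that $\Tckm$ has no $\Zp$-torsion (so that $\dim_{\Zp}$ may be read as $\Zp$-rank), which follows from its faithful action on a finite free $\Zp$-module of ordinary cuspforms at level $\Gamma_1(N)\cap\Gamma_0(p)$.
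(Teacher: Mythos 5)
Your argument is correct and follows the same route as the paper, which simply cites Hida's control theorem in the form $\Tcm/\p_k\Tcm \cong \Tckm$; you have just made explicit the freeness of $\Tcm$ over $\Lambda$ (and the $\Zp$-torsion-freeness of $\Tckm$) that the paper leaves implicit.
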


\begin{proof}
This proposition follows from Hida's control theorem: $\Tcm/\p_k \Tcm \cong \Tckm$.
\end{proof}

\begin{defn}
For a maximal ideal $\m \subseteq \T$, we say $\m$ satisfies \eqref{r1} if any of the equivalent conditions of Proposition \ref{prop:r1} hold.  
\end{defn}

\begin{lemma}
\label{lemma:r1togoren}
If an Eisenstein maximal ideal $\m \subseteq \T$ satisfies \fk~and \eqref{r1}, then it also satisfies \eqref{goren} and \eqref{cuspgoren}.
\end{lemma}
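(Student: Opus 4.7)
The plan is to deduce both Gorenstein conditions from the principality of the cuspidal Eisenstein ideal, and then invoke Theorem \ref{thm:eisenprincipal}. First, by Proposition \ref{prop:r1}, condition \eqref{r1} is equivalent to $\dim_{\Zp} \Tckm = 1$ for every $k \equiv j(\m) \pmod{p-1}$ with $k \geq 2$. Since $\Tckm$ is a finite local $\Zp$-algebra that is torsion-free and of $\Zp$-rank one, the structure map $\Zp \to \Tckm$ must be an isomorphism, so $\Tckm \cong \Zp$.

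Next, fix any such $k$. Then the cuspidal Eisenstein ideal $\Iceis_{k,\mck} \subseteq \Tckm \cong \Zp$ is simply an ideal of a principal ideal domain, hence principal (in fact of the form $p^e \Zp$ for some $e \geq 1$, where $e \geq 1$ because $\m$ is Eisenstein). This verifies condition (6) of Theorem \ref{thm:eisenprincipal} for our chosen weight $k$.

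Since we are assuming \fk, Theorem \ref{thm:eisenprincipal} applies, and the implication (\ref{eis:6}) $\implies$ (\ref{eis:1}) of that theorem yields immediately that $\m$ satisfies both \eqref{goren} and \eqref{cuspgoren}, as desired.

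There is no substantive obstacle beyond organizing these invocations. The hypothesis \fk~enters only because it is built into the setup of Theorem \ref{thm:eisenprincipal} (via Lemma \ref{lemma:equiv}, which identifies $\Iceis_{k,\m_k}$ with $\Ieis_{k,\m_k}$ as $\Tkm$-modules under \fk). The hypothesis \eqref{r1} is doing all the real work: it forces $\Tckm$ to be as small as possible, which leaves no room for the cuspidal Eisenstein ideal to be non-principal and hence, via Ohta's theorem, no room for the Gorenstein properties to fail.
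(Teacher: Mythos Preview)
Your argument is correct, but it takes a different path from the paper's. The paper argues directly: from \eqref{r1} one has $\Tckm \cong \Zp$, which is trivially Gorenstein, giving \eqref{cuspgoren}; then \fk\ (via Lemma \ref{lemma:unique_eisen}) forces $\Tkm$ to have $\Zp$-rank $2$, hence to be monogenic over $\Zp$, hence a complete intersection, giving \eqref{goren}. You instead observe that $\Tckm \cong \Zp$ makes the cuspidal Eisenstein ideal automatically principal, and then appeal to the implication (\ref{eis:6}) $\Rightarrow$ (\ref{eis:1}) of Theorem \ref{thm:eisenprincipal}. Both work; the paper's route is slightly more self-contained (it does not route back through Theorem \ref{thm:eisenprincipal}), while yours highlights that the lemma is really a special case of that theorem. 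One small remark on your closing commentary: the direction (\ref{eis:6}) $\Rightarrow$ (\ref{eis:1}) that you invoke does not actually use Ohta's theorem---Ohta is the content of (\ref{eis:1}) $\Rightarrow$ (\ref{eis:3})---so your parenthetical ``via Ohta's theorem'' is not quite accurate, though it does not affect the validity of the argument.
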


\begin{proof}
If $(p,j)$ satisfies \eqref{r1}, then $\Tckm$ is rank 1 over $\Zp$ and clearly \eqref{cuspgoren} holds.  Further, by Lemma \ref{lemma:unique_eisen}, \fk~implies that $\Tkm$ has rank 2 over $\Zp$.  This in turn implies that $\Tkm$ is generated by a single element over $\Zp$.  In particular, $\Tkm$ is a complete intersection and thus \eqref{goren} holds.
\end{proof}

By the above lemma and Theorem \ref{thm:eisenprincipal}, we know that $\Iceis_{\mc}$ is principal when \eqref{r1} holds.  In this case, work of Wiles and Ohta \cite{Wiles-MC,Ohta-congruence} imply that this generator can be taken to be a $p$-adic $L$-function.  More precisely, let $L_p(\psi,\kappa)$ be defined by as in \cite{BD-evil}.  Here we view $L_p(\psi,\kappa)$ as a function on weight space and we have:
$$
L_p(\psi,z^k) = -(1-\psi^{-1}(p)p^{k-1}) \frac{B_{k,\psi^{-1}}}{k}
$$
for $k \geq 1$ with $(-1)^k = \psi(-1)$.  Set $\Gammawj = 1+p\Zp$ and view $\Zp[[\Gammawj]]$ as Iwasawa functions on $\W_j$.   Then $L_p(\psi,\kappa)|_{\W_j}$ is in $\Zp[[\Gammawj]]$ as long as either $\psi$ is non-trivial or $j \neq 0$.

Note that under \eqref{r1} we can identify $\Tcm$ with $\Zp[[\Gammawj]]$ and we can and will view $L_p(\psi,\kappa)|_{\W_j}$ as an element of $\Tcm$.

\begin{thm}
\label{thm:eisgen}
If $\m \subseteq \T$ satisfies \eqref{r1}, then $\Iceis_{\mc}$ is generated by $L_p(\psi_{\m}^{-1},\kappa)|_{\W_{j(\m)}}$.
\end{thm}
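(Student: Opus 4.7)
The plan is to identify $L_p := L_p(\psi_\m^{-1},\kappa)|_{\W_{j(\m)}}$ and a generator of $\Iceis_{\mc}$ as elements of $\Lambda := \Zp[[\Gammawj]]$ that differ by a unit, by establishing one divisibility via a $\Lambda$-adic Eisenstein constant-term argument and then matching $p$-adic valuations at a single classical weight. By Lemma \ref{lemma:r1togoren}, condition \eqref{r1} forces \eqref{goren} and \eqref{cuspgoren}, so Theorem \ref{thm:eisenprincipal} yields $\Iceis_{\mc} = (g)$ for some $g \in \Tcm$; Proposition \ref{prop:r1} together with Hida's control theorem identifies $\Tcm$ with $\Lambda$, so both $g$ and $L_p$ live in $\Lambda$.

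To get $L_p \in \Iceis_{\mc}$, I would use the $\Lambda$-adic Eisenstein family $\E_\kappa$ of character $\psi_\m$ --- unique in the $\m$-component by \fk~and Lemma \ref{lemma:unique_eisen} --- whose $q$-expansion has constant term $\tfrac{1}{2}L_p(\psi_\m^{-1},\kappa)$ and higher Fourier coefficients matching the Eisenstein Hecke eigenvalues. The cuspidal Hida eigenform, reduced modulo $\Iceis_{\mc}$, has the same Hecke eigenvalues as $\E_\kappa$ but, being cuspidal, vanishing constant term. Comparing $q$-expansions via Hida's $q$-expansion principle for $\Lambda$-adic modular forms then forces the constant term of $\E_\kappa$ to lie in $\Iceis_{\mc}$; since $p \geq 5$, so does $L_p$. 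This step follows the arguments of Wiles \cite{Wiles-MC} and Ohta \cite{Ohta-congruence} and is the principal technical input.

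For the matching of valuations, fix any classical weight $k \geq 2$ with $k \equiv j(\m) \pmod{p-1}$. Under \eqref{r1} with \fk, Lemma \ref{lemma:unique_eisen} gives $\Tckm \cong \Zp$ and $\Tkm$ free of rank $2$ over $\Zp$. The cuspidal and Eisenstein projections embed $\Tkm \hookrightarrow \Zp \times \Zp$ with image $\{(x,y) : x \equiv y \pmod{p^{n_k}}\}$, and Lemma \ref{lemma:equiv} identifies the image of $\Iceis_{\mc}$ in $\Tckm$ with $p^{n_k}\Zp$. The congruence number $p^{n_k}$ equals the $p$-valuation of the constant term $-(1-\psi_\m(p)p^{k-1})B_{k,\psi_\m}/(2k)$ of $E^{\ord}_{k,\psi_\m}$: the upper bound is automatic (any cuspidal lift has vanishing constant term), while the lower bound is Ribet's construction in \cite{Ribet-Herbrand} of a cusp form congruent modulo the full constant term, which under \eqref{r1} is forced to be the unique $f_k$ in the Hida family. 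Hence $g(z^k)\Zp = L_p(\psi_\m^{-1}, z^k)\Zp$.

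Writing $L_p = g \cdot h$ from the second step, the third step yields $h(z^k) \in \Zp^\times$. Since specialization at a classical weight followed by reduction mod $p$ factors through the maximal ideal $(p,[\gamma]-1)$ of $\Lambda$ (as $\gamma \in 1+p\Zp$ implies $\gamma^k \equiv 1 \pmod p$ for a topological generator $\gamma$ of $\Gammawj$), $h$ is a unit modulo the maximal ideal, so $h \in \Lambda^\times$, giving $\Iceis_{\mc} = (L_p)$. The main obstacle is the second step --- identifying the $\Lambda$-adic Eisenstein constant term as a member of the cuspidal Eisenstein ideal --- which sits at the technical heart of the Mazur--Wiles--Ohta program.
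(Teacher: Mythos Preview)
Your proposal is correct. The paper dispatches this theorem in one sentence by citing \cite[Proposition 3.1.9]{Ohta-congruence}, which identifies $\Tcm/\Iceis_{\mc}$ with $\Lambda/(L_p)$ in general; you instead unpack that result in the rank-one setting. Your step~2 (the $\Lambda$-adic constant-term comparison giving $L_p \in \Iceis_{\mc}$) is exactly the easy direction of Ohta's argument, and your steps~3--4 recover the reverse inclusion by specializing to a single weight and checking the quotient is a unit. One refinement worth noting: the lower bound $n_k \geq \ord_p\bigl(a_0(E^{\ord}_{k,\psi_\m})\bigr)$ that you attribute to Ribet is in fact elementary under \eqref{r1} --- since the localized cuspidal space has $\Zp$-rank one and the quotient of modular forms by cuspforms is $\Zp$-torsion-free, the reduction of $E^{\ord}_{k,\psi_\m}$ modulo $p^{\ord_p(a_0)}$ lands in the cuspidal part and is automatically a unit multiple of $f_k$; no Deligne--Serre lifting is required. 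So your decomposition genuinely isolates a simplification available in the rank-one case that the general Ohta machinery does not exploit. Finally, your appeal to Lemma~\ref{lemma:r1togoren} requires \fk, which is absent from the theorem statement but ambient in the subsection, so this is not a gap.
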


\begin{proof}
This theorem is simply \cite[Proposition 3.1.9]{Ohta-congruence} in the special case when \eqref{r1} holds.
\end{proof}

Under \eqref{r1}, we can consider  $L_p^+(\mc)$  in $\Tcm[[\Zpx]] \cong \Zp[[\Gammawj \times \Zpx]]$, and we write $L_p^+(\mc)$ as $L_p^+(\kappa)$ where $\kappa$ is thought to range over weights in $\W_j$.

The following theorem is the simplified verison of Theorem \ref{thm:anlowerbndgen} when \eqref{r1} holds.  We denote by $f_k$ the unique normalized cuspidal eigenform of weight $k$ and level $\Gamma_0$ congruent to $E_{k,\psi}^{\ord}$.

\begin{thm}
\label{thm:anlowerbnd}
Fix an Eisenstein maximal ideal $\m \subseteq \T$ satisfying  \fk~and \eqref{r1}.  Then
$$
L_p(\psi_{\m}^{-1},\kappa)\text{~~divides~~} L^+_{p}(\kappa)
$$
in $\Zp[[\Gammawj \times \Zpx]]$.  In particular,
$$
\mu(f_k,\omega^a) \geq \ord_p(L_p(\psi_{\m}^{-1},z^k)) = \ord_p( B_{k,\psi_{\m}}/k)
$$
for each even $a$ with $0 \leq a \leq p-1$ and for all $k \geq 2$ with $k \equiv j(\m) \pmod{p-1}$.
\end{thm}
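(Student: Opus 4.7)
The plan is to deduce this theorem directly from its more general counterpart Theorem \ref{thm:anlowerbndgen} by translating the abstract generator $\Leispsi$ into the concrete $p$-adic $L$-function $L_p(\psi_{\m}^{-1},\kappa)$ made available by the rank-one hypothesis.

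First I would check that the hypotheses of Theorem \ref{thm:anlowerbndgen} are satisfied. Since $\m$ satisfies \fk\ and \eqref{r1}, Lemma \ref{lemma:r1togoren} gives both \eqref{goren} and \eqref{cuspgoren}, so $\m$ satisfies \all. Therefore Theorem \ref{thm:anlowerbndgen} applies and yields the divisibility $\Leispsi \mid L_p^+(\mc)$ in $\Tcm[[\Gammac]]$, for $\Leispsi$ any generator of the principal ideal $\Iceis_{\mc}$.

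Next I would identify this generator with a $p$-adic $L$-function. By Theorem \ref{thm:eisgen}, under \eqref{r1} the restriction $L_p(\psi_{\m}^{-1},\kappa)|_{\W_{j(\m)}}$ generates $\Iceis_{\mc}$, so we may take $\Leispsi = L_p(\psi_{\m}^{-1},\kappa)$. Now \eqref{r1} provides an isomorphism $\Tcm \cong \Zp[[\Gammawj]]$, under which $L_p^+(\mc) \in \Tcm[[\Zpx]]$ becomes $L_p^+(\kappa) \in \Zp[[\Gammawj \times \Zpx]]$; transporting the divisibility across this isomorphism gives the first assertion.

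For the second assertion I would specialize $\kappa$ at a classical weight $k \equiv j(\m) \pmod{p-1}$, $k \geq 2$, and then project onto the $\omega^a$-component. Specialization sends $L_p^+(\kappa)$ to $L_p^+(f_k)$ and sends the divisor $L_p(\psi_{\m}^{-1},\kappa)$ to the \emph{number} $L_p(\psi_{\m}^{-1},z^k) \in \Zp$, which therefore divides $L_p^+(f_k)$ in $\Zp[[\Zpx]]$; after projection to the $\omega^a$-component, this number still divides $L_p^+(f_k,\omega^a)$ in $\Zp[[1+p\Zp]]$, forcing $\mu(f_k,\omega^a) \geq \ord_p(L_p(\psi_{\m}^{-1},z^k))$. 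To convert this to the Bernoulli statement, I would invoke the explicit interpolation formula recalled just before Theorem \ref{thm:eisgen}, namely $L_p(\psi_{\m}^{-1},z^k) = -(1-\psi_{\m}(p)p^{k-1}) B_{k,\psi_{\m}}/k$, and note that since $k \geq 2$ the Euler-type factor $1-\psi_{\m}(p)p^{k-1}$ is a $p$-adic unit, so its $p$-adic valuation equals $\ord_p(B_{k,\psi_{\m}}/k)$. Essentially all of the genuine work has already been carried out in Theorems \ref{thm:anlowerbndgen} and \ref{thm:eisgen}; the only mild obstacle is bookkeeping the identification of variables under \eqref{r1} (conflating the Hida-family parameter with the weight-space parameter on $\W_{j(\m)}$) so that the divisibility truly takes place inside $\Zp[[\Gammawj \times \Zpx]]$ rather than just inside $\Tcm[[\Zpx]]$.
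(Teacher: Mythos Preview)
Your proposal is correct and follows exactly the paper's own argument: invoke Lemma~\ref{lemma:r1togoren} to obtain \eqref{goren} and \eqref{cuspgoren}, then apply Theorem~\ref{thm:anlowerbndgen} with the generator identified as $L_p(\psi_{\m}^{-1},\kappa)$ via Theorem~\ref{thm:eisgen}. Your write-up simply unpacks the specialization and the Bernoulli interpolation more explicitly than the paper does, but the route is identical.
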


\begin{proof}
This theorem follows immediately from Theorem \ref{thm:anlowerbndgen} and Theorem \ref{thm:eisgen} noting that Lemma \ref{lemma:r1togoren} gives us that \eqref{goren} and \eqref{cuspgoren} hold.
\end{proof}

We can likewise deduce the analogue of Theorem \ref{thm:anfullintro} of the introduction from Theorem \ref{thm:anfullgen}.

\begin{thm}
Fix an Eisenstein maximal ideal $\m \subseteq \T$ satisfying \fk~and \eqref{r1} and
for which
$$
\ord_p(B_{k,\psi_{\m}}/k) = \ord_p(a_p(k)-1).
$$
for some $k \equiv j(\m) \pmod{p-1}$.
Then 
$$
L^+_{p}(\kappa,\omega^0) = L_p(\psi_{\m}^{-1},\kappa) \cdot U
$$ 
where $U$ is a unit in $\Zp[[\Gammawj \times (1+p\Zp)]$.
In particular, for every $k \geq 2$ with $k \equiv j(\m) \pmod{p-1}$, we have that 
$$\lambda(f_k)=0 \text{~~and~~}\mu(f_k) = \ord_p(L_p(\psi_{\m},k)) = \ord_p(B_{k,\psi_{\m}}/k).$$  
That is, $L_p^+(f_k,\omega^0)$ is simply a power of $p$, up to a unit.
\end{thm}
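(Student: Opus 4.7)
The plan is to reduce to Theorem \ref{thm:anfullgen} by showing that our hypotheses force condition \lu, after which the factorization and the numerical consequences for each $f_k$ follow by direct specialization.

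First, since $\m$ satisfies \fk~and \eqref{r1}, Lemma \ref{lemma:r1togoren} gives both \eqref{goren} and \eqref{cuspgoren}, so by Theorem \ref{thm:eisenprincipal} the cuspidal Eisenstein ideal $\Iceis_{\mc}$ is principal. By Theorem \ref{thm:eisgen}, one distinguished generator is $L_p(\psi_\m^{-1}, \kappa)$. Since $U_p - 1$ always lies in $\Iceis_{\mc}$, we may write $U_p - 1 = L_p(\psi_\m^{-1}, \kappa) \cdot Q$ for some $Q \in \Tcm$.

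The main step is to verify that $Q$ is a unit. Under \eqref{r1}, $\Tcm \cong \Zp[[\Gammawj]] \cong \Zp[[T]]$ is a regular local ring, and specialization at the weight $k$ provided by hypothesis corresponds to $T \mapsto \gamma^k - 1 \in p\Zp$; in particular, reduction mod $p$ of any element of $\Tcm$ is already recovered from its weight-$k$ specialization. The weight-$k$ specialization of $U_p - 1$ is $a_p(f_k) - 1$, while the specialization of $L_p(\psi_\m^{-1}, \kappa)$ is $-(1 - \psi_\m(p) p^{k-1}) B_{k,\psi_\m}/k$, whose $p$-adic valuation equals $\ord_p(B_{k,\psi_\m}/k)$ (the Euler factor is a $p$-adic unit for $k \geq 2$). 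The hypothesis $\ord_p(B_{k,\psi_\m}/k) = \ord_p(a_p(f_k) - 1)$ then forces the specialization of $Q$ at $k$ to be a $p$-adic unit, and hence $Q$ itself to be a unit in $\Tcm$. Thus $U_p - 1$ also generates $\Iceis_{\mc}$, establishing \lu.

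With \lu~in hand, Theorem \ref{thm:anfullgen} applies and yields $L_p^+(\mc, \omega^0) = \Leispsi \cdot U$ for some unit $U \in \Tcm[[1+p\Zp]]$; choosing $\Leispsi = L_p(\psi_\m^{-1}, \kappa)$ and using \eqref{r1} to identify $\Tcm[[1+p\Zp]]$ with $\Zp[[\Gammawj \times (1+p\Zp)]]$ gives the first factorization. To deduce the rest, specialize this identity at each classical $k \equiv j(\m) \pmod{p-1}$: the right-hand side becomes $L_p(\psi_\m^{-1}, z^k)$ times a unit in $\Zp[[1+p\Zp]]$, so $L_p^+(f_k, \omega^0)$ is a power of $p$ times a unit, giving $\lambda(f_k) = 0$ and $\mu(f_k) = \ord_p(L_p(\psi_\m^{-1}, z^k)) = \ord_p(B_{k,\psi_\m}/k)$.

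The only non-routine point is the unit-verification step: one needs that an equality of valuations at a single weight, combined with the rigidity of $\Tcm \cong \Zp[[T]]$ provided by \eqref{r1}, is enough to conclude unit-ness of $Q$ in the entire Hecke algebra. Without \eqref{r1}, where $\Tcm$ may have several irreducible components corresponding to distinct cuspidal branches of the Hida family, a single weight would not detect the behavior on every branch; this is precisely the reason the general Theorem \ref{thm:anfullgen} must be invoked through the stronger assumption \lu.
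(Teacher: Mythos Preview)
Your proof is correct and follows essentially the paper's approach: the paper states this theorem as a corollary of Theorem \ref{thm:anfullgen}, and the reduction requires exactly the verification of \lu\ that you carry out. Your argument that $Q=(U_p-1)/L_p(\psi_\m^{-1},\kappa)$ is a unit because its weight-$k$ specialization is a $p$-adic unit is precisely the mechanism behind Proposition \ref{prop:goren} (specialized to the situation where $\Tckm\cong\Zp$ under \eqref{r1}), so you could equally well have cited that proposition after observing that the hypothesis $\ord_p(B_{k,\psi_\m}/k)=\ord_p(a_p(f_k)-1)$ is exactly the statement that $U_p-1$ generates $\Iceis_{k,\mck}$.
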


\subsection{Numerical verification of Conjecture \ref{conj:anmu}}
\label{sec:numerical}
Using Proposition \ref{prop:verify}, we  numerically verified that the $a=0$ case of this  conjecture holds for $N=1$ and all irregular pairs $(p,k)$ with $p<2000$. However, in every such case, $\ord_\varpi(\Leispsi(\p_f)) =  \ord_\varpi(a_p(f)-1)$, and thus
the inequalities in (\ref{eqn:ineqgen}) automatically imply the conjecture holds.
Thus these checks do not yield much evidence for the conjecture.

However, when $a \not\equiv 0 \pmod{p-1}$,  the upper bound in (\ref{eqn:ineqgen}) does not a priori hold and we instead directly computed the relevant $\mu$-invariant to verify this conjecture.  Namely, we verified Conjecture \ref{conj:anmu} holds when $N=1$ and $p < 750$ for all even values of $a$ (except when $p=547$ and $k=486$ when $\eqref{r1}$ does not hold).  Further, for $N$ prime, let $\chi_N$ denote the unique quadratic character of conductor $N$.  We verified this conjecture for $\chi_N$ for $N=3,5,7$ and $p< 500,400,300$ respectively, again for all even values of $a$.  These computations included approximately 100 distinct eigenforms and in all of these examples the above mentioned exception was the only time \eqref{r1} failed.

We further mention that in all of  these tests there was a single example where the $a=0$ case of the conjecture did not immediately follow from the bounds in (\ref{eqn:ineqgen}).  Namely, $\ord_{19}(B_{8,\chi_5}) = 2$ and in this case, there is a unique cuspidal eigenform $f \in S_8(\Gamma_1(5),\chi_5,\Z_{19})$ congruent to $E_{8,\chi_5}$, and as expected from the Bernoulli divisibility, these forms are congruent modulo $19^2$.  Moreover,  $\ord_{19}(a_{19}(f) - 1) = 3$ and thus the bounds in (\ref{eqn:ineqgen}) do not meet. 
In this case, we computed that $\mu(f,\omega^0) = 2$ which verifies Conjecture \ref{conj:anmu}. 
We further computed that $\lambda(f,\omega^0) = 1$ in contrast to the behavior of these invariants described by Theorem \ref{thm:anfullgen} when the bounds of (\ref{eqn:ineqgen}) do meet.

We briefly mention here the method used to compute the relevant $\mu$-invariants as it was a bit novel and to explain why $\eqref{r1}$ was needed.  Namely, fix $N$, $\psi$ and a pair $(p,k)$ such that $p \mid B_{k,\psi}$.  We formed a random overconvergent modular symbol $\Phi$ of level $Np$, weight $k$, and nebentype $\psi$ as in \cite{PS-explicit}.  Working modulo a low accuracy (around modulo $p^4$), we projected $\Phi$ to the ordinary subspace (i.e.\ computed $\Phi | U_p^4$), and then used the other Hecke operators to form a (non-boundary) eigensymbol whose $\ell$-th eigenvalue was congruent to $1+\chi(\ell) \ell^{k-1}$.  When $\eqref{r1}$ holds, there is a unique such symbol and thus this symbol must be the one associated to the cuspidal eigenform we are considering.  Evaluating the resulting symbol at $\{\infty\}-\{0\}$ then gives the relevant $p$-adic $L$-function (modulo $p^4$) from which an upper bound for the $\mu$-invariant can be obtained.  In every case, this upper bound matched our proven lower bound verifying the conjecture.

\section{Selmer groups}

Let $f = \sum_n a_n q^n$ be a normalized eigenform in $S_k(\Gamma_1(N), \psi, \Qpbar)$.  We further assume that $f$ is $p$-ordinary, that is, $\ord_p(a_p) =1$.
 Attached to $f$ we have its (homological) Galois representation $\rho_f : G_{\Q} \to \Aut(V_f)$ where $V_f$ is a two-dimensional vector space over $\Kf = \Qp(\{a_n\}_n)$.   We note that $\det(\rho_f) = \psi \ve^{k-1}$ where $\ve$ is the $p$-adic cyclotomic character.  Set $\Of$ equal to the ring of integers of $\Kf$, and choose a Galois stable $\Of$-lattice $T_f \subseteq V_f$.
 
Since $f$ is $p$-ordinary, the representation $\rho_f$ is locally reducible at $p$ \cite[Theorem 4.2.7(2)]{Hida-GMF}, and we have an exact sequence of $G_{\Qp}$-representations
$$
0 \to \Of(\eta^{-1} \psi \ve^{k-1}) \to T_f \to \Of(\eta) \to 0
$$ 
where $\eta$ is an unramified character which sends $\Frob_p$ to $\alpha_p$, the unit root of $x^2-a_p x + \psi(p) p^{k-1}$.

Further, set $A_f = V_f/T_f$ which is isomorphic to $(\Kf / \Of)^2$ and endowed with an action of $G_{\Q}$.  
We then have an exact sequence of $G_{\Qp}$-representations
\begin{equation}
\label{eqn:ordline}
0 \to \KOf(\eta^{-1} \psi \ve^{k-1}) \to A_f \to \KOf(\eta) \to 0.
\end{equation}
and
\begin{equation}
\label{eqn:ordlinetor}
0 \to \Of / \varpi^r \Of(\eta^{-1}\psi \ve^{k-1}) \to A_f[\varpi^r] \to \Of / \varpi^r \Of(\eta) \to 0
\end{equation}
where  $\varpi$ is a uniformizer of $\Of$.  Lastly, set $\Ff = \Of/\varpi \Of$.

\subsection{Definitions of Selmer groups}

Let $\Qinf$ denote the cyclotomic $\Zp$-extension of $\Q$ and set $\Gamma := \Gal(\Qinf/\Q)$.
Following Greenberg \cite{Greenberg-ordinary}, we define the Selmer group $\Sel(\Qinf,A_f)$ as the collection of classes $\sigma$ in $H^1(\Qinf,A_f)$ such that (1) for $v$ a place of $\Qinf$ not over $p$, $\res_v(\sigma)$ is unramified; that is, $\sigma$ is in the kernel of
$$
H^1(\Qinf,A_f) \to H^1(I_v,A_f)
$$
where $I_v$ is a choice of the inertia group at $v$, and (2) $\sigma$ lies in the kernel of 
$$
H^1(\Qinf,A_f) \to H^1(I_{p,\infty},A_f) \to H^1(I_{p,\infty},K/\O)
$$
where the final map is induced by (\ref{eqn:ordline}) and $I_{p,\infty}$ is a choice of an inertia group at the unique prime of $\Qinf$ over $p$.

One can as well analogously define these Selmer groups over $\Q$ or over $\Q(\mu_{p^\infty})$.  We can also define Selmer groups of $A_{f,j} := A_f \otimes \omega^j$ by simply twisting (\ref{eqn:ordline}) and using this new sequence to define the local condition at $p$.  Since $\Q(\mu_p)/\Q$ has degree prime to $p$, we have the following relationship between these Selmer groups:
$$
\Sel(\Qcyc,A_f) \cong \bigoplus_{j=0}^{p-2} \Sel(\Qinf,A_{f,j}).
$$

\begin{remark}
We note that if $f$ arises from some elliptic curve $E/\Q$, then $\Sel(\Q,A_f)$ defined above may not equal the $p$-adic Selmer group of $E$, $\Sel(\Q,E[p^\infty])$, whose local conditions are defined by the Kummer map.  Indeed, for $\ell \neq p$, the image of the Kummer map vanishes while there can be unramified cocyles at $\ell$ for primes of bad reduction.  As a result, $\Sel(\Q,A_f)$ can be larger than $\Sel(\Q,E[p^\infty])$ and we will see that control theorems (see Proposition \ref{prop:controlinf}) work better for these larger Selmer groups.
\end{remark}

The definition of the local condition at $p$ in all of these Selmer groups is given by restriction to the inertia group at $p$.  However, as proven in the following lemma, restricting to the decomposition at $p$ instead does not change the definition of these Selmer groups.  (In the language of Greenberg \cite{Greenberg-ordinary}, the strict Selmer group matches the full Selmer group in our case.)

\begin{lemma}
\label{lemma:strict}
Let $D_{p,\infty} \supset I_{p,\infty}$ be a choice of decomposition and inertia groups at the unique prime of $\Qinf$ over $p$, and likewise define $D_p \supset I_p$ as decomposition and inertia groups at $p$.  Then the following maps are injective:
$$
H^1(D_{p},\KOf(\eta\omega^j)) \to H^1(I_{p},\Kf/\Of(\omega^j)).
$$
and
$$
H^1(D_{p,\infty},\KOf(\eta\omega^j)) \to H^1(I_{p,\infty},\Kf/\Of(\omega^j)).
$$
\end{lemma}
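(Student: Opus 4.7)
The strategy is to apply the inflation--restriction sequence to both maps and reduce in each case to the vanishing of a Frobenius cohomology group over $\widehat{\Z}$. Since $\eta$ is unramified, it restricts trivially to any inertia group, so $\KOf(\eta\omega^j)$ and $\KOf(\omega^j)$ are canonically identified as $I_p$-modules (resp.\ as $I_{p,\infty}$-modules). After this identification both displayed maps are just restriction from decomposition to inertia. Moreover, because $\Qinf/\Q$ is totally ramified at $p$, the quotients $D_p/I_p$ and $D_{p,\infty}/I_{p,\infty}$ are both procyclic, generated by the same Frobenius $\phi$. Thus the kernels of the two maps are, writing $M := \KOf(\eta\omega^j)$, the Frobenius coinvariants $M^{I_p}/(\phi-1)M^{I_p}$ and $M^{I_{p,\infty}}/(\phi-1)M^{I_{p,\infty}}$.

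Next I would split into cases according to $j \bmod (p-1)$. If $j \not\equiv 0 \pmod{p-1}$, then $\omega^j$ is non-trivial on the tame quotient of the inertia group in question; this is clear for $I_p$, and for $I_{p,\infty}$ it follows from $\Qinf \cap \Q(\mu_p) = \Q$, which implies that the tame quotient of $I_{p,\infty}$ still surjects onto $\F_p^\times$. Picking $\tau$ in tame inertia with $\omega^j(\tau) = \zeta$ a non-trivial $(p-1)$-st root of unity, the element $\zeta - 1$ is a unit in $\Of$, so $M^I$ is killed by a unit and is therefore zero. In this case both cohomology groups vanish trivially.

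The case $j \equiv 0 \pmod{p-1}$ is where the substance lies. Here $M^I = \KOf$ with $\phi$ acting as multiplication by $\eta(\phi) = \alpha_p$, so the cohomology is $\KOf/(\alpha_p - 1)\KOf$. Since $\KOf$ is a divisible $\Of$-module, multiplication by any nonzero element of $\Of$ is surjective on $\KOf$, so the quotient vanishes as soon as $\alpha_p \neq 1$. The main (but minor) obstacle is therefore to verify $\alpha_p \neq 1$: if it were, the relation $\alpha_p\beta_p = \psi(p)p^{k-1}$ would force $a_p(f) = 1 + \psi(p)p^{k-1}$, which is the $U_p$-eigenvalue of $E_{k,\psi}^{\ord}$; this contradicts strong multiplicity one, since a normalized cuspidal eigenform and an Eisenstein series cannot share the same system of Hecke eigenvalues. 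With this verified, both $H^1$'s vanish and the two restriction maps are injective.
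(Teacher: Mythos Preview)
Your overall approach --- inflation--restriction, the case split on $j \bmod (p-1)$, and the divisibility of $\Kf/\Of$ --- is exactly the paper's. The one genuine gap is in your justification that $\alpha_p \neq 1$. From $\alpha_p = 1$ you correctly deduce $a_p(f) = 1 + \psi(p)p^{k-1}$, but then you invoke strong multiplicity one against $E_{k,\psi}$. That does not work: you have only shown that the $p$-th Hecke eigenvalues coincide, not the full system of eigenvalues. The lemma is stated for an arbitrary $p$-ordinary cuspidal eigenform $f$, with no hypothesis linking its eigenvalues at primes $\ell \neq p$ to those of any Eisenstein series, so there is nothing for multiplicity one to contradict.

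The paper closes this step differently: it observes that $a_p = 1 + \psi(p)p^{k-1}$ violates the Ramanujan--Petersson (Weil) bound $|a_p| \leq 2p^{(k-1)/2}$. Replace your final sentence with that appeal and the argument is complete.
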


\begin{proof}
We check the injectivity of the first map as the argument for the second is identical.  The kernel of this map is 
$H^1(\Qp^{\un}/\Qp,(\KOf(\eta\omega^j))^{I_p})$ where $\Qp^{\un}$ is the maximal unramified extension of $\Qp$.  If $j \not\equiv 0 \pmod{p-1}$, then $(\KOf(\eta\omega^j))^{I_p}=0$ and we are done.

Otherwise, since $\Gal(\Qp^{\un}/\Qp)$ is topologically cyclic generated by $\Frob_p$, we have that $H^1(\Qp^{\un}/\Qp,\KOf(\eta))$ is given by the cokernel of
$$
\KOf(\eta) \stackrel{\Frob_p - 1}{\lra} \KOf(\eta).
$$
As long as $\Frob_p -1$ is not identically zero on $\KOf(\eta)$ we are done since $\KOf$ is divisible.  Since $\eta(\Frob_p) = \alpha_p$, the unit root of $x^2 - a_p x + \psi(p) p^{k-1}$, we simply need to check then that $\alpha_p \neq 1$.  But if $\alpha_p = 1$, then the other root of this quadratic would be $\psi(p) p^{k-1}$.  Hence $a_p = 1 + \psi(p) p^{k-1}$ which violates the Weil bounds.
\end{proof}

We will need to consider one last kind of Selmer group; namely, we will define the Selmer group of the finite Galois module $A_f[\varpi^n]$.  We follow the method of \cite[Section 4.2]{EPW} and note that these Selmer groups depend not just on $A_f[\varpi^n]$, but on the modular form $f$ itself.  Namely, if we are working over $\Qinf$, then our local condition at $v \nmid p$ is given by the image of $A_f^{G_v}/\varpi^n$ in $H^1(\Q_{\infty,v},A_f[\varpi^n])$ where $G_v = G_{\Q_{\infty,v}}$. At $p$, we use (\ref{eqn:ordlinetor}) to define our local condition just as in the characteristic 0 case.  As before, we can also define these objects over $\Q$ or $\Q(\mu_{p^\infty})$ and we can analogously define Selmer groups of $A_{f,j}[\varpi^n]$.

The reason for the above definition of the local condition at $v \nmid p$ is that it is exactly the condition needed to make the following lemma true.

\begin{lemma}
\label{lemma:selmodp}
There are natural maps
$$
\Sel(\Q,\Afj[\varpi^r]) \to \Sel(\Q,\Afj)[\varpi^r],
$$
$$
\Sel(\Qinf,\Afj[\varpi^r]) \to \Sel(\Qinf,\Afj)[\varpi^r]
$$
which are surjective and have respective kernels $H^0(\Q,\Afj)/p^rH^0(\Q,\Afj)$ and $H^0(\Qinf,\Afj)/p^rH^0(\Qinf,\Afj)$.
\end{lemma}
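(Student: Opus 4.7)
The plan is to derive both maps from the long exact cohomology sequence attached to the short exact sequence
$$
0 \to \Afj[\varpi^r] \to \Afj \xrightarrow{\varpi^r} \Afj \to 0,
$$
and then verify that they restrict to the Selmer subgroups. Taking $G_F$-cohomology for $F = \Q$ and $F = \Qinf$ gives the fundamental exact sequence
$$
0 \to H^0(F,\Afj)/\varpi^r H^0(F,\Afj) \to H^1(F,\Afj[\varpi^r]) \to H^1(F,\Afj)[\varpi^r] \to 0,
$$
and I would define the map of the lemma as the restriction of the right-hand projection to $\Sel(F,\Afj[\varpi^r])$. The real content then breaks into three checks.

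First, I would show the image lies in $\Sel(F,\Afj)[\varpi^r]$ by a local verification. At each $v\nmid p$, the chosen local condition on $\Afj[\varpi^r]$ is by definition the image of $\Afj^{G_v}/\varpi^r$ under the connecting homomorphism, which is precisely the kernel of the local projection $H^1(F_v,\Afj[\varpi^r])\to H^1(F_v,\Afj)$; so the image of any such Selmer class in $H^1(F_v,\Afj)$ is already trivial, hence unramified. At a place above $p$, I would apply the functorial $G_{\Qp}$-equivariant map from \eqref{eqn:ordlinetor} to \eqref{eqn:ordline} to obtain a commutative square on $H^1$ of inertia; combined with Lemma \ref{lemma:strict}, this forces the image of a Selmer class in $H^1(I_{p,\infty},\KOf(\eta\omega^j))$ to vanish.

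Second, I would identify the kernel with $H^0(F,\Afj)/\varpi^r$. The cohomological kernel is already this group, so it suffices to check that every such class sits inside $\Sel(F,\Afj[\varpi^r])$. At $v\nmid p$, functoriality of the connecting homomorphism places the local image of any global $a\in\Afj^{G_F}$ inside the image of $\Afj^{G_v}/\varpi^r$, which is the local condition at $v$ by definition. At the place above $p$, a parallel diagram chase using the map from \eqref{eqn:ordlinetor} to \eqref{eqn:ordline} together with Lemma \ref{lemma:strict} shows that the corresponding class in $H^1(I_{p,\infty},\Of/\varpi^r(\eta\omega^j))$ is the restriction of a coboundary from $\KOf(\eta\omega^j)$ living in the unramified subgroup, hence vanishes.

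For surjectivity, given $\bar\sigma\in\Sel(F,\Afj)[\varpi^r]$, I would choose any cohomological lift $\sigma\in H^1(F,\Afj[\varpi^r])$ and then modify $\sigma$ place-by-place to lie in the local Selmer condition, using the commutative diagram
$$
\begin{CD}
0 @>>> \Sel(F,\Afj[\varpi^r]) @>>> H^1(F,\Afj[\varpi^r]) @>>> \prod_v H^1(F_v,\Afj[\varpi^r])/L_v' \\
@. @VVV @VVV @VVV \\
0 @>>> \Sel(F,\Afj)[\varpi^r] @>>> H^1(F,\Afj)[\varpi^r] @>>> \prod_v H^1(F_v,\Afj)/L_v
\end{CD}
$$
to which a snake-lemma-style argument applies. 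The main technical obstacle throughout is the compatibility of Selmer conditions at the prime above $p$: one must verify that the inclusion $\Of/\varpi^r(\eta\omega^j)\hookrightarrow\KOf(\eta\omega^j)$ induces a clean correspondence between the two local Selmer subgroups on $H^1$ of inertia, which is precisely what Lemma \ref{lemma:strict} and the explicit form of \eqref{eqn:ordline} and \eqref{eqn:ordlinetor} are designed to provide.
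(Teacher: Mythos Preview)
Your approach is the same as the paper's: the paper's entire proof is the single sentence ``Verifying this lemma is just a diagram chase,'' and what you have written is precisely that diagram chase spelled out, starting from the Kummer sequence $0 \to \Afj[\varpi^r] \to \Afj \xrightarrow{\varpi^r} \Afj \to 0$ and checking compatibility of the local conditions. One small quibble: in your surjectivity step the phrase ``modify $\sigma$ place-by-place'' is misleading, since the only freedom in the lift is by a global element of $H^0(F,\Afj)/\varpi^r$; the point is rather that any lift already satisfies the local conditions, which is exactly what injectivity of the right vertical map in your diagram gives and is the reason the paper chose the local condition at $v\nmid p$ to be the image of $\Afj^{G_v}/\varpi^r$.
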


\begin{proof}
Verifying this lemma is just a diagram chase.
\end{proof}

\subsection{The control theorem}

We now state a key control theorem for these Selmer groups following closely \cite{Greenberg-CIME}.

 \begin{prop}
\label{prop:controlinf}
If $H^0(\Q,\Afj[\varpi])=0$, then the natural map
$$
\Sel(\Q,\Afj) \to \Sel(\Qinf,\Afj)^\Gamma
$$
is injective and has cokernel bounded by the size of $\Of/(\alpha_p-1)\Of$.  Moreover, the map is surjective if $j \not\equiv 0 \pmod{p-1}$.
\end{prop}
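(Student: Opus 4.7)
The plan is to follow Greenberg's standard control theorem strategy: combine an inflation--restriction computation in global $H^1$ with a local-at-$p$ analysis that produces the bound $\Of/(\alpha_p-1)\Of$.

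First I would establish that $H^0(\Qinf,\Afj)=0$. Since $\Gamma = \Gal(\Qinf/\Q) \cong \Zp$ is pro-$p$, any nonzero Galois submodule of $\Afj$ fixed by $G_\Qinf$ would contain nonzero $\varpi$-torsion, and then $\Gamma$-invariance would produce a nonzero element of $\Afj[\varpi]^{G_\Q}$, contradicting the hypothesis. With this vanishing in hand, the inflation--restriction sequence
$$0 \to H^1(\Gamma, H^0(\Qinf,\Afj)) \to H^1(\Q,\Afj) \to H^1(\Qinf,\Afj)^\Gamma \to H^2(\Gamma, H^0(\Qinf,\Afj))$$
has both outer terms zero (the second because $\Gamma \cong \Zp$ has cohomological dimension $1$), so $H^1(\Q,\Afj) \xrightarrow{\sim} H^1(\Qinf,\Afj)^\Gamma$.

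Next, I would set up the commutative diagram
$$\begin{CD}
0 @>>> \Sel(\Q,\Afj) @>>> H^1(\Q,\Afj) @>>> \prod_v H^1(\Q_v,\Afj)/L_v \\
@. @VV s V @VV \wr V @VV g V \\
0 @>>> \Sel(\Qinf,\Afj)^\Gamma @>>> H^1(\Qinf,\Afj)^\Gamma @>>> \bigl(\prod_w H^1(\Qinf_w,\Afj)/L^{\infty}_w\bigr)^\Gamma
\end{CD}$$
and apply the snake lemma: $s$ is injective (the middle map is an isomorphism), and $\coker(s) \hookrightarrow \ker(g)$. So the real task is to compute $\ker(g)$ place-by-place.

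For $v\nmid p$, I would argue that the local contribution to $\ker(g)$ vanishes. Archimedean places contribute nothing since $p$ is odd and $\Afj$ is $p$-primary. For finite $v \nmid p$, the place is unramified in $\Qinf/\Q$, so Shapiro and a Frobenius computation show that the map on singular quotients $H^1(\Q_v,\Afj)/H^1_{\mathrm{unr}} \to \prod_{w\mid v}H^1(\Qinf_w,\Afj)/H^1_{\mathrm{unr}}$ is injective (see the argument in \cite[\S3]{Greenberg-CIME}). The more delicate case is $v=p$, handled as follows: using Lemma \ref{lemma:strict}, the local condition at $p$ is cut out by the map to $H^1(D_p,\KOf(\eta\omega^j))$, so the $p$-component of $\ker(g)$ sits inside the kernel of
$$H^1(\Q_p,\KOf(\eta\omega^j)) \to H^1(\Qinf_p,\KOf(\eta\omega^j))^\Gamma,$$
which by inflation--restriction equals $H^1(\Gamma, \KOf(\eta\omega^j)^{G_{\Qinf_p}})$. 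If $j \not\equiv 0 \pmod{p-1}$, then $\omega^j$ is nontrivial on $I_{p,\infty}$, so this invariant module vanishes and $\ker(g) = 0$, giving surjectivity. If $j \equiv 0$, then $\KOf(\eta)^{G_{\Qinf_p}} = \KOf[\alpha_p-1] \cong \Of/(\alpha_p-1)\Of$, and because $\eta$ is unramified while $\Qinf_p/\Q_p$ is totally ramified, $\Gamma$ acts trivially on this submodule, so $H^1(\Gamma, \Of/(\alpha_p-1)\Of) \cong \Of/(\alpha_p-1)\Of$, which is exactly the asserted bound.

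The main technical point to watch is the vanishing of the local kernels at $v\nmid p$, since the definition of $L_v$ uses unramified cohomology and one must verify that this is preserved at each place of $\Qinf$ above $v$; this is standard but relies crucially on the fact that only $p$ ramifies in the cyclotomic $\Zp$-extension, so that for $v \nmid p$ the inertia subgroups of $\Q_v$ and $\Qinf_w$ coincide and the analysis reduces to a cyclic Frobenius computation on $\Afj^{I_v}$.
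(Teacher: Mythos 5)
Your proof is correct and follows essentially the same route as the paper's: deduce $H^0(\Qinf,\Afj)=0$ from the hypothesis, apply inflation--restriction to get an isomorphism on global $H^1$, set up the commutative diagram of Selmer groups and local conditions, invoke the snake lemma, and bound the cokernel by the kernel of the map on local terms. The only cosmetic differences are that the paper works with $H^1(\Q_\Sigma/\Q,\Afj)$ and phrases the local condition as restriction to $H^1(I_v,\cdot)$ rather than passing through the singular quotient, and at the place $p$ the paper records $|\ker(r)|=|H^0(\Q_{\infty,p},\KOf(\eta))|$ via coinvariants of a finite module under a procyclic group rather than spelling out that $\Gamma$ acts trivially on $\KOf[\alpha_p-1]$ --- both yield the same bound $\Of/(\alpha_p-1)\Of$.
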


\begin{proof}
By Lemma \ref{lemma:strict}, we have a commutative diagram:
$$
\begin{tikzcd}
\Sel(\Q,\Afj) \arrow[hookrightarrow]{r}{} \arrow{d}{} &H^1(\Q_\Sigma/\Q,\Afj) \arrow{r}{} \arrow{d}{h} & \H^{(p)}_{\loc} \times H^1(D_p,\KOf(\eta \omega^j)) \arrow{d}{r}\\
\Sel(\Qinf,\Afj)^\Gamma \arrow[hookrightarrow]{r}{}  &H^1(\Q_\Sigma/\Qinf,\Afj)^\Gamma \arrow{r}{} & (\H^{(p)}_{\infty,\loc} \times H^1(D_{p,\infty},\KOf(\eta \omega^j)))^\Gamma
\end{tikzcd}
$$
where $\H^{(p)}_{\loc} := \oplus_{\ell | N} H^1(I_\ell,A_f)$ and $\H^{(p)}_{\infty,\loc} := \oplus_{v | N} H^1(I_v,A_f)$ where $v$ runs over primes of $\Qinf$.   We seek to apply the snake lemma and thus we must analyze the kernel and cokernel of $h$ and the kernel of $r$.

The cokernel of $h$ maps to $H^2(\Gamma,H^0(\Qinf,\Afj))$ which vanishes since $\Gamma \cong \Zp$ has cohomological dimension 1.  The kernel of $h$ equals $H^1(\Gamma,H^0(\Qinf,\Afj))$.  To see that this group vanishes, it suffices to see that $H^0(\Qinf,\Afj)$ vanishes.  But by assumption $H^0(\Q,\Afj[\varpi])=0$ which implies $H^0(\Q,\Afj)=0$ which in turn implies $H^0(\Qinf,\Afj) =0$ as $\Gal(\Qinf/\Q)$ is pro-$p$.  Thus, $h$ is an isomorphism.

To determine the kernel of $r$, we first note that $I_\ell \cong I_v$ if $v | \ell \neq p$ as $\Qinf/\Q$ is unramified at $\ell$.  Thus $\H^{(p)}_{\loc}$ injects into $\H^{(p)}_{\infty,\loc}$, and
$\ker(r)$ equals the kernel of $H^1(D_p,\KOf(\eta \omega^j)) \to H^1(D_{p,\infty},\KOf(\eta \omega^j))$ which in turn is isomorphic to 
$$
H^1(\Q_{\infty,p}/\Qp, H^0(\Q_{\infty,p},\KOf(\eta \omega^j))).
$$  
If $j \not\equiv 0 \pmod{p-1}$, then the above $H^0$-term vanishes and $\ker(r)=0$.  Otherwise, since $\Gal(\Q_{\infty,p}/\Qp)$ is topologically cyclic,
we have $\ker(r)$ is given by the $\Gal(\Q_{\infty,p}/\Qp)$-coinvariants of $H^0(\Q_{\infty,p},\KOf(\eta))$.  
Since $\eta$ is the unramified character sending $\Frob_p$ to $\alpha_p$, we have 
$$
H^0(\Q_{\infty,p},\KOf(\eta)) = \ker(\KOf \stackrel{\times (\alpha_p-1)}{\lra} \KOf)
\cong \Of/(\alpha_p-1)\Of.
$$ 
But then 
$$
| \ker(r) | = | H^0(\Q_{\infty,p},\KOf(\eta)) | = | \Of/(\alpha_p-1)\Of |.
$$
The proposition then follows, in either case, from the snake lemma.
\end{proof}

\subsection{The main conjecture}

Let $\Lambda_{\O} := \Of[[1+p\Z_p]] \cong \Of[[\Gal(\Qinf/\Q)]]$. The following main conjecture relates these Selmer group over $\Qinf$ to $p$-adic $L$-functions.

\begin{conj}[Main Conjecture]
We have $\Sel(\Qinf,\Afj)^\vee$ is a finitely generated torsion $\Lambda_{\O}$-module, and 
$$
\chr_{\Lambda_{\O}[1/p]} \Sel(\Qinf,\Afj)^\vee = L^+_p(f,\omega^j) \cdot \Lambda_{\O}[1/p].
$$
\end{conj}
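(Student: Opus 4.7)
The plan is to prove the Main Conjecture by combining an Euler system divisibility with a complementary divisibility coming from Eisenstein congruences, and then to upgrade the statement from $\Lambda_{\O}[1/p]$ to $\Lambda_{\O}$ via a careful $\mu$-invariant analysis. First, I would apply Kato's Euler system \cite[Theorem 17.4.2]{Kato} to the $p$-adic representation attached to $f$. This simultaneously yields the finite generation and $\Lambda_{\O}$-torsionness of $\Sel(\Qinf, \Afj)^\vee$ (using that $L^+_p(f,\omega^j)$ is nonzero by the interpolation formula) and the divisibility
$$\chr_{\Lambda_{\O}[1/p]} \Sel(\Qinf, \Afj)^\vee \supseteq L^+_p(f,\omega^j) \cdot \Lambda_{\O}[1/p].$$

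For the reverse divisibility, the approach bifurcates according to the residual representation. When $\rhobar_f$ is irreducible, the Skinner--Urban Eisenstein-congruence construction on $\mathrm{GU}(2,2)$ supplies the matching divisibility in $\Lambda_{\O}[1/p]$ under mild hypotheses. When $\rhobar_f$ is reducible, which is the setting of this paper, I would instead leverage the paper's two-sided Eisenstein-ideal bounds: Theorem \ref{thm:anlowerbndgen_intro} on the analytic side and Theorem \ref{thm:alglowerbnd} on the algebraic side, both governed by the generator $\Leis$ of the cuspidal Eisenstein ideal. The common Eisenstein structure on the two sides should allow one to promote these lower bounds to a genuine reverse divisibility by tracing through the two-variable overconvergent modular symbol construction of the appendix.

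To upgrade from $\Lambda_{\O}[1/p]$ to the conjectured equality in $\Lambda_{\O}$, one must verify that the analytic and algebraic $\mu$-invariants coincide. When $U_p - 1$ generates the cuspidal Eisenstein ideal, Theorems \ref{thm:anfullintrogen} and \ref{thm:algfull} already supply this and the integral main conjecture is achieved, as already established in the Corollary above. More generally one would need to establish both Conjectures \ref{conj:anmuintro} and \ref{conj:algmu}, showing simultaneously that $\mu(f,\omega^j) = \mu(\Sel(\Qinf,\Afj)^\vee) = \ord_\varpi(\Leis(\p_f))$, after which the $p$-factor in Kato's divisibility would be accounted for and equality would follow.

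The principal obstacle is precisely this $\mu$-invariant comparison in the residually reducible case. Kato's work lives intrinsically in $\Lambda_{\O}[1/p]$ and is insensitive to $p$-torsion, and Skinner--Urban inherits the same defect whenever $\rhobar_f$ is reducible. A fully general attack would require extending the two-variable $p$-adic $L$-function constructions of the appendix beyond the hypotheses \fk, \eqref{goren} and \eqref{cuspgoren}, so as to handle situations where the Eisenstein component of the Hecke algebra is more intricate or where the Eisenstein ideal is not principal (e.g.\ the cases anticipated by \cite[Corollary 1.4]{Wake-gorenstein} for $N>1$). This is exactly the point at which the present paper stops, and any serious progress on the general main conjecture in the reducible regime will have to begin there.
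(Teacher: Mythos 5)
The statement you are asked to prove is a \emph{conjecture}; the paper itself neither claims nor gives a proof of it, so there is no proof in the source to compare your argument against. The paper proves Kato's divisibility (Theorem~\ref{thm:katomc}) and proves the conjecture in full only in the special case where the cuspidal Eisenstein ideal is generated by $U_p-1$ (Corollary~\ref{cor:mc}). Your writeup correctly identifies these partial results and is honest that the general case remains open, so in that sense it is an accurate status report rather than a proof. Two remarks on the substance of the plan, though.

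First, the middle step is flawed as written. You propose to obtain the reverse divisibility by ``combining'' Theorem~\ref{thm:anlowerbndgen_intro} with Theorem~\ref{thm:alglowerbnd}. But these two results are not complementary: both are \emph{lower} bounds on $\mu$-invariants governed by $\ord_\varpi(\Leis(\p_f))$, one on the analytic side and one on the algebraic side. They point in the same direction and hence cannot, by themselves, close the gap with Kato's divisibility. A genuine reverse divisibility in the residually reducible setting would require an analogue of the Skinner--Urban or Wiles Eisenstein-congruence argument that is not available here, and ``tracing through the two-variable overconvergent modular symbol construction'' will not manufacture it.

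Second, the paper's actual route to the integral main conjecture in the special case does not pass through a two-sided divisibility at all. Under the \lu~hypothesis, Theorem~\ref{thm:anfullgen} pins down $L_p^+(f,\omega^0)$ exactly (a power of $\varpi$ up to unit), and Theorem~\ref{thm:selmer} pins down $\Sel(\Qinf,A_f)^\vee \cong (\Lambda_{\O}/\varpi^{\eis(f)}\Lambda_{\O})^\vee$ exactly; Corollary~\ref{cor:mc} then just observes that the two sides agree. That is, both sides are computed outright rather than compared via an inclusion of characteristic ideals. Your final paragraph, identifying the $\mu$-invariant comparison as the obstruction and noting that Kato and Skinner--Urban are blind to $p$-torsion in the reducible case, is accurate and matches the paper's own remarks; beyond \lu~the statement is an open conjecture, and no argument in the paper or in your plan resolves it.
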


In this level of generality, where the Galois representation of $f$ is not assumed to be residually irreducible, we can only state a main conjecture with $p$ inverted.  The reason for this is  that $L_p(f)$ only depends upon the modular form $f$ while $\Sel(\Qinf,\Afj)$ depends on a choice of a lattice in the Galois representation $V_f$.  The choice of a lattice can change the left hand side by powers of $\varpi$.  The issue of which lattice to pick to correctly match the $p$-adic $L$-function will be further discussed in the next section.

The following is a deep theorem of Kato which proves half of the main conjecture.

\begin{thm}[Kato]
\label{thm:katomc}
We have that  $\Sel(\Qinf,\Afj)^\vee$ is a finitely generated torsion $\Lambda_{\O}$-module, and 
$$
\chr_{\Lambda_{\O}[1/p]} \Sel(\Qinf,\Afj)^\vee \text{~~divides~~} L^+_p(f,\omega^j) 
$$
in $ \Lambda_{\O}[1/p]$.
\end{thm}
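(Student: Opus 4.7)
The plan is to invoke Kato's monumental work \cite{Kato}, specifically his Theorem 17.4.2, whose statement is essentially identical to ours. The proof reduces to verifying that our setup matches Kato's hypotheses and tracing through his construction. The strategy has three main ingredients: the construction of an Euler system of zeta elements, an explicit reciprocity law identifying these elements with the $p$-adic $L$-function, and the Euler system machinery bounding the Selmer group in terms of these elements.

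First, I would recall Kato's zeta elements $\z_f \in H^1(\Q_\Sigma/\Qinf, T_f \otimes \omega^j)$, constructed by projecting Beilinson--Kato elements onto the $f$-isotypic component of the cohomology of modular curves. These Beilinson--Kato elements are Steinberg symbols of Siegel units, living in $K_2$ of modular curves, and their norm-compatibility relations reflect the Euler factors of $f$ at primes away from $Np$. After restriction to the cyclotomic tower and twisting by $\omega^j$, we obtain classes whose ideal of generation over $\Lambda_{\O}$ is what gets compared with the characteristic ideal of the Selmer group.

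Next, I would invoke Kato's explicit reciprocity law: the composition of the localization-at-$p$ map with Perrin-Riou's big logarithm sends $\z_f$ to $L^+_p(f,\omega^j)$, up to an explicit factor that becomes a unit in $\Lambda_{\O}[1/p]$. Combined with the Euler system machinery of Kolyvagin, Rubin, and Kato --- which constructs derivative classes and uses the Chebotarev density theorem to bound the dual Selmer group in terms of the divisibility properties of the zeta element --- this yields both the torsion assertion and the divisibility
$$
\chr_{\Lambda_{\O}[1/p]} \Sel(\Qinf,\Afj)^\vee \text{~~divides~~} L^+_p(f,\omega^j).
$$
The torsion assertion follows from the same analysis, since $L^+_p(f,\omega^j)$ is a non-zero element of $\Lambda_{\O}$ and the bound shows $\Sel(\Qinf,\Afj)^\vee$ has rank zero.

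The main obstacle, and the reason the divisibility is stated only after inverting $p$, is that the Euler system machinery demands the residual Galois representation to have sufficiently large image to run the Chebotarev argument integrally. Since $\rhobar_f$ is reducible in our setting, this step introduces $\varpi$-torsion errors in the comparison between the Selmer module and the module generated by the derivative classes; these errors are precisely what is killed upon passing to $\Lambda_{\O}[1/p]$. Obtaining an integral refinement requires supplementary input, which in our situation is provided by the independent control of $\mu$-invariants on both the analytic (Theorem \ref{thm:anfullgen}) and algebraic (Theorem \ref{thm:algfull}) sides, allowing us to upgrade Kato's divisibility to an equality of ideals in $\Lambda_{\O}$ when $U_p-1$ generates the cuspidal Eisenstein ideal.
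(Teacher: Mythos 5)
Your proposal follows the same path as the paper: both ultimately invoke Kato's Theorems 17.4.1 and 17.4.2, and your exposition of the zeta elements, explicit reciprocity law, and Euler system bounding machinery is an accurate account of what goes into Kato's proof. The one place where your account diverges from the paper's proof is your explanation of why the divisibility holds only over $\Lambda_\O[1/p]$. You attribute the inversion of $p$ to the reducibility of $\rhobar_f$ obstructing the integral Chebotarev/derivative-class argument, with ``$\varpi$-torsion errors'' that are subsequently absorbed. The paper instead records explicitly that ``the hypothesis that $\rhobar_f$ has large image is not used in this part of Kato's work'' --- that is, Kato's Theorem 17.4.2 is stated and proved unconditionally on the residual image, and the $[1/p]$ is already present in Kato's unconditional result rather than being something we introduce to work around reducibility. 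The large-image hypothesis enters only for the integral refinement (Kato's Theorem 17.4.3), which the paper does not invoke. This distinction matters because your phrasing suggests Kato's argument must be rerun with controlled errors in our setting, when in fact the cited result applies verbatim. Your last sentence, about upgrading to an integral statement using independent control of both $\mu$-invariants, does correctly anticipate the route taken in Corollary \ref{cor:mc}.
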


\begin{proof}
See \cite[Theorem 17.4.1 and 17.4.2]{Kato}.  Note that the hypothesis that $\rhobar_f$ has large image is not used in this part of Kato's work.
\end{proof}

\section{Algebraic results}
\label{sec:algebraic}

We continue with the notation of the previous section so that $f$ is a normalized cuspidal eigenform in $S_k(\Gamma_1(N),\psi,\Qpbar)$ where $p\nmid N$ and $p \nmid \varphi(N)$.  

\begin{defn}
Let $\eis(f)$ denote the largest integer $n \geq 0$ such that $f \equiv E^{\ord}_{k,\psi} \pmod{\varpi^n}$ where this congruence takes place in $\Of / \varpi^n \Of[[q]]$.
\end{defn}

Throughout this section we assume that $\eis(f) > 0$.  In particular, $f$ is ordinary and let $\m = \m_f$ denote the maximal ideal in $\T$ corresponding to $f$.  Note then that $\psi = \psi_{\m}$.  We further assume that $\m$ satisfies \fk.  

We note again that there is not a unique lattice (up to homethety) in the Galois representation $V_f$.  The following lemma describes the situation more precisely.

\begin{lemma}
\label{lemma:lattices}
If $\m_f$ satisfies \fk, then in $V_f$ there exists a sequence of Galois stable lattices:
$$
T_0 \subsetneq T_1 \subsetneq \cdots \subsetneq T_{\eis(f)},
$$
no two of which are homethetic, such that 
\begin{enumerate}
\item $T_i / T_{i-1}$ is isomorphic to $\Ff$;

\item For $i \neq 0, \eis(f)$, we have $T_i / \varpi T_i$ is a split extension of $\Ff$ and $\Ff(\psi\omega^{k-1})$.
\item For $i = \eis(f)$, we have a non-split extension
$$
0 \to \Ff(\psi\omega^{k-1}) \to T_{\eis(f)} / \varpi T_{\eis(f)} \to \Ff \to 0.
$$
\item For $i = 0$, we have a non-split extension
$$
0 \to \Ff \to T_0 / \varpi T_0 \to \Ff(\psi\omega^{k-1}) \to 0.
$$
\item $T_i / \varpi^i T_i$ contains a submodule isomorphic to $\Of/\varpi^i \Of(\psi\ve^{k-1})$;
\end{enumerate}
\end{lemma}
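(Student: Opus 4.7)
The plan is to start from an $\Of$-lattice $T \subset V_f$ whose reduction realizes the Eisenstein congruence with the correct directionality, and then to descend along a single coordinate axis to produce the chain. Set $n = \eis(f)$. The $q$-expansion congruence $f \equiv E^{\ord}_{k,\psi} \pmod{\varpi^n}$ translates, via Chebotarev and the identity $a_\ell(f) = \operatorname{tr}\rho_f(\Frob_\ell)$ for $\ell \nmid Np$, into the trace congruence
\[
\operatorname{tr}\rho_f(g) \equiv 1 + \psi\ve^{k-1}(g) \pmod{\varpi^n} \qquad \text{for every } g \in G_{\Q},
\]
while $\det\rho_f = \psi\ve^{k-1}$ holds identically. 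A Ribet-style argument for residually reducible representations (as in Ribet's original proof, or via the pseudo-representation formalism of Bella\"iche--Chenevier) then produces a Galois-stable $\Of$-lattice $T \subset V_f$ together with an ordered basis $(e_1, e_2)$ of $V_f$ in which
\[
\rho_f(g) = \begin{pmatrix} \psi\ve^{k-1}(g) & b(g) \\ c(g) & d(g) \end{pmatrix},
\]
with $c(g) \in \varpi^n\Of$ and $d(g) \equiv 1 \pmod{\varpi^n}$ for every $g$. Moreover the basis can be chosen so that the cocycle $\bar b$ in $H^1(G_{\Q}, \Ff(\psi\omega^{k-1}))$ is non-trivial (using irreducibility of $\rho_f$ at the global level) and the cocycle $\overline{c/\varpi^n}$ in $H^1(G_{\Q}, \Ff(\psi^{-1}\omega^{1-k}))$ is non-trivial (using the maximality of $n = \eis(f)$: if this class were a coboundary, a change of basis would produce a new lattice with $c(g) \in \varpi^{n+1}\Of$ for all $g$, promoting the trace congruence to modulus $\varpi^{n+1}$ and, via (Good Eisen) together with Lemma \ref{lemma:unique_eisen}, forcing $f \equiv E^{\ord}_{k,\psi} \pmod{\varpi^{n+1}}$, a contradiction).

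Setting $T_n := T$, I would then define, for each $0 \leq i \leq n$,
\[
T_i := \Of \cdot e_1 + \varpi^{n-i}\Of \cdot e_2.
\]
Galois stability is immediate: in the basis $(e_1, \varpi^{n-i}e_2)$ the matrix of $\rho_f(g)$ has entries $\psi\ve^{k-1}(g)$, $\varpi^{n-i}b(g)$, $\varpi^{i}\cdot(c(g)/\varpi^n)$, and $d(g)$, all of which lie in $\Of$. The $T_i$ are pairwise non-homothetic because any scalar taking $T_i$ to $T_j$ would have to preserve the $e_1$-coefficient $\Of$ and hence be a unit, forcing $i=j$. Property (1) is clear from $T_{i+1}/T_i \cong \varpi^{n-i-1}\Of/\varpi^{n-i}\Of \cong \Ff$, and property (5) follows because the $\Of$-submodule of $T_i/\varpi^i T_i$ generated by $\bar e_1$ is free of rank one with Galois acting by $\psi\ve^{k-1}$, since $c(g)e_2 \in \varpi^n\Of \cdot e_2 \subseteq \varpi^i T_i$. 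Property (2) follows because when $0 < i < n$ both off-diagonal entries of the displayed matrix vanish modulo $\varpi$, so $T_i/\varpi T_i \cong \Ff \oplus \Ff(\psi\omega^{k-1})$ as a Galois module.

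The substance of the lemma is concentrated in the non-splittings (3) and (4). For (3), when $i = n$ the reduction $T_n/\varpi T_n$ is described by the upper-triangular matrix $\left(\begin{smallmatrix} \psi\omega^{k-1} & \bar b \\ 0 & 1 \end{smallmatrix}\right)$, so non-splittability is exactly the non-triviality of $\bar b$, which has been arranged in the construction. For (4), when $i = 0$ the reduction is described by the lower-triangular matrix $\left(\begin{smallmatrix} \psi\omega^{k-1} & 0 \\ \overline{c/\varpi^n} & 1 \end{smallmatrix}\right)$, and non-splittability is exactly the non-triviality of $\overline{c/\varpi^n}$, which has likewise been arranged.

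The main obstacle is the initial Ribet-type input: the existence of a basis in which $\rho_f$ is upper triangular modulo $\varpi^n$ does not follow formally from the trace congruence alone, and the simultaneous genericity (non-triviality modulo $\varpi$) of both $b$ and $c/\varpi^n$ is what encodes the fact that $T_n$ and $T_0$ sit at the two endpoints of the maximal Galois-fixed path in the Bruhat--Tits tree of $V_f$. Once that structural input is in place, the verification of (1)--(5) for the explicit chain $T_i = \Of e_1 + \varpi^{n-i}\Of e_2$ is just direct bookkeeping with index-$\varpi$ sublattices.
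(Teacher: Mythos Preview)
Your proposal is correct and follows essentially the same approach as the paper. The paper's proof simply cites \cite[section 1.2]{B-Hawaii} for the existence of the chain (exactly the Ribet--style lattice construction you unpack explicitly via the basis $(e_1,\varpi^{n-i}e_2)$ and the Bruhat--Tits tree picture), and then invokes \fk\ together with Lemma \ref{lemma:unique_eisen} to rule out a longer chain, precisely as you do when arguing that non-triviality of $\overline{c/\varpi^n}$ follows from maximality of $n=\eis(f)$.
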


\begin{proof}
As $f$ admits a congruence to $E_{k,\psi}^{\ord}$ modulo $\varpi^{\eis(f)}$, the existence of such a chain of lattices is standard as in \cite[section 1.2]{B-Hawaii}. If a longer chain existed, then $f$ would admit a congruence modulo $\varpi^{m}$ with $m>\eis(f)$ to some Eisenstein series (necessarily not $E_{k,\psi}^{\ord}$).  However, \fk~and Lemma \ref{lemma:unique_eisen} prevent this possibility. 
\end{proof}

We will see that the choice of lattice in $V_f$ will affect the value of the $\mu$-invariant of the corresponding Selmer group.

\subsection{Lower bounds}

Set $A_{f,j}^{(r)} := V_{f}/T_r \otimes \omega^j$ for $0 \leq r \leq \eis(f)$.  
By Theorem \ref{thm:katomc}, the Selmer group $\Sel(\Qinf,A_{f,j}^{(r)})$ is a cotorsion $\Lambda_{\O}$-module and thus has associated $\mu$ and $\lambda$-invariants.  
Following Greenberg, we now give lower bounds on these $\mu$-invariants which grow as $r$ grows.

\begin{thm}
\label{thm:alglowerbndgen}
For $0 \leq r \leq \eis(f)$ and for $j$ even, we have 
$$
\mu(\Sel(\Qinf,A_{f,j}^{(r)})^\vee) \geq r.
$$
\end{thm}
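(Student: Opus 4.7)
The plan is to realize the desired lower bound as an application of Greenberg's method \cite[Proposition 5.7]{Greenberg-CIME}, by exhibiting inside $A_{f,j}^{(r)}$ a Galois submodule isomorphic to $\Of/\varpi^r\Of(\chi)$ for a character $\chi$ that is odd and ramified at $p$.  The requisite submodule is produced by the chain of lattices of Lemma \ref{lemma:lattices}.

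Set $\chi := \psi_\m\,\varepsilon^{k-1}\,\omega^j$ and $B := \Of/\varpi^r\Of(\chi)$.  From Lemma \ref{lemma:lattices}(5) applied to $T_r$, combined with the twist by $\omega^j$ and the canonical identification $A_{f,j}^{(r)}[\varpi^r] \cong (T_r/\varpi^r T_r)\otimes\omega^j$, one obtains a $G_\Q$-equivariant injection
\[
B \hookrightarrow A_{f,j}^{(r)}[\varpi^r] \hookrightarrow A_{f,j}^{(r)}.
\]
The character $\chi$ is odd:\ $\psi_\m(-1) = (-1)^{j(\m)} = (-1)^k$ together with $j$ even give $\chi(-1) = (-1)^k(-1)^{k-1} = -1$.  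It is ramified at $p$ because $\varepsilon^{k-1}$ is ramified on $I_p$ for $k \geq 2$.  The Eisenstein congruence $f \equiv E_{k,\psi_\m}^{\ord}\pmod{\varpi^r}$ further gives $a_p(f) \equiv 1 \pmod{\varpi^r}$, and since the non-unit root of $x^2 - a_p(f)x + \psi_\m(p)p^{k-1}$ is divisible by $p^{k-1}$, one also has $\alpha_p \equiv 1 \pmod{\varpi^r}$ and hence $\eta \equiv 1 \pmod{\varpi^r}$.  Thus the character of $B$ agrees mod $\varpi^r$ with that of the ordinary submodule of $A_{f,j}^{(r)}$, and one verifies that the image of $B$ is in fact contained in the ordinary submodule that defines the local Selmer condition at $p$.

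With this setup the inclusion $B \hookrightarrow A_{f,j}^{(r)}$ induces a map $H^1(\Qinf,B) \to H^1(\Qinf,A_{f,j}^{(r)})$ sending classes unramified at every prime $v \nmid p$ into $\Sel(\Qinf,A_{f,j}^{(r)})$:\ the unramified condition at $v \nmid p$ is preserved, and by Lemma \ref{lemma:strict} the local condition at $p$ is automatic since $B$ lies in the ordinary submodule.  This produces an $\Of$-linear map, with kernel of bounded order, from the relaxed-at-$p$ Selmer group $\Sel^{\{p\}}(\Qinf,B)$ into $\Sel(\Qinf,A_{f,j}^{(r)})$.  The conclusion then follows from the core Iwasawa-theoretic input which is Greenberg's Proposition 5.7 itself:\ for $\chi$ odd and ramified at $p$, the Pontryagin dual of $\Sel^{\{p\}}(\Qinf, \Of(\chi)\otimes \Kf/\Of)$ is a $\Lambda_\Of$-module of positive rank, so $\Sel^{\{p\}}(\Qinf,B)^\vee$ has $\mu$-invariant at least $r$.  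Pontryagin dualizing the map above transfers this lower bound to $\Sel(\Qinf,A_{f,j}^{(r)})^\vee$.  The main technical obstacle is this last Iwasawa-theoretic step; the remaining work amounts to local-condition bookkeeping together with the careful choice of lattice $T_r$ supplied by Lemma \ref{lemma:lattices}.
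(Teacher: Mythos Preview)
Your proposal is correct and takes essentially the same approach as the paper: produce the cyclic submodule $\Of/\varpi^r\Of(\psi\varepsilon^{k-1}\omega^j)$ of $A_{f,j}^{(r)}$ via Lemma~\ref{lemma:lattices}, note that it is odd and ramified at $p$, and invoke \cite[Proposition~5.7]{Greenberg-CIME}. Your additional unpacking of Greenberg's argument---in particular the verification that $\eta\equiv 1\pmod{\varpi^r}$ so that $B$ lands in the ordinary submodule, whence cocycles valued in $B$ automatically satisfy the local condition at $p$---goes beyond what the paper spells out but is entirely in the same spirit (the appeal to Lemma~\ref{lemma:strict} there is unnecessary, since a cocycle valued in the submodule already dies in the quotient by definition).
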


\begin{proof}
By Theorem \ref{lemma:lattices}, we have that $A_{f,j}^{(r)}$ contains a submodule isomorphic to $\Of/\varpi^r \Of(\psi\ve^{k-1}\omega^j)$ which is cyclic, odd, and ramified at $p$.  Our theorem then follows exactly as in \cite[Proposition 5.7]{Greenberg-CIME} where an analogous statement is proven in the case of elliptic curves. 
\end{proof}

To relate this discussion to the our bounds on analytic $\mu$-invariants, especially Theorem \ref{thm:anlowerbndgen}, we have the following lemma.

\begin{lemma}
\label{lemma:Leis}
Let $\m \subseteq \T$ be a maximal ideal and assume that the cuspidal Eisenstein ideal $\Iceis_{\mc} \subseteq \Tcm$ is principal with generator $\Leispsi$.  Then for $\p_f \subseteq \mc$, we have
$$
\eis(f) = \ord_{\varpi}( \Leispsi(\p_f)).
$$
\end{lemma}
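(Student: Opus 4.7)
The plan is to show that both sides of the equation compute the valuation of the ideal $\pi_f(\Iceis_{\mc}) \subseteq \Of$, where $\pi_f \colon \Tcm \twoheadrightarrow \Tcm/\p_f \hookrightarrow \Of$ denotes the specialization map attached to $f$. Since $\Leispsi$ generates $\Iceis_{\mc}$ by assumption, this image is the principal ideal $(\Leispsi(\p_f))$, whose valuation is $\ord_\varpi(\Leispsi(\p_f))$.

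On the other hand, by the definition of the Eisenstein ideal in Section \ref{sec:notation}, $\Iceis$ is generated by the elements $T_\ell - (1 + \langle \ell \rangle \ell^{-1})$ for $\ell \nmid Np$ together with $U_q - 1$ for $q \mid Np$. At the arithmetic prime $\p_f$ (weight $k$, character $\psi$), the diamond operator $\langle \ell \rangle$ specializes to $\psi(\ell)\ell^k$, so $\pi_f(\Iceis_{\mc})$ is also the ideal of $\Of$ generated by $a_\ell(f) - (1+\psi(\ell)\ell^{k-1})$ for $\ell \nmid Np$ and $a_q(f) - 1$ for $q \mid Np$. The valuation of this ideal equals the largest $n$ for which every such generator lies in $\varpi^n \Of$. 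Using Hecke multiplicativity of Fourier coefficients (both for $f$ and for $E^{\ord}_{k,\psi}$), this condition is equivalent to the congruence $a_m(f) \equiv a_m(E^{\ord}_{k,\psi}) \pmod{\varpi^n}$ for every $m \geq 1$.

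The final step, which I expect to be the main obstacle, is reconciling this positive-Fourier-coefficient congruence with the full $q$-expansion congruence defining $\eis(f)$, which also sees the constant term. Since $f$ is cuspidal we have $a_0(f) = 0$, and we must verify that $a_0(E^{\ord}_{k,\psi}) = -(1-\psi(p)p^{k-1}) B_{k,\psi}/(2k)$ lies in $\varpi^n \Of$ whenever the positive-coefficient congruence holds modulo $\varpi^n$. I would handle this by invoking Ohta's work underlying Theorem \ref{thm:eisenprincipal}: under the Gorenstein hypotheses implicit in the principality of $\Iceis_{\mc}$, the $\Lambda$-adic constant term of the Eisenstein family lies in $\Ieis_{\m}$, so specializing to $\p_f$ shows that the constant term congruence holds to the same depth as the positive-coefficient congruence. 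In the simpler \eqref{r1} case, Theorem \ref{thm:eisgen} makes this transparent, since $\Leispsi$ can be identified with the $p$-adic $L$-function whose specialization at $k$ is, up to a unit, precisely the constant term of $E^{\ord}_{k,\psi}$. Combining these observations yields $\eis(f) = \ord_\varpi(\Leispsi(\p_f))$.
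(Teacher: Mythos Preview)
Your argument is correct and considerably more detailed than the paper's own proof, which consists of the single sentence ``This lemma follows simply from the definition of $\I$.'' In other words, the authors regard the identity as an immediate unwinding of definitions: the image $\pi_f(\Iceis_{\mc})$ is generated by $\Leispsi(\p_f)$ on one hand, and by the Hecke-eigenvalue differences $a_\ell(f)-(1+\psi(\ell)\ell^{k-1})$ and $a_q(f)-1$ on the other, so its $\varpi$-adic valuation is simultaneously $\ord_\varpi(\Leispsi(\p_f))$ and the depth of the eigenvalue congruence between $f$ and $E^{\ord}_{k,\psi}$.

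Your extra paragraph on the constant term is a point the paper does not address explicitly. The authors evidently read $\eis(f)$ as the eigenvalue (equivalently, positive $q$-coefficient) congruence depth, in which case the lemma really is tautological. If one insists on the full $q$-expansion congruence including $a_0$, your proposed fix via Ohta is the right one: under the Gorenstein hypotheses implied by principality of $\Iceis_{\mc}$ (Theorem~\ref{thm:eisenprincipal}), the constant term of the Eisenstein family lies in the Eisenstein ideal, so its specialization has valuation at least $\ord_\varpi(\Leispsi(\p_f))$ and the constant-term constraint is never the binding one. So your argument is a strictly more careful version of what the paper intends.
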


\begin{proof}
This lemma follows simply from the definition of $\I$.
\end{proof}

\subsection{Upper bounds}
Let $\m$ be a maximal ideal of $\T$, and let $f$ be a classical eigenform in the Hida family for $\mc$.  Set $T = T_{\eis(f)}$ as in Lemma \ref{lemma:lattices}, and $A_f := V_f / T$.  This is the choice of lattice which maximizes our lower bound on the $\mu$-invaraint.  This section will be devoted to proving an upper bound on the $\mu$-invariant of $\Sel(\Qinf,A_f)^\vee$. Compare with Theorem \ref{thm:anupperbndgen} on the analytic side.

Consider now our nebentype character $\psi: (\Z/N\Z)^\times \to \Qpbar$ as a Galois character, and let $K_\psi \subseteq \Q(\mu_N)$ be the smallest field which trivializes $\psi$.  We note that $K_\psi$ is disjoint from $\Q(\mu_p)$, and thus $\Gal(K_\psi(\mu_p)/K_\psi) \cong \Delta$.  In particular, we can discuss the $\omega^j$-eigenspaces in $\Cl(K_\psi(\mu_p))$.  We need a Vandiver-type hypothesis to achieve an upper bound on the $\mu$-invaraint.
Namely, 
\begin{equation}
\label{vandpsi}
\tag{$\text{Vand~}\psi\omega^r $}
\text{the~}\psi\omega^{r}\text{-eigenspace~of~}\Cl(K_\psi(\mu_p))[p]{~vanishes}.
\end{equation}

\begin{remark}
We note that this condition does not hold generally when $N>1$.  See \cite[Corollary 1.4]{Wake-gorenstein} for explicit counter-examples and the relation of these counter-examples to $\Tm$ being Gorenstein.  
\end{remark}

\begin{thm}
\label{thm:algupperbndgen}
Let $\m \subseteq \T$ be an maximal ideal satisfying:
\begin{enumerate}
\item \fk
\item $\vandpsiv{\psi^{-1}}{2-j(\m)}$
\item $j(\m) = 2 \implies \psi(p) \neq 1$.  
\end{enumerate}
Then for $f$ a classical eigenform in the Hida family for $\mc$, we have  
$$
\mu(\Sel(\Qinf,A_f)^\vee) \leq \ord_\varpi(a_p(f)-1).
$$
\end{thm}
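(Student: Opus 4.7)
The plan mirrors the strategy sketched in the introduction for Theorem \ref{thm:algupperbnd}, combining three ingredients: an inequality bounding the $\mu$-invariant by the $\varpi$-adic length of $\Sel(\Qinf,A_f)^\Gamma$ (Lemma \ref{lemma:mu}), the control theorem of Proposition \ref{prop:controlinf}, and a Vandiver-based vanishing of $\Sel(\Q, A_f)$. Thus the aim reduces to the numerical estimate
$$\lvert \Sel(\Qinf, A_f)^\Gamma \rvert \leq \lvert \Of/(a_p(f)-1)\Of \rvert.$$

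To apply the control theorem, I must first verify its hypothesis $H^0(\Q, A_f[\varpi]) = 0$. By Lemma \ref{lemma:lattices}(3), $A_f[\varpi] \cong T_{\eis(f)}/\varpi T_{\eis(f)}$ sits in a non-split extension of $\Ff$ by $\Ff(\psi\omega^{k-1})$, and the combined hypotheses \fk~and ``$j(\m)=2 \implies \psi(p) \neq 1$'' force both $1$ and $\psi\omega^{k-1}$ to be non-trivial as mod $p$ characters of $G_\Q$; together with non-splitness this rules out any global invariant. Proposition \ref{prop:controlinf} then produces an injection $\Sel(\Q, A_f) \hookrightarrow \Sel(\Qinf, A_f)^\Gamma$ with cokernel of order at most $\lvert \Of/(\alpha_p-1)\Of \rvert$, where $\alpha_p$ is the unit root of $x^2 - a_p(f) x + \psi(p) p^{k-1}$. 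Since $f$ has level divisible by $p$, $\alpha_p = a_p(f)$, and the desired estimate follows as soon as one shows $\Sel(\Q,A_f)=0$.

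For that final step I would invoke the Vandiver hypothesis. Using Lemma \ref{lemma:selmodp} (whose input $H^0(\Q,A_f)=0$ was verified above) it suffices to show $\Sel(\Q, A_f[\varpi^n]) = 0$ for each $n \geq 1$. I would filter $A_f[\varpi^n]$ by the subrepresentation $\Of/\varpi^n(\psi\omega^{k-1})$ (provided by Lemma \ref{lemma:lattices}(5)) and chase the long exact cohomology sequence, bounding the Selmer groups of the two graded pieces. Kummer theory identifies the unramified part of the Selmer group of $\Of/\varpi^n(\psi\omega^{k-1})$ with a piece of $\Cl(K_\psi(\mu_p))[p^n]$; Poitou--Tate duality (the reflection principle) converts the relevant eigenspace $\psi\omega^{k-1}$ on the cocycle side into the $\psi^{-1}\omega^{2-k}$-eigenspace on the class-group side. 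Since $k\equiv j(\m)\pmod{p-1}$, this is exactly $\psi^{-1}\omega^{2-j(\m)}$, whose vanishing is $\vandpsiv{\psi^{-1}}{2-j(\m)}$.

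The main obstacle will be executing this last step cleanly:\ the non-split extension structure of $A_f[\varpi^n]$ means that Selmer classes do not simply reassemble from the graded pieces, so one must carefully track the connecting homomorphism and verify that its image is controlled by the class group eigenspace appearing in the hypothesis. One must also check that the unramified local conditions at primes $\ell \mid N$ impose no additional obstruction --- here the primitivity of $\psi$ built into \fk~will be essential, as it forces the $\ell$-part of the Selmer local condition to vanish on $\Ff(\psi\omega^{k-1})$-cocycles. Once these checks are in place, the chain of inequalities collapses to the desired bound $\mu(\Sel(\Qinf,A_f)^\vee) \leq \ord_\varpi(a_p(f)-1)$.
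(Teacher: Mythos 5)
Your plan mirrors the paper's proof exactly at the structural level: Lemma \ref{lemma:mu} to convert the $\mu$-bound to a bound on $|\Sel(\Qinf,A_f)^\Gamma|$, Proposition \ref{prop:controlinf} to compare with $\Sel(\Q,A_f)$, and a Vandiver-type hypothesis to force $\Sel(\Q,A_f)=0$. You are also right that $\alpha_p = a_p(f)$ for the $p$-stabilized form, and right that Lemma \ref{lemma:selmodp} reduces the vanishing to finite coefficients. Two remarks.

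First, a small but real wrinkle: to apply Lemma \ref{lemma:mu} one must know $\Sel(\Qinf,A_f)^\vee$ has no nonzero finite $\Lambda$-submodule; the paper cites \cite[Proposition 2.5]{GV} for this, and you do not mention it. Also, your justification of $H^0(\Q,A_f[\varpi])=0$ is garbled --- you write that the hypotheses force ``both $1$ and $\psi\omega^{k-1}$ to be non-trivial,'' but $1$ is always trivial. What you actually need is only that the sub-character $\psi\omega^{k-1}$ is non-trivial (it is, being odd) and that the extension $0\to\Ff(\psi\omega^{k-1})\to A_f[\varpi]\to\Ff\to 0$ is non-split, which is exactly how $T_{\eis(f)}$ is chosen in Lemma \ref{lemma:lattices}; together these give $H^0(\Q,A_f[\varpi]) = H^0(\Q,\Ff(\psi\omega^{k-1})) = 0$.

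Second, the ``main obstacle'' you flag is real, but the paper sidesteps it more cleanly than your plan suggests. Rather than trying to bound the Selmer groups of the two graded pieces separately (which runs into the reassembly problem you worry about, and for which bounding the Selmer group of the trivial piece $\Ff$ is awkward), the paper shows directly that the image of any $\phi\in\Sel(\Q,A_f[\varpi])$ in $H^1(\Q_\Sigma/\Q,\Ff)=\Hom(\Gal(\Q_\Sigma/\Q),\Ff)$ is unramified everywhere --- unramified away from $p$ because $\phi$ is a Selmer class, and unramified at $p$ because the local condition at $p$ is defined by vanishing in $H^1(I_{p},\Kf/\Of)$ --- hence zero. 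So $\Sel(\Q,A_f[\varpi])$ lies in the image of $H^1(\Q_\Sigma/\Q,\Ff(\psi\omega^{k-1}))$, and Lemma \ref{lemma:Wiles} (Wiles's Euler characteristic formula from \cite{Wiles-FLT}, not the reflection principle per se) bounds that cohomology group by one more than the dimension of the $\psi^{-1}\omega^{2-k}$-eigenspace of the class group, which is zero by $\vandpsiv{\psi^{-1}}{2-j(\m)}$. You should adopt this direct argument; as written, ``bounding the Selmer groups of the two graded pieces'' is a plan you have not filled in, and it is not how the paper executes the step.
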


We begin with a simple lemma which gives an upper bound on the $\mu$-invariant of a $\Lambda$-module in terms the module's $\Gamma$-coinvariants.

\begin{lemma}
\label{lemma:mu}
Let $X$ be a finitely generated torsion $\Lambda_{\Of}$-module with no finite submodules. If $X_{\Gamma}$ is finite with size bounded by $M$, then $q^{\mu(X)} \leq M$.  
\end{lemma}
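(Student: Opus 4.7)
The plan is to reduce to the elementary divisor form via the structure theorem for finitely generated torsion $\Lambda_{\Of}$-modules, where $\Lambda_{\Of} \cong \Of[[T]]$ with $T = \gamma - 1$ for a topological generator $\gamma$ of $\Gamma$. So I would begin by choosing a pseudo-isomorphism
$$
\phi : X \lra E := \bigoplus_{i=1}^{s} \Lambda_{\Of}/(\varpi^{a_i}) \ \oplus\ \bigoplus_{j=1}^{t} \Lambda_{\Of}/(f_j),
$$
where the $f_j$ are distinguished polynomials and $\mu(X) = \sum_i a_i$. The kernel of $\phi$ is a finite submodule of $X$, and by the hypothesis that $X$ has no finite submodules it must vanish. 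Hence $\phi$ is injective, so we have a short exact sequence
$$
0 \to X \to E \to C \to 0
$$
with $C$ finite.

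Next I would extract information from the $\Gamma$-cohomology hexagon. Applying $\gamma - 1$ and the snake lemma gives the six-term exact sequence
$$
0 \to X^{\Gamma} \to E^{\Gamma} \to C^{\Gamma} \to X_{\Gamma} \to E_{\Gamma} \to C_{\Gamma} \to 0.
$$
Because $C$ is finite and $\Gamma \cong \Zp$ is procyclic, $|C^{\Gamma}| = |C_{\Gamma}|$. The key intermediate claim is that $E^{\Gamma} = 0$, and hence $X^{\Gamma} = 0$. For this, observe that on $\Lambda_{\Of}/(\varpi^{a_i})$ the element $T$ is a non-zero-divisor, and on $\Lambda_{\Of}/(f_j)$ the element $T$ is a non-zero-divisor provided $T \nmid f_j$, i.e.\ $f_j(0) \neq 0$. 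If instead some $f_j(0) = 0$, then $(\Lambda_{\Of}/(f_j))_{\Gamma}$ would be infinite, forcing $E_{\Gamma}$ and hence $X_{\Gamma}$ to be infinite, contradicting our hypothesis. Therefore every $f_j(0) \neq 0$, $E^{\Gamma} = 0$, and the six-term sequence collapses to the equality $|X_{\Gamma}| = |E_{\Gamma}|$.

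To finish, I would compute $|E_{\Gamma}|$ summand by summand. Since $(\Lambda_{\Of}/(\varpi^{a_i}))_{\Gamma} \cong \Of/\varpi^{a_i}\Of$ has order $q^{a_i}$, and $(\Lambda_{\Of}/(f_j))_{\Gamma} \cong \Of/(f_j(0))$ is a nonzero finite group, we get
$$
M \geq |X_{\Gamma}| = |E_{\Gamma}| = q^{\sum_i a_i} \cdot \prod_j |\Of/(f_j(0))\Of| \geq q^{\mu(X)},
$$
as desired. The only subtle point is the vanishing of $X^{\Gamma}$ (equivalently that no $f_j$ is divisible by $T$), but this is forced by the finiteness hypothesis on $X_{\Gamma}$, so there is no serious obstacle; the remainder is a direct application of the structure theorem and the six-term exact sequence in $\Gamma$-(co)homology.
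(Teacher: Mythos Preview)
Your proof is correct and follows essentially the same route as the paper's: pseudo-isomorphism plus the no-finite-submodule hypothesis to get an injection $X \hookrightarrow E$, the six-term $\Gamma$-cohomology sequence, the observation that finiteness of $X_\Gamma$ forces $f_j(0)\neq 0$ (hence $E^\Gamma=0$), and the resulting equality $|X_\Gamma|=|E_\Gamma|\geq q^{\mu(X)}$. The only cosmetic difference is that you explicitly separate the $\varpi$-power summands from the distinguished-polynomial summands, whereas the paper lumps them together as $\bigoplus_i \Lambda/f_i^{n_i}$ with $f_i$ irreducible.
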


\begin{proof}
Using the structure theorem of finitely generated $\Lambda$-modules, we know that we have a map
$$
X \lra Y
$$
with finite kernel and cokernel where $Y = \oplus_i \Lambda / f_i^{n_i} \Lambda$ and the $f_i$ are irreducible.  Since $X$ has no finite submodule, we get an exact sequence
$$
0 \to X \lra Y \lra K \to 0
$$
with $K$ finite.  Thus,
$$
0 \to X^\Gamma \to Y^\Gamma \to K^\Gamma \to X_\Gamma \lra Y_\Gamma  \lra K_\Gamma \to 0.
$$
Since we are assuming that $X_\Gamma$ is finite, we have $f_i(0) \neq 0$ for all $i$, and thus $Y^\Gamma=0$.  Our exact sequence is then just
$$
0 \to K^\Gamma \to X_\Gamma \lra Y_\Gamma \lra K_\Gamma \to 0,
$$
and we deduce that $|X_\Gamma| = |Y_\Gamma|$  since $|K^\Gamma| = |K_\Gamma|$.
Finally, since $(\Lambda_{\O} / \varpi^r \Lambda_{\O})_{\Gamma}$ has size $q^r$, we have  $|Y_\Gamma| \geq q^{\mu(X)}$.  Thus 
$$
M \geq |X_\Gamma| = |Y_\Gamma| \geq q^{\mu(X)}
$$
as desired.
\end{proof}

In light of Lemma \ref{lemma:mu}, we need to bound the size of $\Sel(\Qinf,A_f)^{\Gamma}$.  By Proposition \ref{prop:controlinf}, we thus need to control the size of $\Sel(\Q,A_f)$.  This is done in the following lemma. 

\begin{prop}
\label{prop:vandtoSel0}
Under the hypotheses of Theorem \ref{thm:algupperbndgen}, we have  $\Sel(\Q,A_f)=0$.
\end{prop}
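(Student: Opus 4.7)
The plan is to show vanishing by lifting Selmer classes to an auxiliary cohomology group and then identifying the resulting subgroup with a class-group eigenspace controlled by Vandiver. Throughout, write $\chi := \psi\omega^{j(\m)-1}$ and $L := K_\psi(\mu_p)$.

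First I would verify $A_f^{G_\Q}=0$. By Lemma \ref{lemma:lattices}, $A_f[\varpi]$ sits in the non-split exact sequence
$$0 \to \Ff(\chi) \to A_f[\varpi] \to \Ff \to 0.$$
The submodule has no $G_\Q$-invariants: $\chi=1$ would force $\psi=1$ and $j(\m)=1$, which is incompatible with the parity $\psi(-1)=(-1)^{j(\m)}$. Non-splitness then prevents $\Ff^{G_\Q}=\Ff$ from lifting, so $A_f[\varpi]^{G_\Q}=0$, and $A_f^{G_\Q}=0$ follows by $\varpi$-divisibility. Applying Lemma \ref{lemma:selmodp} gives $\Sel(\Q,A_f[\varpi])\cong\Sel(\Q,A_f)[\varpi]$; since $\Sel(\Q,A_f)$ is $\varpi$-primary (as a subgroup of $H^1(\Q,A_f)$ with $A_f$ of $\varpi$-power torsion), it suffices to prove $\Sel(\Q,A_f[\varpi])=0$.

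Next, take $\sigma\in\Sel(\Q,A_f[\varpi])$ and push it forward to $\bar\sigma\in H^1(\Q,\Ff)=\Hom(G_\Q^{\mathrm{ab}},\Ff)$. The ordinary condition at $p$ forces $\sigma|_{I_p}$ into the image of $H^1(I_p,\Ff(\chi))$, so $\bar\sigma|_{I_p}=0$; at $v\nmid p$ the local conditions combined with the vanishing of $H^1(I_v,\Ff)$ (since the pro-$p$ part of tame inertia is trivial away from $p$) force $\bar\sigma$ unramified everywhere. Hence $\bar\sigma\in\Hom(\Cl(\Q),\Ff)=0$, and $\sigma$ lifts to some $\tilde\sigma\in H^1(\Q,\Ff(\chi))$, well-defined modulo the extension class $c$ of $A_f[\varpi]$.

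A parallel analysis (using that $H^1(I_v,\Ff(\chi))=0$ for $v\nmid p$: at $v\nmid Np$ because $\chi$ is unramified and the tame quotient is prime-to-$p$, and at $v\mid N$ because $\chi|_{I_v}=\psi|_{I_v}$ is a non-trivial character of prime-to-$p$ order) shows that $\tilde\sigma$ is unramified outside $p$. By inflation--restriction from $L$, valid since $p\nmid[L:\Q]$ (which follows from $p\nmid\varphi(N)$), the group $H^1(\Q,\Ff(\chi))$ embeds into $\Hom_{\Ff}((G_L^{\mathrm{ab}}/p)^{(\chi)},\Ff)$, and once we further track the constraint at primes of $L$ above $p$ (where the hypothesis $j(\m)=2\Rightarrow\psi(p)\neq 1$ is used to rule out a trivial-character local contribution) the image of $\tilde\sigma$ mod $\langle c\rangle$ is captured by $\Hom_{\Ff}(\Cl(L)[p]^{(\chi)},\Ff)$. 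Leopoldt's Spiegelungssatz then bounds the $\chi$-eigenspace by the $(\omega\chi^{-1})=(\psi^{-1}\omega^{2-j(\m)})$-eigenspace, which vanishes by $\vandpsiv{\psi^{-1}}{2-j(\m)}$. Hence $\tilde\sigma\in\langle c\rangle$ and $\sigma=0$.

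The main obstacle will be the third step: precisely transferring the ordinary condition at $p$ on $\sigma$ into a usable constraint on $\tilde\sigma$ modulo $c$ at the places of $L$ over $p$, so that the Selmer subgroup really lands in the unramified eigenspace that reflection and Vandiver control. The boundary case $j(\m)=2$, where $\omega^{2-j(\m)}=1$ trivializes the Teichm\"uller twist in the Vandiver character, is exactly where the auxiliary hypothesis on $\psi(p)$ is needed.
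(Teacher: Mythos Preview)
Your first two steps---reducing to $\Sel(\Q,A_f[\varpi])$ via $A_f[\varpi]^{G_\Q}=0$, and showing that the image of any Selmer class in $H^1(\Q,\Ff)$ is everywhere unramified and hence vanishes---agree with the paper's argument. One correction along the way: your claim that ``the pro-$p$ part of tame inertia is trivial away from $p$'' is false. Tame inertia at a prime $\ell$ is $\prod_{q\neq\ell}\Z_q$, which has a $\Zp$ factor when $\ell\neq p$, so $H^1(I_v,\Ff)\cong\Ff$ for $v\nmid p$. Fortunately you do not need this: unramifiedness of $\bar\sigma$ (and likewise of $\tilde\sigma$ at $v\nmid Np$, via injectivity of $H^1(I_v,\Ff(\chi))\to H^1(I_v,A_f[\varpi])$ there) follows directly from unramifiedness of $\sigma$.

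The genuine gap is in your last step. You propose to place the lift $\tilde\sigma$ (modulo the extension class $c$) inside $\Hom(\Cl(L)[p]^{(\chi)},\Ff)$ and then invoke Leopoldt's Spiegelungssatz to bound the $\chi$-eigenspace by the $\omega\chi^{-1}=\psi^{-1}\omega^{2-j(\m)}$-eigenspace. But $\chi=\psi\omega^{k-1}$ is \emph{odd} and $\omega\chi^{-1}$ is even, and the reflection inequality runs the other way: it gives $\dim A^{(\omega\chi^{-1})}\leq\dim A^{(\chi)}$, not the reverse. Vandiver for the even eigenspace says nothing, via reflection, about the odd one. (There is also the prior issue that you have not shown $\tilde\sigma$ is unramified at primes above $p$; ``tracking the constraint'' there is exactly the delicate point, so even landing in the class group is not established.)

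The paper sidesteps all of this. After obtaining $\Sel(\Q,A_f[\varpi])\subseteq\im\bigl(H^1(\Q_\Sigma/\Q,\Ff(\chi))\to H^1(\Q_\Sigma/\Q,A_f[\varpi])\bigr)$, it does not try to pin down where individual lifts land; it simply bounds $\dim H^1(\Q_\Sigma/\Q,\Ff(\chi))$ by the Greenberg--Wiles formula (global duality and the Euler characteristic), which compares it to the dual Selmer group $H^1_f(\Q_\Sigma/\Q,\Ff(\psi^{-1}\omega^{2-k}))$ with locally trivial conditions. That dual group, for the \emph{even} character, is bounded by the class-group eigenspace that Vandiver kills, and the local terms (this is where the hypothesis $j(\m)=2\Rightarrow\psi(p)\neq1$ enters) contribute exactly the extra $1$ accounted for by the extension class. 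Global duality is what legitimately passes from the odd character to the even one; the reflection theorem cannot.
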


\begin{proof}
By definition of $T:=T_{\eis(f)}$, there exists a non-split short exact sequence
$$
0 \to \Ff(\psi\omega^{k-1}) \to T/\varpi T \to \Ff \to 0,
$$
In particular, $H^0(\Q,A_f[\varpi]) = H^0(\Q,\Ff(\psi\omega^{k-1}))=0$ as $\psi\omega^{k-1}$ is odd.
Thus, by Lemma \ref{lemma:selmodp}, to prove this proposition it suffices to check that $\Sel(\Q,A_f[\varpi])=0$.

The above short exact sequence yields
$$
0 \to H^0(\Q_\Sigma/\Q,\Ff) \to H^1(\Q_\Sigma/\Q,\Ff(\psi\omega^{k-1})) \to H^1(\Q_\Sigma/\Q,A_f[\varpi]) \to H^1(\Q_\Sigma/\Q,\Ff).
$$
We claim that the image of $H^1(\Q_\Sigma/\Q,\Ff(\psi\omega^{k-1}))$ in $H^1(\Q_\Sigma/\Q,A_f[\varpi])$ contains $\Sel(\Q,A_f[\varpi])$.  

To this end, take $\phi \in \Sel(\Q,A_f[\varpi])$ and let $\im(\phi)$ denote the image of $\phi$ in $H^1(\Q_\Sigma/\Q,\Ff) = \Hom(\Gal(\Q_\Sigma/\Q),\Ff)$, and we will check that $\im(\phi)$ vanishes.  Since $\phi$ is a Selmer class, $\phi$ is unramified outside of $p$.  Further, the very definition of the local condition at $p$ tells us that $\im(\phi)$ is  unramified at $p$.  In particular, $\im(\phi)$ is unramified everywhere and hence zero as desired.

Thus
$$
\dim_{\Ff}(\Sel(\Q,A_f[\varpi])) \leq \dim_{\Ff}(H^1(\Q_\Sigma/\Q,\Ff(\psi\omega^{k-1}))) - 1
$$
with the $-1$ coming from $H^0(\Q_\Sigma/\Q,\Ff) \cong \Ff$.  The below lemma (Lemma \ref{lemma:Wiles}) whose hypotheses are satisfied by \fk~and our running assumption that $p \nmid \varphi(N)$, then gives
$$
\dim_{\Ff}(\Sel(\Q,A_f[\varpi])) \leq \dim_{\Fp}(\Cl(K_\psi(\mu_p))[p]^{(\psi^{-1} \omega^{2-k})}) 
$$
which is 0 by $\vandpsiv{\psi^{-1}}{2-k}$.
\end{proof}

\begin{lemma}
\label{lemma:Wiles}
Let $\psi$ be a character of conductor $N$ and let $\Sigma$ denote the set of primes dividing $pN$.  Assume that:\
\begin{enumerate}
\item $p \nmid \varphi(N)$
\item $(-1)^k = \psi(-1)$
\item $k \equiv 2 \pmod{p-1} \implies \psi(p) \neq 1$.
\end{enumerate}
Then
$$
\dim_{\F}(H^1(\Q_\Sigma/\Q,\F(\psi\omega^{k-1}))) \leq \dim_{\F}(\Cl(K_\psi(\mu_p))[p]^{(\psi^{-1} \omega^{2-k})}) + 1.
$$
\end{lemma}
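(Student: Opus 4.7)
The plan is to exploit the Kummer dictionary: reduce $H^1(\Q_\Sigma/\Q, \F(\chi))$, with $\chi = \psi\omega^{k-1}$, to an eigenspace computation over the field $L := K_\psi(\mu_p)$, where $\chi|_{G_L}$ is trivial, and then compare with the class group through the $\Sigma$-units exact sequence. Set $\Delta := \Gal(L/\Q)$; the hypothesis $p \nmid \varphi(N)$ and the linear disjointness of $K_\psi$ and $\Q(\mu_p)$ ensure $p \nmid |\Delta|$, so eigenspace functors on $\Delta$-modules are exact. Inflation-restriction gives $H^1(\Q_\Sigma/\Q, \F(\chi)) \cong \Hom_\F((G_{L,\Sigma}^{\mathrm{ab}} \otimes \F)^{(\chi)}, \F)$, and Kummer theory together with the $\Delta$-isomorphism $\F \cong \mu_p \otimes \F(\omega^{-1})$ identifies this space with $L(\Sigma,p)^{(\chi')}$, where $\chi' := \chi^{-1}\omega = \psi^{-1}\omega^{2-k}$ and $L(\Sigma,p) = \{\alpha \in L^\times/(L^\times)^p : v(\alpha) \in p\Z \text{ for } v \notin \Sigma\}$.

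Next I would invoke the fundamental exact sequence
\[
0 \to \O_{L,\Sigma}^\times \otimes \F \to L(\Sigma,p) \to \Cl(L,\Sigma)[p] \to 0
\]
and take $\chi'$-eigenspaces, yielding
\[
\dim_\F H^1(\Q_\Sigma/\Q, \F(\chi)) = \dim_\F (\O_{L,\Sigma}^\times \otimes \F)^{(\chi')} + \dim_\F \Cl(L,\Sigma)[p]^{(\chi')}.
\]
The quotient $\Cl(L) \twoheadrightarrow \Cl(L,\Sigma)$ bounds the second summand by $\dim_\F \Cl(L)[p]^{(\chi')}$, using that for any finite $p$-group $A$ the $\F$-dimensions of $A[p]$ and $A/pA$ coincide. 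It then remains to show the inequality $\dim_\F(\O_{L,\Sigma}^\times \otimes \F)^{(\chi')} \leq 1$.

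The heart of the proof, and the main obstacle, is this unit bound. Dirichlet's $S$-unit theorem provides the $\Delta$-equivariant isomorphism $(\O_{L,\Sigma}^\times \otimes \Q) \oplus \Q \cong \Q[\Sigma_\infty] \oplus \Q[\Sigma_f]$, while the torsion subgroup contributes at most $(\mu(L) \otimes \F)^{(\chi')} = \F(\omega)^{(\chi')}$. A direct computation using $\psi(-1) = (-1)^k$ shows $\chi'(-1) = 1$, so $\chi'$ is even; hence the torsion piece vanishes ($\omega$ being odd) and, since $L$ is CM, $\Q[\Sigma_\infty]^{(\chi')}$ is one-dimensional. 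Hypothesis~3 is used precisely to exclude the degenerate case $\psi = 1$ with $k \equiv 2 \pmod{p-1}$, which would make $\chi'$ trivial. The crux is then to show $\Q[\Sigma_f]^{(\chi')} = 0$: by Frobenius reciprocity this reduces to checking $\chi'|_{D_\ell} \neq 1$ for each rational prime $\ell \mid Np$. For $\ell \mid N$, the primitivity of $\psi$ forces $\psi|_{I_\ell} \neq 1$, preventing cancellation with the unramified character $\omega^{2-k}|_{D_\ell}$; at $\ell = p$ one splits into cases, using that $\omega^{2-k}|_{I_p}$ is nontrivial whenever $k \not\equiv 2 \pmod{p-1}$, and invoking hypothesis~3 in the remaining case to guarantee $\psi(p) \neq 1$, hence $\chi'|_{D_p} \neq 1$. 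Combining these inputs with the class group bound completes the proof.
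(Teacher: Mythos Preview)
Your proof is correct and takes a genuinely different route from the paper's. The paper applies the Greenberg--Wiles formula \cite[Proposition~1.6]{Wiles-FLT}
\[
\frac{\#H^1(\Q_\Sigma/\Q,X)}{\#H^1_f(\Q_\Sigma/\Q,X^*)} = h_\infty \prod_{\ell \in \Sigma} h_\ell,
\]
with $X = \F(\psi\omega^{k-1})$ and $X^* = \F(\psi^{-1}\omega^{2-k})$, then computes each local term directly: $h_\infty = q$ (using hypothesis~2), $h_\ell = 1$ for $\ell \mid N$ (primitivity of $\psi$), and $h_p = 1$ (hypothesis~3). This yields $\#H^1 = q \cdot \#H^1_f$, and the remaining $H^1_f(\Q_\Sigma/\Q,X^*)$ is bounded by the class group eigenspace via a short inflation--restriction argument over $K_\psi(\mu_p)$.

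Your approach instead works entirely on the Kummer side: you pass to $L = K_\psi(\mu_p)$ at the outset, identify $H^1$ with $L(\Sigma,p)^{(\chi')}$, and then split this via the $S$-unit sequence into a unit piece and a class-group piece. The ``$+1$'' emerges from the archimedean contribution in Dirichlet's $S$-unit theorem rather than from the $h_\infty$ term in the duality formula, and the vanishing of the finite-place contribution $\Q[\Sigma_f]^{(\chi')}$ plays the role of the local $h_\ell = 1$ computations. The two arguments are dual in spirit: the paper's use of Poitou--Tate duality hides the unit computation inside the local terms, whereas you make it explicit via Dirichlet. Your route is more self-contained (no black-box formula) but requires a bit more care with the $S$-unit representation theory; the paper's is shorter once one grants the Wiles formula.
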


\begin{proof}
Let $X=\F(\psi\omega^{k-1})$ and $X^*=\F(\psi^{-1}\omega^{2-k})$.
Set $H^1_f(\Q_\Sigma/\Q,X^*)$ equal to the subcollection of classes in $H^1(\Q_\Sigma/\Q,X^*)$ which are locally trivial at all places in $\Sigma$.  Then, by \cite[Proposition 1.6]{Wiles-FLT}, we have
$$
\frac{\#H^1(\Q_\Sigma/\Q,X)}{\#H^1_f(\Q_\Sigma/\Q,X^*)} = 
h_\infty \prod_{{\ell} \in \Sigma} h_{\ell},
$$
where 
$$
h_{\ell} = \#H^0(\Q_{\ell},X^*) \text{~~and~~}
h_\infty = \frac{\#H^0(\R,X^*) \cdot \#H^0(\Q,X)}{\#H^0(\Q,X^*)}.
$$

We analyze each term individually.  First note that $H^0(\Q,X) = 0$ as $\psi\omega^{k-1}$ is non-trivial (being odd) and $H^0(\Q,X^*) = 0$ as we have assumed that $\psi^{-1}\omega^{2-k}$ is non-trivial.   Further, $\#H^0(\R,X^*) = q$ as $\psi^{-1} \omega^{2-k}$ is an even character.  Now, for ${\ell} \in \Sigma-\{p\}$, clearly $H^0(\Q_{\ell},X) = 0$ as $\psi$ is ramified at ${\ell}$.  Lastly, $H^0(\Qp,X^*)=0$ as either $k \not\equiv 2 \pmod{p-1}$ or $\psi(p) \neq 1$ by assumption.

Thus, $h_\infty = q$, $h_{\ell} = 1$ for all $\ell \in \Sigma$, and
$$
\#H^1(\Q_\Sigma/\Q,X) = q \cdot \#H^1_f(\Q_\Sigma/\Q,X^*).
$$
Let $\Delta_{\psi} = \Gal(K_\psi(\mu_p)/\Q)$ which has size prime-to-$p$ as we are assuming that $p \nmid \varphi(N)$.  Thus we have 
\begin{align*}
H^1(\Q_\Sigma/\Q,X^*) 
&\stackrel{\sim}{\to} 
H^1(\Q_\Sigma/K_\psi(\mu_p),X^*)^{\Delta_{\psi}}\\
&\cong
\Hom_{\Delta_{\psi}}(\Gal(\Q_\Sigma/K_\psi(\mu_p)),X^*)\\
&\cong
\Hom(\Gal(\Q_\Sigma/K_\psi(\mu_p))^{(\psi^{-1}\omega^{2-k})},\F)
\end{align*}
The image of $H^1_f(\Q_\Sigma/\Q,X^*)$ in $H^1(\Q_\Sigma/\Q,X^*)$ thus lands in 
$$
\Hom(\Gal(H_\psi/K_\psi(\mu_p))^{(\psi^{-1}\omega^{2-k})},\F) 
$$
where $H_{\psi}$ denotes the Hilbert class field of $K_\psi(\mu_p)$.
Hence,
$$
\#H^1(\Q_\Sigma/\Q,\F(\psi\omega^{k-1}))
\leq q \cdot \#\Cl(K_\psi(\mu_p))[p]^{(\psi^{-1}\omega^{2-k})}
$$
which proves the lemma.
\end{proof}

\begin{proof}[Proof of Theorem \ref{thm:algupperbndgen}]
By Proposition \ref{prop:vandtoSel0}, $\vandpsiv{\psi^{-1}}{2-k}$ implies $\Sel(\Q,A_f) = 0$.  Thus, by Proposition \ref{prop:controlinf}, we have that $| \Sel(\Qinf,A_f)^{\Gamma} | \leq  q^{\ord_{\varpi}(a_p(f)-1)}$.
We note that by our choice of $T$, we have $H^0(\Q,A_f[\varpi])=0$ which is needed to invoke Proposition \ref{prop:controlinf}. Lastly,  by \cite[Proposition 2.5]{GV}, $\Sel(\Qinf,A_f)^\vee$ has no non-zero finite submodules; thus $\mu(\Sel(\Qinf,A_f)^\vee) \leq \ord_p(a_p(f)-1)$ by Lemma \ref{lemma:mu}.
\end{proof}

\begin{remark}
We note that if $j(\m)=2$ and $\psi(p) = 1$, then Proposition \ref{prop:vandtoSel0} does not hold.  Indeed, Lemma \ref{lemma:Wiles} gives in this case that the image of $H^1(\Q_\Sigma/\Q,\Ff(\psi\omega^{k-1}))$ in $H^1(\Q_\Sigma/\Q,A_f[\varpi])$ yields a non-trivial class in $\Sel(\Q,A_f[\varpi])$.  Nonetheless Theorem \ref{thm:algupperbndgen} likely holds in this case even if our method of proof fails.
\end{remark}

\subsection{Conjecture}

We continue with the notation and assumptions of the previous section so that $A_f = V_f / T_{\eis(f)}$.  Theorems \ref{thm:alglowerbndgen} and \ref{thm:algupperbndgen} give the following string of inequalities:
\begin{equation}
\label{eqn:algineq}
\eis(f) \leq \mu(\Sel(\Qinf,A_f)^\vee) \leq \ord_\varpi(a_p(f)-1).
\end{equation}

Greenberg has formulated precise conjectures on $\mu$-invariants of Selmer groups of an elliptic curve in terms of the Galois module structure of $E[p^n]$ for $n$ large enough \cite[Conjecture 1.11 and page 70]{Greenberg-CIME}.  These conjecture readily generalize to the case of modular forms and in this context predict that the lower bound gives the true value of the $\mu$-invariant. 

\begin{conj}
We have
$$
\mu \left( \Sel(\Qinf,A_f) ^\vee \right) = \eis(f).
$$
\end{conj}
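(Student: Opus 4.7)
The lower bound $\mu(\Sel(\Qinf,A_f)^\vee) \geq \eis(f)$ is Theorem \ref{thm:alglowerbndgen}, so the task is to establish the matching upper bound, in parallel with the analytic Conjecture \ref{conj:anmu} and its proven special case Theorem \ref{thm:anfullgen}. My overall plan is to isolate, inside $A_f$, the ``Eisenstein piece'' that is responsible for the lower bound, and to show that the quotient contributes nothing to the $\mu$-invariant.

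More concretely, let $B \subset A_f$ denote the cyclic, odd, ramified submodule isomorphic to $\O/\varpi^{\eis(f)}\O(\psi\ve^{k-1})$ provided by Lemma \ref{lemma:lattices}(5), and set $A_f^\circ = A_f/B$. The first step is to use the short exact sequence $0 \to B \to A_f \to A_f^\circ \to 0$ together with \cite[Proposition 2.5]{GV} (absence of finite $\Lambda$-submodules in the Pontryagin duals of these Selmer groups) to deduce
\[
\mu(\Sel(\Qinf,A_f)^\vee) \leq \mu(\Sel(\Qinf,B)^\vee) + \mu(\Sel(\Qinf,A_f^\circ)^\vee),
\]
while a direct computation modeled on the proof of Theorem \ref{thm:alglowerbndgen} gives $\mu(\Sel(\Qinf,B)^\vee) = \eis(f)$, the upper bound being immediate since $B$ is annihilated by $\varpi^{\eis(f)}$. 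It therefore suffices to prove $\mu(\Sel(\Qinf,A_f^\circ)^\vee) = 0$.

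For this last step I would run the control-theorem strategy of Theorem \ref{thm:algupperbndgen} with $A_f^\circ$ in place of $A_f$: bound $|\Sel(\Qinf,A_f^\circ)^\Gamma|$ using Lemma \ref{lemma:mu}, relate it to $|\Sel(\Q,A_f^\circ)|$ via an analog of Proposition \ref{prop:controlinf}, and bound the latter using a Vandiver-type input in the spirit of Lemma \ref{lemma:Wiles}. The hope is that, since $B$ has already absorbed the odd ramified $\varpi^{\eis(f)}$-torsion, the unit-root defect $\O/(\alpha_p-1)\O$ appearing in the cokernel of Proposition \ref{prop:controlinf} has been exactly accounted for, leaving only a trivial residual bound on $\Sel(\Q,A_f^\circ)$.

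The main obstacle is that $A_f^\circ$ is still residually reducible, so its Selmer group is not covered by standard residually irreducible $\mu=0$ theorems. In the filtration on $A_f^\circ$ the odd character now appears only as a \emph{quotient} rather than a submodule, so to run the Wiles-style dual-Selmer argument one needs a Vandiver-type hypothesis on the eigenspace dual to $\psi\omega^{k-1}$, mirroring the role that $\vandpsiv{\psi^{-1}}{2-j}$ played in Theorem \ref{thm:algupperbndgen}. Alternatively, one can try to extract the conjecture from an integral two-variable main conjecture over $\Tcm$: Kato's divisibility (Theorem \ref{thm:katomc}) combined with the analytic Conjecture \ref{conj:anmu} would, given an integral refinement, yield both statements simultaneously --- a strengthening that seems to be the true heart of the matter in the residually reducible case.
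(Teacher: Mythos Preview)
This statement is a \emph{conjecture} in the paper, not a theorem: the paper offers no proof. It is stated immediately after the string of inequalities \eqref{eqn:algineq} as the natural algebraic analogue of Conjecture~\ref{conj:anmu}, and is justified only by appeal to Greenberg's general philosophy on $\mu$-invariants \cite[Conjecture 1.11 and page 70]{Greenberg-CIME}. The only case in which the paper establishes the equality is under the additional hypothesis \lu, where it follows because the upper and lower bounds in \eqref{eqn:algineq} coincide (Theorem~\ref{thm:selmer}).

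Your proposal is therefore not to be compared against a proof in the paper --- there is none --- but is rather a sketch of a possible attack on an open problem. As such it is reasonable in outline, and you correctly identify the genuine obstruction: after stripping off the submodule $B$, the quotient $A_f^\circ$ is still residually reducible, and obtaining $\mu(\Sel(\Qinf,A_f^\circ)^\vee)=0$ is exactly the hard part that Greenberg's conjecture predicts but that no current technique delivers in this generality. Your alternative route via an integral refinement of Kato's divisibility is likewise a known desideratum rather than an available tool. So the proposal is an honest research plan, but it is not a proof, and you should not present it as one; the paper itself does not claim to prove this statement.
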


\subsection{When the bounds meet}

As on the analytic side, when the upper and lower bound in (\ref{eqn:algineq}) meet, the Iwasawa theory becomes very simple.  In this section, we will prove the following  theorem which establishes that the Selmer group is entirely given by the $\mu$-invariant in this situation (compare with Theorem \ref{thm:anfullgen} on the analytic side).

\begin{thm}
\label{thm:selmer}
Let $\m \subseteq \T$ be an maximal ideal satisfying
\begin{itemize}
\item 
\fk
\item
\lu
%\item 
%$\vandpsiv{\psi^{-1}}{2-j(\m)}$
\item 
$j(\m) = 2 \implies \psi(p) \neq 1$.  
\end{itemize} 
Then for $f$ a classical eigenform in the Hida family for $\mc$, we have  
$$
\Sel(\Qinf,A_f) \cong (\Lambda_{\O}/\varpi^{\eis(f)}\Lambda_{\O})^\vee.
$$
\end{thm}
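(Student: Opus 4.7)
The hypothesis \lu, combined with Lemma \ref{lemma:lutogoren} and Theorem \ref{thm:eisenprincipal}, means that $U_p - 1$ generates $\Iceis_{\mc}$, so by Lemma \ref{lemma:Leis} one has $r := \eis(f) = \ord_\varpi(a_p(f) - 1)$. The lower bound of Theorem \ref{thm:alglowerbndgen} and the upper bound of Theorem \ref{thm:algupperbndgen} will then both equal $r$, pinning down the $\mu$-invariant, provided we can invoke the latter. The only hypothesis of Theorem \ref{thm:algupperbndgen} not directly assumed is $\vandpsiv{\psi^{-1}}{2-j(\m)}$; but \lu forces the corresponding component of $\Cl(K_\psi(\mu_p))[p]$ to vanish via the results of Wake \cite{Wake-gorenstein} (cf.\ the parenthetical remark following Theorem \ref{thm:algfull} in the introduction). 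Hence $\mu(\Sel(\Qinf,A_f)^\vee) = r$.

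To upgrade this numerical equality to the claimed isomorphism, set $X := \Sel(\Qinf,A_f)^\vee$; the goal is $X \cong \Lambda_\O/\varpi^r\Lambda_\O$. Since the hypotheses of Proposition \ref{prop:vandtoSel0} are now in force, $\Sel(\Q,A_f) = 0$, and then the commutative diagram in the proof of Proposition \ref{prop:controlinf} gives an injection $\Sel(\Qinf,A_f)^\Gamma \hookrightarrow \O/(\alpha_p - 1)\O$. The target is cyclic over $\O$, so by Pontryagin duality $X_\Gamma$ is cyclic over $\O$ as well. Writing $\mathfrak{n} = (\varpi,\gamma-1)$ for the maximal ideal of $\Lambda_\O$, it follows that $X/\mathfrak{n}X \cong X_\Gamma/\varpi X_\Gamma$ has dimension at most $1$ over $\F$, and topological Nakayama yields that $X$ is cyclic over $\Lambda_\O$.

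Since $X$ has no non-trivial finite $\Lambda_\O$-submodules by \cite[Proposition 2.5]{GV} and is $\Lambda_\O$-torsion, the structure theorem applied to a cyclic module gives $X \cong \Lambda_\O/(\varpi^a g)\Lambda_\O$ for some $a \geq 0$ and some distinguished polynomial $g$ of degree $\lambda(X)$. Combining the control-theorem bound $|X_\Gamma| \leq |\O/(\alpha_p-1)\O| \leq q^r$ with Lemma \ref{lemma:mu}'s inequality $|X_\Gamma| \geq q^{\mu(X)} = q^r$ forces $|X_\Gamma| = q^r$. Since any distinguished $g$ of positive degree has $g(0) \in \varpi\O$, a non-trivial $g$ would inflate $|X_\Gamma| = q^a\cdot|\O/g(0)\O|$ strictly beyond $q^a$; hence $\lambda(X) = 0$ and $a = r$. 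Thus $X \cong \Lambda_\O/\varpi^r\Lambda_\O$, and Pontryagin dualizing yields $\Sel(\Qinf,A_f) \cong (\Lambda_\O/\varpi^r\Lambda_\O)^\vee$ as claimed.

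The main obstacle I anticipate is the invocation of Wake's work to bypass the explicit Vandiver assumption of Theorem \ref{thm:algupperbndgen}: deducing from \lu alone (i.e.\ principality of $\Iceis_{\mc}$ by $U_p-1$) that the relevant component of $\Cl(K_\psi(\mu_p))[p]$ vanishes. Once this is in hand, the passage from equality of $\mu$-invariants to the structural statement is a clean Nakayama-plus-structure-theorem argument, using that the control theorem already exhibits $\Sel(\Qinf,A_f)^\Gamma$ as cyclic over $\O$.
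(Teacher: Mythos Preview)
Your proposal is correct and follows essentially the same approach as the paper: both invoke Wake's results \cite[Theorem 1.2]{Wake-gorenstein} to obtain $\vandpsiv{\psi^{-1}}{2-j(\m)}$ from \lu, pin down $\mu=\eis(f)$ via Theorems \ref{thm:alglowerbndgen} and \ref{thm:algupperbndgen}, and use the control theorem together with Proposition \ref{prop:vandtoSel0} and \cite[Proposition 2.5]{GV} to finish. The only difference is ordering: the paper first uses \cite[Lemma 4.2]{Greenberg-CIME} to deduce $\lambda=0$ and then proves cyclicity, whereas you establish cyclicity first and then read off $\lambda=0$ from the explicit presentation $\Lambda_\O/(\varpi^a g)$; both routes are equivalent, and your anticipated ``obstacle'' (Wake) is exactly what the paper invokes.
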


\begin{proof}
To ease notation, let $S=\Sel(\Qinf,A_f)$.  First note that by Theorem \ref{thm:eisenprincipal} we know that $\T$ is Gorenstein as $\lu$ holds. Then by \cite[Theorem 1.2]{Wake-gorenstein}, we have that $\vandpsiv{\psi^{-1}}{2-j(\m)}$ holds. 
Hence, by Theorems \ref{thm:alglowerbndgen} and \ref{thm:algupperbndgen}, 
$$
\eis(f) \leq \mu(S^\vee) \leq \ord_\varpi(a_p(f)-1).
$$
Further, by Lemma \ref{lemma:Leis}, \lu~implies that $\eis(f) = \ord_\varpi (a_p(f)-1)$, and thus 
$\mu(S^\vee) = \ord_\varpi(a_p(f)-1)$.

Let $h \in \Lambda_{\O}$ denote the characteristic power series of $S^\vee$.  Then by \cite[Lemma 4.2]{Greenberg-CIME}, we have 
$$
\ord_q |S^\Gamma| = \ord_\varpi h(0) + \ord_q  |S_\Gamma|.
$$
Clearly,  $\ord_\varpi h(0) \geq \mu(S^\vee) = \ord_\varpi(a_p(f)-1)$.
Further, in the course of the proof of Proposition \ref{prop:controlinf}, the following exact sequence was derived:
\begin{equation}
\label{eqn:ct}
0 \to \Sel(\Q,A_f) \to S^\Gamma \to \ker(r)
\end{equation}
and that there was a surjection $\O /(a_p(f)-1)\O \surj \ker(r)$.  By Proposition \ref{prop:vandtoSel0}, we have $\Sel(\Q,A_f)=0$, and thus $\ord_q |S^\Gamma| \leq \ord_\varpi(a_p(f)-1)$.  Thus, we must have that 
$$
\ord_q |S^\Gamma| = \ord_\varpi h(0) = \ord_\varpi(a_p(f)-1) \text{~~and~~} \ord_q |S_\Gamma| = 0.
$$
But now $\mu(h) = \ord_\varpi h(0)$, and hence $\lambda(h) = 0$ (as in the proof of Theorem \ref{thm:anfullgen}). 

We now know that $\mu(S^\vee) = \eis(f)$ and $\lambda(S^\vee)=0$, and thus to prove our theorem it suffices to check that $S^\vee$ is a cyclic $\Lambda_\O$-module.  By Nakayama's lemma, it suffices to check that $(S^\vee)_{\Gamma}$ is a cyclic $\O$-module or equivalently that $S^{\Gamma}$ is a cyclic $\O$-module.   Returning to (\ref{eqn:ct}), we can now deduce that $S^\Gamma \cong \ker(r) \cong \O /(a_p(f)-1)\O$ as all of these modules have the same size.  Thus, $S^\Gamma$ is $\O$-cyclic which concludes the proof.
\end{proof}

\begin{cor}[Main conjecture]
\label{cor:mc}
Let $\m \subseteq \T$ be an maximal ideal satisfying
\begin{itemize}
\item 
\fk
\item
\lu
%\item 
%$\vandpsiv{\psi^{-1}}{2-j(\m)}$
\item 
$j(\m) = 2 \implies \psi(p) \neq 1$.  
\end{itemize} 
Then for $f$ a classical eigenform in the Hida family for $\mc$, we have  
$$
\chr_{\Lambda_\O}( \Sel(\Qinf,A_f)^\vee ) = \varpi^{\eis(f)} \Lambda_{\O} = L_p(f/\Qinf) \Lambda_{\O}.
$$
\end{cor}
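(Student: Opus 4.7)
The plan is to assemble this main conjecture by combining the analytic result (Theorem \ref{thm:anfullgen}) and the algebraic result (Theorem \ref{thm:selmer}), with Lemma \ref{lemma:Leis} serving as the bridge that identifies $\eis(f)$ with $\ord_\varpi(\Leispsi(\p_f))$.

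First, I would apply Theorem \ref{thm:selmer} to the ideal $\m$: the three hypotheses of the corollary are exactly those needed there, so the theorem yields $\Sel(\Qinf,A_f)^\vee \cong \Lambda_\O/\varpi^{\eis(f)}\Lambda_\O$, from which $\chr_{\Lambda_\O}(\Sel(\Qinf,A_f)^\vee) = \varpi^{\eis(f)}\Lambda_\O$ is immediate. This settles the left-hand side of the asserted equality.

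Next, on the analytic side, I would apply Theorem \ref{thm:anfullgen} (whose hypotheses \fk~and \lu~are also in force) to get $L^+_p(\mc,\omega^0) = \Leispsi \cdot U$ with $U \in \Tcm[[1+p\Zp]]^\times$. Specializing at the height-one prime $\p_f \subseteq \mc$ attached to $f$, I obtain $L_p(f/\Qinf) = L^+_p(f,\omega^0) = \Leispsi(\p_f) \cdot U(\p_f)$ inside $\Lambda_\O$, where $U(\p_f)$ remains a unit. Lemma \ref{lemma:Leis} identifies $\ord_\varpi(\Leispsi(\p_f)) = \eis(f)$, so $\Leispsi(\p_f)$ is a unit multiple of $\varpi^{\eis(f)}$ in $\O$, and therefore $L_p(f/\Qinf)\Lambda_\O = \varpi^{\eis(f)}\Lambda_\O$. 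Combining this with the algebraic computation yields both claimed equalities.

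There is no substantial obstacle: the heavy lifting has been done in Theorems \ref{thm:selmer} and \ref{thm:anfullgen}, so the corollary is a formal consequence. The only minor bookkeeping is to confirm that the notation $L_p(f/\Qinf)$ in the statement refers to the cyclotomic $p$-adic $L$-function viewed in $\Lambda_\O = \Of[[1+p\Zp]]$, which coincides with $L^+_p(f,\omega^0)$ as constructed in Section \ref{sec:OMS} under the conventions fixed by \fk; and to observe that specialization at $\p_f$ commutes with the projection $\Tcm[[\Zpx]] \to \Tcm[[1+p\Zp]]$ used to define $L^+_p(\mc,\omega^0)$, so that unit elements remain units after specialization.
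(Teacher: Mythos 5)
Your proposal is correct and matches the paper's approach exactly: the paper's own proof reads simply ``combine Theorem \ref{thm:anfullgen} and Theorem \ref{thm:selmer},'' and what you have written is the natural unpacking of that one-liner, including the use of Lemma \ref{lemma:Leis} to identify $\eis(f)$ with $\ord_\varpi(\Leispsi(\p_f))$ and the observation that \lu~supplies the Gorenstein hypotheses required upstream.
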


\begin{proof}
Simply combine Theorem \ref{thm:anfullgen} and Theorem \ref{thm:selmer}.
\end{proof}

\begin{appendix}
\label{appendix}

\section{Two-variable  $p$-adic $L$-functions}
\label{appendix:2var}

Let $\T$ denote the universal ordinary Hecke algebra of tame level $\Gamma_1(N)$ acting on modular forms (not just cuspforms), and let $\m$ denote a maximal ideal of $\T$.  We aim to construct a two-variable $p$-adic $L$-function over $\Tm$ assuming the maximal ideal $\m$ satisfies \eqref{m1}.  Specifically, we construct elements
$$
L_p^+(\m) \in \T_{\m} \otimes_{\Zp} \Zp[[\Zpx]] \text{~~and~~}
L_p^-(\m) \in \Tc_{\mc} \otimes_{\Zp} \Zp[[\Zpx]]
\footnote{Note that $\T_{\m} \cong \Tc_{\mc}$ if $\m$ is not Eisenstein and the necessary distinction being drawn between the plus and minus $p$-adic $L$-functions disappears.}
$$
such that for any classical height one prime $\p_f \subseteq \T_{\m}$ of residue characteristic zero, the image of $L_p^+(\m)$ in $\Tm/\p_f \otimes \Zp[[\Zpx]]$ yields the $p$-adic $L$-function $L^+_p(f)$, and similarly for $L_p^-(\m)$.

\subsection{Integral control theorems}

Recall the definition of  $\cP_g$ for $g \geq 0$ as well as the specialization map $\Mv{g} \to \cP_g^\vee$ from section \ref{sec:OMS}.  In this section, we aim to prove the following integral version of the control theorem in Theorem \ref{thm:control}.  In what follows, we write $H^i_c(X)$ for $H^i_c(\Gamma_0,X)$.

\begin{thm}
\label{thm:comparek}
Specialization induces an isomorphism
$$
H^1_c(\Mv{g})^{\ord} \stackrel{\sim}{\lra} H^1_c(\cP_g^\vee)^{\ord}.
$$
\end{thm}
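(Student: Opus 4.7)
The plan is to realize $\Mv{g}$ and $\cP_g^\vee$ as the outer terms of a short exact sequence of $S_0(p)$-modules
$$
0 \to N \to \Mv{g} \to \cP_g^\vee \to 0,
$$
where $N$ is the kernel of specialization, i.e.\ the measures whose moments against the Mahler basis elements $\binom{z}{j}$ for $0 \leq j \leq g$ all vanish. Taking the long exact sequence in $H^\bullet_c(\Gamma_0,-)$ and applying the ordinary idempotent $e_{\ord} = \lim_n U_p^{n!}$, which is exact on $\Zp$-modules with a compact action of $U_p$, it will suffice to verify that $e_{\ord} H^i_c(N) = 0$ for $i=1,2$.

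The main content is therefore to show that $U_p$ is topologically nilpotent on $N$ (and hence on its compactly supported cohomology, which is computed by a finite complex of $p$-adically complete $\Zp$-modules). I would filter $N$ by the submodules $N_r$ of measures whose moments vanish on $\binom{z}{j}$ for $j < g+1+r$, so each graded piece $N_r/N_{r+1}$ is free of rank one over $\Zp$. Using the expansion
$$
\binom{a+pz}{j} = \sum_{i \leq j} c_{i,j}(a) \binom{z}{i}
$$
together with the formula $(\mu \mid_g \psmallmat{1}{a}{0}{p})(f) = \mu(f(a+pz))$, I would check that the leading coefficient $c_{g+1+r,g+1+r}(a)$ of this expansion is exactly $p^{g+1+r}$ and that all lower coefficients landing in $N_r/N_{r+1}$ are integral. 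Summing over $a \in \{0,1,\dots,p-1\}$ shows that $U_p$ acts on $N_r/N_{r+1}$ by $p^{g+1+r}$ times a $p$-adic integer; in particular $U_p$ raises the filtration index by one and multiplies by at least one extra power of $p$, so $U_p^n$ is divisible by $p^n$ on $N$.

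Combining this with the $p$-adic completeness of $N$ (and hence of its cohomology) gives $e_{\ord}N = 0$ and $e_{\ord} H^i_c(N) = 0$, and the long exact sequence then yields the asserted isomorphism. The principal obstacle is the combinatorial lemma on the $p$-adic valuations of the coefficients $c_{i,j}(a)$: one must show that the weight-$g$ action on the full Mahler expansion shifts the filtration strictly and produces at least one extra power of $p$ on each graded piece. Once this divisibility bookkeeping is in place, the topological nilpotence of $U_p$ and the exactness of $e_{\ord}$ finish the argument.
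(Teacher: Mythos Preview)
Your skeleton---short exact sequence $0 \to N \to \Mv{g} \to \cP_g^\vee \to 0$, long exact sequence, then kill $H^i_c(N)^{\ord}$ for $i=1,2$---is exactly the paper's. The divergence is in how you kill these groups, and your proposed mechanism has a real gap. From your own computation, $U_p$ \emph{preserves} each $N_r$ and acts on $N_r/N_{r+1}$ by multiplication by $p\cdot p^{g+1+r}$; it does \emph{not} send $N_r$ into $N_{r+1}$. So the sentence ``$U_p$ raises the filtration index by one'' is false, and the inference ``$U_p^n$ is divisible by $p^n$ on $N$'' does not follow from what you wrote: with an infinite upper-triangular operator whose off-diagonal entries need not be divisible by $p$, high-index coordinates of $\mu\mid U_p^n$ can pick up contributions from low-index coordinates along paths that avoid the diagonal, and your estimate gives no uniform $p$-divisibility. (A careful analysis of the off-diagonal entries $\sum_a c_{i,j}(a)$ might rescue this, but you have not supplied it, and the case $j\geq i+p-1$ is genuinely delicate.) There is also a second, smaller gap: even granting topological nilpotence of $U_p$ on $N$, passing to $H^1_c(N)^{\ord}=0$ needs an argument, since $U_p$ on modular symbols mixes the action on $\Delta_0$ with the action on $N$.

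The paper's argument sidesteps all of this combinatorics. For $H^1_c(K)^{\ord}$, take $\Phi$ ordinary and write $\Phi=\Psi\mid U_p^n$; then $\Phi(D)(a+p^n\Zp)$ is expressed in terms of values $\Psi(\cdot)({\bf 1}_{\Zp})$, which vanish because $\Psi$ lands in $K$ (measures with vanishing total mass). Since $n$ is arbitrary, $\Phi(D)$ vanishes on every coset, hence $\Phi=0$. For $H^2_c(K)^{\ord}$, one identifies it with $(K^{\ord})_{\Gamma_0}$ and shows $\M^{\ord}=0$ by the same coset trick: if $\mu=\mu_n\mid U_p^n$ then $\mu(b+p^n\Zp)=\mu_n(\Zp)$ is independent of $b$, forcing $\mu=0$. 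No filtration, no Mahler-coefficient estimates---just the fact that a measure is determined by its values on cosets.
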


Towards proving Theorem \ref{thm:comparek}, we recall the following famous result of Mahler.

\begin{thm}
\label{thm:Mahler}
Any continuous function $f :\Zp \to \Qp$ can be written uniquely in the form $f(x) = \sum_{j=0}^\infty c_j \binom{x}{j}$ with $\{c_j\}$ a sequence in $\Qp$ tending to 0.  Moreover, 
$$
\sup_{x \in \Zp} |f(x)|_p = \sup_j |c_j|_p.
$$
In particular, if $f$ takes values in $\Zp$, then all of the $c_j$ are integral.
\end{thm}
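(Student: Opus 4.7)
The plan is to prove Mahler's theorem by isolating the coefficients via the forward difference operator and then establishing the required $p$-adic decay. Define $\Delta f(x) = f(x+1) - f(x)$ and use the basic identities
$$\Delta \binom{x}{j} = \binom{x}{j-1} \quad \text{and} \quad \binom{0}{j} = \begin{cases} 1 & j=0 \\ 0 & j>0 \end{cases}.$$
First, I would dispatch uniqueness: if $f(x) = \sum_{j \geq 0} c_j \binom{x}{j}$ with $c_j \to 0$ in $\Qp$, then interchanging $\Delta^i$ with the (uniformly convergent) series and evaluating at $x=0$ forces $c_i = \Delta^i f(0)$. Explicitly, $c_i = \sum_{k=0}^i (-1)^{i-k} \binom{i}{k} f(k)$, so the coefficients depend only on $f$.

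For existence, define $c_j := \Delta^j f(0)$ and prove the key technical estimate $|c_j|_p \to 0$. The crucial input is the operator congruence $\Delta^{p^n} \equiv E^{p^n} - I \pmod{p}$, where $E$ denotes translation by $1$; this follows from $(E-I)^p \equiv E^p - I \pmod{p}$ (the intermediate binomial coefficients $\binom{p}{k}$ for $0<k<p$ are divisible by $p$) and induction. Combined with uniform continuity of $f$ on the compact space $\Zp$, one obtains: for any $\varepsilon > 0$, choosing $n$ so that $|f(x+p^n) - f(x)|_p < \varepsilon$ for all $x$ and also $p^{-m} \|f\|_\infty < \varepsilon$, one shows inductively that $\|\Delta^k f\|_\infty \leq \varepsilon$ for all sufficiently large $k$. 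Specializing at $x=0$ gives $|c_k|_p \to 0$. This step is the main obstacle of the proof, since everything else is formal manipulation.

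Having established $|c_j|_p \to 0$, the partial sums $S_N(x) := \sum_{j=0}^N c_j \binom{x}{j}$ converge uniformly on $\Zp$ (using $|\binom{x}{j}|_p \leq 1$ for $x \in \Zp$) to a continuous function $g$. One checks directly from the finite-difference identity
$$f(k) = \sum_{j=0}^k \binom{k}{j} \Delta^j f(0) = S_k(k) = S_N(k) \quad (N \geq k)$$
that $g(k) = f(k)$ for every $k \in \Z_{\geq 0}$. Since $\Z_{\geq 0}$ is dense in $\Zp$ and both $f,g$ are continuous, $g = f$, proving the expansion exists.

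Finally, for the norm identity, the inequality $\sup_j |c_j|_p \leq \sup_x |f(x)|_p$ is immediate from $c_j = \Delta^j f(0)$ and the fact that $\Delta$ has operator norm $\leq 1$ (binomial coefficients are integers). The reverse inequality follows from the established expansion $f(x) = \sum c_j \binom{x}{j}$ together with $|\binom{x}{j}|_p \leq 1$. The final ``in particular'' is then a tautology: $\Zp$-valued $f$ forces every $c_j \in \Zp$.
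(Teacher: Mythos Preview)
Your proof is correct and is essentially the standard modern proof of Mahler's theorem via the forward-difference operator; the only cosmetic wrinkle is the stray ``$m$'' in your decay argument, where you clearly intend to iterate the estimate
\[
\|\Delta^{p^n} g\|_\infty \leq \max\bigl(\|(E^{p^n}-I)g\|_\infty,\; p^{-1}\|g\|_\infty\bigr)
\]
applied successively to $g = f, \Delta^{p^n}f, \Delta^{2p^n}f,\dots$ until the second term drops below $\varepsilon$.

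As for comparison with the paper: there is nothing to compare. The paper does not prove this theorem at all---it simply cites Mahler's original 1958 paper and moves on, treating the result as a black box. So you have supplied strictly more than the paper does. Your argument is the one typically found in textbook treatments (e.g.\ Robert's \emph{A Course in $p$-adic Analysis} or Schikhof's \emph{Ultrametric Calculus}), and it is entirely adequate here.
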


\begin{proof}
See \cite{Mahler}.
\end{proof}

\begin{cor}
\label{cor:Mahler}
The map which associates $\mu \in \M$ to the sequence $\{\mu ( \binom{x}{j} ) \}_{j=0}^\infty$ establishes a bijection between $\M$ and the collection of sequences in $\Zp$.  
\end{cor}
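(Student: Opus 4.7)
The plan is to read off the bijection directly from Mahler's theorem by dualizing the isomorphism it establishes between $\Cont(\Zp)$ and a space of sequences tending to $0$.

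First, I would package Theorem \ref{thm:Mahler} as the statement that the map
\[
c_0(\N, \Zp) \stackrel{\sim}{\lra} \Cont(\Zp), \qquad (c_j) \longmapsto \sum_{j=0}^\infty c_j \binom{x}{j},
\]
is an isometric isomorphism of $\Zp$-Banach modules, where $c_0(\N, \Zp)$ denotes sequences in $\Zp$ tending to $0$ under the sup norm. The series converges uniformly because $c_j \to 0$ and the binomial coefficients take values in $\Zp$, so partial sums form a Cauchy sequence in $\Cont(\Zp)$ under the sup norm; Mahler's theorem supplies both the existence of the expansion and the norm equality.

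Next, I would apply continuous $\Zp$-duality. By definition $\M = \Meas(\Zp) = \Hom_\cont(\Cont(\Zp), \Zp)$, so dualizing the isomorphism above yields
\[
\M \stackrel{\sim}{\lra} \Hom_\cont(c_0(\N, \Zp), \Zp).
\]
The right-hand side consists of continuous $\Zp$-linear functionals $\Lambda$ on $c_0(\N, \Zp)$. Writing $e_j$ for the standard basis vector, any such $\Lambda$ is determined by $(\Lambda(e_j))_j \in \Zp^\N$, and conversely any sequence $(a_j) \in \Zp^\N$ defines a continuous functional by $(c_j) \mapsto \sum_j c_j a_j$ (the sum converges since $c_j \to 0$ and $a_j$ is bounded in $\Zp$; continuity is automatic because $|\Lambda((c_j))|_p \leq \sup_j |c_j|_p$). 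This gives a bijection $\Hom_\cont(c_0(\N,\Zp), \Zp) \cong \Zp^\N$.

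Composing the two bijections, $\mu \in \M$ corresponds to the sequence whose $j$-th entry is the value of the dualized functional on $e_j$, which is exactly $\mu\bigl(\binom{x}{j}\bigr)$. There is no real obstacle here; the only thing to be careful about is to use the sup-norm identification in Theorem \ref{thm:Mahler} to ensure that linear functionals on $c_0(\N, \Zp)$ valued in $\Zp$ really correspond to \emph{arbitrary} (not just bounded-tail) sequences in $\Zp$, which is immediate because the basis vectors $e_j$ themselves have norm $1$ and the pairing $\sum c_j a_j$ remains in $\Zp$ for any $(a_j) \in \Zp^\N$ precisely because $c_j \to 0$.
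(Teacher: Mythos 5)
Your proof is correct and is exactly the argument the paper is implicitly invoking when it says the corollary "follows immediately from Theorem \ref{thm:Mahler}": one dualizes the Mahler isomorphism $c_0(\N,\Zp)\cong\Cont(\Zp)$ and identifies $\Hom_\cont(c_0(\N,\Zp),\Zp)$ with $\Zp^{\N}$ via evaluation on the standard basis. You have simply spelled out the convergence and continuity details that the paper leaves to the reader.
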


\begin{proof}
This corollary follows immediately from Theorem \ref{thm:Mahler}.
\end{proof}

\begin{lemma}
\label{lemma:specsurj}
The specialization map $\Mv{g} \to \cP_g^\vee$ is surjective.
\end{lemma}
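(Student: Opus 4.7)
The plan is to reduce the statement to Mahler's theorem via Corollary \ref{cor:Mahler}, which identifies $\M$ with the space of arbitrary $\Zp$-valued sequences by sending a measure $\mu$ to the sequence $\{\mu(\binom{x}{j})\}_{j \geq 0}$. Since the weight $g$ action on $\Mv{g}$ is immaterial here (the map in question is just a $\Zp$-linear restriction, not required to respect the $S_0(p)$-action for the statement), I would treat the target side purely as a statement about $\Zp$-linear functionals on $\cP_g$.

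First, I would recall from the excerpt that $\cP_g$ is free over $\Zp$ with basis given by the binomial coefficients $\binom{x}{j}$ for $0 \leq j \leq g$, so an element $\alpha \in \cP_g^\vee$ is uniquely determined by, and freely specified by, the values $\alpha(\binom{x}{j}) \in \Zp$ for $0 \leq j \leq g$. Second, given such an $\alpha$, I would define a sequence $(c_j)_{j \geq 0}$ in $\Zp$ by
\[
c_j = \begin{cases} \alpha(\binom{x}{j}) & 0 \leq j \leq g, \\ 0 & j > g, \end{cases}
\]
and invoke Corollary \ref{cor:Mahler} to produce a unique measure $\mu \in \M$ with $\mu(\binom{x}{j}) = c_j$ for all $j$.

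Finally, I would check that the specialization of $\mu$ to $\cP_g^\vee$ agrees with $\alpha$. Both functionals are $\Zp$-linear on $\cP_g$, and by construction they take the same values on the basis $\{\binom{x}{j}\}_{j=0}^{g}$, so they coincide. This proves surjectivity.

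There is no real obstacle here; the only mild point to keep track of is that we are free to extend the finite sequence $(\alpha(\binom{x}{j}))_{0 \leq j \leq g}$ to an arbitrary $\Zp$-valued sequence (Mahler gives no convergence condition beyond boundedness for $\Zp$-valued measures, which is automatic since we extend by zero), and that the restriction map indeed sends $\mu \in \M$ to the functional on $\cP_g$ whose value on $\binom{x}{j}$ is $\mu(\binom{x}{j})$.
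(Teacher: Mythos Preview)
Your proof is correct and matches the paper's own argument essentially line for line: both invoke that $\cP_g$ has the binomial coefficients $\binom{x}{j}$ for $0 \le j \le g$ as a $\Zp$-basis (via Theorem~\ref{thm:Mahler}), and then use Corollary~\ref{cor:Mahler} to realize any prescribed values on this basis by a measure. The only difference is that you spell out the extension-by-zero step explicitly, which the paper leaves implicit.
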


\begin{proof}
By Theorem \ref{thm:Mahler}, $\cP_g$ is generated over $\Zp$ by the binomial coefficients $\binom{x}{j}$ for $0 \leq j \leq k$.  Thus this lemma follows directly from  Corollary \ref{cor:Mahler}.
\end{proof}

\begin{proof}[Proof of Theorem \ref{thm:comparek}]
By Lemma \ref{lemma:specsurj}, we have an exact sequence
$$
0 \to K \to \Mv{g} \to \cP_g^\vee \to 0
$$
where $K$ is the subspace of measures which vanish on polynomials of degree less than or equal to $k$.  The long exact sequence for cohomology then gives
$$
H^1_c(K)^{\ord} \to H^1_c(\Mv{g})^{\ord} \to H^1_c(\cP_g^\vee)^{\ord} \to H^2_c(K)^{\ord}.
$$

We first show $H^1_c(K)^{\ord}=0$.  To this end, take $\Phi \in H^1_c(K)^{\ord}$, and  for any $n$ we can write $\Phi = \Psi \big| U_p^n$ with $\Psi \in  H^1_c(K)^{\ord}$.  Then 
$$
\Phi(D) = (\Psi \big| U_p^n)(D) = \sum_{a=0}^{p-1} \Psi (\psmallmat{1}{a}{0}{p^n} D) \big| \psmallmat{1}{a}{0}{p^n}
$$
Since all values of $\Psi$ are in $K$, we have in particular that 
$$
\Psi(\psmallmat{1}{a}{0}{p^n} D)(\Zp) 
=
\Psi(\psmallmat{1}{a}{0}{p^n} D)({\bf 1}_{\Zp}) = 0.
$$
Thus, by Lemma \ref{lemma:measUp} below, we have $\Phi(D)(a+p^n\Zp) = 0$ for all $a$.  But since $n$ was chosen arbitrarily, we have that $\Phi(D)$ and thus $\Phi$ is identically zero.

To see $H^2_c(K)^{\ord}=0$, note that 
$$
H^2_c(K)^{\ord} \cong H_0(K)^{\ord} \cong (K_{\Gamma_0})^{\ord} 
\cong (K^{\ord})_{\Gamma_0}.
$$
Here we are computing $K^{\ord}$ with respect to the $U_p$-operator $\sum_{a=0}^{p-1} \psmallmat{1}{a}{0}{p}$.  It thus suffices to show that $K^{\ord} = 0$.  But this follows from Lemma \ref{lemma:Mord} below.
\end{proof}

\begin{lemma}
\label{lemma:measUp}
If $\mu \in \M$ such that $\mu(\Zp) = 0$, then 
$$
(\mu \big| \psmallmat{1}{a}{0}{p^n})(b + p^n \Zp) = 0
$$
for all $b \in \Zp$.
\end{lemma}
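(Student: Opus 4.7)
The plan is to unwind the definition of the weight action on measures for the specific upper-triangular matrix $\gamma = \psmallmat{1}{a}{0}{p^n}$ and observe that the computation reduces to evaluating $\mu$ on either the characteristic function of all of $\Zp$ or the zero function, both of which give $0$ by hypothesis.

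Concretely, I would first note that for $\gamma = \psmallmat{1}{a}{0}{p^n}$, the lower-left entry is $0$, so the factor $(a+cz)^g$ that usually appears in the weight-$g$ action is simply $1$. Thus, regardless of the weight being used, one has
$$
(\mu \mid \gamma)(f) \;=\; \mu\bigl( f(a + p^n z) \bigr)
$$
for any continuous $f : \Zp \to \Zp$. Applying this to $f = {\bf 1}_{b+p^n\Zp}$, the function $z \mapsto {\bf 1}_{b+p^n\Zp}(a + p^n z)$ is identically $1$ if $a \equiv b \pmod{p^n}$ and identically $0$ otherwise. In the first case $(\mu \mid \gamma)(b + p^n\Zp) = \mu({\bf 1}_{\Zp}) = \mu(\Zp) = 0$ by assumption, and in the second case it is trivially $0$.

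There is no real obstacle here; the content of the lemma is simply that the assumption $\mu(\Zp) = 0$ propagates through the action of $\gamma$ because $\gamma$ pulls the compact open $b + p^n \Zp$ back to either all of $\Zp$ or the empty set. The only mild care needed is to verify that the weight-$g$ twist does not change this computation for matrices with $c = 0$, which is immediate from the defining formula of the action recalled in Section~\ref{sec:OMS}.
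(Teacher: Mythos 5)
Your argument is correct and matches the paper's proof essentially verbatim: both unwind the action of $\psmallmat{1}{a}{0}{p^n}$ on the characteristic function ${\bf 1}_{b+p^n\Zp}$, observe that ${\bf 1}_{b+p^n\Zp}(a+p^nz)$ is either identically $1$ (when $a \equiv b \pmod{p^n}$) or identically $0$, and conclude from $\mu(\Zp)=0$. Your added remark that the weight factor $(a+cz)^g$ is trivially $1$ here (since the lower-left entry is $0$ and the upper-left entry is $1$) is a sensible explicit note of something the paper uses silently.
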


\begin{proof}
We have
\begin{align*}
(\mu \big| \psmallmat{1}{a}{0}{p^n})(b + p^n \Zp)
=
(\mu \big| \psmallmat{1}{a}{0}{p^n})({\bf 1}_{b + p^n \Zp}(z)) 
=
\mu({\bf 1}_{b + p^n \Zp}(a+ p^nz)) 
\end{align*}
When $a \not \equiv b \pmod{p^n}$, the function ${\bf 1}_{b + p^n \Zp}(a+ p^nz)$ is identically zero.  When $a \equiv b \pmod{p^n}$, we have 
$$
\mu({\bf 1}_{b + p^n \Zp}(a+ p^nz))  = \mu({\bf 1}_{\Zp}(z)) = \mu(\Zp) = 0
$$
by assumption.
\end{proof}

Define an action of $U_p$ on $\M$ simply by the action of $\sum_{a=0}^{p-1} \psmallmat{1}{a}{0}{p}$.

\begin{lemma}
\label{lemma:Mord}
We have $\M^{\ord} = 0$.
\end{lemma}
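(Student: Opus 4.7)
The plan is to show that every $\mu \in \M^{\ord}$ vanishes on every compact-open ball $b+p^m\Zp \subseteq \Zp$, and then invoke Corollary \ref{cor:Mahler} (equivalently, density of the $\Zp$-span of characteristic functions in $\Cont(\Zp)$) to conclude $\mu = 0$.

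First I would compute the action of $U_p$ on values of measures. Unwinding the weight-zero action, $(\nu \mid \psmallmat{1}{a}{0}{p})({\bf 1}_{b+p^m\Zp}) = \nu({\bf 1}_{b+p^m\Zp}(a+pz))$, so for $m \geq 1$ only the unique $a_0 \in \{0,\ldots,p-1\}$ with $a_0 \equiv b \pmod p$ contributes, giving
$$
(\nu \mid U_p)(b+p^m\Zp) \;=\; \nu\bigl((b-a_0)/p + p^{m-1}\Zp\bigr).
$$
For $m = 0$ the ball is all of $\Zp$ and every $a$ contributes, so $(\nu \mid U_p)(\Zp) = p\,\nu(\Zp)$.

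Iterating these two identities, for any $\nu \in \M$, any $b \in \Zp$, and any $n \geq m \geq 0$,
$$
(\nu \mid U_p^n)(b + p^m \Zp) \;=\; p^{n-m}\,\nu(\Zp),
$$
since the first $m$ applications of $U_p$ lower the level until the ball becomes $\Zp$, after which each of the remaining $n-m$ applications multiplies the value by $p$.

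Now suppose $\mu \in \M^{\ord}$. By definition, for every $n \geq 0$ there exists $\nu_n \in \M$ with $\mu = \nu_n \mid U_p^n$. Fixing $b \in \Zp$ and $m \geq 0$ and letting $n$ range over $n \geq m$, the previous identity yields $\mu(b+p^m\Zp) = p^{n-m}\nu_n(\Zp)$, which is divisible by $p^{n-m}$ in $\Zp$ for arbitrarily large $n$. Hence $\mu(b+p^m\Zp) = 0$ for all $b$ and $m$, so $\mu = 0$. There is no real obstacle; the whole argument rests on the observation that $U_p$ acts on the total mass $\nu(\Zp)$ by multiplication by $p$, which prevents any $U_p$-ordinary element from having nonzero mass.
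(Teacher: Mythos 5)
Your proof is correct and takes essentially the same approach as the paper's: both reduce to computing the $U_p^n$-action on values of the measure at balls and use the observation that $U_p$ multiplies the total mass $\nu(\Zp)$ by $p$. The paper evaluates at balls of level exactly $n$ (the power of $U_p$), getting $\mu(b+p^n\Zp)=\mu_n(\Zp)$ independent of $b$, and leaves the final step ("this clearly forces $\mu$ to vanish") to the reader; you evaluate at balls of arbitrary level $m\le n$ and exhibit the divisibility $p^{n-m}\mid\mu(b+p^m\Zp)$ explicitly, which spells out that last step cleanly.
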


\begin{proof}
For $\mu \in \M^{\ord}$, write $\mu = \mu_n | U_p^n$ for each $n \geq 0$.  Then 
\begin{align*}
\mu(b+p^n\Zp) 
= \sum_{a=0}^{p^n-1} \left(\mu_n \big| \psmallmat{1}{a}{0}{p^n}\right)(b+p^n\Zp) = \mu_n(\Zp)
\end{align*}
which is independent of $b$.  But this clearly forces $\mu$ to vanish.
\end{proof}

We end this section with a ``mod $p$" control theorem. 

\begin{lemma}
\label{lemma:modp}
We have
$$
H^1_c(\cP_g^\vee)^{\ord} \otimes \Fp \cong H^1_c(\cP_g^\vee \otimes \Fp)^{\ord}.
$$
\end{lemma}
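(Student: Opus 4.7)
My plan is to feed the short exact sequence of coefficients
$$0 \longrightarrow \cP_g^\vee \stackrel{p}{\longrightarrow} \cP_g^\vee \longrightarrow \cP_g^\vee \otimes \Fp \longrightarrow 0$$
(where $p$ really is injective because $\cP_g^\vee$ is $\Zp$-free) into compactly-supported cohomology, and then apply Hida's ordinary idempotent $e=\lim_n U_p^{n!}$, which is exact on the level of Hecke-cohomology. The relevant piece of the long exact sequence becomes
$$H^1_c(\cP_g^\vee)^{\ord} \stackrel{p}{\to} H^1_c(\cP_g^\vee)^{\ord} \to H^1_c(\cP_g^\vee \otimes \Fp)^{\ord} \to H^2_c(\cP_g^\vee)^{\ord} \stackrel{p}{\to} H^2_c(\cP_g^\vee)^{\ord},$$
so the lemma reduces to the assertion that $p$ is injective on $H^2_c(\cP_g^\vee)^{\ord}$. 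I will in fact prove the stronger statement that $H^2_c(\cP_g^\vee)^{\ord}=0$.

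For this vanishing I would imitate the tail of the proof of Theorem \ref{thm:comparek}, one level higher. The key observation is that the matrices $\psmallmat{1}{a}{0}{p}$ defining $U_p$ all have lower-left entry zero, so the weight-$g$ action of these matrices on $\Mv{g}$ coincides verbatim with the weight-$0$ action treated in Lemma \ref{lemma:Mord}. Hence the same calculation gives $\Mv{g}^{\ord}=0$. The chain of identifications already used in the paper for $K$,
$$H^2_c(\Mv{g})^{\ord} \cong H_0(\Gamma_0,\Mv{g})^{\ord} \cong ((\Mv{g})_{\Gamma_0})^{\ord} \cong (\Mv{g}^{\ord})_{\Gamma_0},$$
then forces $H^2_c(\Mv{g})^{\ord}=0$.

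Finally I would apply $(\cdot)^{\ord}$ to the tail of the long exact sequence attached to $0 \to K \to \Mv{g} \to \cP_g^\vee \to 0$:
$$H^2_c(K)^{\ord} \to H^2_c(\Mv{g})^{\ord} \to H^2_c(\cP_g^\vee)^{\ord} \to 0,$$
where the terminal zero comes from $\Gamma_0\backslash\mathcal{H}$ having cohomological dimension two. The first group vanishes by the proof of Theorem \ref{thm:comparek} and the second by the preceding paragraph, so $H^2_c(\cP_g^\vee)^{\ord}=0$ as required. There is no real obstacle here; the only bookkeeping worth double-checking is that the ordinary idempotent genuinely commutes both with the Bockstein connecting map and with passage to $\Gamma_0$-coinvariants, but both are standard and already used tacitly in the proof of Theorem \ref{thm:comparek}.
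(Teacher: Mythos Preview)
Your proof is correct and follows the same overall strategy as the paper: take the long exact sequence associated to $0 \to \cP_g^\vee \stackrel{p}{\to} \cP_g^\vee \to \cP_g^\vee \otimes \Fp \to 0$, pass to ordinary parts, and reduce to showing $H^2_c(\cP_g^\vee)^{\ord}=0$.

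The only difference is in how you establish this last vanishing, and your route is slightly more circuitous than necessary. You first prove $H^2_c(\Mv{g})^{\ord}=0$ via the coinvariant identification and $\Mv{g}^{\ord}=0$, and then push this through the tail of the long exact sequence for $0 \to K \to \Mv{g} \to \cP_g^\vee \to 0$ (your appeal to $H^2_c(K)^{\ord}=0$ is in fact superfluous here, since surjectivity onto $H^2_c(\cP_g^\vee)^{\ord}$ already suffices). The paper instead applies the coinvariant identification directly to $\cP_g^\vee$: from the $U_p$-equivariant surjection $\Mv{g} \surj \cP_g^\vee$ and $\Mv{g}^{\ord}=0$ one gets $(\cP_g^\vee)^{\ord}=0$ immediately, whence $H^2_c(\cP_g^\vee)^{\ord} \cong ((\cP_g^\vee)^{\ord})_{\Gamma_0} = 0$. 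This saves you the detour through $H^2_c(\Mv{g})$ and the appeal to cohomological dimension two, but the content is the same.
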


\begin{proof}
Starting with the short exact sequence
$$
0 \to \cP_g^\vee \stackrel{\times p}{\lra} \cP_g^\vee \lra \cP_g^\vee \otimes \Fp \to 0
$$
gives
$$
0 \to H^1_c(\cP_g^\vee)^{\ord} \stackrel{\times p}{\lra} H^1_c(\cP_g^\vee)^{\ord} \lra 
H^1_c(\cP_g^\vee \otimes \Fp)^{\ord} \to H^2_c(\cP_g^\vee)^{\ord}.
$$
As before, we have 
$$
H^2_c(\cP_g^\vee)^{\ord} \cong ((\cP_g^\vee)_{\Gamma_0})^{\ord} \cong
((\cP_g^\vee)^{\ord})_{\Gamma_0}.
$$
Since $\Mvx{g} \surj \cP_g^\vee$ (Lemma \ref{lemma:specsurj}) and $\Mvx{g}^{\ord} = 0$ (Lemma \ref{lemma:Mord}), we get $(\cP_g^\vee)^{\ord} = 0$ and thus $H^2_c(\cP_g^\vee)^{\ord}=0$.
\end{proof}

\subsection{Two-variable measures and specialization}

Recall the natural isomorphism
$$
\Ms \cong \Msg
$$
which sends the Dirac-measure $\delta_x$ supported at $x \in \X$ to the group-like element $[x]$ in $\Msg$.  We endow $\Msg$ with the $\Mxg$-action arising from the embedding
\begin{align*}
\Zpx &\to \Mxg \\
a &\mapsto a^2[(a,a)].
\end{align*}
The factor of $a^2$ here is related to the fact that weight $k$ modular forms correspond to $\Sym^{k-2}$-valued modular symbols. 
In terms of measures, the corresponding action of $\Mxg$ on $\Ms$ is given by
$$
([a] \cdot \mu)(f(x,y)) = a^2 \int_{\X} f(ax,ay) ~d\mu(x,y)
$$
for $a \in \Zpx$ and $\mu \in \Ms$.

The space $\Ms$ also admits an action of $\sigop$ defined by
$$
(\mu \big| \gamma)(f(x,y)) = \int_{\X} f((x,y) \cdot \gamma) ~d\mu(x,y).
$$
The specialization to weight $g$ is then the following $\sigop$-equivariant map:
\begin{align*}
\Ms &\lra \Mv{g} \\
\mu &\mapsto \left(f \mapsto \int_{\X} x^g f(y/x) ~d\mu(x,y) \right).
\end{align*}

Let $\p_k \subseteq \Mxg$ denote the ideal generated by elements of the form $[a] - a^k$.
We note that $\p_k$ is a principal ideal with generator $\pi_k = [\gamma]-\gamma^k$ where $\gamma$ is a topological generator of $\Zpx$.

\begin{lemma}
\label{lemma:specmeas}
The specialization map $\Ms \to \Mv{g}$ sits in the exact sequence
$$
0 \to \Ms \stackrel{\times \pi_{k}}{\lra} \Ms \to \Mv{k-2} \to 0
$$
where $\pi_{k}$ is any generator of the ideal $\p_{k}$.
\end{lemma}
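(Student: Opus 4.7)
My proof plan is as follows. First I would check the easy containment $\pi_k \Ms \subseteq \ker(\text{Sp})$ by direct computation. For any $f \in \Cont(\Zp, \Zp)$ the test function $g(x,y) := x^{k-2} f(y/x)$ is homogeneous of degree $k-2$, so $g(\gamma x, \gamma y) = \gamma^{k-2} g(x,y)$; the formula for the $\Mxg$-action then yields
\[
(\pi_k \mu)(g) = \gamma^2\,\mu\bigl(g(\gamma\cdot,\gamma\cdot)\bigr) - \gamma^k \mu(g) = \gamma^k \mu(g) - \gamma^k \mu(g) = 0.
\]
Thus $\text{Sp}$ factors through $\Ms/\pi_k \Ms$, and the task reduces to showing that $\pi_k$ is injective on $\Ms$ and that the induced map $\Ms/\pi_k \Ms \to \Mv{k-2}$ is an isomorphism.

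The key structural move is a change of coordinates on $\X$. The map $(x,y) \mapsto (x, z := y/x)$ is a self-homeomorphism of $\X = \Zpx \times \Zp$ and identifies $\Ms \cong \Meas(\Zpx \times \Zp)$ in the new $(x,z)$-coordinates. Under this change the scaling action $(x,y) \mapsto (\gamma x, \gamma y)$ transforms into $(x,z) \mapsto (\gamma x, z)$, affecting only the first coordinate. Composing with the standard identification $\Meas(\Zpx \times \Zp) \cong \Mxg \widehat{\otimes}_{\Zp} \Mg$, the twisted $\Mxg$-action becomes multiplication on the first tensor factor alone: $[\gamma]$ acts as $\gamma^2 [\gamma]$, and $\pi_k$ as $\gamma^2([\gamma] - \gamma^{k-2})\otimes \text{id}$. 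A direct computation shows that in these coordinates $\text{Sp}$ sends an elementary tensor $\nu_1 \otimes \nu_2 \in \Mxg \widehat{\otimes} \Mg$ to $\nu_1(x^{k-2}) \cdot \nu_2 \in \Mg$.

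It therefore remains to verify the short exact sequence
\[
0 \to \Mxg \xrightarrow{\,[\gamma]-\gamma^{k-2}\,} \Mxg \xrightarrow{\,\nu \mapsto \nu(x^{k-2})\,} \Zp \to 0
\]
and to show that $\widehat{\otimes}_{\Zp} \Mg$ preserves it. For this I would decompose $\Mxg \cong \prod_{i=0}^{p-2} \Zp[[T]]$ via the idempotents of $\mu_{p-1}$, with $T = [1+p]-1$ on the procyclic factor, and argue case by case: on components with $i \not\equiv k-2 \pmod{p-1}$, the constant term of $[\gamma]-\gamma^{k-2}$ is the unit $\omega^i - \omega^{k-2}(1+p)^{k-2}$, so the element itself is a unit and the quotient is zero; on the single component $i \equiv k-2$, $[\gamma]-\gamma^{k-2}$ equals a unit times $T - ((1+p)^{k-2}-1)$, a degree-one distinguished polynomial, hence a non-zero-divisor whose quotient is $\Zp$ via the assignment $[a] \mapsto a^{k-2}$. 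For the preservation under the completed tensor, one notes that $\Ms \cong \Mxg[[U]]$ as an $\Mxg$-module (using $\Mg \cong \Zp[[U]]$): multiplication by $[\gamma]-\gamma^{k-2}$ on the formal power series ring $\Mxg[[U]]$ is injective (coefficient by coefficient) with quotient $(\Mxg/([\gamma]-\gamma^{k-2}))[[U]] \cong \Zp[[U]] \cong \Mg$. Combined with the matching of this quotient with $\text{Sp}$ computed above, this produces the desired exact sequence.

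The main obstacle is the bookkeeping around the change of coordinates together with the tensor decomposition, in particular making sure that the twisted $\Mxg$-action (with its factor of $a^2$) is correctly transported and that the resulting surjection onto $\Mg$ really does agree with $\text{Sp}$; all the hard inputs (the Weierstrass-style non-zero-divisor argument and flatness of formal power series) are standard.
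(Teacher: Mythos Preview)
Your proof is correct and follows essentially the same route as the paper: the heart of both arguments is the change of variables $(x,y)\mapsto(x,y/x)$ on $\X$ (the paper uses the inverse isomorphism $(r,s)\mapsto(r,rs)$), after which the diagonal $\Mxg$-action acts only on the first factor and the specialization map becomes $[(a,b)]\mapsto a^{k-2}[b]$. The paper then simply declares the kernel ``visibly equal to $\p_k\Msg$'' and handles injectivity of $\pi_k$ by noting that each of the $p-1$ components of $\Zp[[\X]]$ is a domain; your version spells out this ``visibly'' step via the explicit tensor decomposition and the coefficient-by-coefficient argument on $\Mxg[[U]]$, which is a perfectly reasonable elaboration of the same idea.
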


\begin{proof}
To prove this lemma, we will work with the interpretation of these measure spaces as group algebras.  Specialization is then the map
$$
\Msg \to \Mg
$$
which sends the group-like element $[(a,b)]$ to $a^{k-2} [b/a]$.  We immediately see then that specialization is  surjective as $[(1,b)]$ maps to $[b]$.  

To see that multiplication by $\pi_k$ is injective, note that $\Zp[[\X]]$ is isomorphic to a direct sum of $p-1$ copies of $\Zp[[(1+p\Zp) \times \Zp]]$ with projections induced by the characters of $(\Z/p\Z)^\times$.  Since $\Zp[[(1+p\Zp) \times \Zp]]$ is a domain, it suffices to see that no projection of $\pi_k$ is 0 which is true by inspection.

Lastly, to compute the kernel of specialization, we will make a change of variables on $\Zp[[\X]]$.  Namely, consider the group isomorphism
\begin{align*}
\X &\to \X \\
(r,s) &\mapsto (r,rs)
\end{align*}
which induces a ring isomorphism
$$
\Msg \stackrel{\alpha}{\lra} \Msg. 
$$
The map $\alpha$ is $\Mxg$-linear if we endow the target with same $\Mxg$-action as before, but we endow the source the $\Mxg$-action arising from embedding
\begin{align*}
\Zpx &\to \Mxg \\
a &\mapsto a^2[(a,1)].
\end{align*}
By precomposing by $\alpha$, it  suffices to compute the kernel of $\Msg \to \Mg$ where the group-like element $[(a,b)]$ simply maps to $a^{k-2} [b]$.  This kernel is visibly equal to $\p_k \Msg$ as desired.
\end{proof}

\subsection{Two-variable control theorem}

To ease notation we write $\MM$ for $\Ms$ and $\MMv{g}$ for $\Mv{g}$.

\begin{thm}
\label{thm:control2to1}
Specialization induces the isomorphism
$$
H^1_c(\MM)^{\ord} \otimes_{\Zp} (\Mxg / \p_k)
 \cong
H^1_c(\MMv{k-2})^{\ord}.
$$
\end{thm}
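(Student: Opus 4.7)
The plan is to apply the long exact sequence in compactly supported cohomology to the short exact sequence of Lemma~\ref{lemma:specmeas},
$$
0 \to \MM \stackrel{\pi_k}{\lra} \MM \to \MMv{k-2} \to 0,
$$
and then pass to ordinary parts (the ordinary projector is exact on the relevant cohomology). This yields the long exact sequence
$$
\cdots \to H^1_c(\MM)^{\ord} \stackrel{\pi_k}{\lra} H^1_c(\MM)^{\ord} \to H^1_c(\MMv{k-2})^{\ord} \to H^2_c(\MM)^{\ord} \stackrel{\pi_k}{\lra} H^2_c(\MM)^{\ord},
$$
and hence a short exact sequence
$$
0 \to H^1_c(\MM)^{\ord}/\pi_k H^1_c(\MM)^{\ord} \to H^1_c(\MMv{k-2})^{\ord} \to H^2_c(\MM)^{\ord}[\pi_k] \to 0.
$$
Since $[a] \in \Mxg$ acts on $\MMv{k-2}$ as the scalar $a^k$, the ideal $\p_k$ kills $H^1_c(\MMv{k-2})^{\ord}$, and the left-hand term identifies with the tensor product $H^1_c(\MM)^{\ord} \otimes_{\Mxg} (\Mxg/\p_k)$ appearing in the theorem (interpreting the tensor in the statement accordingly). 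The task therefore reduces to showing that $\pi_k$ acts injectively on $H^2_c(\MM)^{\ord}$.

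I will establish the stronger vanishing $H^2_c(\MM)^{\ord} = 0$. Exactly as in the arguments of Theorem~\ref{thm:comparek} and Lemma~\ref{lemma:modp}, Poincar\'e duality together with the exactness of coinvariants and the ordinary projector give
$$
H^2_c(\MM)^{\ord} \;\cong\; (\MM_{\Gamma_0})^{\ord} \;\cong\; (\MM^{\ord})_{\Gamma_0},
$$
so it suffices to prove the two-variable analogue of Lemma~\ref{lemma:Mord}, namely $\MM^{\ord} = 0$. This is the main technical step. For $\mu \in \MM^{\ord}$ and every $n \geq 1$, choose $\mu_n \in \MM$ with $\mu = \mu_n | U_p^n$, where $U_p^n = \sum_{a=0}^{p^n-1} \psmallmat{1}{a}{0}{p^n}$ acts on the right by $(x,y)\cdot\psmallmat{1}{a}{0}{p^n} = (x,\,ax+p^n y)$. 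For any compact open $A \subseteq \Zpx$ and $b \in \Z/p^n\Z$, setting $V = A \times (b+p^n\Zp)$, one computes
$$
\mu(V) = \sum_{a=0}^{p^n-1} \mu_n\bigl(\{(x,y): x \in A,\ ax \equiv b \pmod{p^n}\}\bigr) = \mu_n(A \times \Zp),
$$
the second equality because, for each $x \in \Zpx$, the unique solution $a \equiv bx^{-1} \pmod{p^n}$ shows the inner sets partition $A \times \Zp$ as $a$ ranges over $\{0,\dots,p^n-1\}$. Summing over the $p^n$ residues $b$ yields $\mu(A \times \Zp) = p^n \mu_n(A \times \Zp)$, so $\mu(A \times \Zp) \in p^n \Zp$ for every $n$ and hence vanishes; consequently $\mu(V) = 0$ for every cylindrical $V$, giving $\mu = 0$.

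I expect this last step to be the only real obstacle: unlike the one-variable setting of Lemma~\ref{lemma:Mord}, the $U_p$ action on $\MM$ couples the two coordinates through the off-diagonal term $ax$, so the one-variable argument does not transfer directly. The key observation that rescues it is that although, for a fixed $a$, the fiber $\{x \in \Zpx : ax \equiv b \pmod{p^n}\}$ depends delicately on $a$, the union over all residues $a \bmod p^n$ reassembles all of $\Zpx$ via the bijection $x \mapsto bx^{-1} \bmod p^n$; everything else is routine homological algebra applied to the short exact sequence of Lemma~\ref{lemma:specmeas}.
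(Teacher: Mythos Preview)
Your proof is correct and follows essentially the same approach as the paper: apply the long exact sequence to the short exact sequence of Lemma~\ref{lemma:specmeas}, pass to ordinary parts, and kill the $H^2_c$ term by identifying it with $(\MM^{\ord})_{\Gamma_0}$. The only difference is that the paper dispatches the vanishing of $\MM^{\ord}$ with a one-line reference (``which one can see vanishes just as in the proof of Lemma~\ref{lemma:Mord}''), whereas you supply the explicit two-variable computation; your partition argument via $a \equiv bx^{-1} \pmod{p^n}$ is exactly the right way to make that reference precise.
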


\begin{proof}
Taking the corresponding long exact sequence of the exact sequence from Lemma \ref{lemma:specmeas}, and passing to ordinary parts then gives
\begin{align*}
0 \lra H^1_c(\MM)^{\ord} \stackrel{\times \pi_k}{\lra} H^1_c(\MM)^{\ord} \lra H^1_c(\MMv{k-2})^{\ord} \lra H^2_c(\MM)^{\ord}.
\end{align*}
We have
\begin{align*}
H^2_c(\MM)^{\ord} &\cong H_0(\MM)^{\ord} = (\MM_{\Gamma_0})^{\ord} 
= (\MM^{\ord})_{\Gamma_0}
\end{align*}
which one can see vanishes just as in the proof of the Lemma \ref{lemma:Mord}, completing the proof of the theorem.
\end{proof}

\begin{cor}
\label{cor:CT} 
For $k\geq 2$, we have
$$
H^1_c(\MM)^{\ord} \otimes (\Mxg / \p_k) \cong H^1_c(\cP_{k-2}^\vee)^{\ord}.
$$
\end{cor}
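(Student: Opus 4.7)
The plan is to simply chain together the two control theorems already established in this appendix. Theorem \ref{thm:control2to1} provides the ``two-variable to one-variable'' isomorphism
$$
H^1_c(\MM)^{\ord} \otimes_{\Zp} (\Mxg / \p_k) \cong H^1_c(\MMv{k-2})^{\ord},
$$
and Theorem \ref{thm:comparek} (applied with $g = k-2$) provides the ``one-variable to classical'' isomorphism
$$
H^1_c(\MMv{k-2})^{\ord} \cong H^1_c(\cP_{k-2}^\vee)^{\ord}
$$
induced by the specialization map $\MMv{k-2} \to \cP_{k-2}^\vee$. Composing these two isomorphisms yields the desired identification.

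The only small thing to check is that the composite map is the natural one induced by the two-variable specialization $\MM \to \cP_{k-2}^\vee$ sending $\mu$ to the functional $f \mapsto \int_{\X} x^{k-2} f(y/x) \, d\mu(x,y)$ restricted to polynomial test functions. This is immediate from the factorization of this specialization through $\MMv{k-2}$ used in the proofs of Lemma \ref{lemma:specmeas} and Theorem \ref{thm:control2to1}. The hypothesis $k \geq 2$ is needed so that $\cP_{k-2}^\vee$ is non-trivial and so that Theorem \ref{thm:comparek} applies. Since both inputs have already been proved, there is no remaining obstacle; this corollary is essentially a bookkeeping statement combining the integral weight-$k$ control theorem with the specialization from the universal (two-variable) measure space.
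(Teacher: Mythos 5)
Your proposal is correct and matches the paper's own proof exactly: the paper likewise derives the corollary by composing Theorem \ref{thm:control2to1} with Theorem \ref{thm:comparek}. The extra remarks on compatibility of the specialization maps and the role of $k \geq 2$ are accurate, if slightly more than the paper spells out.
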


\begin{proof}
Combine Theorem \ref{thm:control2to1} and Theorem \ref{thm:comparek}.
\end{proof}

\subsection{Control theorems on the dual side}

For a maximal ideal $\m \subseteq \T$, the space $H^1_c(\cP_g^\vee \otimes \Fp)[\m_k]$ is more directly related to $\Hom_{\Lt}(H^1_c(\MM)_{\m},\Lt)$ than to $H^1_c(\MM)_{\m}$ where $\Lt := \Zp[[\Zpx]]$.  
For this reason, we now prove a control theorem on the dual side.  We begin with a few lemmas.

\begin{lemma}
\label{lemma:quot1}
Let $K$ be a field, $R$ a finitely generated $K$-algebra, and $M$ a finitely generated $R$-module.  For any ideal $I \subseteq R$, we have
\begin{enumerate}
\item 
$M[I]^\vee \cong M^\vee / I M^\vee$,
\item
$(M/IM)^\vee \cong M^\vee[I]$.
\end{enumerate}
Here $X^\vee = \Hom_K(X,K)$.
\end{lemma}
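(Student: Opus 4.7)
The plan is to realize $M[I]$ as a kernel and $M/IM$ as a cokernel of the same style of map, and then invoke the exactness of $K$-linear duality. Two small preliminary observations drive the argument. First, since $R$ is a finitely generated $K$-algebra, the Hilbert basis theorem gives that $R$ is Noetherian, so $I$ is finitely generated; fix generators $i_1,\ldots,i_n$ of $I$. Second, $K$ is injective as a $K$-module (every module over a field is), so the functor $(-)^\vee = \Hom_K(-,K)$ is exact on all $K$-vector spaces.

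For part (2), I would begin from the right exact sequence of $R$-modules
$$M^n \xrightarrow{\psi} M \longrightarrow M/IM \longrightarrow 0, \qquad \psi(m_1,\ldots,m_n) = \sum_{j} i_j m_j,$$
and dualize it to the left exact sequence
$$0 \longrightarrow (M/IM)^\vee \longrightarrow M^\vee \xrightarrow{\psi^\vee} (M^\vee)^n.$$
Unwinding definitions (using that $R$ acts on $M^\vee$ by $(r\phi)(m) = \phi(rm)$) shows $\psi^\vee(\phi) = (i_1\phi,\ldots,i_n\phi)$, so the kernel is by definition $M^\vee[I]$, giving the claimed isomorphism.

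For part (1), the argument is symmetric: I would start from the left exact sequence
$$0 \longrightarrow M[I] \longrightarrow M \xrightarrow{\varphi} M^n, \qquad \varphi(m) = (i_1 m,\ldots,i_n m),$$
and dualize to the right exact sequence
$$(M^\vee)^n \xrightarrow{\varphi^\vee} M^\vee \longrightarrow M[I]^\vee \longrightarrow 0,$$
where $\varphi^\vee(\phi_1,\ldots,\phi_n) = \sum_{j} i_j \phi_j$ and the image of $\varphi^\vee$ is exactly $IM^\vee$. There is no substantial obstacle here; once the two preliminary observations are in place, both isomorphisms fall out of the definitions. The most subtle point worth highlighting is that surjectivity of $M^\vee \twoheadrightarrow M[I]^\vee$ (which would be automatic in finite dimensions) requires precisely the injectivity of $K$ as a $K$-module noted above, since $M$ need not be finite-dimensional over $K$.
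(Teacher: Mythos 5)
Your proof is correct, and it takes a genuinely different route from the paper. The paper proves item (2) directly, by observing that dualizing the quotient map $M\twoheadrightarrow M/IM$ gives an injection $(M/IM)^\vee\hookrightarrow M^\vee$ whose image is exactly $M^\vee[I]$; it then deduces item (1) formally by applying item (2) to the $R$-module $M^\vee$ and identifying double duals, i.e.\ from $(M^\vee/IM^\vee)^\vee\cong M^{\vee\vee}[I]$ it concludes $M^\vee/IM^\vee\cong M[I]^\vee$. You instead present $M[I]$ and $M/IM$ as the kernel and cokernel of the multiplication maps $M\to M^n$ and $M^n\to M$ built from a finite generating set of $I$, and dualize each exact sequence, getting both isomorphisms by the same mechanism. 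The ingredients you highlight (Noetherianity of $R$ so that $I$ is finitely generated, exactness of $\Hom_K(-,K)$, and the explicit $R$-linearity of the transposed maps) are exactly what is needed, and the computation $\psi^\vee(\phi)=(i_1\phi,\dots,i_n\phi)$, $\varphi^\vee(\phi_1,\dots,\phi_n)=\sum_j i_j\phi_j$ is correct.

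One point worth flagging in your favor: the paper's deduction of (1) from (2) passes through $M^{\vee\vee}$, and as written it implicitly uses $M^{\vee\vee}\cong M$ together with a second dualization of a finite-dimensional space. That step is only valid when $\dim_K M<\infty$ (which holds in all the paper's applications, where $R$ is a finite ring and $M$ a finite module), whereas the lemma is stated for an arbitrary finitely generated $R$-module $M$, which can be infinite-dimensional over $K$ when $\dim R>0$. Your direct argument proves both items as stated, with no finite-dimensionality restriction, so it is slightly more robust than the paper's. Your closing remark about where injectivity of $K$ enters is apt: it is needed for the surjectivity $M^\vee\twoheadrightarrow M[I]^\vee$ in item (1), and more generally it is precisely what makes $(-)^\vee$ exact rather than merely left exact.
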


\begin{proof}
First note that the second isomorphism implies the first.  Indeed, applying the second isomorphism to $M^\vee$ gives
$$
(M^\vee / I M^\vee)^\vee \cong (M^\vee)^\vee[I]
$$
and thus
$$
(M^\vee / I M^\vee) \cong M[I]^\vee.
$$
To see the second isomorphism, there is a natural map
$$
(M / IM)^\vee \to M^\vee
$$
which is clearly injective.  Moreover, the image of this map lands in $M^\vee[I]$ since if $a \in I$ and $\phi : M \to M/IM \to K$ is in the image, then $(a \cdot \phi)(m) = \phi(am) = 0$ for all $m \in M$.

Lastly, to get surjectivity, take $\phi \in M^\vee[I]$.  So $\phi: M \to K$ and $a \cdot \phi = 0$ for all $a \in I$.  To prove surjectivity, we simply need to see that $\phi(IM) = 0$.  But this follows since for $a \in I$, $\phi(am) = (a \cdot \phi)(m) = 0$ 
\end{proof}

\begin{lemma}
\label{lemma:quot2}
Let $M$ be a projective $R$-module and $I=aR$ a principal ideal of $R$ where $a$ is not a zero-divisor.  Then
$$
\Hom_R(M,R) / I \Hom_R(M,R) \cong \Hom_R(M,R/I) \cong \Hom_{R/I}(M/IM,R/I).
$$
\end{lemma}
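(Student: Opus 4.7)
The plan is to handle the two asserted isomorphisms separately, since they come from different standard mechanisms: the first uses that $M$ is projective (hence $\Hom_R(M,-)$ is exact), and the second is the usual extension-of-scalars adjunction. Neither uses anything about the particular structure of $M$ or $R$ beyond what is stated, so the proof should be short.

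For the first isomorphism, I would begin by writing down the short exact sequence
$$
0 \to R \xrightarrow{\;\cdot a\;} R \to R/I \to 0,
$$
which is exact on the left precisely because $a$ is not a zero-divisor. Since $M$ is projective, the functor $\Hom_R(M,-)$ is exact, so applying it yields
$$
0 \to \Hom_R(M,R) \xrightarrow{\;\cdot a\;} \Hom_R(M,R) \to \Hom_R(M,R/I) \to 0.
$$
Reading off the cokernel gives the desired identification
$\Hom_R(M,R)/I\Hom_R(M,R) \cong \Hom_R(M,R/I)$.

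For the second isomorphism, I would invoke the adjunction between restriction of scalars along $R\to R/I$ and extension of scalars $-\otimes_R R/I$. Any $R$-linear map $M\to R/I$ factors uniquely through $M\otimes_R R/I = M/IM$ (since $I$ acts as zero on the target), giving a natural bijection
$$
\Hom_R(M,R/I) \;\cong\; \Hom_{R/I}(M/IM,\,R/I).
$$
Composing the two isomorphisms completes the proof.

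The main obstacle, if one could call it that, is just making sure the hypothesis that $a$ is not a zero-divisor is actually used, and it is used exactly once, to guarantee injectivity of multiplication by $a$ on $R$ so that the starting short exact sequence is exact. The projectivity of $M$ is used exactly once, to preserve exactness after applying $\Hom_R(M,-)$. Everything else is formal.
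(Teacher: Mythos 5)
Your proof is correct and follows essentially the same route as the paper: both start from the short exact sequence $0 \to R \xrightarrow{\cdot a} R \to R/I \to 0$ and apply $\Hom_R(M,-)$, with the only cosmetic difference being that the paper phrases the right-exactness via $\Ext^1_R(M,R)=0$ while you phrase it directly as exactness of $\Hom_R(M,-)$ for projective $M$. You also spell out the second isomorphism (the restriction/extension-of-scalars adjunction), which the paper treats as implicit; that is a reasonable addition.
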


\begin{proof}
Since $a$ is not a zero-divisor, we have
$$
0 \lra R \stackrel{\times a}{\lra} R \lra R/I \lra 0,
$$
and thus
$$
0 \lra \Hom_R(M,R) \stackrel{\times a}{\lra} \Hom_R(M,R) \lra
\Hom_R(M,R/I) \lra \Ext^1_R(M,R).
$$
This last Ext group vanishes as $M$ is projective over $R$ which proves the lemma.
\end{proof}

The following lemma gives control theorems for $\Hom_{\Lt}(H^1_c(\MM)_{\m}, \Lt)$.

\begin{lemma}
\label{lemma:lemma}
For $k \equiv j(\m) \pmod{p-1}$ and $g=k-2 \geq 0$, we have
$$
\Hom_{\tilde{\Lambda}}(H^1_c(\MM)_{\m},\tilde{\Lambda}) \otimes_{\tilde{\Lambda}} \tilde{\Lambda}/\p_k \cong \Hom(H^1_c(\cP_g^\vee)_{\m_k} ,\Zp)
$$
and
$$
\Hom_{\tilde{\Lambda}}(H^1_c(\MM)_{\m},\tilde{\Lambda}) \otimes_{\T} \T/\m \cong \Hom(H^1_c(\cP_g^\vee \otimes \Fp)[\overline{\m}_k] ,\Fp).
$$
\end{lemma}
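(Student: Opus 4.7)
The plan is to derive both isomorphisms by combining the two-variable control theorem of Corollary~\ref{cor:CT} with the Hom--tensor manipulations of Lemmas~\ref{lemma:quot1} and~\ref{lemma:quot2}.

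For the first isomorphism, I would apply Lemma~\ref{lemma:quot2} with $R=\tilde{\Lambda}$ (localized at $\m\cap\tilde{\Lambda}$), $M=H^1_c(\MM)_{\m}$, and $I=\p_k=(\pi_k)$. The element $\pi_k=[\gamma]-\gamma^k$ is a non-zero-divisor on this localization: the maximal ideal $\m$ singles out one component, isomorphic to $\Lambda=\Zp[[1+p\Zp]]$, which is a regular domain on which $\pi_k$ is nonzero since $\tilde{\Lambda}/\p_k\cong\Zp$. Granting projectivity of $H^1_c(\MM)_{\m}$ over the localized $\tilde{\Lambda}$ (discussed below), Lemma~\ref{lemma:quot2} will yield
$$\Hom_{\tilde{\Lambda}}(H^1_c(\MM)_{\m},\tilde{\Lambda})\otimes_{\tilde{\Lambda}}\tilde{\Lambda}/\p_k \;\cong\; \Hom_{\tilde{\Lambda}/\p_k}\!\bigl(H^1_c(\MM)_{\m}/\p_k,\tilde{\Lambda}/\p_k\bigr).$$
Under $\tilde{\Lambda}/\p_k\cong\Zp$ and the control identification $H^1_c(\MM)_{\m}/\p_k\cong H^1_c(\cP_g^\vee)_{\m_k}$ from Corollary~\ref{cor:CT} (using Hida's $\T/\p_k\T\cong\T_k$ so that localization at $\m$ descends to localization at $\m_k$), the first claimed isomorphism follows.

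For the second isomorphism, I would further tensor the first with $\T_k/\m_k$ over $\T_k$. Since $k\equiv j(\m)\pmod{p-1}$ forces $\p_k\subseteq\m$, tensoring with $\T/\m$ over $\T$ factors as tensoring with $\tilde{\Lambda}/\p_k$ followed by tensoring with $\T_k/\m_k\cong\Fp$, reducing the problem to computing $\Hom_{\Zp}(H^1_c(\cP_g^\vee)_{\m_k},\Zp)\otimes_{\T_k}\T_k/\m_k$. I would split this into reduction mod $p$ and quotienting by $\overline{\m}_k$. The first step produces $\Hom_{\Fp}(H^1_c(\cP_g^\vee)_{\m_k}\otimes\Fp,\Fp)$, which by Lemma~\ref{lemma:modp} equals $\Hom_{\Fp}(H^1_c(\cP_g^\vee\otimes\Fp)_{\m_k},\Fp)$. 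A final application of Lemma~\ref{lemma:quot1}(1) with $I=\overline{\m}_k$ converts $M^\vee/\overline{\m}_kM^\vee$ into $M[\overline{\m}_k]^\vee$, giving $\Hom_{\Fp}(H^1_c(\cP_g^\vee\otimes\Fp)[\overline{\m}_k],\Fp)$ as required (the $\m_k$-localization disappears because $X[\overline{\m}_k]$ is automatically supported at $\m_k$).

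The hard part will be verifying the projectivity of $H^1_c(\MM)_{\m}$ over the localized $\tilde{\Lambda}$ (equivalently, the vanishing of $\Ext^1_{\tilde{\Lambda}}(H^1_c(\MM)_{\m},\tilde{\Lambda})$), which is the hypothesis Lemma~\ref{lemma:quot2} requires. For the $\pm$-eigenspaces under $\iota$ this should follow from Hida-theoretic freeness of $\tilde{H}^1_c(N)^\pm_{\m}$, accessed via the isomorphism $H^1_c(\Gamma_0,\MM)^\pm\cong\tilde{H}^1_c(N)^\pm$ of Proposition~\ref{prop:compare} together with Ohta's duality. Since every functor appearing in the plan commutes with the $\iota$-eigenspace decomposition, it suffices to run the argument separately on the $+$ and $-$ parts and reassemble; the real content is concentrated in this projectivity statement.
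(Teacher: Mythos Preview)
Your approach is essentially identical to the paper's: apply Lemma~\ref{lemma:quot2} together with Corollary~\ref{cor:CT} for the first isomorphism, then factor $-\otimes_{\T}\T/\m$ through $\tilde\Lambda/\p_k$, reduce mod $p$ via Lemma~\ref{lemma:quot2} and Lemma~\ref{lemma:modp}, and finish with Lemma~\ref{lemma:quot1}. The paper carries out exactly this chain of isomorphisms.

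One point worth noting: you flag the projectivity of $H^1_c(\MM)_\m$ over the localized $\tilde\Lambda$ as the ``hard part'' needing justification before Lemma~\ref{lemma:quot2} can be invoked. The paper's proof simply cites Lemma~\ref{lemma:quot2} without addressing this hypothesis. So you are being more careful than the paper here. The freeness you want is indeed a standard consequence of Hida theory (the ordinary tower is $\tilde\Lambda$-free; cf.\ Proposition~\ref{prop:compare} and the references to Ohta there), but it is not proven internally in the appendix, and your instinct to isolate it as the real input is sound. The same remark applies to the second use of Lemma~\ref{lemma:quot2} (reducing $\Hom_{\Zp}(-,\Zp)$ mod $p$), which requires $H^1_c(\cP_g^\vee)_{\m_k}$ to be $\Zp$-free; this is implicit in Lemma~\ref{lemma:modp}.
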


\begin{proof}
Set $X = H^1_c(\MM)^{\ord}$.  Then for the first part we have
\begin{align*}
\Hom_{\tilde{\Lambda}}(X_{\m},\tilde{\Lambda}) \otimes_{\tilde{\Lambda}} \tilde{\Lambda}/\p_k
&\cong
\Hom_{\tilde{\Lambda}}(X_{\m}/\p_k X_{\m},\tilde{\Lambda}/\p_k) & (\text{Lemma~} \ref{lemma:quot2})\\
&\cong
\Hom_{\tilde{\Lambda}}((X/\p_kX)_{\m_k},\tilde{\Lambda}/\p_k) & \\
&\cong
\Hom(H^1_c(\cP_g^\vee)_{\m_k} ,\Zp). & (\text{Corollary~} \ref{cor:CT})
\end{align*}
For the second part, we have
\begin{align*}
\Hom_{\tilde{\Lambda}}(X_{\m},\tilde{\Lambda}) \otimes_\T \T/\m 
&\cong
\Hom_{\Lt}(X_{\m},\Lt) \otimes_{\Lt} (\Lt/\p_k) \otimes_{\T} \T/\m \\
&\cong
\Hom(H^1_c(\cP_g^\vee)_{\m_k},\Zp)  \otimes_{\T} \T/\m  & (\text{part~} 1 \text{~above})\\
&\cong
\Hom(H^1_c(\cP_g^\vee)_{\m_k}\otimes_{\Zp} \Fp,\Fp) \otimes_{\T} \T/\m  & (\text{Lemma~} \ref{lemma:quot2})\\
&\cong
\Hom(H^1_c( \cP_g^\vee \otimes \Fp)_{\m_k},\Fp)\otimes_{\T} \T/\m & (\text{Lemma~} \ref{lemma:modp})\\
&\cong
\Hom(H^1_c( \cP_g^\vee \otimes \Fp)[\m_k],\Fp). & (\text{Lemma~} \ref{lemma:quot1})
\end{align*}
\end{proof}

\subsection{Freeness over the Hecke-algebra}

For $\m \subseteq \T$ maximal,
we aim to show that $\m$ satisfies a mod $p$ multiplicity one assumption if and only if $\Hom_{\Lt}(H^1_c(\MM)^{\ve}_{\m}, \Lt)$ is free over $\Tm$ for $\ve = +$ and free over $\Tcm$ if $\ve = -$.  However, such a result cannot hold unconditionally as the Eisenstein Hecke-eigensystems do not always all occur in the plus subspace.  We thus introduce the following hypothesis on $\m$ to force this condition to hold.

\begin{defn}
We say $\m \subset \T$ satisfies (\ep) if $\Hom_{\Gamma_0}(\Delta,\cP_{k-2}^\vee)_{\mk}^-=0$ for all (equivalently for one) classical $k \equiv j(\m) \pmod{p-1}$.
\end{defn}

Note that (\ep) is automatically satisfied if $\m$ is not an Eisenstein maximal ideal.  It is also automatically satisfied if $N$ is squarefree.  But it fails for instance when $N=9$ as the boundary symbol attached to $E_{k,\chi,\chi}$ lies in the minus subspace where $\chi$ is the non-trivial (odd) character of order 3.

\begin{lemma}
\label{lemma:ES}
If $\m \subseteq \T$ satisfies (\ep), for $k\geq2$, $k \equiv j(\m) \pmod{p-1}$, we have
$$
H^1_c(\cP_{k-2}^\vee)_{\mk}^+ \otimes \Qpbar \cong M_{k}(\Gamma_0,\Zp)_{\mk} \otimes \Qpbar
$$
and
$$
H^1_c(\cP_{k-2}^\vee)^-_{\mk} \otimes \Qpbar \cong S_{k}(\Gamma_0,\Zp)_{\mk} \otimes \Qpbar
$$
as Hecke-modules.
\end{lemma}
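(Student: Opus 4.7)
\emph{Plan.} The lemma is a Hecke-equivariant refinement of the Eichler--Shimura isomorphism, with hypothesis (\ep) used to sort the Eisenstein contributions into the $+$-part. Fix an embedding $\Qbar \hookrightarrow \Qpbar$ and work with $\Qpbar$-coefficients throughout. The starting point is the Hecke-equivariant short exact sequence
$$
0 \to B_k \to H^1_c(\Gamma_0, \cP_{k-2}^\vee) \to H^1_{par}(\Gamma_0, \cP_{k-2}^\vee) \to 0,
$$
where $B_k$ denotes the boundary subspace, namely the image in $H^1_c$ of the restriction map $\Hom_{\Gamma_0}(\Delta, \cP_{k-2}^\vee) \to \Hom_{\Gamma_0}(\Delta_0, \cP_{k-2}^\vee) = H^1_c(\Gamma_0, \cP_{k-2}^\vee)$. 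After tensoring with $\Qpbar$ and localizing at $\mk$, this sequence splits as Hecke-modules, since $\Tk_{\mk}\otimes\Qpbar$ is a product of fields and no Eisenstein Hecke-eigensystem coincides with a cuspidal one in characteristic zero.

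Next, I would identify the pieces via classical Eichler--Shimura. For the parabolic part,
$$
H^1_{par}(\Gamma_0, \cP_{k-2}^\vee)^{\ord} \otimes \Qpbar \cong S_k(\Gamma_0,\Qpbar)^{\ord} \oplus \overline{S_k(\Gamma_0,\Qpbar)^{\ord}}
$$
as Hecke-modules, with $\iota$ swapping the two summands, so each $\pm$-part contributes exactly one copy of $S_k(\Gamma_0,\Qpbar)^{\ord}$. For the boundary part, $B_k \otimes \Qpbar$ is Hecke-isomorphic to the Eisenstein subspace of $M_k(\Gamma_0,\Qpbar)^{\ord}$ via the standard construction of Eisenstein boundary symbols (see, e.g., \cite{BD-evil}), so that after localization $(B_k)_{\mk}\otimes\Qpbar$ matches the Eisenstein part of $M_k(\Gamma_0,\Qpbar)^{\ord}_{\mk}$.

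Finally, I would invoke hypothesis (\ep): the vanishing $\Hom_{\Gamma_0}(\Delta, \cP_{k-2}^\vee)^-_{\mk} = 0$ forces $(B_k)^-_{\mk} = 0$. Consequently, the minus-part of $H^1_c(\cP_{k-2}^\vee)_{\mk} \otimes \Qpbar$ is purely parabolic and recovers $S_k(\Gamma_0,\Zp)^{\ord}_{\mk} \otimes \Qpbar$; while the plus-part contains both the parabolic copy of $S_k$ and the full Eisenstein subspace of $M_k$ localized at $\mk$, reassembling $M_k(\Gamma_0,\Zp)^{\ord}_{\mk} \otimes \Qpbar$. The main obstacle is the clean identification of $B_k \otimes \Qpbar$ with the Eisenstein part of $M_k(\Gamma_0,\Qpbar)^{\ord}$ together with the splitting of the short exact sequence over $\Tk_{\mk} \otimes \Qpbar$; both ultimately rest on the characteristic-zero separation of Eisenstein and cuspidal Hecke-eigensystems at the classical height one primes below $\mk$.
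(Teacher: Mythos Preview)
Your argument is correct; the paper's own proof is a one-line citation to \cite[Proposition~3.15]{B-critical}, and what you have written is precisely the Eichler--Shimura/boundary-symbol analysis that underlies that proposition. The only cosmetic remark is that the description of $\iota$ as ``swapping the two summands'' in the Eichler--Shimura decomposition is a slight oversimplification, but the conclusion you draw---that each $\pm$-eigenspace of $H^1_{par}$ is Hecke-isomorphic to one copy of $S_k$---is correct and is all that is needed.
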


\begin{proof}
The lemma follows immediately from \cite[Proposition 3.15]{B-critical}.
\end{proof}

\begin{thm}
\label{thm:m1impliesfreeplus}
Let $\m$ be a maximal ideal of $\T$ satisfying (\ep).  The following are equivalent:
\begin{enumerate}
\item \label{1}
$\Hom_{\Lt}(H^1_c(\MM)^+_{\m}, \Lt),$ is a free $\Tm$-module of rank 1;
\vspace{.1cm}
\item \label{2}
$\dim_{\T/\m} H^1_c(\cP_{k-2}^\vee \otimes \Fp)^{+}[\om_k] =1$ for all $k\geq 2$, $k \equiv j(\m) \pmod{p-1}$;
\item \label{3}
$\dim_{\T/\m} H^1_c(\cP_{k-2}^\vee \otimes \Fp)^{+}[\om_k] =1$ for some $k\geq 2$, $k \equiv j(\m) \pmod{p-1}$.
\end{enumerate}
\end{thm}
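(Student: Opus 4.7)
The plan is to prove (2)$\Rightarrow$(3) trivially, (1)$\Rightarrow$(2) by a plus-part refinement of Lemma \ref{lemma:lemma}, and (3)$\Rightarrow$(1) --- the main content --- by Nakayama's lemma together with a generic $\Lt$-rank comparison.

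For (1)$\Rightarrow$(2), I would first observe that the involution $\iota$ acts $\Zp$-linearly on $H^1_c(\MM)_\m$ and commutes with every functor appearing in the proof of Lemma \ref{lemma:lemma} (specialization in weight, reduction mod $p$, $\Hom_{\Lt}(-,\Lt)$, and localization at $\m$); since $p$ is odd, the idempotent $(1+\iota)/2$ splits off the plus eigenspace as a direct summand at every stage. The argument of Lemma \ref{lemma:lemma} therefore runs verbatim on plus parts and yields
$$
\Hom_{\Lt}\bigl(H^1_c(\MM)^+_\m,\Lt\bigr)\otimes_\T \T/\m \;\cong\; \Hom_{\F}\bigl(H^1_c(\cP_{k-2}^\vee\otimes\Fp)^+[\om_k],\F\bigr).
$$
If the left-hand side is free of rank one over $\Tm$, the tensor product equals $\T/\m$, so the right-hand side is one-dimensional over $\T/\m$, which is (2). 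The implication (2)$\Rightarrow$(3) is immediate.

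For (3)$\Rightarrow$(1), assume (3) holds for some $k$. The same plus-part refinement produces the displayed isomorphism above, and the one-dimensionality on the right gives that $\Hom_{\Lt}(H^1_c(\MM)^+_\m,\Lt)\otimes_\T \T/\m$ is one-dimensional over $\T/\m$. By Proposition \ref{prop:compare} and Hida theory, $H^1_c(\MM)^+_\m$ is finitely generated over $\Lt$, so its $\Lt$-dual is finitely generated over $\Lt$, hence over $\Tm$. Nakayama's lemma over the complete local ring $\Tm$ then furnishes a surjection
$$
\pi \;:\; \Tm \twoheadrightarrow \Hom_{\Lt}\bigl(H^1_c(\MM)^+_\m,\Lt\bigr).
$$
To show $\ker\pi=0$, I would note that $\Tm$ is $\Lt$-flat (Hida), hence $\Lt$-torsion-free; it therefore suffices to check that source and target of $\pi$ have the same generic $\Lt$-rank. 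By Hida theory the $\Lt$-rank of $\Tm$ equals the $\Zp$-rank of $\Tkm$ for any admissible $k$; by Corollary \ref{cor:CT} (applied to plus parts) the $\Lt$-rank of $H^1_c(\MM)^+_\m$ equals the $\Zp$-rank of $H^1_c(\cP_{k-2}^\vee)^+_\mk$; and by Lemma \ref{lemma:ES}, this last equals the $\Zp$-rank of $M_k(\Gamma_0,\psi_\m)_\mk$, which equals the $\Zp$-rank of $\Tkm$. Since dualizing preserves generic $\Lt$-rank, the two sides of $\pi$ have equal generic $\Lt$-rank; $\ker\pi$ is therefore $\Lt$-torsion, but also $\Lt$-torsion-free as a sub-$\Tm$-module of $\Tm$, whence $\ker\pi=0$.

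The main obstacle is precisely the generic-rank step in (3)$\Rightarrow$(1), and this is where hypothesis (\ep) is indispensable: without it, Eisenstein eigensystems could live on the minus side, the $\Lt$-rank of $H^1_c(\MM)^+_\m$ would drop strictly below the $\Lt$-rank of $\Tm$, and the rank comparison forcing $\ker\pi=0$ would fail.
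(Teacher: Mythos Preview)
Your proposal is correct and follows essentially the same approach as the paper: both use Lemma~\ref{lemma:lemma} (on plus parts) for (1)$\Rightarrow$(2), Nakayama for cyclicity in (3)$\Rightarrow$(1), and Lemma~\ref{lemma:ES} (hence hypothesis (\ep)) for the rank comparison that kills the kernel. The only difference is cosmetic: the paper specializes the surjection $\Tm\twoheadrightarrow Y^+$ modulo $\p_k$ first and compares $\Zp$-ranks of the torsion-free modules $\Tkm$ and $\Hom(H^1_c(\cP_{k-2}^\vee)^+_{\m_k},\Zp)$ at a single classical weight, whereas you compare generic $\Lt$-ranks directly; these are equivalent via the control theorems already in play.
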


\begin{proof}
Lemma \ref{lemma:lemma} gives (\ref{1}) implies (\ref{2}).  The implication (\ref{2}) implies (\ref{3}) is clear.  So we just need to show that (\ref{3}) implies (\ref{1}).  
To this end,
by Lemma \ref{lemma:lemma} and Nakayama's lemma, (\ref{3}) implies that $Y^+:=\Hom_{\Lt}(H^1_c(\MM)^+_{\m}, \Lt)$ is a cyclic $\Tm$-module.  We then have an exact sequence
$$
0 \to K \to \T \to Y^+ \to 0
$$
and thus
$$
0 \to K/\p_kK \to \T/\p_k\T \to Y^+/\p_kY^+ \to 0
$$
as $Y^+$ is $\tilde{\Lambda}$-torsion free.  Note that if we show for a single $k$ that $K/\p_kK = 0$, then, by Nakayama's lemma, $K=0$, and $Y^+$ is free of rank 1 over $\T$.  

To this end, recall that $\T/\p_k\T \cong \Tk$ which is a torsion-free $\Zp$-module.  Also, $Y^+/\p_kY^+ \cong 
\Hom(H^1_c(\cP_{k-2}^\vee)^+_{\m_k} ,\Zp)$ by Lemma \ref{lemma:lemma} which is also torsion-free.  Thus, it suffices to see that the $\Zp$-ranks of $\Tkm$ and $H^1_c(\cP_{k-2}^\vee)^+_{\m_k}$ match.  But this follows immediately from Lemma \ref{lemma:ES}.
\end{proof}

\begin{thm}
\label{thm:m1impliesfreeminus}
Let $\mc$ be a maximal ideal of $\Tc$ satisfying (\ep).  The following are equivalent:
\begin{enumerate}
\item 
$\Hom_{\Lt}(H^1_c(\MM)^-_{\mc}, \Lt),$ is a free $\Tcm$-module of rank 1;
\vspace{.1cm}
\item 
$\dim_{\T/\m} H^1_c(\cP_{k-2}^\vee \otimes \Fp)^{-}[\omk] =1$ for all $k \geq 2$, $k \equiv j(\m) \pmod{p-1}$;
\item 
$\dim_{\T/\m} H^1_c(\cP_{k-2}^\vee \otimes \Fp)^{-}[\omk] =1$ for some $k \geq 2$, $k \equiv j(\m) \pmod{p-1}$.
\end{enumerate}
\end{thm}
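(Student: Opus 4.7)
The plan is to mimic the proof of Theorem \ref{thm:m1impliesfreeplus}, with the key adjustment that on the minus subspace the Hecke algebra acts through its cuspidal quotient. The hypothesis (\ep) is exactly what is needed to force the Eisenstein eigensystems away from the minus part: since $\Hom_{\Gamma_0}(\Delta,\cP_{k-2}^\vee)_{\mk}^- = 0$, every class in $H^1_c(\cP_{k-2}^\vee)^-_{\mk}$ is cuspidal (in the sense of coming from $S_k$ after tensoring with $\Qpbar$), so by Lemma \ref{lemma:ES} the Hecke algebra $\Tkm$ acts on $H^1_c(\cP_{k-2}^\vee)^-_{\mk}$ through its cuspidal quotient $\Tckm$. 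Dually, $\Tm$ acts on $H^1_c(\MM)^-_{\m}$ through $\Tcm$.

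First, I would establish the minus analog of Lemma \ref{lemma:lemma}: the involution $\iota$ commutes with specialization and with the action of $\Lt$, so the same chain of isomorphisms (using Lemma \ref{lemma:quot1}, Lemma \ref{lemma:quot2}, Corollary \ref{cor:CT}, and Lemma \ref{lemma:modp}, each applied to the $-$-eigenspace) yields
\[
\Hom_{\Lt}(H^1_c(\MM)^-_{\mc},\Lt) \otimes_{\Tc} \Tc/\mc \cong \Hom\bigl(H^1_c(\cP_{k-2}^\vee \otimes \Fp)^-[\omk],\Fp\bigr),
\]
which shows (\ref{1}) $\implies$ (\ref{2}). The implication (\ref{2}) $\implies$ (\ref{3}) is tautological.

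For (\ref{3}) $\implies$ (\ref{1}), set $Y^- := \Hom_{\Lt}(H^1_c(\MM)^-_{\mc},\Lt)$. By the displayed isomorphism and Nakayama's lemma, condition (\ref{3}) gives that $Y^-$ is a cyclic $\Tcm$-module. Choose a surjection $\Tc \surj Y^-$ with kernel $K$, giving an exact sequence
\[
0 \to K \to \Tc \to Y^- \to 0.
\]
Because $Y^-$ is $\Lt$-torsion free (as a $\Lt$-dual), tensoring over $\Lt$ with $\Lt/\p_k$ keeps the sequence exact:
\[
0 \to K/\p_k K \to \Tck \to Y^-/\p_k Y^- \to 0.
\]
By Hida's control theorem $\Tc/\p_k\Tc \cong \Tck$ is $\Zp$-torsion free, and by the minus analog of Lemma \ref{lemma:lemma} we have $Y^-/\p_k Y^- \cong \Hom(H^1_c(\cP_{k-2}^\vee)^-_{\mk},\Zp)$, which is also $\Zp$-torsion free. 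Hence it suffices to match $\Zp$-ranks at a single weight $k$ to conclude $K/\p_k K = 0$, whence $K = 0$ by Nakayama applied to the maximal ideal $\mc$ of $\Tcm$. The rank comparison is precisely Lemma \ref{lemma:ES}, which identifies $H^1_c(\cP_{k-2}^\vee)^-_{\mk} \otimes \Qpbar$ with $S_k(\Gamma_0,\Zp)_{\mk} \otimes \Qpbar$; since $\Tckm$ also acts faithfully on this space and has the same rank as that space (up to extending scalars), the ranks agree.

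The main delicate point is the appeal to (\ep): without it one only obtains freeness over $\Tm$, not $\Tcm$, because the Eisenstein part of $H^1_c$ need not be concentrated on the $+$ side. With (\ep) in force, however, $\Tcm$ is the correct ring to work over, and the rank equality between $\Tckm$ and $H^1_c(\cP_{k-2}^\vee)^-_{\mk}$ is clean, so the argument closes exactly as in the plus case.
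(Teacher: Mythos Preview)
Your approach is the same as the paper's, and the argument is correct.  The one place where the paper is more explicit than you are is the passage ``Dually, $\Tm$ acts on $H^1_c(\MM)^-_{\m}$ through $\Tcm$'': this is not a duality, and it does not follow formally from the single-weight statement.  The paper proves it directly by taking $h$ in the kernel of $\Tm \to \Tcm$, noting that for every classical $k$ the image of $hx$ in $X^-/\p_k X^- \cong H^1_c(\cP_{k-2}^\vee)^-_{\mk}$ vanishes (since by Lemma~\ref{lemma:ES} this space injects into $S_k(\Gamma_0,\Qpbar)$, on which $h$ acts as zero), and then concluding $hx=0$ because $\bigcap_k \p_k X^- = 0$.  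Once this is established, the rest of your write-up matches the paper verbatim: Lemma~\ref{lemma:lemma} and Nakayama give cyclicity, and the rank comparison at a single weight via Lemma~\ref{lemma:ES} (now against $\Tckm$ and $S_k$ rather than $\Tkm$ and $M_k$) kills the kernel.
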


\begin{proof}
We first need to justify why $X^-:=H^1_c(\MM)^-_{\m}$ is even a module over $\Tcm$.  To this end, take $h$ in the kernel of $\Tm \to \Tcm$, and we will show that $h$ kills $X^-$.  For $x \in X^-$, consider the image of $hx$ in 
$$
X^- \otimes_{\tilde{\Lambda}} \tilde{\Lambda}/\p_k \cong H^1_c(\cP_{k-2}^\vee)^-_{\mk}.
$$
Further, by Lemma \ref{lemma:ES}, we have $H^1_c(\cP_{k-2}^\vee)_{\mk}^- \inj S_k(\Gamma_0,\Qpbar)$.  Thus, the image of $hx$ in $X^-/\p_k X^-$ vanishes.  Since this is true for all $k \geq 2$, we deduce that $hx =0$ and that $X^-$ is a $\Tcm$-module.  The remainder of the proof follows exactly as in the proof of Theorem \ref{thm:m1impliesfreeplus}.
\end{proof}

\subsection{Two-variable $p$-adic $L$-functions}
\label{sec:twovar}

In this section, we assume that 
$$
\dim_{\T/\m} H^1_c(\cP_{k-2}^\vee \otimes \Fp)^{\ve}[\om_k] =1
$$
so that we know that $\Hom_{\Lt}(H^1_c(\MM)_{\m}^{\ve},\Lt)$ is a free $\T^{\ve}_{\m}$-module where $\T^+_{\m} := \Tm$ and $\T^-_{\m} := \Tcm$.  Moreover, we now fix an isomorphism $\Hom_{\Lt}(H^1_c(\MM)_{\m}^{\ve},\Lt) \cong \T^{\ve}_{\m}$.

Consider the $\Lt$-linear map
\begin{align*}
 H^1_c(\MM)^{\ord}   &\to \MM := \Ms. \\
\Phi &\mapsto \Phi(\{\infty\} - \{0\})
\end{align*}
Let $L_p^\ve$ denote the corresponding element of
\begin{align*}
\Hom_{\Lt}(H^1_c(\MM)^{\ord,\ve},\MM) 
&\cong \Hom_{\Lt}(H^1_c(\MM)^{\ord,\ve},\Lt) \hat{\otimes}_{\Lt} \Zp[[\Zpx \times \Zp]] \\
&\cong \Hom_{\Lt}(H^1_c(\MM)^{\ord,\ve},\Lt) \hat{\otimes}_{\Lt} \left(\Lt \hat{\otimes}_{\Zp} \Zp[[\Zp]] \right)\\
&\cong \Hom_{\Lt}(H^1_c(\MM)^{\ord,\ve},\Lt) \hat{\otimes}_{\Zp} \Zp[[\Zp]].
\end{align*}
Here the second isomorphism arises from
\begin{align*}
\Zp[[\Zpx \times \Zp]] &\cong \Lt \hat{\otimes}_{\Zp} \Zp[[\Zp]]\\
[(a,b)] &\mapsto [a] \otimes [b/a]
\end{align*}
where $\Lt$ acts on $\Lt \hat{\otimes}_{\Zp} \Zp[[\Zp]]$ simply by acting on the first coordinate.

Let $L^\ve_p(\m)$ denote the image of $L_p^\ve$ in 
$$
\Hom_{\Lt}(H^1_c(\MM)^{\ve}_{\m},\MM) \cong \Hom_{\Lt}(H^1_c(\MM)^{\ve}_{\m},\Lt) \hat{\otimes}_{\Zp} \Zp[[\Zp]] 
\cong \T_{\m}^{\ve} \hat{\otimes}_{\Zp} \Zp[[\Zp]] \cong \Tm^{\ve}[[\Zp]].
$$
For $\p$ a height 1 prime ideal of $\Tm^{\ve}$ with $\O := \Tm^{\ve}/\p$, we write  $\p(L^\ve_p(\m))$ for the image of $L^\ve_p(\m)$ under the map
$$
\Tm^{\ve}[[\Zp]] \surj (\Tm^{\ve} / \p)[[\Zp]] \cong \O[[\Zp]].
$$
We call $L_p^\ve(\m)$ the two-variable $p$-adic $L$-function attached to $\m$ as it has the following interpolation property.  

\begin{prop}
\label{prop:interpolate}
Let $\m$ be a maximal ideal of $\T$, and let $\p_f$ be a classical height one prime of $\T$ corresponding to an eigenform $f$.
Let $\O$ denote the subring of $\Qpbar$ generated by the Hecke-eigenvalues of $f$.  Then
$$
\p_f( L_p^\ve(\m) ) \Big|_{\Zpx} = L_p^\ve(f)
$$
as $\O$-valued measures on $\Zpx$ (for some choice of canonical period $\Omega_f^\ve$).
\end{prop}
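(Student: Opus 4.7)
The plan is to trace the specialization at $\p_f$ through the construction of $L_p^\ve(\m)$ and identify the result with $\Phi_f^\ve(\{\infty\}-\{0\})|_{\Zpx}$, where $\Phi_f^\ve$ denotes the unique ordinary overconvergent lift of $\varphi_f^\ve$ guaranteed by Stevens' control theorem. Invoking Theorem~\ref{thm:control} then finishes the job.

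First I would unwind the construction. Writing $Y := H^1_c(\MM)^{\ord,\ve}_\m$, the map $L_p^\ve$ is by definition the $\Lt$-linear evaluation $\Phi \mapsto \Phi(\{\infty\}-\{0\})$; under the decomposition $\MM \cong \Lt \hat{\otimes}_{\Zp} \Zp[[\Zp]]$ combined with the fixed isomorphism $\Hom_{\Lt}(Y,\Lt) \cong \T^\ve_\m$, this evaluation corresponds to the element $L_p^\ve(\m) \in \T^\ve_\m[[\Zp]]$. Next, since the classical prime $\p_f$ contains $\p_k$ for $k$ the weight of $f$, the specialization $\p_f \colon \T^\ve_\m \to \O_f$ factors through the weight $k$ quotient. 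Applying Corollary~\ref{cor:CT}, the corresponding quotient of $Y$ is $H^1_c(\cP_{k-2}^\vee)^{\ord,\ve}_{\m_k}$; projecting further to the $\p_f$-eigencomponent and inverting $p$, Lemma~\ref{lemma:ES} shows that this space is one-dimensional and spanned by $\varphi_f^\ve$. By Theorem~\ref{thm:comparek} there is a unique overconvergent lift of $\varphi_f^\ve$ to $Y \otimes \Qpbar$; this is precisely $\Phi_f^\ve$.

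Putting these identifications together, I would conclude that $\p_f(L_p^\ve(\m)) \in \O_f[[\Zp]]$ coincides with $\Phi_f^\ve(\{\infty\}-\{0\})$, up to the scaling of $\Phi_f^\ve$ implicit in our fixed isomorphism $\Hom_{\Lt}(Y,\Lt) \cong \T^\ve_\m$. Restricting to $\Zpx$ and invoking Theorem~\ref{thm:control} then yields $L_p^\ve(f)$ for this particular choice of canonical period, which is exactly the claim.

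The main obstacle is verifying that the implicit period arising from our fixed isomorphism is in fact a canonical period, i.e.\ that the resulting $\Phi_f^\ve$ has $p$-integral values attaining at least one $p$-adic unit. This is where \eqref{m1} is doing the essential work: via Lemma~\ref{lemma:lemma} it forces the reduction modulo $\m$ of any generator $\psi_0 \in \Hom_{\Lt}(Y,\Lt)$ of this free rank-one $\T^\ve_\m$-module to yield a nonzero functional on the one-dimensional space $H^1_c(\cP_{k-2}^\vee \otimes \Fp)^\ve[\om_k]$, which translates into the required unit value and closes the argument.
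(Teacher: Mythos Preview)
Your proposal is correct and follows exactly the approach the paper has in mind: the paper's proof consists of the single sentence ``Given the control theorems proven in this appendix, the proof of this theorem is just a diagram chase,'' and you have supplied that diagram chase in detail, tracing the specialization through Corollary~\ref{cor:CT}, Theorem~\ref{thm:comparek}, and Theorem~\ref{thm:control}, and then using Lemma~\ref{lemma:lemma} together with \eqref{m1} to verify the integrality normalization. There is nothing to add.
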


\begin{proof}
Given the control theorems proven in this appendix, the proof of this theorem is just a diagram chase.
\end{proof}

\end{appendix}

\bibliography{mu}
\bibliographystyle{abbrv}

\end{document}